\documentclass{amsart}
\usepackage{graphicx} 
\usepackage[utf8]{inputenc}
\usepackage[utf8]{inputenc}
\usepackage{amsfonts}
\usepackage{amsmath}
\usepackage{amssymb}
\usepackage{mathrsfs}
\usepackage{amsthm}
\usepackage{float}
\usepackage{upgreek}
\usepackage{enumitem}
\usepackage{mathabx}
\usepackage{overpic}
\usepackage{color}
\usepackage{thmtools}
\usepackage{thm-restate}
\usepackage{stmaryrd}
\usepackage[dvipsnames]{xcolor}
\usepackage{caption}
\usepackage{subcaption}
\usepackage{thmtools}
\usepackage{enumitem}
\usepackage{thm-restate}
\usepackage{esint}
\usepackage{xcolor}
\newcommand{\subsetsim}{\mathrel{\ooalign{\raise.4ex\hbox{$\subset$}\cr$\raise-.9ex\hbox{$\sim$}$}}}
\declaretheorem[name=Corollary,numberwithin=section]{cor}
\usepackage[left=1.03in, right=1.03in, top=1.05in, bottom=1.05in]{geometry}
\usepackage{hyperref}
\usepackage{cleveref}
\numberwithin{equation}{section}

\newcommand{\p}{\mathcal{P}}

\newcommand{\T}{\mathcal{T}}
\DeclareMathOperator{\Z}{\mathbb{Z}}
\DeclareMathOperator{\J}{\mathcal{J}}

\DeclareMathOperator{\A}{\mathcal{A}}

\newcommand{\N}{\mathcal{N}}
\newcommand{\K}{\mathcal{K}}
\DeclareMathOperator\supp{supp }

\newcommand{\dd}[1]{\mathrm{d}{#1}}
\newcommand{\diam}{\operatorname{diam}}
\DeclareMathOperator{\R}{\mathbb{R}}
\DeclareMathOperator{\B}{\mathbb{B}}

\newtheorem{theorem}{Theorem}[section]
\newtheorem{lemma}[theorem]{Lemma}

\newtheorem{prop}[theorem]{Proposition}
\newtheorem{corollary}[theorem]{Corollary}
\newtheorem{remark}[theorem]{Remark}
\theoremstyle{definition}
\newtheorem{definition}[theorem]{Definition}
\theoremstyle{definition}
\newtheorem{example}[theorem]{Example}
\newtheorem{notation}[theorem]{Notation}

\usepackage{graphicx}

\newtheorem{claim}[theorem]{Claim}

\DeclareMathOperator{\dist}{dist}

\newcommand{\qad}{\mathrm{dim}_{\mathrm{qA}}}

\newcommand{\I}{\mathscr{I}}
\title{Nikod\'ym maximal function with restricted directions}
\subjclass[2020]{42B25 (primary) 42B15 (secondary)}
\keywords{Nikod\'ym maximal function, Furstenberg sets, incidences}
\thanks{Both authors are supported by the European Research Council (ERC) under the European Union’s Horizon Europe research and innovation programme (grant agreement No 101087499). T.O. is also supported by the Research Council of Finland via the project \emph{Approximate incidence geometry}, grant no. 355453.}
\author{Tuomas Orponen and Hrit Roy}
\begin{document}
\begin{abstract} We study the planar Nikod\'ym maximal operator $\mathcal{N}_{\Theta;\delta}$ associated to a direction set $\Theta \subset \mathbb{S}^{1}$. We show that the quasi-Assouad dimension $s := \dim_{\mathrm{qA}} \Theta$ characterises the essential $L^{p}$-boundedness of $\mathcal{N}_{\Theta;\delta}$ in the following sense. If $s \in [\tfrac{1}{2},1]$, then $\mathcal{N}_{\Theta;\delta}$ is essentially bounded on $L^{p}(\R^{2})$ for $p \geq 1 + s$, and essentially unbounded for $p < 1 + s$. Here essential boundedness means $L^{p}$-boundedness with constant $O_{\epsilon}(\delta^{-\epsilon})$. We also show that the characterisation described above fails for $s < \tfrac{1}{2}$. More precisely, there exists a set $\Theta \subset \mathbb{S}^{1}$ with $\dim_{\mathrm{qA}}  \Theta = \tfrac{1}{3}$ such that $\mathcal{N}_{\Theta;\delta}$ is essentially unbounded on $L^{p}(\mathbb{R}^{2})$ for all $p < \tfrac{3}{2}$.
    
    As an application, we show there exists a convex domain with affine dimension $\tfrac{1}{6}$ such that the $\alpha$-order Bochner--Riesz means converge in $L^6$ for all $\alpha>0$. 

\end{abstract}
\maketitle
\section{Introduction}
\label{sec: introduction}
\subsection{Nikod\'ym maximal operator}
 For $\delta\in (0,1)$, the classical \textit{Nikod\'ym maximal operator} is defined as \begin{equation}\label{eqn: nikodym def}
  \mathcal{N}_{\delta}f(x):=\sup_{x(T)=x}\fint_T |f(y)|\,\mathrm{d}y  \quad\text{for all}\quad f\in L^1_{\mathrm{loc}}(\R^d),
\end{equation} where the supremum is taken over all $\delta$-tubes $T$ with centre $x(T)$ at $x$. A $\delta$-tube is the closed $(\delta/2)$-neighbourhood of a unit line segment. It is well-known that $(\N_\delta)_{\delta>0}$ are not uniformly bounded in $L^p$ for $p\neq\infty$, due to the existence of zero-measure \textit{Nikod\'ym sets}. However, the \textit{Nikod\'ym maximal conjecture} asserts that for all $d\leq p\leq\infty$,
\begin{equation}
    \label{eqn: nikodym essential boundedness}
    \|\N_\delta f\|_{L^p(\R^d)}\lesssim_\epsilon\delta^{-\epsilon}\|f\|_{L^p(\R^d)}\quad\text{for all}\quad \epsilon>0.
\end{equation}
The planar version of this conjecture was resolved by C\'ordoba in 1977:
\begin{theorem}[C\'ordoba \cite{Cordoba_Nikodym}]
\label{theorem: cordoba}
    For all $p\geq 2$, we have
    \begin{equation}
        \label{eqn: L^2 bound}
        \|\N_\delta f\|_{L^p(\R^2)}\lesssim(\log\delta^{-1})^{1/p}\|f\|_{L^p(\R^2)}.
    \end{equation}
\end{theorem}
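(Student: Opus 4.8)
The plan is to deduce everything from the endpoint estimate $\|\mathcal{N}_\delta f\|_{L^2(\mathbb{R}^2)} \lesssim (\log \delta^{-1})^{1/2}\|f\|_{L^2(\mathbb{R}^2)}$. Two preliminary reductions make this enough. First, since $\fint_T |f| \le \|f\|_\infty$ for every tube $T$, the operator $\mathcal{N}_\delta$ is bounded on $L^\infty$ with constant $1$; interpolating (e.g.\ by the Marcinkiewicz theorem, $\mathcal{N}_\delta$ being sublinear) between the $L^2$ estimate and the $L^\infty$ estimate gives $\|\mathcal{N}_\delta f\|_{L^p} \lesssim (\log\delta^{-1})^{1/p}\|f\|_{L^p}$ for all $p \ge 2$, which is the assertion. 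Second, because the tubes defining $\mathcal{N}_\delta$ have length $1$, the value $\mathcal{N}_\delta f(x)$ depends only on $f|_{B(x,1)}$; splitting $\mathbb{R}^2$ into unit cubes and treating each separately, it suffices to bound $\|\mathcal{N}_\delta f\|_{L^2(Q)}$ with $\supp f \subset 10Q$ for a fixed unit cube $Q$.

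For this localised $L^2$ bound I would pass to the Kakeya maximal function $\mathcal{M}_\delta g(\omega) := \sup_{T \parallel \omega} \fint_T |g|$ via the classical point--line (projective) duality between the Nikod\'ym and Kakeya configurations: under a suitable projective transformation of the plane, a $\delta$-tube centred at a point $x$ corresponds to a $\delta'$-tube ($\delta' \approx \delta$ on the bounded region in play) pointing in a direction determined by $x$, the map being bi-Lipschitz with comparable Jacobian away from a controlled neighbourhood of its singular locus, which is harmless after the localisation above. Through this change of variables the estimate $\|\mathcal{N}_\delta f\|_{L^2(Q)} \lesssim (\log\delta^{-1})^{1/2}\|f\|_{L^2}$ reduces to
\[
\|\mathcal{M}_\delta g\|_{L^2(\mathbb{S}^1)} \lesssim (\log\delta^{-1})^{1/2}\|g\|_{L^2(\mathbb{R}^2)}.
\]

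The gain from this reduction is that, after linearising $\mathcal{M}_\delta g(\omega) = \fint_{T_\omega}|g|$ and discretising the direction over a maximal $\delta$-separated set $\omega_1,\dots,\omega_N$ ($N \approx \delta^{-1}$), the resulting tubes $T_1,\dots,T_N$ automatically point in $\delta$-separated directions---unlike in the Nikod\'ym picture, where many linearised tubes are near-parallel. One then has $\|\mathcal{M}_\delta g\|_{L^2(\mathbb{S}^1)}^2 \lesssim \delta \sum_j (\fint_{T_j}|g|)^2 \approx \delta^{-1}\sum_j \langle |g|, \mathbf{1}_{T_j}\rangle^2$, and $\ell^2$-duality gives
\[
\sum_j \langle |g|, \mathbf{1}_{T_j}\rangle^2 = \sup_{\sum_j c_j^2 \le 1} \Big\langle |g|, \sum_j c_j \mathbf{1}_{T_j}\Big\rangle^2 \le \|g\|_2^2 \sup_{\sum_j c_j^2 \le 1} \Big\| \sum_j c_j \mathbf{1}_{T_j}\Big\|_2^2 .
\]
It remains to prove the geometric lemma: for $\delta$-tubes $T_1,\dots,T_N$ in $\delta$-separated directions and $c_j \ge 0$,
\[
\Big\|\sum_j c_j \mathbf{1}_{T_j}\Big\|_2^2 = \sum_{j,k} c_j c_k |T_j \cap T_k| \lesssim \delta \log(\delta^{-1}) \sum_j c_j^2 .
\]
Here I would use the elementary bound $|T_j \cap T_k| \lesssim \delta^2/(\delta + \angle(T_j,T_k))$ (two $\delta$-tubes crossing at angle $\theta$ meet in a parallelogram of area $\approx \delta^2/\theta$, and always $|T_j \cap T_k| \le |T_j| \approx \delta$); ordering the directions so that $\angle(T_j,T_k) \approx |j-k|\delta$ yields $|T_j \cap T_k| \lesssim \delta/(1+|j-k|)$, hence $\sum_k |T_j \cap T_k| \lesssim \delta \sum_k (1+|j-k|)^{-1} \lesssim \delta \log(\delta^{-1})$, and Schur's test applied to the symmetric kernel $(|T_j\cap T_k|)_{j,k}$ finishes the lemma. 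Combining, $\|\mathcal{M}_\delta g\|_{L^2(\mathbb{S}^1)}^2 \lesssim \delta^{-1}\cdot\|g\|_2^2\cdot\delta\log(\delta^{-1}) = \log(\delta^{-1})\|g\|_2^2$.

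I expect the main obstacle to be the projective-duality step: transferring the $L^2$ bound cleanly from the Kakeya to the Nikod\'ym side requires tracking the distortion of the projective map near its singularities and verifying that $\delta$-tubes are sent to tubes of comparable thickness and eccentricity---this is precisely where the argument uses that we are in the plane. The purely combinatorial core, namely the geometric lemma together with the Schur-test summation, is short, but obtaining the \emph{sharp} power of the logarithm is delicate: it rests on the harmonic sum $\sum_k (1+|j-k|)^{-1} \approx \log(\delta^{-1})$ being exactly balanced, so no slack may be wasted in the tube-intersection estimate or in the direction discretisation. (In dimensions $d \ge 3$ the analogous geometric input fails and the corresponding maximal estimate is open.)
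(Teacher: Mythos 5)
The paper does not prove this statement at all: Theorem \ref{theorem: cordoba} is quoted from C\'ordoba's paper, so your proposal can only be judged on its own terms. Your outer reductions (interpolation of the sublinear operator between the $L^{2}$ endpoint and the trivial $L^{\infty}$ bound, and localisation to a unit cube) are fine, and the Kakeya-side computation at the end — local constancy in the direction variable, linearisation over a $\delta$-net of directions, $\ell^{2}$-duality, the overlap bound $|T_{j}\cap T_{k}|\lesssim \delta^{2}/(\delta+\angle(T_{j},T_{k}))$ and Schur's test giving $\bigl\|\sum_{j}c_{j}\mathbf{1}_{T_{j}}\bigr\|_{2}^{2}\lesssim \delta\log\delta^{-1}$ — is exactly C\'ordoba's classical argument and is correct.

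The genuine gap is the transfer from Nikod\'ym to Kakeya, which you yourself flag but then assert works "with $\delta'\approx\delta$ and bi-Lipschitz control away from the singular locus". A projective transformation preserves incidence, so it sends the pencil of lines through a point $x$ to the pencil of lines through $\Phi(x)$; this becomes a \emph{parallel} pencil only if $\Phi(x)$ lies on the line at infinity, i.e.\ only for centres $x$ lying on the single line $\Phi^{-1}(\ell_{\infty})$. Since the Nikod\'ym centres fill a two-dimensional square, no single transformation does what you describe; worse, the centres that \emph{are} handled sit precisely on the singular locus, so localising to a unit cube does not make the singularity harmless — the middle portion of each tube (the part near its centre) is mapped to an unbounded set. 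The honest version of this transfer (Tao's Kakeya--Nikod\'ym equivalence) slices the centre set into lines, uses a different projective map for each slice together with Fubini/Minkowski (note also the dimension mismatch in your reduction: $L^{2}(Q)$ on the left versus $L^{2}(\mathbb{S}^{1})$ on the right, which forces such a slicing), and must first decompose each tube dyadically about its centre to avoid the singular line; carried out naively this costs an extra summation over $\sim\log\delta^{-1}$ scales and yields $\log\delta^{-1}$ rather than $(\log\delta^{-1})^{1/2}$ at $p=2$. This is why the equivalence in the literature is stated with $\delta^{-\epsilon}$ (or unspecified logarithmic) losses; it does not deliver the sharp constant you need.

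The fix is simpler than the duality route: run the overlap argument directly on the Nikod\'ym side. Linearise with a tube $T(x)$ centred at each $x\in Q$ and estimate, for fixed $x$, $\int_{Q}|T(x)\cap T(x')|\,\mathrm{d}x'$ by decomposing into dyadic angles $\angle(T(x),T(x'))\sim 2^{j}\delta$. Your worry about near-parallel tubes is neutralised by the constraint you dropped: since $T(x')$ is centred at $x'$, the two tubes can only meet at angle $\sim 2^{j}\delta$ if $x'$ lies in a strip of width $\sim 2^{j}\delta$ (and length $\lesssim 1$) around the line of $T(x)$, a set of measure $\lesssim 2^{j}\delta$, which exactly balances $|T(x)\cap T(x')|\lesssim \delta^{2}/(2^{j}\delta)$. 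Summing over the $\sim\log\delta^{-1}$ dyadic angles gives $\int_{Q}|T(x)\cap T(x')|\,\mathrm{d}x'\lesssim\delta^{2}\log\delta^{-1}$, and Schur's test then yields the sharp $L^{2}$ bound with $(\log\delta^{-1})^{1/2}$, with no projective transformation at all.
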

The Nikod\'ym maximal conjecture is equivalent to the \textit{Kakeya maximal conjecture} (discussed below), and is still open in dimensions $d\geq 3$. 

In this paper, we will consider the following variant of the Nikod\'ym maximal operator in the plane. For a set of directions $\Theta\subseteq\mathbb{S}^1$, we define 
\begin{equation}
    \label{eqn: fractal nikodym def}
    \mathcal{N}_{\Theta;\delta} f(x):=\sup_{\substack{x(T)=x,\\\omega(T)\in\Theta}}\fint_T |f(y)|\,\mathrm{d}y  \quad\text{for all}\quad f\in L^1_{\mathrm{loc}}(\R^2),
\end{equation}
where the supremum is restricted to those $\delta$-tubes with direction $\omega(T)\in\Theta$ and centre $x(T)$ at $x$. Thus, $\mathcal{N}_{\delta} = \mathcal{N}_{\mathbb{S}^{1};\delta}$. If $\Theta$ is finite, then the operators $\N_{\Theta;\delta}$, $\delta > 0$, are uniformly $L^p$-bounded for all $p \in [1,\infty]$ with constant $|\Theta|$ (the cardinality of $\Theta$). This follows from
\begin{equation}\label{form59} \mathcal{N}_{\Theta;\delta}f(x) \leq \sum_{\theta \in \Theta} \fint_{x + T_{\theta}} |f(y)| \, \mathrm{d}y, \qquad x \in \R^{2}, \end{equation}
where $T_{\theta}$ is the $\delta$-tube centred at the origin, with direction $\theta$. However, the constant $|\Theta|$ may not be optimal. The optimal constant in the $p=2$ case has been studied for general direction-sets and equidistributed direction-sets (see \cite{stromberg_annals, katz, katz2, ASV}, and \cite{demeter_3d_nikodym,kim,diplinio--parissis} for higher dimensions). The operators $\mathcal{N}_{\Theta;\delta}$ are also uniformly bounded for certain ``sparse" but infinite direction-sets, see \cite{alfonseca,bateman,CF_differentiation,NSW,SS,stromberg_thesis}, and \cite{parcet--rogers} for higher dimensions. For example, if $\Theta$ consists of directions with slopes in $2^{-\mathbb{N}}$, then uniform boundedness holds for all $p>1$. It is believed that the direction sets for which uniform boundedness holds for some (all) $1<p<\infty$ are precisely those which can be written as finite unions of finite order lacunary sets.\footnote{This is the main result in \cite{bateman}. However, some anomalies were recently observed in the methods of \cite{bateman}, which has cast some doubts to the validity of the result: see \cite{hagelstein}.} These sets have Assouad (and box and Hausdorff) dimension zero. 

In this paper, we will focus on direction-sets that have positive dimension, and work with the weaker notion of boundeness as in \eqref{eqn: nikodym essential boundedness} that allows $\epsilon$-power losses. Let us define the critical exponent for the Nikod\'ym maximal operator to be $$p_\Theta:=\inf\big\{p:\|\N_{\Theta;\delta}\|_{L^p\to L^p}\lesssim_\epsilon\delta^{-\epsilon}\;\forall\epsilon>0\big\}.$$

\begin{remark}\label{rem1} The ``$\inf$" in the definition of $p_{\Theta}$ is a ``$\min$". In other words, we claim that $$\|\mathcal{N}_{\Theta;\delta}\|_{L^{p_{\Theta}} \to L^{p_{\Theta}}} \lesssim_{\epsilon} \delta^{-\epsilon}\quad\text{for all}\quad \epsilon > 0.$$
To see why, fix $\epsilon > 0$ and $f \in L^{p_{\Theta}}(\R^{2})$. First, note that it is sufficient to show that if $Q \subset \R^{2}$ is a square of unit side-length, then 
\begin{equation}\label{form61} \|\mathcal{N}_{\Theta;\delta}f\|_{L^{p_{\Theta}}(Q)} \lesssim_{\epsilon} \|f\|_{L^{p_{\Theta}}(\R^{2})}. \end{equation} 
Indeed, we may decompose $\R^{2}$ into a disjoint union of (dyadic) unit squares $Q$, and for each $Q$ it holds $\mathbf{1}_{Q}\mathcal{N}_{\Theta,\delta}f = \mathbf{1}_{Q}\mathcal{N}_{\Theta,\delta}(f\mathbf{1}_{3Q})$. Now, we apply H\"older's inequality with exponent $p > p_{\Theta}$ satisfying $p_{\Theta}^{-1} - p^{-1} < \epsilon/4$ to estimate
\begin{equation}\label{form60} \|\mathcal{N}_{\Theta;\delta}f\|_{L^{p_{\Theta}}(Q)} \leq \|\mathcal{N}_{\Theta;\delta}f\|_{L^{p}(\R^{2})}. \end{equation}
The next easy observation is that the following estimate holds pointwise: $\mathcal{N}_{\Theta;\delta}f \lesssim \mathcal{N}_{\Theta,2\delta}(|f| \ast \varphi_{\delta})$, where $\varphi_{\delta}(x) = \delta^{-2}\varphi(x/\delta)$ is a standard approximate identity. Continuing from \eqref{form60}, and using $p > p_{\Theta}$, 
\begin{displaymath} \|\mathcal{N}_{\Theta;\delta}f\|_{L^{p_{\Theta}}(Q)} \lesssim_{\epsilon} \delta^{-\epsilon/2}\||f| \ast \varphi_{\delta}\|_{L^{p}(\R^{2})}. \end{displaymath}
Young's inequality implies $\||f| \ast \varphi_{\delta}\|_{L^{p}(\R^{2})} \leq \|f\|_{L^{p_{\Theta}}(\R^{2})}\|\varphi_{\delta}\|_{L^{q}(\R^{2})}$ for $p^{-1} + 1 = p_{\Theta}^{-1} + q^{-1}$, thus $q^{-1} - 1 = p^{-1} - p_{\Theta}^{-1} < 0$. Here $\|\varphi_{\delta}\|_{L^{q}(\R^{2})} \lesssim \delta^{2(q^{-1} - 1)} = \delta^{2(p^{-1} - p_{\Theta}^{-1})} \leq \delta^{-\epsilon/2}$. Chaining these estimates proves \eqref{form61}. \end{remark}

Searching for the infimal exponent of (essential) boundedness is sensible, because trivially $\|\mathcal{N}_{\mathbb{S}^{1};\delta}\|_{L^{\infty} \to L^{\infty}} \leq 1$. Theorem \ref{theorem: cordoba} shows that $p_\Theta\leq 2$ for all $\Theta\subseteq\mathbb{S}^1$. In the extreme case where the upper box dimension $\overline{\dim}_{\mathrm{B}} \Theta=0$, it is easily seen from an estimate analogous to \eqref{form59} (but summing only over a $\delta$-net $\Theta_{\delta} \subset \Theta$) that $p_{\Theta} = 1$. Previously, it was not known whether $p_\Theta<2$ for any direction set with positive dimension. Our main result provides a characterisation of $p_{\Theta}$ in terms of the \textit{quasi-Assouad dimension} of $\Theta$, denoted $\qad \Theta$. This notion of dimension was introduced in \cite{qA_dimension}, and the definition is recalled in Section \ref{sec: maximal estimate}.
\begin{theorem}
    \label{theorem: main}
    Let $\Theta\subseteq\mathbb{S}^1$ be a set with $\qad \Theta = s\in [1/2,1]$.
    Then $p_{\Theta} = 1 + s$. In other words,
    \begin{equation}
        \label{eqn: main}
        \|\N_{\Theta;\delta} f\|_{L^{p}(\R^2)}\lesssim_\epsilon\delta^{-\epsilon}\|f\|_{L^{p}(\R^2)}\quad\text{for all $\epsilon>0$,}
    \end{equation}
    if and only if $p\geq 1+s$.
\end{theorem}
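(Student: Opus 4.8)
The plan is to prove the two directions of Theorem~\ref{theorem: main} by rather different methods. For the \emph{positive} direction (boundedness for $p \geq 1+s$), it suffices by interpolation with the trivial $L^{\infty}$-bound, and by Remark~\ref{rem1}, to establish the restricted weak-type estimate at the endpoint $p = 1+s$: that is, for $E \subset \R^{2}$ of finite measure, control $|\{x : \mathcal{N}_{\Theta;\delta}\mathbf{1}_{E}(x) > \lambda\}|$. Dualising in the usual way, this reduces to a counting problem: given a family of $\delta$-tubes $\{T_{i}\}$ with directions in $\Theta$, pointing in $\delta$-separated directions, and each satisfying $|T_{i} \cap E| \geq \lambda |T_{i}|$, bound the measure of $\bigcup T_{i}$ from below, or equivalently bound the number of tubes through a typical point. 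The key structural input is that since $\qad \Theta = s$, a $\delta$-net of $\Theta$ can be covered efficiently at every intermediate scale: for any $\rho \in [\delta,1]$, every arc of length $\rho$ contains at most roughly $(\rho/\delta)^{s+\epsilon}$ points of the $\delta$-net. This is precisely the kind of ``every scale'' regularity that feeds into a Furstenberg-set / incidence estimate. So I would run a Córdoba-type $L^{2}$ argument on the sum $\sum \mathbf{1}_{T_{i}}$, where the cross terms $|T_{i} \cap T_{j}|$ are summed according to the angle $\angle(T_i,T_j)$, and use the quasi-Assouad bound on the number of tubes in each angular window to obtain the sharp exponent $1+s$. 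The appearance of $s \geq 1/2$ is natural here: the $L^{2}$-based method gives $p_{\Theta} \leq 2$ always, and the refinement to $1+s$ via the net bound is exactly the regime where the angular count $(\rho/\delta)^{s}$ beats the trivial count, which is the useful range when $s \ge 1/2$.

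For the \emph{sharpness} direction (unboundedness for $p < 1+s$), the plan is to construct, for arbitrarily small $\delta$, a function $f$ (morally $\mathbf{1}_{E}$ for a small set $E$) and a large set of points $x$ at which $\mathcal{N}_{\Theta;\delta}f(x)$ is not small. Concretely, one exploits that $\qad \Theta = s$ forces the existence of \emph{some} scale-pair $\delta \leq \rho \leq 1$ and some arc $I$ of length $\rho$ such that the $\delta$-net of $\Theta \cap I$ has cardinality $\gtrsim (\rho/\delta)^{s-\epsilon}$ — this is the content of the quasi-Assouad dimension being at least $s$. Rescaling the arc $I$ to unit length, we are looking at $N \approx (\rho/\delta)^{s}$ directions that are $(\delta/\rho)$-separated, i.e. a genuine Kakeya-type configuration at scale $\delta' := \delta/\rho$ with $N \approx (\delta')^{-s}$ directions. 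One then builds a Besicovitch-type ``bush'' or Perron-tree construction using these $N$ tubes: the tubes can be translated so that their union $E$ has measure $\lesssim N^{-\gamma}$ for a suitable $\gamma$, while $\mathcal{N}_{\Theta;\delta}\mathbf{1}_{E} \gtrsim N^{-1}$ (say) on a set of measure $\gtrsim 1$, or some quantitatively optimized variant thereof. Testing the putative inequality \eqref{eqn: main} on this example and letting $\delta \to 0$ (hence $N \to \infty$) forces $p \geq 1+s$; the power of $N$ that one needs to extract is dictated exactly by balancing the measure of $E$ against the size and measure of the high-multiplicity set, and the optimal such balance is the standard one yielding the exponent $1 + (\text{dimension of direction set})$.

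The main obstacle, I expect, is the positive direction — specifically, converting the ``every scale'' quasi-Assouad regularity of $\Theta$ into a clean incidence/Furstenberg estimate for the tubes. The subtlety is that quasi-Assouad dimension only controls covering numbers of $\Theta$ at scale $\delta$ inside arcs of length $\rho$ when the ratio $\rho/\delta$ is large (polynomially in some fixed power), so one cannot naively iterate scale-by-scale; one must set up an induction-on-scales or two-ends/multiscale argument that is robust to the $\delta^{-\epsilon}$ losses and that only uses the net bound in the ``deep'' regime. I would handle this by a dyadic pigeonholing of the tube family according to multiplicity and angular concentration, reducing to a single dominant scale $\rho$ where the quasi-Assouad bound applies cleanly, and then closing with a Córdoba-type $TT^{*}$ computation at that scale; the bookkeeping of the $\epsilon$-losses through the pigeonholing is the part that requires care but is not conceptually hard. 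A secondary technical point is verifying that the sharpness construction can be carried out \emph{within} the given direction set $\Theta$ (the tubes must have directions actually lying in $\Theta$, not merely in a $\delta$-net of a model set), which is where one genuinely uses that the bad arc $I$ and the lower bound on $|\Theta \cap I|_{\delta}$ come from $\Theta$ itself.
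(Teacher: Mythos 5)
Your sharpness direction is essentially the paper's argument: use the definition of $\qad$ to extract a scale pair $\delta \leq \rho$ and an arc of length $\rho$ containing $\gtrsim (\rho/\delta)^{s-\epsilon}$ $\delta$-separated directions of $\Theta$, then test \eqref{eqn: main} on a bush of tubes through a common point with $f$ the indicator of their common intersection (a $(\delta/\rho)\times\delta$ rectangle); no Perron-tree translation is needed, and the paper's Proposition \ref{prop: sharpness} closes this in a few lines. That half is fine.

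The positive direction, however, has a genuine gap. You propose to close the key counting estimate with ``a C\'ordoba-type $TT^{*}$ computation at a single dominant scale,'' with the quasi-Assouad bound feeding in through angular counts of the form $|\Theta \cap I|_{\delta} \lesssim (\rho/\delta)^{s+\epsilon}$. An $L^{2}$ overlap computation of the type $\sum_{i,j}|T_i \cap T_j| \lesssim \sum_{i,j} \delta^{2}/\angle(T_i,T_j)$ is exactly what proves the \emph{Kakeya} variant (Lemma \ref{lemma: rogers} and Theorem \ref{theorem: fractal kakeya}), where there is one tube per direction; but in the Nikod\'ym problem there are $\sim \delta^{-2}$ tubes, one per position, and arbitrarily many of them may share essentially the same direction. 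In that setting the $L^{2}$/angular argument only recovers $p_{\Theta} \leq 2$ (C\'ordoba), which is why, as the paper notes, it was previously unknown whether $p_{\Theta} < 2$ for \emph{any} direction set of positive dimension. What the paper actually needs is the incidence bound of Theorem \ref{theorem: incidence bound}, i.e.\ $|P_{r}(\mathcal{T})|_{\delta} \lesssim_{\epsilon} \delta^{-\epsilon}(C_{\mathrm{KT}}C_{\mathrm{reg}})^{1/s}\delta^{-1}|\mathcal{T}|/r^{(s+1)/s}$; since $(s+1)/s > 2$ for $s < 1$, this is strictly stronger than anything an $L^{2}$ computation can give (which caps out at exponent $2$ in $r$). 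The extra gain comes from the Wang--Wu $2$-ends Furstenberg set theorem (Theorem \ref{t:WW}), applied after a genuinely multiscale reduction: locating a two-ends scale $\Delta_{T}$ for each tube, pigeonholing to uniform $(\eta,\Delta,l,m_{Q})$, rescaling each $\Delta$-square and verifying a Katz--Tao condition for the $\delta\times\Delta$ tubelets (Claim \ref{c6}) -- none of which is a single-scale $TT^{*}$ bookkeeping exercise. Your heuristic for why $s \geq 1/2$ enters is also off: the angular count $(\rho/\delta)^{s}$ ``beats the trivial count'' for every $s<1$, so it does not single out $s \geq 1/2$; in the paper the threshold arises from the exponent balance $\bigl(C_{\mathrm{KT}}C_{\mathrm{reg}}\Delta^{-s}\bigr)^{2}\Delta/\mathfrak{r}^{3} \leq (C_{\mathrm{KT}}C_{\mathrm{reg}})^{1/s}/\mathfrak{r}^{(s+1)/s}$, which requires $2 - 1/s \geq 0$. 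Finally, the paper's Proposition \ref{ex3} (a set with $\qad \Theta \leq 1/3$ but $p_{\Theta} \geq 3/2$) shows that any correct proof of the positive direction must use $s \geq 1/2$ in an essential, quantitative way; an argument of the shape you sketch, which would apply uniformly for all $s$, cannot be correct.
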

The numerology $1 + s$ is not altogether unexpected: in 1995, Duoandikoetxea and Vargas \cite[Theorem 1]{duo--vargas} proved that if $\overline{\dim}_{\mathrm{B}} \Theta \leq s$, then the operators $\mathcal{N}_{\Theta;\delta}$ are uniformly bounded (without $\delta^{-\epsilon}$-losses) in the space $L^{p}_{\mathrm{rad}}(\R^{2})$ of radial $L^{p}$-functions for $p > 1 + s$. Theorem \ref{theorem: main} tells us that the upper box dimension of $\Theta$ is not the correct notion to consider in the absence of radiality: in fact, for every $\epsilon > 0$, there exist sets $\Theta \subset \mathbb{S}^{1}$ with $\overline{\dim}_{\mathrm{B}} \Theta \leq \epsilon$ and $\qad \Theta = 1$ (see \cite[Proposition 1.6]{qA_dimension}), so Theorem \ref{theorem: main} tells us that $p_{\Theta} = 2$. In fact, the lower bound $p_{\Theta} \geq 1 + \qad \Theta$ holds for all values of $\qad \Theta$ in $[0,1]$, see Proposition \ref{prop: sharpness}.

Let us record the following evident corollary of Theorem \ref{theorem: main} in the case $s<1/2$.
\begin{corollary}\label{cor: 3/2 bound}
    Let $\Theta\subset\mathbb{S}^1$ be a set with $\qad \Theta = s\in[0,1/2)$. Then $p_{\Theta} \leq 3/2$.\end{corollary}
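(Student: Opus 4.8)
The plan is to derive this formally from Theorem~\ref{theorem: main} by a monotonicity argument, after enlarging $\Theta$ to a direction set whose quasi-Assouad dimension is exactly $1/2$. The first ingredient is the obvious monotonicity of the operator in the direction set: if $\Theta \subseteq \Theta' \subseteq \mathbb{S}^{1}$, then for each $x$ the supremum defining $\mathcal{N}_{\Theta';\delta}f(x)$ ranges over a larger family of $\delta$-tubes through $x$ than the one defining $\mathcal{N}_{\Theta;\delta}f(x)$, so $\mathcal{N}_{\Theta;\delta}f \leq \mathcal{N}_{\Theta';\delta}f$ pointwise; hence $\|\mathcal{N}_{\Theta;\delta}\|_{L^{p}\to L^{p}} \leq \|\mathcal{N}_{\Theta';\delta}\|_{L^{p}\to L^{p}}$ for every $p$ and every $\delta$, and therefore $p_{\Theta} \leq p_{\Theta'}$.

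Next I would fix, once and for all, a reference set $\Theta_{0} \subset \mathbb{S}^{1}$ with $\qad \Theta_{0} = 1/2$ — for instance the image, under a bi-Lipschitz chart of a subarc of $\mathbb{S}^{1}$, of the middle-half Cantor subset of $[0,1]$ (the self-similar set generated by two similarities of ratio $1/4$ with the strong separation condition), which is Ahlfors $\tfrac12$-regular, so that its quasi-Assouad, Assouad, box and Hausdorff dimensions all equal $1/2$. Given $\Theta$ with $s = \qad \Theta \in [0,1/2)$, put $\Theta' := \Theta \cup \Theta_{0} \subseteq \mathbb{S}^{1}$. Since the quasi-Assouad dimension is finitely stable — the covering inequality $N_{r}(B(x,R)\cap(A\cup B)) \leq N_{r}(B(x,R)\cap A) + N_{r}(B(x,R)\cap B)$ forces $\qad(A\cup B) = \max\{\qad A,\qad B\}$, exactly as for the Assouad dimension — we obtain $\qad \Theta' = \max\{s,\tfrac12\} = \tfrac12$.

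Finally, since $\tfrac12 \in [\tfrac12,1]$, Theorem~\ref{theorem: main} applies to $\Theta'$ and gives $p_{\Theta'} = 1 + \tfrac12 = \tfrac32$; combined with the monotonicity above, $p_{\Theta} \leq p_{\Theta'} = \tfrac32$. I do not expect a genuine obstacle here: the entire analytic content lies in Theorem~\ref{theorem: main}, and the corollary is a purely formal consequence of it together with the pointwise monotonicity of $\mathcal{N}_{\Theta;\delta}$ in $\Theta$, the finite stability of $\qad$, and the elementary existence of a subset of $\mathbb{S}^{1}$ of quasi-Assouad dimension exactly $1/2$. (One could equally well take for $\Theta'$ any superset of $\Theta$ in $\mathbb{S}^{1}$ with $\qad \Theta' = 1/2$; such sets are easy to produce, and the choice of $\Theta_{0}$ above is merely convenient.)
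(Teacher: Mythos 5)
Your argument is correct, but it reaches the corollary by a different route than the paper. The paper's (implicit) proof is that the upper-bound half of Theorem \ref{theorem: main} is actually established under the monotone hypothesis $\qad \Theta \leq s$: Proposition \ref{prop: main} (via Claim \ref{c7}, which only needs $\qad\Theta\leq s$ to get $(\delta,\tfrac12,\delta^{-\epsilon})$-regularity) applies directly with $s=\tfrac12$ to any $\Theta$ with $\qad\Theta<\tfrac12$, giving $\|\mathcal{N}_{\Theta;\delta}\|_{L^{3/2}\to L^{3/2}}\lesssim_{\epsilon}\delta^{-\epsilon}$ and hence $p_{\Theta}\leq\tfrac32$; this is exactly how the bound is invoked later for the Bochner--Riesz applications, where $\qad\Theta=\tfrac13$. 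You instead treat Theorem \ref{theorem: main} as a black box: you enlarge $\Theta$ to $\Theta'=\Theta\cup\Theta_{0}$ with $\qad\Theta_{0}=\tfrac12$, use finite stability of $\qad$ (which you need not reprove -- it is recorded in the paper's lemma on basic properties of $\qad$, citing Fraser's book, and your one-line covering sketch glosses over the re-centering of balls and the uniformity in $\gamma$) to get $\qad\Theta'=\tfrac12$, and then use the pointwise monotonicity $\mathcal{N}_{\Theta;\delta}f\leq\mathcal{N}_{\Theta';\delta}f$ to conclude $p_{\Theta}\leq p_{\Theta'}=\tfrac32$. Both arguments are valid; the paper's route is more direct and yields the $L^{3/2}$ estimate for $\Theta$ itself without auxiliary constructions, while yours has the virtue of being purely formal from the statement of Theorem \ref{theorem: main}, at the cost of importing finite stability of $\qad$ and an auxiliary Ahlfors $\tfrac12$-regular direction set.
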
 
In the range $\qad \Theta \in [0,1/2)$, our results are not complete, but at least Corollary \ref{cor: 3/2 bound} cannot be improved to match Theorem \ref{theorem: main}: Proposition \ref{ex3} exhibits a set $\Theta \subset \mathbb{S}^{1}$ with $\qad(\Theta) \leq 1/3$ such that $p_{\Theta} \geq 3/2$. The following upper bound may still be plausible: $p_{\Theta} \leq 1 + \dim_{\mathrm{A}} \Theta$, where $\dim_{\mathrm{A}}$ is the \emph{Assouad dimension}. In particular, if $\Theta$ is $s$-Ahlfors regular for $s \in [0,\tfrac{1}{2}]$, is it true that $p_{\Theta} \leq 1 + s$?

\begin{remark}
    Strictly speaking, the papers \cite{stromberg_annals, katz, katz2, ASV, demeter_3d_nikodym,kim,diplinio--parissis, CF_differentiation,SS,stromberg_thesis,alfonseca,NSW,bateman, parcet--rogers} mentioned above pertain to the stronger \textit{directional maximal operator} $$M_\Theta f(x):=\sup_{\omega\in\Theta}\int_{-1/2}^{1/2} |f(x+t\omega)|\,\dd{t},$$ rather than the Nikod\'ym maximal operator. As explained by Demeter in \cite[\S8]{demeter_3d_nikodym}, averaging over lines yields a more singular operator than averaging over $\delta$-tubes, which has an additional smoothing effect. While the Nikod\'ym maximal operator is controlled by the directional maximal operator, the converse may not be true.
\end{remark}
\subsection{A variant for the Kakeya maximal operator}\label{s:Kakeya} In this section we discuss variants of the previous problem for the \textit{Kakeya maximal operator}, defined by $$\mathcal{K}_{\delta}f(\omega):=\sup_{\omega(T)=\omega}\fint_T|f(y)|\,\dd{y}, \qquad f\in L^1_\mathrm{loc}(\R^d),$$ where the supremum is taken over all $\delta$-tubes with direction $\omega\in\mathbb{S}^{d-1}$. The Kakeya maximal conjecture asserts that for all $d\leq p\leq\infty$, 
$$\|\K_{\delta}f\|_{L^p(\mathbb{S}^{d-1})}\lesssim_\epsilon\delta^{-\epsilon}\|f\|_{L^p(\R^d)}\quad\text{for all $\epsilon>0$}.$$ This conjecture is equivalent to the Nikod\'ym maximal conjecture via point-line duality. Interpolating the Kakeya analogue of \eqref{eqn: L^2 bound} with the trivial $L^1$-bound, we have the following sharp result in $\R^2$.
\begin{theorem}
    [C\'ordoba \cite{Cordoba_Nikodym}]\label{theorem: cordoba kakeya}
    For all $p\leq 2$, we have
    \begin{equation}
        \label{eqn: kakeya l2}
        \|\K_\delta f\|_{L^p(\mathbb{S}^1)}\lessapprox\delta^{1-2/p}\|f\|_{L^2(\R^2)}.\footnote{Here, the notation $A\lessapprox B$ means that $A\leq C(\log(1/\delta))^{C}B$, for some absolute constant $C>0$.}
    \end{equation}
\end{theorem}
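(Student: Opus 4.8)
The plan is to obtain Theorem~\ref{theorem: cordoba kakeya} by interpolating, on $\mathbb{S}^{1}$, between the $L^{2}\to L^{2}$ Kakeya maximal estimate and a trivial $L^{\infty}$ bound. The first ingredient is
\begin{equation}\label{eq:KL2}
\|\mathcal{K}_{\delta}f\|_{L^{2}(\mathbb{S}^{1})}\lessapprox\|f\|_{L^{2}(\R^{2})},
\end{equation}
which is the Kakeya analogue of \eqref{eqn: L^2 bound}: it is essentially the dual form of Theorem~\ref{theorem: cordoba} under planar point--line duality, and may also be proved directly by C\'ordoba's argument. I would sketch the latter as follows. Fix a maximal $\delta$-separated set $\{\omega_{j}\}_{j=1}^{N}\subset\mathbb{S}^{1}$ ($N\approx\delta^{-1}$), choose for each $j$ a $\delta$-tube $T_{j}$ in direction $\omega_{j}$ with $\fint_{T_{j}}|f|\geq\tfrac{1}{2}\mathcal{K}_{\delta}f(\omega_{j})$, and set $a_{j}:=\fint_{T_{j}}|f|$. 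Since $\mathcal{K}_{\delta}f$ is comparable to $\mathcal{K}_{\delta}f(\omega_{j})$ on the $\delta$-arc around $\omega_{j}$, we have $\|\mathcal{K}_{\delta}f\|_{L^{2}(\mathbb{S}^{1})}^{2}\lesssim\delta\sum_{j}a_{j}^{2}$. Using $|T_{j}|\approx\delta$ and Cauchy--Schwarz,
\begin{displaymath}
\delta\sum_{j}a_{j}^{2}\lesssim\sum_{j}a_{j}\int\mathbf{1}_{T_{j}}|f|=\int|f|\Big(\sum_{j}a_{j}\mathbf{1}_{T_{j}}\Big)\leq\|f\|_{L^{2}(\R^{2})}\Big\|\sum_{j}a_{j}\mathbf{1}_{T_{j}}\Big\|_{L^{2}(\R^{2})},
\end{displaymath}
and, by the elementary overlap bound $|T_{j}\cap T_{k}|\lesssim\delta^{2}/\max(|\omega_{j}-\omega_{k}|,\delta)\approx\delta/\max(|j-k|,1)$ for two $\delta$-tubes whose directions are $\approx|j-k|\delta$ apart,
\begin{displaymath}
\Big\|\sum_{j}a_{j}\mathbf{1}_{T_{j}}\Big\|_{L^{2}(\R^{2})}^{2}=\sum_{j,k}a_{j}a_{k}|T_{j}\cap T_{k}|\lesssim\delta\sum_{j,k}\frac{a_{j}a_{k}}{\max(|j-k|,1)}\lesssim\delta\log(1/\delta)\sum_{j}a_{j}^{2},
\end{displaymath}
where the last step uses that the matrix $(\max(|j-k|,1)^{-1})_{j,k}$ has $\ell^{2}\to\ell^{2}$ norm $\lesssim\log N$ (Schur test). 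Feeding this back into the previous display and rearranging yields \eqref{eq:KL2}.

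The second ingredient is immediate: for any $\delta$-tube $T$, Cauchy--Schwarz gives $\fint_{T}|f|\leq|T|^{-1/2}\|f\|_{L^{2}(\R^{2})}\approx\delta^{-1/2}\|f\|_{L^{2}(\R^{2})}$, so $\|\mathcal{K}_{\delta}f\|_{L^{\infty}(\mathbb{S}^{1})}\lesssim\delta^{-1/2}\|f\|_{L^{2}(\R^{2})}$; since $\mathbb{S}^{1}$ has finite measure and $\delta\leq 1$,
\begin{equation}\label{eq:KL1}
\|\mathcal{K}_{\delta}f\|_{L^{1}(\mathbb{S}^{1})}\lesssim\delta^{-1}\|f\|_{L^{2}(\R^{2})}.
\end{equation}
To interpolate \eqref{eq:KL2} and \eqref{eq:KL1}, fix $p\in[1,2]$ and put $\theta:=2-\tfrac{2}{p}\in[0,1]$, so that $\tfrac{1}{p}=(1-\theta)+\tfrac{\theta}{2}$. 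By log-convexity of $L^{p}$-norms on the finite measure space $\mathbb{S}^{1}$ (i.e.\ H\"older), applied to $g=\mathcal{K}_{\delta}f$,
\begin{displaymath}
\|\mathcal{K}_{\delta}f\|_{L^{p}(\mathbb{S}^{1})}\leq\|\mathcal{K}_{\delta}f\|_{L^{1}(\mathbb{S}^{1})}^{1-\theta}\,\|\mathcal{K}_{\delta}f\|_{L^{2}(\mathbb{S}^{1})}^{\theta}\lessapprox\big(\delta^{-1}\|f\|_{L^{2}(\R^{2})}\big)^{1-\theta}\|f\|_{L^{2}(\R^{2})}^{\theta}=\delta^{-(1-\theta)}\|f\|_{L^{2}(\R^{2})},
\end{displaymath}
and $-(1-\theta)=1-\tfrac{2}{p}$, which is \eqref{eqn: kakeya l2}. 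The range $p<1$ follows a fortiori from $p=1$, since $\mathbb{S}^{1}$ has finite measure and $\delta^{-1}\leq\delta^{1-2/p}$ there.

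The only substantive step is \eqref{eq:KL2}; if one is willing to cite C\'ordoba or to invoke point--line duality with Theorem~\ref{theorem: cordoba}, there is no obstacle at all and the argument reduces to the two elementary displays above. If instead \eqref{eq:KL2} is to be proved from scratch, the delicate point is extracting only a logarithmic loss: a crude $TT^{*}$ estimate that ignores the refined intersection bound $|T_{j}\cap T_{k}|\lesssim\delta^{2}/\max(|\omega_{j}-\omega_{k}|,\delta)$ only gives the much larger factor $\delta^{-1}$ (attained by a ``bush'' of tubes through a common point), and it is precisely the identification of the bilinear form above with the $\ell^{2}$-bound for the discrete Hilbert-type kernel $1/\max(|j-k|,1)$ that recovers the sharp constant $(\log(1/\delta))^{1/2}$. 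This logarithm cannot be removed, as the $\delta$-neighbourhood of a Besicovitch set (of measure $\approx(\log(1/\delta))^{-1}$) shows.
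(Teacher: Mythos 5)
Your two ingredients are fine as far as they go: the sketch of the $L^{2}$ bound $\|\mathcal{K}_{\delta}f\|_{L^{2}(\mathbb{S}^{1})}\lessapprox\|f\|_{L^{2}(\mathbb{R}^{2})}$ is exactly C\'ordoba's overlap/Schur argument, which the paper simply cites. The problem is with what you interpolate it against, and with which statement you end up proving. You prove the inequality only in the literal printed form, with $\|f\|_{L^{2}(\mathbb{R}^{2})}$ on the right; but in that form the claim is essentially vacuous, because $\mathbb{S}^{1}$ has finite measure and $\delta^{1-2/p}\geq 1$ for $p\leq 2$, so H\"older applied directly to the $p=2$ case already gives the stronger bound $\|\mathcal{K}_{\delta}f\|_{L^{p}(\mathbb{S}^{1})}\lesssim\|\mathcal{K}_{\delta}f\|_{L^{2}(\mathbb{S}^{1})}\lessapprox\|f\|_{L^{2}}\leq\delta^{1-2/p}\|f\|_{L^{2}}$ --- your detour through the (deliberately lossy) endpoint $\|\mathcal{K}_{\delta}f\|_{L^{1}(\mathbb{S}^{1})}\lesssim\delta^{-1}\|f\|_{L^{2}}$ buys nothing. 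That the ``sharp'' factor $\delta^{1-2/p}$ can be inserted for free should have been a red flag: the statement the paper intends (the one tested for sharpness against $\|\mathbf{1}_{B(0,\delta)}\|_{L^{p}(\mathbb{R}^{2})}$, the one quoted later as a bound on $\|\mathcal{K}_{\delta}\|_{L^{p}(\mathbb{R}^{2})\to L^{p}(\mathrm{d}\mu)}$, and the $s=1$ case of Theorem \ref{theorem: fractal kakeya}) has $\|f\|_{L^{p}(\mathbb{R}^{2})}$ on the right, i.e.\ $\|\mathcal{K}_{\delta}\|_{L^{p}(\mathbb{R}^{2})\to L^{p}(\mathbb{S}^{1})}\lessapprox\delta^{1-2/p}$; the ``$L^{2}$'' in \eqref{eqn: kakeya l2} is a typo.

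Your route cannot reach that statement: interpolating two estimates that both have $\|f\|_{L^{2}}$ on the right, varying only the exponent on the $\mathbb{S}^{1}$ side, can never produce $\|f\|_{L^{p}}$ on the right. The paper's one-line proof interpolates in the $f$-exponent instead: the ``trivial $L^{1}$-bound'' is $\mathcal{K}_{\delta}\colon L^{1}(\mathbb{R}^{2})\to L^{\infty}(\mathbb{S}^{1})$ (hence $L^{1}(\mathbb{S}^{1})$) with norm $\lesssim\delta^{-1}$, coming from $\fint_{T}|f|\leq|T|^{-1}\|f\|_{1}\sim\delta^{-1}\|f\|_{1}$, and this is combined with the Kakeya analogue of \eqref{eqn: L^2 bound} by operator interpolation --- e.g.\ linearize the supremum by a measurable choice $\omega\mapsto T(\omega)$ and apply Riesz--Thorin to $f\mapsto\fint_{T(\omega)}|f|$, or invoke Marcinkiewicz for the sublinear operator --- giving the $L^{p}\to L^{p}$ norm $\lessapprox\delta^{-(2/p-1)}=\delta^{1-2/p}$ for $1\leq p\leq 2$. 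So keep your $L^{2}$ sketch, but replace your second endpoint by the genuine $L^{1}(\mathbb{R}^{2})$ bound and interpolate the operator, not the $L^{q}(\mathbb{S}^{1})$-norms of the fixed function $\mathcal{K}_{\delta}f$.
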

Similar to the Nikod\'ym case, we can try to restrict the Kakeya maximal operator to fractal direction sets. To make this precise, let $\mu$ be a Borel measure on $\mathbb{S}^1$ satisfying the \textit{Frostman condition} $$\mu(B(\omega,r))\leq Cr^s\quad\text{for all}\quad \omega\in\mathbb{S}^1,\,r>0,$$
for some $C>0$ and $s\in[0,1]$. Such a measure is called an \textit{$s$-Frostman measure}. We would like to obtain sharp estimate for $\|\K_\delta\|_{L^p(\R^2)\to L^p(\dd{\mu})}$ for a general $s$-Frostman measure $\mu$. First, we observe that for all $p$, \eqref{eqn: kakeya l2} is the best possible estimate for every non-zero measure $\mu$ on $\mathbb{S}^1$. Indeed, we have $\K_\delta\mathbf{1}_{B(0,\delta)}\gtrsim\delta$, so $$\frac{\|\K_\delta \mathbf{1}_{B(0,\delta)}\|_{L^p(\dd{\mu})}}{\|\mathbf{1}_{B(0,\delta)}\|_{L^p(\R^2)}}\gtrsim\delta^{1-2/p}\mu(\mathbb{S}^1)^{1/p}.$$ 
On the other hand, if $\mu$ is \textit{$s$-Ahlfors regular}, that is, there exist $C,c>0$ such that 
$$cr^s\leq\mu(B(\omega,r))\leq Cr^s\quad\text{for all}\quad \omega\in\mathbb{S}^1,\,r>0,$$ then \eqref{eqn: kakeya l2} fails for $p>1+s$. Indeed, \cite[Proposition 3.1]{mitsis} shows that 
$$\|\K_\delta\|_{L^p(\R^2)\to L^p(\dd{\mu})}\gtrsim \delta^{-\frac{1-s}{p}}\quad\text{for all $p\geq 1$.}$$ In particular, for $p>1+s$ we get $\|\K_\delta\|_{L^p(\R^2)\to L^p(\dd{\mu})}\gtrsim\delta^{1-2/p-\eta}$, where $\eta=1-\frac{1+s}{p}>0$. 
\begin{restatable}{theorem}{kakeya}\label{theorem: fractal kakeya}
      Let $\mu$ be an $s$-Frostman measure on $\mathbb{S}^1$ for $s\in[0,1]$. For all $p\leq 1+s$, it holds
    \begin{equation*}
        \|\K_\delta f\|_{L^p(\dd{\mu})}\lessapprox\delta^{1-2/p}\|f\|_{L^p(\R^2)}.
    \end{equation*}
\end{restatable}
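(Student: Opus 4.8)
The plan is to prove the dual "bush-type" estimate by a standard linearization, and then exploit the Frostman condition on $\mu$ to control the overlap of tubes pointing in nearby directions. First, by the usual reduction, it suffices to bound the linearized operator: choose a measurable assignment $\omega \mapsto T_\omega$ of a $\delta$-tube in direction $\omega$ to each $\omega \in \mathbb{S}^1$, discretize $\mathbb{S}^1$ into a $\delta$-separated set $\{\omega_j\}$ (losing only a $\log(1/\delta)$ factor and using that tubes in directions within $\delta$ of each other are comparable, i.e. replacing $T_\omega$ by $C T_{\omega_j}$), and show
\begin{equation*}
  \Big\| \sum_j a_j \mathbf{1}_{T_{\omega_j}} \Big\|_{L^{p'}(\R^2)} \lessapprox \delta^{2/p - 1} \Big( \sum_j a_j^{p'} \mu(B(\omega_j,\delta)) \Big)^{1/p'},
\end{equation*}
for nonnegative coefficients $a_j$, where $p' = p/(p-1)$; by duality with the measure $\mu$ (and $\mu(B(\omega_j,\delta)) \lesssim \delta^s$ providing the weights on the direction side), this yields the claimed $L^p(\R^2) \to L^p(\dd\mu)$ bound. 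For $p \le 2$ we have $p' \ge 2$, so the natural route is to interpolate between an $L^2$ estimate and an $L^\infty$-type estimate, or — more directly for these values — to run a duality/$TT^*$ argument at the exponent $p'$.

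The heart of the matter is the $p=2$ (hence $p'=2$) case, which I would prove by the classical Córdoba $L^2$ argument adapted to the Frostman weight. Expand $\|\sum_j a_j \mathbf{1}_{T_{\omega_j}}\|_{L^2}^2 = \sum_{j,k} a_j a_k |T_{\omega_j} \cap T_{\omega_k}|$ and use the elementary geometric bound $|T_{\omega_j} \cap T_{\omega_k}| \lesssim \delta^2 / (\delta + |\omega_j - \omega_k|)$ for $\delta$-separated directions. The key point is then that, for fixed $j$,
\begin{equation*}
  \sum_k \frac{\delta^2}{\delta + |\omega_j - \omega_k|} \, a_k \lesssim \Big( \sum_k a_k^2 \, \tfrac{\delta^2}{\delta + |\omega_j-\omega_k|} \Big)^{1/2} \Big( \sum_k \tfrac{\delta^2}{\delta + |\omega_j-\omega_k|} \Big)^{1/2} \lessapprox \delta \Big( \sum_k a_k^2 \, \tfrac{\delta^2}{\delta+|\omega_j-\omega_k|} \Big)^{1/2},
\end{equation*}
since the second factor sums (dyadically in $|\omega_j - \omega_k| \sim 2^\ell \delta$) to $\lessapprox \delta$. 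Summing the resulting Schur-type bound over $j$, one arrives at $\delta \sum_{j,k} a_j a_k \tfrac{\delta^2}{\delta + |\omega_j - \omega_k|}$, and then the Frostman hypothesis enters: $\sum_{k : |\omega_j - \omega_k| \sim 2^\ell \delta} \mu(B(\omega_k,\delta)) \lesssim \mu(B(\omega_j, C 2^\ell \delta)) \lesssim (2^\ell \delta)^s$, which after the dyadic sum over $\ell$ gives the factor $\mu(B(\omega_j,\delta))$-weighted bound one wants. This is the step I expect to be the main obstacle: one must carefully interleave the Schur/Cauchy--Schwarz balancing with the Frostman summation so that the final right-hand side really is $\big(\sum_j a_j^2 \mu(B(\omega_j,\delta))\big)^{1/2}$ and not something weaker, and the bookkeeping of the $\log(1/\delta)$ losses (hence the $\lessapprox$ notation) has to be tracked.

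Finally, to reach all $p \le 1+s$ rather than just $p=2$, I would interpolate the $p=2$ estimate just obtained with the trivial bound at the endpoint: at $p=1$ one has the pointwise estimate $\K_\delta f \le \|f\|_{L^1}/|T| \cdot (\text{something})$, more precisely $\|\K_\delta f\|_{L^1(\dd\mu)} \le \mu(\mathbb{S}^1) \|\K_\delta f\|_\infty$ is too lossy, so instead I would interpolate between the $L^2(\R^2) \to L^2(\dd\mu)$ bound $\delta^{1-2/2} = \delta^0$ is wrong — rather, interpolate the bound of the form $\|\K_\delta f\|_{L^2(\dd\mu)} \lessapprox \|f\|_{L^2(\R^2)}$ (the $p=2$ case of the theorem) against the $s=1$, $p=2$ Córdoba bound is not available either. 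The cleanest path: prove the restricted strong-type bound at $p=1+s$ directly from the $L^2$ argument by choosing the Cauchy--Schwarz split asymmetrically, or interpolate (via the real method, for the sublinear operator $\K_\delta$ acting into $L^p(\dd\mu)$) between the $L^2(\R^2)\to L^2(\dd\mu)$ estimate and the elementary $L^1(\R^2) \to L^{1,\infty}(\dd\mu)$ estimate $\|\K_\delta f\|_{L^{1,\infty}(\dd\mu)} \lesssim \delta^{-1}\|f\|_{L^1}$, which interpolates to exactly $\delta^{1-2/p}$ at $p = 1+s$ once the $L^2$ bound is the weighted one above; I would verify the arithmetic of the interpolation exponents as the last routine step.
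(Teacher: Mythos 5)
Your overall strategy --- prove a weighted $L^{2}$ estimate by C\'ordoba's argument and then interpolate down to $p=1+s$ --- cannot yield the theorem, and the key estimate you isolate is false as stated. First, the duality is set up with the wrong weights: dualizing $\|\K_\delta f\|_{L^p(\dd{\mu})}\le A\|f\|_{L^p}$ against the weights $\mu(B(\omega_j,\delta))$ leads to the requirement $\|\sum_j a_j\mathbf{1}_{T_j}\|_{p'}\lesssim A\delta\big(\sum_j a_j^{p'}\mu(B(\omega_j,\delta))^{1-p'}\big)^{1/p'}$, with the \emph{negative} power $1-p'$ of the weight (directions of small $\mu$-mass make the right-hand side large, not small). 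Your version, with $\mu(B(\omega_j,\delta))$ to the first power, already fails for a single tube: if $\mu(B(\omega_{j_0},\delta))\ll\delta$, which a Frostman measure permits at many scales, the left side is $\sim\delta^{1/2}$ while your right side is $\mu(B(\omega_{j_0},\delta))^{1/2}\ll\delta^{1/2}$. Relatedly, your Schur computation only uses the $\delta$-separation of the directions and reproduces C\'ordoba's unweighted bound $\lessapprox\delta\sum_j a_j^2$; the step where the ``Frostman summation'' is supposed to convert this into your $\mu$-weighted right-hand side is precisely the step that cannot be carried out. The paper sidesteps this by discarding directions with $\mu(B_\delta(\omega))\le\delta^{2/p}$, pigeonholing so that $\mu(B_\delta(\omega))\sim\lambda$ on the remaining $\delta$-net $\Xi$, and observing that $\Xi$ is then a Frostman $(\delta,s,(\lambda|\Xi|)^{-1})$-set whose constant later cancels the $\lambda^{1/p}$ coming from the discretization.

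Second, and more structurally, $p=2$ is the wrong exponent for the main estimate. For $s<1$ the bound $\|\K_\delta\|_{L^2(\R^2)\to L^2(\dd{\mu})}\lessapprox 1$ is false (the bush example, or Mitsis's lower bound $\gtrsim\delta^{-(1-s)/2}$ quoted in the paper), and interpolating the true $L^2$ bound $\delta^{-(1-s)/2}$ with the elementary $L^1\to L^{1,\infty}(\dd{\mu})$ bound $\delta^{-1}$ gives only $\delta^{-(1-s)}$ at $p=1+s$, far from the claimed $\delta^{1-2/p}=\delta^{-(1-s)/(1+s)}$; the target estimate simply does not lie on that interpolation segment. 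The sharp estimate must be proved directly at the endpoint $p=1+s$, i.e.\ at $p'=(1+s)/s$ rather than $p'=2$: the paper does this in Lemma \ref{lemma: rogers} by writing $\|\sum_T\chi_T\|_{p'}^{p'}=\sum_T\int_T(\sum_{T'}\chi_{T'})^{1/s}$, applying Minkowski's inequality with exponent $1/s\ge 1$, and using $|T\cap T'|\lesssim\delta^2/\angle(T,T')$ together with the Frostman property of the direction set to get $\sum_{T'}\angle(T,T')^{-s}\lessapprox C|\mathcal{T}|$ --- this is where $s$ genuinely enters, whereas at $p'=2$ the Frostman condition plays no role. Only after that endpoint bound does one interpolate \emph{downward} with the trivial $L^1$ bound to cover $p<1+s$. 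Your closing paragraph, which you yourself flag as inconclusive, does not supply a correct substitute for this endpoint argument.
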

The discussion above demonstrates that the power on the right-hand side, as well as the range of $p$ is sharp. Theorem \ref{theorem: fractal kakeya} is much easier than Theorem \ref{theorem: main}, and follows by a short computation from a maximal function bound due to J. Zahl \cite[Theorem 1.2]{zahl_fractal_kakeya}. In fact, Zahl's result is valid in all dimensions (and his result is highly non-trivial in this generality), but it turns out that the planar case is rather simple. We thank K. Rogers for a permission to include his short argument, see Lemma \ref{lemma: rogers}.

\begin{remark} Mitsis \cite{mitsis} proved that if $\mu$ is an $s$-Frostman measure on $\mathbb{S}^{1}$, then
  \begin{displaymath}
                 \|\K_\delta f\|_{L^p(\dd{\mu})}\lesssim \delta^{-\frac{1-s}{p}}\|f\|_{L^p(\R^2)},\quad\text{for all}\quad p\geq 2.
     \end{displaymath}
 Up to a multiplicative error $C(\log(1/\delta))^{C}$, this result follows by interpolating the bound in Theorem \ref{theorem: fractal kakeya} with the $L^{\infty}$-bound $\|\mathcal{K}_{\delta}f\|_{L^{\infty}(\mu)} \leq \|f\|_{L^{\infty}(\R^{2})}$. \end{remark}

Finally, we remark that although Theorem \ref{theorem: cordoba} is equivalent to \ref{theorem: cordoba kakeya}, there is no apparent implication between Theorem \ref{theorem: main} and Theorem \ref{theorem: fractal kakeya}. This is because restricting the directions $\omega$ for the Kakeya maximal operator would translate to restricting the \emph{positions} $x$ for the Nikod\'ym maximal operator, and vice-versa.
\subsection{An incidence theorem}
Theorem \ref{theorem: main} will be obtained as a consequence of an incidence bound, which may be of independent interest. In order to state the theorem, we need the following terminology from incidence geometry.

\begin{notation}[Dyadic cubes and tubes] In the following, for $\delta \in 2^{-\mathbb{N}}$ and $A \subset \R^{d}$, the notation $\mathcal{D}_{\delta}(A)$ refers to the family of dyadic $\delta$-cubes intersecting $A$. We abbreviate $\mathcal{D}_{\delta} := \mathcal{D}_{\delta}([0,1)^{d})$. 

The notation $\mathcal{T}^{\delta}$ refers to \emph{dyadic $\delta$-tubes in $\R^{2}$} of side $\delta \in 2^{-\mathbb{N}}$. Thus, a generic element $T \in \mathcal{T}^{\delta}$ has the form $\cup \mathbf{D}(p)$, 
where $p \in \mathcal{D}_{\delta}([-1,1) \times \R)$ is a dyadic $\delta$-square contained in $[-1,1) \times \R$, and $\mathbf{D}$ is the following ``point-line duality map":
\begin{displaymath} \mathbf{D}(a,b) := \{(x,y) \in \R^{2} : y = ax + b\}. \end{displaymath}
For $T = \cup \mathbf{D}([a,a + \delta) \times [b,b + \delta))$, the \emph{slope of $T$} is the number $\sigma(T) := a \in \delta \Z \cap [-1,1)$. For a more comprehensive introduction to dyadic $\delta$-tubes, see \cite[Section 2.3]{MR4718435}. \end{notation}

\begin{notation}[Covering number] Let $E$ be a bounded set in $\mathbb{R}^d$. For $r>0$, we denote the $r$-covering number of $E$ by $|E|_{r}$. Thus, $|E|_{r}$ is the minimal number of closed $r$-balls needed to cover $E$. \end{notation}

Recall the definition of \emph{Katz--Tao $(\delta,t,C)$-sets}:

\begin{definition}[Katz--Tao $(\delta,t,C)$-set] Fix $C,t \geq 0$. A set $P \subset \R^{d}$ is called a \emph{Katz--Tao $(\delta,t,C)$-set} if 
\begin{displaymath} |P \cap B(x,r)|_{\delta} \leq C(r/\delta)^{t}, \qquad x \in \R^{d}, \, \delta \leq r \leq 1. \end{displaymath}
A family of dyadic cubes $\mathcal{P} \subset \mathcal{D}_{\delta}(\R^{d})$ is called a Katz--Tao $(\delta,t,C)$-set if $P := \cup \mathcal{P}$ is a Katz--Tao $(\delta,t,C)$ set. A family of dyadic tubes $\mathcal{T} \subset \mathcal{T}^{\delta}$ is called a Katz--Tao $(\delta,t,C)$-set if $\mathcal{T} = \{\cup \mathbf{D}(p) : p \in \mathcal{P}\}$, where $\mathcal{P} \subset \mathcal{D}_{\delta}([-1,1] \times \R)$ is a Katz--Tao $(\delta,t,C)$-set. \end{definition}

We will use the following notion of \emph{$(\delta,s,C)$-regularity}:

\begin{definition}[$(\delta,s,C)$-regular set] Let $C,s > 0$ and $\delta \in 2^{-\mathbb{N}}$. A set $\Theta \subset \delta \Z$ is called \emph{$(\delta,s,C)$-regular} if 
\begin{displaymath} |\Theta \cap I|_{r} \leq C(R/r)^{s}, \qquad \delta \leq r \leq R \leq 1, \, I \in \mathcal{D}_{R}(\R). \end{displaymath}
\end{definition}
\begin{theorem}\label{theorem: incidence bound} For every $C_{\mathrm{KT}},C_{\mathrm{reg}} \geq 1$, $\epsilon > 0$ and $s \in [\tfrac{1}{2},1]$, the following holds for $\delta \in 2^{-\mathbb{N}}$ small enough. Let $\mathcal{T} \subset \mathcal{T}^{\delta}$ be a Katz--Tao $(\delta,1,C_{\mathrm{KT}})$-set such that $\sigma(\mathcal{T})$ is $(\delta,s,C_{\mathrm{reg}})$-regular. Then,
\begin{equation}\label{form31} |P_{r}(\mathcal{T})|_{\delta} \lesssim_\epsilon \delta^{-\epsilon} (C_{\mathrm{KT}}C_{\mathrm{reg}})^{1/s}\frac{\delta^{-1}|\mathcal{T}|}{r^{(s + 1)/s}}, \qquad r \geq 1. \end{equation}
Here $P_{r}(\mathcal{T}) := \{p \in [0,1]^{2} : |\{T \in \mathcal{T} : p \in T\}| \geq r\}$.
\end{theorem}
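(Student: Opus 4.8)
The plan is to prove the incidence bound by a two-parameter induction on scales, in the spirit of the standard multiscale approach to Furstenberg-type estimates. Write $N := |P_r(\mathcal{T})|_\delta$ and, after pigeonholing, assume every $\delta$-cube $p$ counted in $P_r(\mathcal{T})$ meets $\sim r$ tubes of $\mathcal{T}$ (dyadic pigeonholing in $r$ costs only a $\log(1/\delta)$ factor, which is absorbed in $\delta^{-\epsilon}$). Fix an intermediate scale $\rho = \delta^\tau$ with $\tau \in (0,1)$ to be optimised, and coarsen everything to scale $\rho$: let $\mathcal{T}_\rho$ be the $\rho$-tubes containing tubes of $\mathcal{T}$, and for each $Q \in \mathcal{D}_\rho$ let $\mathcal{T}|_Q$ be the rescaled copy (by $x \mapsto (x - \mathrm{center}(Q))/\rho$) of the family of $\delta$-tubes of $\mathcal{T}$ meeting $Q$. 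The Katz--Tao $(\delta,1,C_{\mathrm{KT}})$ hypothesis is scale-invariant: $\mathcal{T}_\rho$ is a Katz--Tao $(\rho,1,C_{\mathrm{KT}})$-set (at scale $\delta/\rho$ inside each $Q$), and the rescaled $\mathcal{T}|_Q$ is a Katz--Tao $(\delta/\rho,1,C_{\mathrm{KT}})$-set. Crucially, the regularity hypothesis on $\sigma(\mathcal{T})$ is exactly designed to survive this decomposition: the set of slopes appearing in $\mathcal{T}|_Q$ is a rescaling of $\sigma(\mathcal{T}) \cap I$ for an interval $I$ of length $\rho$, and $(\delta,s,C_{\mathrm{reg}})$-regularity passes to both $\sigma(\mathcal{T}_\rho)$ (as a $(\rho, s, C_{\mathrm{reg}})$-regular set at the coarse scales) and to each rescaled $\sigma(\mathcal{T}|_Q)$ (as a $(\delta/\rho, s, C_{\mathrm{reg}})$-regular set). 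This self-similarity is what makes the induction close.

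The heart of the argument is the single-scale inequality relating $N$ at scale $\delta$ to the analogous quantities at scales $\rho$ and $\delta/\rho$. A cube $p$ with $r$-rich multiplicity at scale $\delta$ lies in some $\rho$-cube $Q$; let $r_Q$ be the number of $\rho$-tubes of $\mathcal{T}_\rho$ meeting $Q$, and note that inside $Q$ the cube $p$ is $(r/r_Q)$-rich, roughly, for the rescaled family $\mathcal{T}|_Q$ — here one uses that a single $\rho$-tube contributes at most $\sim \rho/\delta$ many $\delta$-tubes through a fixed point, but more efficiently that after coarsening, the $\delta$-tubes through $p$ distribute over the $r_Q$ relevant $\rho$-tubes. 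Summing the contributions $|P_{r/r_Q}(\mathcal{T}|_Q)|_{\delta/\rho}$ over $Q$ and using the coarse-scale bound to control $\sum_Q r_Q$ against $|P_{r_{\text{coarse}}}(\mathcal{T}_\rho)|_\rho$, one gets a recursive inequality of the form $F(\delta) \lesssim \delta^{-\epsilon_0} (C_{\mathrm{KT}}C_{\mathrm{reg}})^{c} F(\rho) F(\delta/\rho)$ for the normalised quantity $F(\delta) := |P_r(\mathcal{T})|_\delta \cdot r^{(s+1)/s} / (\delta^{-1}|\mathcal{T}|)$, provided one arranges the book-keeping of $|\mathcal{T}|$ and $r$ correctly across scales. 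Iterating this down from scale $1$ to scale $\delta$ through $\sim \log(1/\delta)/\log(1/\rho)$ steps, and choosing $\rho$ a small fixed power of $\delta$ (so the number of steps is $O(1/\tau)$), the accumulated constant is $(C_{\mathrm{KT}} C_{\mathrm{reg}})^{O(1/\tau)} \delta^{-O(\epsilon_0/\tau)}$, which we want to be $\lesssim_\epsilon \delta^{-\epsilon}(C_{\mathrm{KT}}C_{\mathrm{reg}})^{1/s}$; the exponent $1/s$ should emerge from the sharp single-scale base case.

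The base case — the single-scale estimate when $\rho$ is comparable to $\delta$, i.e. $|P_r(\mathcal{T})|_\delta \lesssim (C_{\mathrm{KT}}C_{\mathrm{reg}}) \delta^{-1}|\mathcal{T}|/r^2$ together with an improvement using regularity to upgrade the exponent $2$ to $(s+1)/s$ — is where the hypotheses $s \geq 1/2$ and the regularity of the slope set must be used honestly. The trivial double-counting $\sum_{p \in P_r} r \leq \sum_{T} |T|_\delta \lesssim \delta^{-1}|\mathcal{T}|$ gives exponent $1$; a second moment / Cauchy--Schwarz counting of pairs of tubes through common cubes, using that two $\delta$-tubes with $\delta$-separated slopes meet in a set of diameter $\lesssim \delta$, gives $\sum_p \binom{r}{2} \lesssim \sum_{T \neq T'} |T \cap T'|_\delta \lesssim \delta^{-1}|\mathcal{T}| + (\text{slope-coincidence term})$, and the $(\delta,s,C_{\mathrm{reg}})$-regularity of $\sigma(\mathcal{T})$ controls how many tube pairs share nearly-equal slopes, yielding the exponent $(s+1)/s$ after balancing — this is precisely the computation that forces $s \geq 1/2$ (below which the "clustering" of slopes wins and the exponent is governed by a different, genuinely worse, regime, consistent with Proposition \ref{ex3}). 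I expect \textbf{this balancing in the base case, and verifying that the induction does not lose the sharp constant $(C_{\mathrm{KT}}C_{\mathrm{reg}})^{1/s}$ and does not lose more than $\delta^{-\epsilon}$ across the $O(1/\tau)$ iterations, to be the main obstacle}; the scale-invariance of the hypotheses is the structural miracle that makes the scheme viable, but keeping all five quantities ($\delta$, $r$, $|\mathcal{T}|$, $C_{\mathrm{KT}}$, $C_{\mathrm{reg}}$) tracked correctly through the recursion, and handling the pigeonholing branches (uneven $r_Q$, uneven tube counts per coarse tube) without polynomial-in-$\delta^{-1}$ losses, will require care.
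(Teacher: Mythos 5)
There is a genuine gap, and it sits exactly where you flagged the ``main obstacle'': your base case cannot produce the exponent $(s+1)/s$. A Cauchy--Schwarz/pair-counting argument bounds $r^{2}|P_{r}(\mathcal{T})|_{\delta}$ by $\sum_{T,T'}|T\cap T'|_{\delta}$, and for a Katz--Tao $(\delta,1,C_{\mathrm{KT}})$-set the dyadic-angle summation gives $\lesssim C_{\mathrm{KT}}\delta^{-1}|\mathcal{T}|\log(1/\delta)$; the slope-regularity cannot improve this, because the per-angle tube count is already saturated by the Katz--Tao condition (a ``bush'' of one tube per slope through a single $\delta$-cube, with $(\delta,s,1)$-regular slopes, is a legitimate configuration and shows the second moment is genuinely of this size). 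So pair counting yields only $|P_{r}|_{\delta}\lesssim C_{\mathrm{KT}}\delta^{-1}|\mathcal{T}|/r^{2}$, and since $(s+1)/s>2$ for $s<1$, the claimed bound is strictly stronger for large $r$; no ``balancing'' of the slope-coincidence term can close this, and there is no mechanism in your write-up that produces a third-moment-type gain. Relatedly, the multiplicative recursion $F(\delta)\lesssim F(\rho)F(\delta/\rho)$ does not follow from the coarsening you describe: multiplicities do not factor as $r\approx r_{\mathrm{coarse}}\cdot r_{\mathrm{fine}}$ without heavy pigeonholing and uniformization, the rescaled family inside a $\rho$-square is \emph{not} a Katz--Tao $(\delta/\rho,1,C_{\mathrm{KT}})$-set (after grouping into $\delta\times\rho$ tubelets its Katz--Tao constant is of the form $C_{\mathrm{KT}}C_{\mathrm{reg}}\rho^{-s}/m$, where $m$ is the local multiplicity --- compare Claim \ref{c6}), and even if the recursion held, iterating it cannot upgrade an exponent-$2$ base case to exponent $(s+1)/s$.

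For comparison, the paper does not run an induction on scales at all: it performs a two-ends reduction (finding a scale $\Delta$ and, for each tube, a Frostman-regular subset of its rich points inside one $\Delta$-square), decomposes the tubes inside each $\Delta$-square into $\delta\times\Delta$ tubelets with multiplicity $m_{Q}$, and then invokes the recent $2$-ends Furstenberg incidence theorem of Wang--Wu (Theorem \ref{t:WW}) at scale $\delta/\Delta$; the factor $l^{3/2}$ in that theorem is the source of the better-than-$L^{2}$ gain. The exponent $(s+1)/s$, the constant $(C_{\mathrm{KT}}C_{\mathrm{reg}})^{1/s}$, and the restriction $s\geq\tfrac12$ all emerge only at the very end, from the inequality $\mathfrak{r}\geq C_{\mathrm{KT}}C_{\mathrm{reg}}\Delta^{-s}$ (the complementary range being handled by a trivial bound) together with $2-1/s\geq 0$. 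Some deep external input of this Furstenberg type appears unavoidable; an elementary self-contained multiscale argument of the kind you propose would, in effect, reprove sharp Furstenberg-set estimates by induction on scales, which is not known to be possible.
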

\begin{remark} Theorem \ref{theorem: incidence bound} is also applicable for $s \in [0,\tfrac{1}{2})$. In fact, if $\sigma(\mathcal{T})$ is $(\delta,s,C_{\mathrm{reg}})$-regular with $s < \tfrac{1}{2}$, then $\sigma(\mathcal{T})$ is also $(\delta,\tfrac{1}{2},C_{\mathrm{reg}})$-regular, and Theorem \ref{theorem: incidence bound} yields
\begin{displaymath} |P_{r}(\mathcal{T})|_{\delta} \lesssim_\epsilon \delta^{-\epsilon} (C_{\mathrm{KT}}C_{\mathrm{reg}})^{2}\frac{\delta^{-1}|\mathcal{T}|}{r^{3}}, \qquad r \geq 1, \, \epsilon > 0. \end{displaymath}  \end{remark} 
\begin{remark} Theorem \ref{theorem: incidence bound} is false if $\sigma(\mathcal{T})$ is merely assumed to be a Katz--Tao $(\delta,s)$-set. Indeed, the dependence on $C_{\mathrm{reg}}$ in \eqref{form31} is sharp, see Section \ref{sec: sharp example}.\end{remark} 

\begin{remark} The reader may wish to compare Theorem \ref{theorem: incidence bound} with several recent incidence bounds obtained recently by Demeter and O'Regan \cite{2025arXiv251115899D,2026arXiv260101264D}. To the best of our knowledge, the results do not overlap, but both proofs are based on an application of the \emph{$2$-ends Furstenberg set theorem} due to H. Wang and S. Wu \cite{2024arXiv241108871W}. \end{remark} 

\subsection{Application to Bochner--Riesz means}\label{sec: BR intro}
Let $\Omega\subset\R^2$ be an open, bounded, convex set with $0 \in \Omega$. We call such a set a \textit{convex domain}. Let $\rho$ be the associated Minkowski functional, which is defined as \begin{equation*}\rho(\xi):=\inf\{t>0:\xi\in t\Omega\},\qquad\xi\in\R^2.\end{equation*}
The Bochner--Riesz operator of order $\alpha\geq 0$ associated with $\Omega$ is given by $$(B^\alpha_\Omega f)\;\widehat{}\;(\xi):=(\max\{1-\rho(\xi),0\})^\alpha \cdot \widehat{f}(\xi),\qquad\xi\in\R^2.$$
Let $\kappa_\Omega\in[0,1/2]$ be the \textit{affine dimension} of $\Omega$ as defined in \cite{SZ} (the definition is also recalled in Section \ref{sec: bochner--riesz}). Seeger and Ziesler \cite{SZ} obtained the following bounds for a general convex domain $\Omega$.
\begin{theorem}[Theorem 1.1 \cite{SZ}]
\label{theorem: seeger--ziesler}
 Let $\Omega$ be a convex domain. Then $B^\alpha_\Omega$ is bounded on $L^p(\R^2)$ for  
  \begin{align*}
  \alpha>0\qquad&\text{when}\qquad 2\leq p\leq 4,\\\label{eqn:sz range}
  \alpha>\kappa_\Omega(1-4/p)\qquad&\text{when}\qquad 4\leq p\leq\infty.
  \end{align*}
\end{theorem}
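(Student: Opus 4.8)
The plan is to run the classical dyadic‑annular / square‑function argument for Bochner--Riesz means of convex domains, letting the affine dimension $\kappa_\Omega$ enter only through a crude counting estimate at the $L^\infty$ endpoint. After cutting out a neighbourhood of the origin and of the bulk of $\overline{\Omega}$, where $(\max\{1-\rho(\xi),0\})^\alpha$ is an elementary compactly supported multiplier, one decomposes the part supported near $\partial\Omega$ dyadically: $(\max\{1-\rho(\xi),0\})^\alpha=\sum_{k\ge 1}2^{-k\alpha}m_k(\xi)$, where $m_k$ is supported in the annulus $A_k:=\{\xi:1-\rho(\xi)\sim 2^{-k}\}$ and, once the factor $2^{-k\alpha}$ is extracted, is a symbol of size $\sim 1$ with the expected derivative bounds (in the radial $\rho$‑direction each derivative costs $2^{k}$, since $\partial_\rho^a(1-\rho)^\alpha\sim 2^{-k\alpha}2^{ka}$ on $A_k$; tangentially one uses convexity of $\Omega$). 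Writing $\widetilde{T}_k$ for the operator with multiplier $m_k$, a geometric series reduces the theorem to
\[
\|\widetilde{T}_k\|_{L^p\to L^p}\lesssim_\epsilon 2^{k(\gamma(p)+\epsilon)},\qquad \gamma(p):=\begin{cases}0,&2\le p\le 4,\\ \kappa_\Omega(1-4/p),&4\le p\le\infty,\end{cases}
\]
for all $\epsilon>0$; indeed this yields $\|B^\alpha_\Omega\|_{L^p\to L^p}\lesssim 1$ whenever $\alpha>\gamma(p)$.

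Next I would introduce the plate decomposition of $A_k$. Tile $\partial\Omega$ by caps of height $2^{-k}$, so that the $2^{-k}$‑neighbourhood of a cap $\tau$ is comparable to a rectangle of dimensions $2^{-k}\times w(\tau)$ with $2^{-k}\le w(\tau)\le 1$ and short axis along the normal of the cap; the resulting pieces of $A_k$ tile $A_k$ with $O(1)$ overlap, and their number $N_k$ is, up to constants, the covering function entering the definition of $\kappa_\Omega$, so $N_k\lesssim_\epsilon 2^{k(\kappa_\Omega+\epsilon)}$. Write $\widetilde{T}_k=\sum_\tau\widetilde{T}_{k,\tau}$. The three required endpoint bounds are: (i) by Plancherel and the bounded overlap, $\|\widetilde{T}_k\|_{L^2\to L^2}\lesssim 1$; (ii) each normalised plate multiplier has an $L^1$ kernel of norm $\lesssim 1$ (a bump adapted to its rectangle), so $\|\widetilde{T}_{k,\tau}\|_{L^q\to L^q}\lesssim 1$ for every $q$, whence $\|\widetilde{T}_k\|_{L^\infty\to L^\infty}\le\sum_\tau\|\widetilde{T}_{k,\tau}\|_{L^\infty\to L^\infty}\lesssim N_k$; and (iii) the C\'ordoba‑type square‑function bound $\|\widetilde{T}_k\|_{L^4\to L^4}\lesssim_\epsilon 2^{k\epsilon}$. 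Interpolating (i) with (iii) gives $\gamma(p)=0$ on $[2,4]$; interpolating (ii) with (iii) gives $\|\widetilde{T}_k\|_{L^p\to L^p}\lesssim_\epsilon 2^{k\epsilon}N_k^{1-4/p}\lesssim_\epsilon 2^{k\epsilon'}2^{k\kappa_\Omega(1-4/p)}$ on $[4,\infty]$, which is exactly $\gamma(p)$. Summing the geometric series completes the proof.

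For (iii) I would expand $\|\widetilde{T}_kf\|_4^2=\|\sum_{\tau,\tau'}\widetilde{T}_{k,\tau}f\cdot\widetilde{T}_{k,\tau'}f\|_2$, note that the $(\tau,\tau')$‑summand has frequency support in the Minkowski sum $\tau+\tau'$, and use the monotonicity of the Gauss map of a convex curve to see that — after a preliminary dyadic splitting of the plates by eccentricity $2^{k}w(\tau)$ — only $O(k)$ pairs $(\tau,\tau')$ contribute to any given dyadic frequency region; Cauchy--Schwarz, $L^2$‑orthogonality of the grouped terms, the uniform $L^4$‑boundedness of each $\widetilde{T}_{k,\tau}$, and the matching reverse square‑function estimate (run with difference sets $\tau-\tau$) then yield (iii). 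This is the same circle of ideas underlying Theorem~\ref{theorem: cordoba}.

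The main obstacle is to make step (iii) uniform over all convex domains: a single convex curve can carry caps at every flatness scale, so one must organise plates of wildly different eccentricities into $O(k)$ classes and exploit the monotone Gauss map within each, with no smoothness of $\partial\Omega$ available; a secondary point is to set up the plate tiling with $O(1)$ overlap and to match $N_k$ with the covering function defining $\kappa_\Omega$. I would finally remark that bound (ii) uses only the cardinality of the normal‑direction set of $\partial\Omega$, not its finer geometry; replacing (ii) by a Nikod\'ym maximal estimate along those directions — now available in sharp form via Theorem~\ref{theorem: main} and Corollary~\ref{cor: 3/2 bound} — is precisely what allows one to improve on Theorem~\ref{theorem: seeger--ziesler}, and is the route to the Bochner--Riesz application stated in the abstract.
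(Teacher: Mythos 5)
The paper itself contains no proof of this statement: Theorem \ref{theorem: seeger--ziesler} is quoted as a black box from Seeger--Ziesler \cite{SZ}, so there is nothing internal to compare against. Judged on its own merits, your sketch reproduces the correct architecture of the Seeger--Ziesler argument (dyadic annuli $1-\rho\sim 2^{-k}$, a plate decomposition with $N_k\lesssim_\epsilon 2^{k(\kappa_\Omega+\epsilon)}$ pieces, $L^2$/$L^4$/$L^\infty$ endpoint bounds, interpolation and geometric summation), and the numerology at the end is right; but it has genuine gaps at precisely the two steps that constitute the content of \cite{SZ}, so it is a roadmap rather than a proof.

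Concretely: step (ii) is not the triviality you claim. A general convex boundary has no smoothness, and inside a single cap it may contain arbitrarily many corners and flat segments; the plate multiplier is a function of the merely Lipschitz quantity $1-\rho(\xi)$, whose tangential derivative is only of bounded variation, so it is not ``a bump adapted to its rectangle'', and the uniform $L^1$ kernel bound (and likewise your claim that the part of the multiplier away from $\partial\Omega$ is ``elementary'') requires a convexity-based kernel estimate rather than standard symbol calculus; the same issue infects the construction of the plate partition of unity. More seriously, step (iii) --- the $L^4$ bound $\lesssim_\epsilon 2^{k\epsilon}$ \emph{uniformly over all convex domains} --- is asserted rather than proven: for caps of wildly varying widths, the bounded-overlap property of the sums $\tau+\tau'$ is exactly the heart of \cite{SZ} (in the language of this paper, it amounts to controlling $\mathcal{E}_{2,\Omega}$ for every convex domain), and ``dyadic splitting by eccentricity plus monotonicity of the Gauss map'' does not by itself deliver the $O(k)$-overlap claim; one needs the specific cap-selection and biorthogonality argument of \cite{SZ}. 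You identify these obstacles correctly but defer them, so the gaps remain. Your closing observation --- that the $L^\infty$/cap-counting input can be upgraded to a Nikod\'ym maximal estimate over $\Theta(\Omega)$, which is how improvements over Theorem \ref{theorem: seeger--ziesler} arise --- does match the mechanism the paper actually uses (Proposition \ref{prop: 2m bound} combined with Theorem \ref{theorem: main}).
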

In \cite{SZ}, Seeger and Ziesler also showed that for all $\kappa\in[0,1/2]$, there exists a domain $\Omega$ with $\kappa_\Omega=\kappa$, for which the bounds in Theorem \ref{theorem: seeger--ziesler} are sharp (see also \cite{cladek_BR} for the sharpness example). However, it was shown by Cladek \cite{cladek_BR} that given $\kappa\in(0,1/6]$, one can construct a domain $\Omega$ with $\kappa_\Omega=\kappa$, such that $B^\alpha_\Omega$ satisfies improved bounds. The range $(0,1/6]$ was subsequently extended to $(0,1/2)$ by the second author \cite{roy}. As a corollary of Theorem \ref{theorem: main}, we obtain further improvements. We state specifically the result with $\kappa_\Omega=1/6$.
Define the critical exponent for Bochner--Riesz as $$p_\Omega:=\sup\big\{p:\|B^\alpha_\Omega\|_{L^p\to L^p}<\infty \text{ for all } \alpha>0\big\}.$$ Theorem \ref{theorem: seeger--ziesler} shows that $p_\Omega\geq 4$ for all convex domains $\Omega$. Compare this with the fact that $\|B^0_{B(0,1)}\|_{L^p\to L^p}=\infty$ for all $p\neq 2$ \cite{Fefferman}. In the extreme case when the affine dimension $\kappa_\Omega=0$, it also shows that $p_\Omega=\infty$ (these domains include convex $n$-gons).
 As a consequence of Theorem \ref{theorem: main}, we obtain the following.
\begin{restatable}{theorem}{B}\label{theorem: B}
    There exists a convex domain $\Omega \subset \R^{2}$ with $\kappa_\Omega=1/6$, satisfying $p_\Omega\geq 6$.
\end{restatable}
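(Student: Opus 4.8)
The plan is to derive Theorem~\ref{theorem: B} by combining Theorem~\ref{theorem: main} with the classical reduction of Bochner--Riesz summability for convex domains to Nikod\'ym-type maximal estimates. Recall the mechanism, which goes back to C\'ordoba's treatment of the ball multiplier and was carried to general convex $\Omega$ by Seeger--Ziesler \cite{SZ} and refined by Cladek \cite{cladek_BR} and the second author \cite{roy}: one decomposes $(1-\rho)_+^{\alpha}$ into pieces supported in the dyadic frequency annuli $\{1-\rho(\xi) \sim \delta\}$, $\delta = 2^{-k}$, each carrying an $L^{\infty}$-factor $\sim \delta^{\alpha}$; one splits each annulus into C\'ordoba plates adapted to $\partial \Omega$; C\'ordoba's square-function inequality (valid for $p \geq 4$) reduces the $L^{p}$-norm of each piece, after duality, to an estimate governed by $\|\mathcal{N}_{\Theta_{\Omega};\delta'}\|_{L^{(p/2)'} \to L^{(p/2)'}}^{1/2}$, where $\Theta_{\Omega} \subseteq \mathbb{S}^{1}$ is a rotated copy of the set of tangent directions of $\partial\Omega$ and $\delta' \to 0$ is the relevant tube width; finally one sums over $k$, the factor $2^{-k\alpha}$ absorbing any loss $O_{\epsilon}((\delta')^{-\epsilon})$ coming from \emph{essential} $L^{(p/2)'}$-boundedness of $\mathcal{N}_{\Theta_{\Omega};\delta'}$ as soon as $\epsilon < \alpha$. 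For $p = 6$ one has $(p/2)' = \tfrac32$, so the outcome is the implication
\[ \Big(\, \|\mathcal{N}_{\Theta_{\Omega};\delta}\|_{L^{3/2} \to L^{3/2}} \lesssim_{\epsilon} \delta^{-\epsilon} \text{ for all } \epsilon > 0 \,\Big) \quad\Longrightarrow\quad p_{\Omega} \geq 6. \]
By Theorem~\ref{theorem: main} (or Corollary~\ref{cor: 3/2 bound} when $\qad \Theta_{\Omega} < \tfrac12$) the left-hand side holds whenever $\qad \Theta_{\Omega} \leq \tfrac12$. (As a sanity check, for the disk $\Theta_{\Omega} = \mathbb{S}^{1}$ has $\qad = 1$, so the same reduction with $(p/2)' = 2$ only yields Carleson--Sj\"olin's exponent $p_{\Omega} \geq 4$; it is exactly the new Nikod\'ym bound at $\qad \Theta_{\Omega} = \tfrac12$ that pushes this to $6$.) It therefore suffices to exhibit a convex domain $\Omega \subset \R^{2}$ with $0 \in \Omega$, affine dimension $\kappa_{\Omega} = \tfrac16$, and $\qad \Theta_{\Omega} \leq \tfrac12$.

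The construction I would use is an iterated-polygon convex curve of the type underlying the sharpness examples of \cite{SZ} and the improved examples of \cite{cladek_BR, roy}. One fixes $s = \tfrac12$ and lets $\Theta \subset \mathbb{S}^{1}$ be an $s$-Ahlfors regular Cantor set (so $\qad \Theta = \overline{\dim}_{\mathrm{B}} \Theta = s$). One then builds a bounded convex curve $\partial\Omega$ that is a countable polygon whose edge-directions are dense in $\Theta$, with the edges arranged in the monotone order forced by convexity and with edge-lengths chosen so that the polygon closes up to a bounded convex domain containing $0$ \emph{and} so that the affine $\delta$-cap count of $\partial\Omega$ (in the sense of \cite{SZ}) has exponent exactly $\tfrac16$. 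The set of tangent directions of such a curve is the closure of its set of edge-directions, namely $\Theta$ (up to rotation), whence $\qad \Theta_{\Omega} = s = \tfrac12$. The value $\tfrac16$ is not accidental: it is the affine dimension of the ``maximally curved'' convex curve whose tangent directions fill a half-dimensional set, in the same way that $\tfrac12$ is the affine dimension of the maximally curved curve (the circle) whose tangent directions fill the whole circle; heuristically these are two values of a single increasing function of $\qad \Theta_{\Omega}$ interpolating between them, one natural candidate being $\kappa_{\Omega} = \tfrac{\qad\Theta_{\Omega}}{2(2 - \qad\Theta_{\Omega})}$.

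It then remains to verify: (i) that $\Omega$ is a bona fide bounded convex domain with $0 \in \Omega$, immediate from the construction; (ii) that $\qad \Theta_{\Omega} = \tfrac12$, by checking the quasi-Assouad covering inequalities directly for the Ahlfors regular set $\Theta = \Theta_{\Omega}$; and (iii) that $\kappa_{\Omega} = \tfrac16$, the substantive point, which requires carrying out the affine $\delta$-cap count for the fractal polygon from the definition in \cite{SZ} and confirming that the exponent is precisely $\tfrac16$. I expect step (iii) --- pinning down the affine dimension of the extremal curve --- to be the main obstacle, together with the bookkeeping in the reduction of the first paragraph: for a fractal boundary the C\'ordoba plates over a single frequency annulus have variable Euclidean eccentricity, so one must organise them by tube width $\delta'$, observe that their directions always lie in $\Theta_{\Omega}$ (so each sub-piece is genuinely governed by $\mathcal{N}_{\Theta_{\Omega};\delta'}$), and check that the sum over the $O(\log(1/\delta))$ width-scales costs only a polynomial-in-$\log(1/\delta)$ factor, harmlessly absorbed by $2^{-k\alpha}$. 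Once $\|B^{\alpha}_{\Omega}\|_{L^{6} \to L^{6}} < \infty$ is established for every $\alpha > 0$, interpolating with the trivial bound $\|B^{\alpha}_{\Omega}\|_{L^{2} \to L^{2}} \leq 1$ ($\alpha \geq 0$) and using that $B^{\alpha}_{\Omega}$ is self-adjoint gives $\|B^{\alpha}_{\Omega}\|_{L^{p} \to L^{p}} < \infty$ for all $\alpha > 0$ and all $p \in [\tfrac65, 6]$, whence $p_{\Omega} \geq 6$.
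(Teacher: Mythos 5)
There is a genuine gap, and it sits exactly where you flag the ``bookkeeping'': the implication in your first paragraph, that essential $L^{3/2}$-boundedness of $\mathcal{N}_{\Theta(\Omega);\delta}$ alone forces $p_{\Omega} \geq 6$, is not true for a general convex domain, and the square-function mechanism you invoke does not exist at $p = 6$. C\'ordoba's reverse square-function inequality (and the biorthogonality it rests on) is an $L^{4}$ statement: at $p = 4$ the relevant overlap is that of two-fold sums of $\delta$-caps, which is automatically bounded by convexity. At $p = 2m$ with $m = 3$ the reduction used by Seeger--Ziesler, Cladek, Roy and this paper expands the $2m$-th power and must control the overlap of \emph{$m$-fold} sums of caps; this is quantified by the $m$-additive energy $\mathcal{E}_{m,\Omega}$ of Definition~\ref{def: additive energy}, and the correct form of the reduction is Proposition~\ref{prop: 2m bound}: $B^{\alpha}_{\Omega}$ is bounded on $L^{2m}$ only for $\alpha > \tfrac{\mathcal{E}_{m,\Omega}}{2m} + \tfrac{\beta}{2}$. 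Thus a perfect Nikod\'ym bound ($\beta = 0$) buys you $p_{\Omega} \geq 6$ only if you \emph{also} arrange $\mathcal{E}_{3,\Omega} = 0$, and this can genuinely fail: cap decompositions whose positions contain long arithmetic-progression-like structure have large $m$-fold energy (this is precisely the mechanism behind the sharpness examples for Theorem~\ref{theorem: seeger--ziesler}). Your proposed construction --- a convex curve whose tangent directions form an arbitrary $\tfrac12$-Ahlfors-regular set, with edge lengths tuned so that $\kappa_{\Omega} = \tfrac16$ --- never addresses this, so even granting (i)--(iii) you would only conclude $L^{6}$-boundedness for $\alpha > \mathcal{E}_{3,\Omega}/6$, which need not cover all $\alpha > 0$.

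This is exactly why the paper's construction is more rigid than yours. It takes $\Omega(C) = \mathrm{int}\,\mathrm{conv}\{(t,t^{2}-\tfrac18) : t \in C\}$ for a generalised Cantor set $C$ built from the special interval families of Lemma~\ref{lemma: interval family}, whose defining feature (property (iii)) is bounded $3$-fold sumset multiplicity; combined with the super-exponentially shrinking ratios $c_{k} = 2^{-3k}$ (so that $K(\delta) \lesssim \sqrt{\log(1/\delta)}$), Proposition~\ref{prop: additive energy of GCS domain} gives $\mathcal{E}_{3,\Omega} = 0$. The parabola-over-Cantor-set form also lets one compute the two remaining ingredients exactly rather than by fiat: $\kappa_{\Omega} = \tfrac12 \overline{\dim}_{\mathrm{B}} C = \tfrac16$ (Proposition~\ref{prop: dimension of GCS domain}), and the tangent-direction set satisfies $F(\Theta) = 2(C \cup C_{\mathrm{mid}}) \cup \{0\}$ (Proposition~\ref{prop: slope set of GCS domains}), whence $\qad \Theta = \qad C = \tfrac13 \leq \tfrac12$ and Proposition~\ref{prop: main} applies with $\beta = 0$. (Incidentally, the paper's example has $\qad \Theta = \tfrac13$, not $\tfrac12$; your heuristic formula relating $\kappa_{\Omega}$ to $\qad \Theta$ is not what the construction realises.) To repair your proposal you would need to replace the square-function reduction by the additive-energy framework of Proposition~\ref{prop: 2m bound} and then build the domain so that the $3$-additive energy of its cap decomposition vanishes --- at which point you are essentially forced into a construction of the paper's type.
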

It was previously not known whether $p_\Omega>4$ for any domain of positive affine dimension. The question whether such domains exist was raised in \cite{cladek_BR}.

We also have the following quantitative version of Theorem \ref{theorem: B}.
\begin{restatable}{theorem}{A}\label{theorem: A}
For all $\epsilon>0$, there exists a convex domain $\Omega = \Omega_{\epsilon} \subset \R^{2}$ with $\kappa_\Omega=1/6$, such that $B^\alpha_\Omega$ is bounded on $L^p(\R^2)$ for  
  \begin{align*}
  \alpha>\epsilon\qquad&\text{when}\qquad 2\leq p\leq 6,\\
  \alpha>\kappa_\Omega(2/3-4/p)+\epsilon\qquad&\text{when}\qquad 6\leq p\leq 18,\\
  \alpha>\kappa_\Omega(1-10/p)+\epsilon\qquad&\text{when}\qquad 18\leq p\leq\infty.
  \end{align*}
\end{restatable}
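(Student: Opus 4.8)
The strategy is to feed the Nikod\'ym bounds of Theorem \ref{theorem: main} and Corollary \ref{cor: 3/2 bound} into the square-function reduction of Bochner--Riesz for convex domains, due to Seeger--Ziesler \cite{SZ} and refined by Cladek \cite{cladek_BR} and \cite{roy}. The first task is to construct the domain. I would build a $C^{\infty}$, strictly convex curve $\partial\Omega$, assembled self-similarly over dyadic scales from long flat arcs punctuated by gently rounded caps, so that two properties hold at once: the affine dimension is $\kappa_{\Omega}=\tfrac16$, and the set $\Theta\subset\mathbb{S}^{1}$ of outward unit normals along the curved portion of $\partial\Omega$ is bi-Lipschitz equivalent to a $\tfrac13$-Ahlfors regular Cantor set, so that $\overline{\dim}_{\mathrm B}\Theta=\tfrac13$ and $\qad\Theta=\tfrac13<\tfrac12$. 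Concretely, at scale $\delta$ the boundary is covered by $\approx\delta^{-1/6}$ affine caps whose normals form a $\delta$-net of $\Theta$; the compatibility of these two requirements is exactly the mechanism behind \cite{cladek_BR, roy}, and I would follow those constructions, tuning the self-similarity ratios to land on $(\kappa_{\Omega},\qad\Theta)=(\tfrac16,\tfrac13)$.

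With $\Omega$ in hand, I would carry out the standard dyadic decomposition of the multiplier $(\max\{1-\rho(\xi),0\})^{\alpha}$ into pieces supported where $1-\rho(\xi)\approx\delta$, and reduce the $L^{p}$ boundedness of $B^{\alpha}_{\Omega}$, for each $\delta\in 2^{-\mathbb N}$, to an $L^{p}$ estimate for the square function $G_{\delta}f=(\sum_{\theta}|S_{\theta}f|^{2})^{1/2}$ of cap projections. Via C\'ordoba's duality between $L^{p}$ square-function bounds and $L^{(p/2)'}$ maximal bounds, together with the affine rescaling straightening each cap to a tube, this is controlled by the Nikod\'ym operator norms $\|\mathcal{N}_{\Theta;\delta}\|_{L^{(p/2)'}\to L^{(p/2)'}}$. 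For the $\Theta$ above one has $\|\mathcal{N}_{\Theta;\delta}\|_{L^{3/2}\to L^{3/2}}\lesssim_{\epsilon}\delta^{-\epsilon}$ by Corollary \ref{cor: 3/2 bound} (as $\qad\Theta<\tfrac12$), $\|\mathcal{N}_{\Theta;\delta}\|_{L^{1}\to L^{1}}\lesssim|\Theta|_{\delta}\lesssim\delta^{-1/3}$, and hence by interpolation $\|\mathcal{N}_{\Theta;\delta}\|_{L^{9/8}\to L^{9/8}}\lesssim_{\epsilon}\delta^{-2/9-\epsilon}$. Inserting these and summing the geometric series in $\delta$, I expect to obtain four building-block bounds for $B^{\alpha}_{\Omega}$, valid for every $\epsilon>0$: on $L^{2}$ for $\alpha\geq 0$ (Plancherel); on $L^{6}$ for $\alpha>\epsilon$; on $L^{18}$ for $\alpha>\tfrac{2}{27}+\epsilon$; and on $L^{\infty}$ for $\alpha>\kappa_{\Omega}+\epsilon$ (the $p=\infty$ endpoint of Theorem \ref{theorem: seeger--ziesler}). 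Since $z\mapsto B^{z}_{\Omega}$ is an admissible analytic family, Stein interpolation between consecutive pairs --- linear in $(1/p,\alpha)$, hence piecewise linear in $1/p$ --- produces exactly the stated regimes: $\alpha>\epsilon$ for $2\leq p\leq 6$, $\alpha>\kappa_{\Omega}(\tfrac23-\tfrac4p)+\epsilon$ for $6\leq p\leq 18$ (from the $L^{6}$--$L^{18}$ pair), and $\alpha>\kappa_{\Omega}(1-\tfrac{10}{p})+\epsilon$ for $18\leq p\leq\infty$ (from the $L^{18}$--$L^{\infty}$ pair). Taking $2\leq p\leq 6$ recovers, in particular, the bound $p_{\Omega}\geq 6$ of Theorem \ref{theorem: B}.

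The main obstacle is the reduction of the previous paragraph: passing from the continuous Bochner--Riesz multiplier to the $\delta$-discretised Nikod\'ym operator with constants sharp enough to recover the exact exponents requires controlling the two-parameter cap geometry of a convex curve of positive affine dimension, performing the affine rescalings cap by cap, keeping careful track of the interplay between the scale $\delta$ of the frequency annulus and the scale of the tangential cap spacing, and verifying that no losses beyond the Nikod\'ym operator norms and $\delta^{-\epsilon}$-factors appear. Fortunately essentially all of this machinery is already available in \cite{SZ, cladek_BR, roy}; the genuinely new ingredient is the Nikod\'ym estimate of Theorem \ref{theorem: main} (equivalently the incidence bound Theorem \ref{theorem: incidence bound}) for the $\Theta$ of the construction, which is exactly what upgrades the ``$\alpha>\epsilon$'' range from $p\leq 4$ in Theorem \ref{theorem: seeger--ziesler} to $p\leq 6$ here. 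A secondary point, handled along the lines of \cite{cladek_BR, roy}, is simultaneously arranging $\kappa_{\Omega}=\tfrac16$ and $\qad\Theta=\tfrac13$ on a single convex curve.
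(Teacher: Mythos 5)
Your overall architecture (build a Cantor-type domain with $\kappa_\Omega=\tfrac16$ and $\qad\Theta(\Omega)=\tfrac13$, feed the $L^{3/2}$ Nikod\'ym bound into a square-function reduction, then interpolate the $p=6,18,\infty$ building blocks, whose piecewise-linear envelope indeed reproduces the stated exponents) matches the paper, but two essential ingredients are missing and one of your building blocks does not follow from your own argument. First, plain C\'ordoba square-function/maximal-function duality only yields the $L^{4}$ ($m=2$) case; to reach $L^{2m}$ with $m=3$ one must additionally control the overlap of $m$-fold sums of $\delta$-caps, i.e.\ the $m$-additive energy $\mathcal{E}_{m,\Omega}$ of Definition \ref{def: additive energy}. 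This is exactly what Proposition \ref{prop: 2m bound} (Cladek's reduction) requires: $B^\alpha_\Omega$ is bounded on $L^{2m}$ only for $\alpha>\tfrac{\mathcal{E}_{m,\Omega}}{2m}+\tfrac{\beta}{2}$. For a generic domain with $\kappa_\Omega=\tfrac16$ the energy term can destroy the estimate, so the construction must be built from the special interval families of Lemma \ref{lemma: interval family} with bounded $m$-fold overlap, and in the self-similar construction one only gets $\mathcal{E}_{3,\Omega}\leq 6\epsilon$ after choosing $N\geq g_3^{1/(36\epsilon)}$ --- which is precisely why the domain in Theorem \ref{theorem: A} depends on $\epsilon$. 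Your proposal never mentions additive energy and treats the domain as independent of $\epsilon$, so the $L^{6}$ block is not justified as written. (Relatedly, a $C^{\infty}$ \emph{strictly} convex curve cannot work: its normal set is a full arc, of quasi-Assouad dimension $1$; the actual domain is the convex hull of $\{(t,t^{2}-\tfrac18):t\in C\}$, with flat segments, and one needs Proposition \ref{prop: slope set of GCS domains} and Lemma \ref{lemma: midpoints dimension} to see that $F(\Theta)=2(C\cup C_{\mathrm{mid}})\cup\{0\}$ still has $\qad=\tfrac13$.)

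Second, your $p=18$ building block is wrong as derived. Running the maximal-function reduction at $p=18$ means $m=9$ and dual exponent $(p/2)'=9/8$; with your interpolated bound $\|\mathcal{N}_{\Theta;\delta}\|_{L^{9/8}\to L^{9/8}}\lesssim_\epsilon\delta^{-2/9-\epsilon}$, Proposition \ref{prop: 2m bound} gives at best $\alpha>\tfrac{\beta}{2}+\tfrac{\mathcal{E}_{9,\Omega}}{18}=\tfrac19+\tfrac{\mathcal{E}_{9,\Omega}}{18}$, and $\tfrac19>\tfrac{2}{27}=\kappa_\Omega(\tfrac23-\tfrac{4}{18})$ even if the $9$-fold energy were zero (which the construction does not control; the interval families are only tuned for $m=3$). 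So your route cannot produce the claimed $L^{18}$ bound, and the subsequent interpolation to the second and third regimes collapses. In the paper this block is not obtained from the Nikod\'ym operator at all: it is imported from the decoupling-based $L^{6m}$ estimate of \cite[Theorem 1.3]{roy} for $m=3$, which is the reason the Theorem \ref{theorem: A} construction is kept self-similar (unlike the construction for Theorem \ref{theorem: B}). To repair your argument you would need either to invoke that decoupling result for your domain or to supply a genuinely new estimate at $p=18$; the Nikod\'ym interpolation alone is insufficient.
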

\begin{figure}[h]
    \centering
    \makebox[0pt]{\includegraphics[width=11cm]{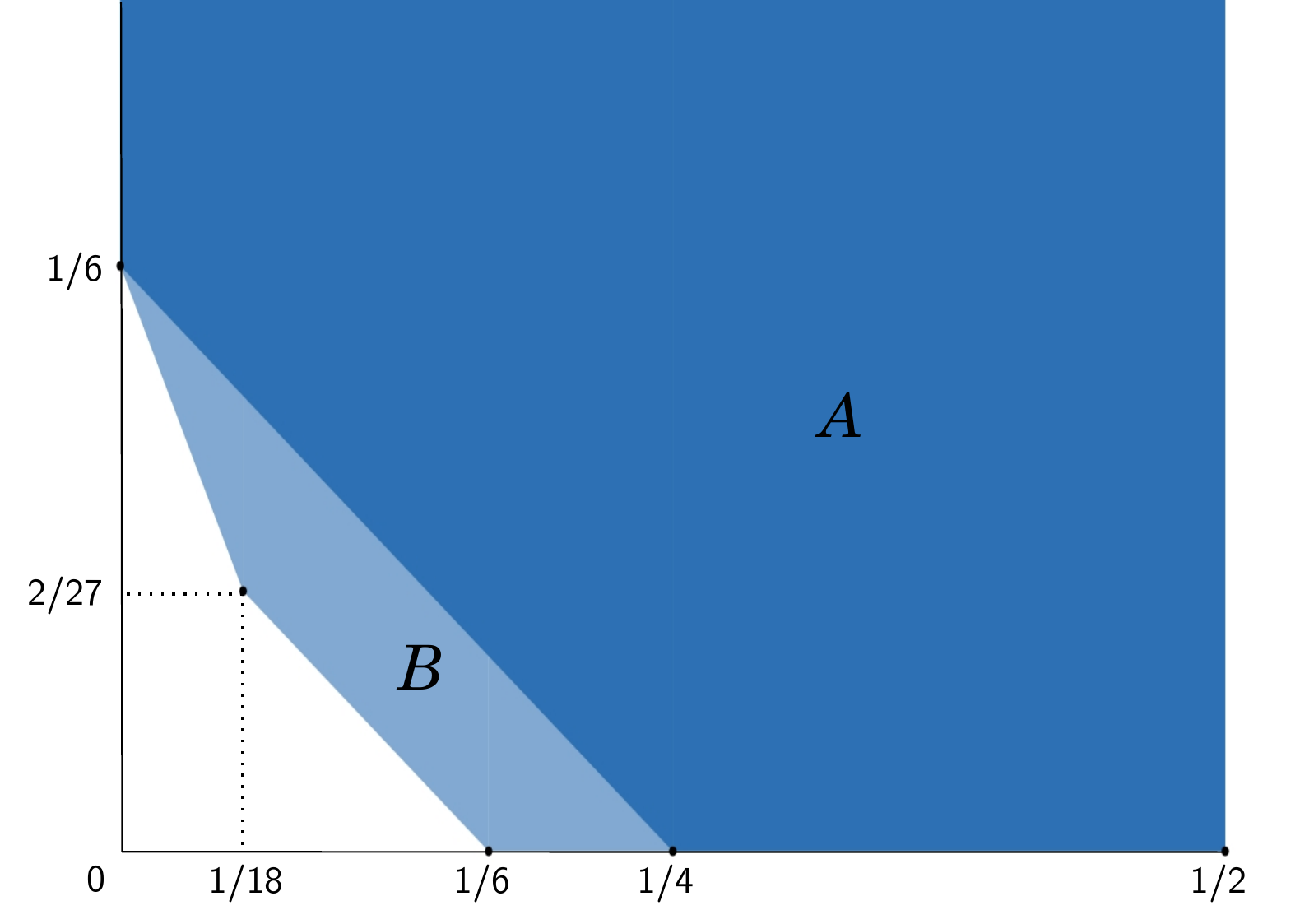}}
    \caption{A comparison of Theorem \ref{theorem: seeger--ziesler} and Theorem \ref{theorem: A}. Theorem \ref{theorem: seeger--ziesler} says that $B^\alpha_\Omega$ is bounded on $L^p$ for all $(1/p,\alpha)\in A$ for any $\Omega$ with $\kappa_\Omega=1/6$. Theorem \ref{theorem: A} constructs a specific $\Omega$ with $\kappa_\Omega=1/6$ such that $B^\alpha_\Omega$ is bounded on $L^p$ for all $(1/p,\alpha)\in A\cup B$.} 
    \label{fig: BR}
\end{figure}

\subsection{Paper outline} The main work in the paper goes into proving the incidence bound, Theorem \ref{theorem: incidence bound}. At its heart, the proof is based on the recent \emph{$2$-ends Furstenberg set estimate} of Wang and Wu \cite{2024arXiv241108871W}, recalled in Theorem \ref{t:WW}. Once Theorem \ref{theorem: incidence bound} has been established in Section \ref{sec: incidence theorem}, we apply it in Section \ref{sec: maximal estimate} to derive Theorem \ref{theorem: main}, our main result on the Nikod\'ym maximal function. Section \ref{sec: maximal estimate} also contains the construction of an example showing that Theorem \ref{theorem: main} fails for $\qad \Theta < \tfrac{1}{2}$, see Proposition \ref{ex3}; that construction relies on work of the first author with K. Ren \cite{2024arXiv241104528O}. Section \ref{sec: bochner--riesz} contains the applications to the convergence of Bochner-Riesz means introduced in Section \ref{sec: BR intro}. Finally, the proof of Theorem \ref{theorem: fractal kakeya} can be found in Appendix \ref{app:kakeya}; we decided to place the proof in an appendix, since the result is essentially a restatement of the planar case of an existing incidence bound due to Zahl, as explained below Theorem \ref{theorem: fractal kakeya}.

\subsection{Notation} Throughout the paper, we shall adopt the following notation, unless otherwise specified. All logarithms shall be assumed as base 2. The notation $A\lesssim B$ or $B\gtrsim A$ shall mean that $A\leq CB$ for some absolute constant $C>0$. Likewise, the notation $A\lesssim_\epsilon B$ or $B\gtrsim_\epsilon A$ shall mean that $A\leq C_\epsilon B$ for some constant $C_\epsilon>0$. The notation $A\lessapprox B$ or $B\gtrapprox A$ shall mean that $A\leq C(\log(1/\delta))^{C}B$, for some absolute constant $C>0$. The notation $A\sim B$ shall be used when both $A\lesssim B$ and $B\lesssim A$ hold. Likewise, $A\sim_\epsilon B$ shall be used when both $A\lesssim_\epsilon B$ and $B\lesssim_\epsilon A$ hold. For a finite set $E$, its cardinality will be denoted by $|E|$, and for a measurable set $F$, we denote its Lebesgue measure by $\mathrm{Leb}(F)$.

\subsection*{Acknowledgments} We would like to thank Amlan Banaji for suggesting and providing references on quasi-Assouad dimension. As already mentioned in Section \ref{s:Kakeya}, we thank Keith Rogers for a permission to include his proof of Lemma \ref{lemma: rogers}.

\section{The incidence theorem}
\label{sec: incidence theorem}
In this section, we prove Theorem \ref{theorem: incidence bound}. The proof will apply the incidence theorem of Wang and Wu \cite[Theorem 2.1]{2024arXiv241108871W} (see also \cite[Theorem 1.5]{2025arXiv250921869W} for a more general version). To state the theorem we need a few further defintions.
\begin{definition}[Frostman $(\delta,t,C)$-set]
Fix $C,t\geq 0$. A set $P\subset\R^d$ is called a \emph{Frostman $(\delta,t,C)$-set} if 
$$|P\cap B(x,r)|_\delta\leq Cr^{-s}|P|_\delta,\qquad x\in\R^d,\,\delta\leq r\leq 1.$$
\end{definition}
Wang and Wu's theorem is stated in terms of Katz--Tao $(\delta,1)$-sets of ``ordinary" (non-dyadic) $\delta$-tubes. It could be equivalently stated in terms of dyadic tubes, but in fact we will find it more convenient to apply the original formulation within the proof of Theorem \ref{theorem: incidence bound}. An \emph{ordinary $\delta$-tube} is any rectangle of dimensions $\delta \times 1$ contained in $\R^{2}$. A family $\mathcal{T}$ of ordinary $\delta$-tubes is called a Katz--Tao $(\delta,t,C)$-set if the following holds: for all $\delta \leq r \leq 1$, and for all rectangles $R \subset \R^{2}$ of dimensions $r \times 2$, it holds
\begin{displaymath} |\{T \in \mathcal{T} : T \subset R\}| \leq C(r/\delta)^{t}. \end{displaymath} 
Two ordinary $\delta$-tubes $T_{1},T_{2} \subset \R^{2}$ are \emph{distinct} if $\mathrm{Leb}(T_{1} \cap T_{2}) \leq \delta/2 = \tfrac{1}{2}\mathrm{Leb}(T_{1})$.
\begin{theorem}\label{t:WW} Let $C,l \geq 1$ and $0 < \epsilon < \eta$. Then the following holds for all $\delta \in 2^{-\mathbb{N}}$ small enough, depending only on $\epsilon$. Let $\mathcal{T}$ be a Katz--Tao $(\delta,1,C)$-set of distinct ordinary $\delta$-tubes. For each $T \in \mathcal{T}$, let $P_{T} \subset \mathcal{D}_{\delta}$ be a family with the properties
\begin{enumerate}[label=(\roman*)]
\item $p \cap T \neq \emptyset$ for all $p \in P_{T}$.
\item $P_{T}$ is a Frostman $(\delta,\eta,\delta^{-\eta^{2}/2})$-set. 
\item $|P_{T}| \sim l$ for all $T \in \mathcal{T}$.
\end{enumerate}
Then,
\begin{equation}\label{form45} \Big| \bigcup_{T \in \mathcal{T}} P_{T} \Big| \gtrsim_{\epsilon} C^{-1}\delta^{(1 + \epsilon + \eta)/2}l^{3/2}|\mathcal{T}|. \end{equation}
\end{theorem}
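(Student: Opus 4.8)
\medskip

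\noindent\textbf{Proof proposal.} Since Theorem \ref{t:WW} is the planar $2$-ends Furstenberg set estimate of Wang and Wu \cite{2024arXiv241108871W}, I will only indicate the shape of the argument I would follow rather than reinvent it. Write $F := \bigcup_{T \in \mathcal{T}} P_{T}$, so that \eqref{form45} is, up to the harmless $\delta^{\epsilon/2}$ and $\delta^{\eta/2}$ losses, the assertion $|F| \gtrsim C^{-1}\delta^{1/2}\,l^{3/2}\,|\mathcal{T}|$; when $l = \delta^{-v}$ and $|\mathcal{T}|$ is as large as the Katz--Tao hypothesis permits ($\sim C\delta^{-1}$), this says $\log_{1/\delta}|F| \gtrsim (3v+1)/2$, i.e.\ it is the sharp Furstenberg bound for (roughly) one-dimensional families of tubes, under the extra hypothesis that the incidences on each tube are non-concentrated at \emph{every} scale — the ``$2$-ends'' hypothesis (ii). The first step is cosmetic normalisation: pigeonhole so that $|P_{T}| = l$ holds exactly on a large sub-collection; pass to a Katz--Tao $(\delta,1,O(1))$-subfamily of $\mathcal{T}$ at the cost of a factor $C^{-1}$ (this is the source of the $C^{-1}$ in \eqref{form45}); and reduce to $\eta,\epsilon$ small and $\delta$ as small as needed.

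The heart of the proof is \emph{induction on the scale $\delta$} via a two-scale (high--low) decomposition. Fix an intermediate scale $\rho \in 2^{-\mathbb{N}}$ with $\delta \le \rho \le 1$, coarsen the data to scale $\rho$ (the tubes become a Katz--Tao $(\rho,1,O(1))$-family $\mathcal{T}_{\rho}$, and each $P_{T}$ becomes a family of $\rho$-cubes that is still a Frostman $(\rho,\eta,\cdot)$-set — the crucial point where hypothesis (ii), being stable under coarsening, is used), and simultaneously, inside each $\rho$-cube $Q$ meeting $F$, rescale by $x \mapsto x/\rho$ so that the $\delta$-tubes through $Q$ and the cubes of $\bigcup_{T} P_{T}$ inside $Q$ become a Furstenberg configuration at scale $\delta/\rho$. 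After a further round of pigeonholing — making the relevant cardinalities ($|P_{T}^{\rho}|$, the number of tubes through a typical $\rho$-cube, the number of $\delta$-cubes of $P_{T}$ inside a typical $\rho$-cube of $T$) roughly constant across tubes and cubes — one applies the inductive hypothesis at scale $\rho$ and at scale $\delta/\rho$ and multiplies the two bounds. The exponents in \eqref{form45} are tuned precisely so that this product reproduces the same estimate at scale $\delta$: the $\delta^{1/2}$ factor is multiplicative ($\rho^{1/2}(\delta/\rho)^{1/2} = \delta^{1/2}$), the $l^{3/2}$ factor multiplies correctly because $l$ splits as (coarse count)$\times$(fine count), and the $|\mathcal{T}|$ factor is recovered from a multi-scale counting identity relating $|\mathcal{T}_{\rho}|$ and the typical number of $\delta$-tubes through a $\rho$-cube to $|\mathcal{T}|$. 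The $O(\log(1/\delta))$ accumulated pigeonholing losses and the $\delta^{-\eta^{2}/2}$ Frostman constants are absorbed into the $\delta^{-\epsilon}$ and $\delta^{-\eta/2}$ factors.

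I expect the genuine obstacle — the part that makes this a theorem rather than an exercise — to be that the naive induction just sketched is vacuous without a \emph{base gain}: a typical $\rho$-cube of $F$ might be met by only one $\delta$-tube, in which case the fine-scale input is useless. One must therefore dichotomise according to whether typical cubes are ``$\delta$-tube-rich'' (then the two-scale induction runs as above) or ``tube-poor'' (then $F$ is essentially a union of few tubes, handled by a bush/direct argument), and, in the rich case, inject a single-scale incidence bound for the Katz--Tao $(\delta,1)$-family $\mathcal{T}$ — which behaves like a family of honest lines — of Szemer\'edi--Trotter / polynomial-method type that strictly beats the elementary $L^{2}$ estimate; hypothesis (ii) is exactly what excludes the degenerate configurations obstructing such a bound. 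Making this dichotomy quantitative, checking that the single-scale gain is uniform as $v \to 0$ and $v \to 1$, and transporting the parameters $C$, $l$, $|\mathcal{T}|$, $\eta$ faithfully through every regularisation and every level of the recursion, is where essentially all of the work lies; here I would simply follow the book-keeping of Wang and Wu \cite{2024arXiv241108871W} (and compare the treatment of the full Furstenberg conjecture by Ren and Wang) rather than try to improve on it.
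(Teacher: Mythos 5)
Your proposal takes essentially the same route as the paper: Theorem \ref{t:WW} is not proved from scratch there either, but is deduced from \cite[Theorem 2.1]{2024arXiv241108871W} via exactly the two reductions you identify — passing to a Katz--Tao $(\delta,1,O(1))$-subfamily of size $\gtrsim C^{-1}|\mathcal{T}|$ (the source of the $C^{-1}$), and observing that hypothesis (ii) yields the Wang--Wu two-ends condition (the paper makes this explicit: (ii) gives $|P_{T}\cap B(x,\delta^{\eta})|\leq \delta^{\eta^{2}/2}|P_{T}|$, i.e.\ their condition with parameters $(1,\eta,\eta^{2}/2)$). Your additional sketch of the internal Wang--Wu argument is not needed for, and not part of, the paper's treatment.
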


\begin{remark} The two-ends condition in \cite[Theorem 2.1]{2024arXiv241108871W} is a little more general than (ii). It postulates that there exist constants $0 < \eta_{2} < \eta_{1} < 1$ and $C \geq 1$ such that
\begin{equation}\label{form42} |P_{T} \cap B(x,\delta^{\eta_{1}})| \leq C\delta^{\eta_{2}}|P_{T}|, \qquad x \in \R^{2}. \end{equation}
Note that (ii) implies
\begin{displaymath} |P_{T} \cap B(x,\delta^{\eta})| \leq \delta^{-\eta^{2}/2} \delta^{\eta^{2}}|P_{T}| = \delta^{\eta^{2}/2}|P_{T}|, \end{displaymath}
so \eqref{form42} holds with $(C,\eta_{1},\eta_{2}) := (1,\eta,\eta^{2}/2)$.
\end{remark}

\begin{remark} In \cite[Theorem 2.1]{2024arXiv241108871W}, it is assumed that the Katz--Tao constant $C$ is absolute. However, for $C \geq 1$, a Katz--Tao $(\delta,1,C)$-set $\mathcal{T}$ of ordinary $\delta$-tubes contains a Katz--Tao $(\delta,1,O_{d}(1))$-subset $\mathcal{T}'$ of cardinality $|\mathcal{T}'| \gtrsim_{d} C^{-1}|\mathcal{T}|$, see for example \cite[Lemma 2.2]{2025arXiv251105091O}. Now \eqref{form45} follows by applying \cite[Theorem 2.1]{2024arXiv241108871W} to any such subset $\mathcal{T}'$. \end{remark}

\begin{proof}[Proof of Theorem \ref{theorem: incidence bound}] By a slight abuse of notation, we will denote by $P_{r}(\mathcal{T})$ a maximal $\delta$-separated subset inside $P_{r}(\mathcal{T})$, so our task is to estimate $|P_{r}(\mathcal{T})|$ in place of $|P_{r}(\mathcal{T})|_{\delta}$. 
Let $r(p) := \{T \in \mathcal{T} : p \in T\}$, thus $r(p) \geq r$ for $p \in P_{r}(\mathcal{T})$. For $\mathfrak{r} \geq r$, let 
\begin{displaymath} P(\mathfrak{r}) := \{p \in P_{r}(\mathcal{T}) : r(p) \in [\mathfrak{r},2\mathfrak{r}]\}. \end{displaymath}
Now \eqref{form31} will follow if we manage to prove that
\begin{equation}\label{form40} |P(\mathfrak{r})| \lessapprox \delta^{-\epsilon}(C_{\mathrm{KT}}C_{\mathrm{reg}})^{1/s}\delta^{-1}|\mathcal{T}|/\mathfrak{r}^{(s + 1)/s}, \qquad \mathfrak{r} \geq r. \end{equation}
The notation $A \lessapprox B$ means that $A \leq C_{1}(\log (1/\delta))^{C_{2}}$, where $C_{1} > 0$ may depend on $\epsilon$, and $C_{2} > 0$ is absolute. 

Fix $\mathfrak{r} \geq r$. Note that $\mathfrak{r} \leq C_{\mathrm{reg}}\delta^{-s}$, or equivalently $1 \leq C_{\mathrm{reg}}^{1/s}\mathfrak{r}^{-1/s}\delta^{-1}$, by the $(\delta,s,C_{\mathrm{reg}})$-regularity of $\sigma(\mathcal{T})$. If $|P(\mathfrak{r})| \leq \delta^{-\epsilon}|\mathcal{T}|/\mathfrak{r}$, there is nothing to prove, because in that case 
\begin{displaymath} |P(\mathfrak{r})| \leq \delta^{-\epsilon} \frac{|\mathcal{T}|}{\mathfrak{r}} \leq \delta^{-\epsilon}C_{\mathrm{reg}}^{1/s}\frac{|\mathcal{T}|\delta^{-1}}{\mathfrak{\mathfrak{r}}^{(s + 1)/s}}. \end{displaymath}
So, in the sequel we may assume that
\begin{equation}\label{form11} \mathfrak{r}|P(\mathfrak{r})| \geq \delta^{-\epsilon}|\mathcal{T}|. \end{equation}
Fix $T \in \mathcal{T}$. We will now locate a "two-ends scale" $\Delta_{T}$ for $P(\mathfrak{r}) \cap T$. By \cite[Proposition 2.17]{MR4912925} (applied with constant $C := 1$ and $\eta_{0} := \epsilon/100$), there exists a scale $\Delta_{T} \in 2^{-\mathbb{N}} \cap [\delta,1]$, a number $\eta_{T} \in [\epsilon/100,\epsilon/10]$, a square $Q_{T} \in \mathcal{D}_{\Delta}(P \cap T)$, and a subset $P_{T} \subset P(\mathfrak{r}) \cap T \cap Q_{T}$ with the following properties:
\begin{itemize}
\item[(1) \phantomsection \label{1}] $|P_{T}| \geq \delta^{\eta_{T}}|P(\mathfrak{r}) \cap T|$.
\item[(2) \phantomsection \label{2}] $S_{Q}(P_{T})$ is a Frostman $(\delta/\Delta,\eta_{T},O_{\epsilon}(1))$-set. 
\end{itemize}
We next pass to refinement of $\mathcal{T}$ in such a way that most incidences remain, and the numbers $\eta_{T},\Delta_{T}$ are roughly constant. Before starting, we define the notation
\begin{displaymath} \mathcal{I}(P',\mathcal{T}') := \sum_{p \in P'} |\{T \in \mathcal{T}' : p \in P_{T}\}| = \sum_{T \in \mathcal{T'}} |P' \cap P_{T}|, \qquad P' \subset P, \, \mathcal{T}' \subset \mathcal{T}. \end{displaymath}
In particular, the reader should note that we only count ``restricted" incidences $(p,T)$ with $p \in P_{T}$, which may be a strict subset of those incidences $(p,T)$ with $p \in T$. 

\begin{claim}\label{c1} There exist numbers $\eta \in [\epsilon/100,\epsilon/10]$, $\Delta \in 2^{-\mathbb{N}} \cap [\delta,1]$, and a subset $\mathcal{T}_{0} \subset \mathcal{T}$ such that:
\begin{itemize}
\item[(i)] $\eta_{T} \in [\eta,2\eta)$ and $\Delta_{T} \in [\Delta,2\Delta)$ for all $T \in \mathcal{T}_{0}$.
\item[(ii)] $\mathcal{I}(P(\mathfrak{r}),\mathcal{T}_{0}) \geq \delta^{2\eta}\mathfrak{r}|P(\mathfrak{r})|$.
\end{itemize}
\end{claim}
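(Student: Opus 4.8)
The proof of Claim \ref{c1} is a two-step dyadic pigeonholing, and I do not expect any genuine obstacle: the only thing to monitor is the $(\log \tfrac{1}{\delta})^{O(1)}$-factors, which are harmless in view of the $\gtrapprox$-bookkeeping already present in \eqref{form40}.

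First I would pigeonhole the \emph{unrestricted} incidence count. By the definition of $P(\mathfrak{r})$, every $p \in P(\mathfrak{r})$ lies on $r(p) \geq \mathfrak{r}$ tubes of $\mathcal{T}$, hence
\begin{equation*}
\sum_{T \in \mathcal{T}} |P(\mathfrak{r}) \cap T| \;=\; \sum_{p \in P(\mathfrak{r})} r(p) \;\geq\; \mathfrak{r}\,|P(\mathfrak{r})|.
\end{equation*}
Partition $\mathcal{T}$ into classes according to (a) the dyadic block $[\eta, 2\eta) \ni \eta_{T}$, of which there are only $O(1)$ since $\eta_{T}$ always lies in the fixed interval $[\epsilon/100,\epsilon/10]$; and (b) the dyadic scale $\Delta \in 2^{-\mathbb{N}} \cap [\delta,1]$ with $\Delta_{T} \in [\Delta,2\Delta)$, of which there are at most $1 + \log(1/\delta)$. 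The total number of classes is $\lesssim \log(1/\delta)$, so pigeonholing in the displayed sum produces a pair $(\eta,\Delta)$ for which the class
\begin{equation*}
\mathcal{T}_{0} \;:=\; \{ T \in \mathcal{T} : \eta_{T} \in [\eta, 2\eta), \ \Delta_{T} \in [\Delta, 2\Delta)\}
\end{equation*}
satisfies \textbf{(i)} together with $\sum_{T \in \mathcal{T}_{0}} |P(\mathfrak{r}) \cap T| \gtrapprox \mathfrak{r}\,|P(\mathfrak{r})|$.

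It remains to descend to the \emph{restricted} incidence count. Since $P_{T} \subset P(\mathfrak{r})$, we have $\mathcal{I}(P(\mathfrak{r}),\mathcal{T}_{0}) = \sum_{T \in \mathcal{T}_{0}} |P_{T}|$, and property (1) above gives $|P_{T}| \geq \delta^{\eta_{T}}|P(\mathfrak{r}) \cap T| \geq \delta^{2\eta}|P(\mathfrak{r}) \cap T|$ for every $T \in \mathcal{T}_{0}$, using $\eta_{T} < 2\eta$. Summing and invoking the previous paragraph,
\begin{equation*}
\mathcal{I}(P(\mathfrak{r}),\mathcal{T}_{0}) \;=\; \sum_{T \in \mathcal{T}_{0}} |P_{T}| \;\geq\; \delta^{2\eta} \sum_{T \in \mathcal{T}_{0}} |P(\mathfrak{r}) \cap T| \;\gtrapprox\; \delta^{2\eta}\,\mathfrak{r}\,|P(\mathfrak{r})|,
\end{equation*}
which is \textbf{(ii)} (with the understanding, consistent with the $\gtrapprox$-conventions of \eqref{form40}, that the implicit $(\log \tfrac{1}{\delta})^{O(1)}$-loss from the $\Delta$-pigeonholing is harmless). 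The $\Delta$-pigeonholing over $\sim \log(1/\delta)$ dyadic scales is the sole source of any loss; everything else is a routine pigeonhole, so I foresee no difficulty in carrying this out.
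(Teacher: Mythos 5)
Your proposal is correct and follows essentially the same route as the paper: a double dyadic pigeonholing over the $O(1)$ blocks for $\eta_{T}$ and the $\lesssim \log(1/\delta)$ scales for $\Delta_{T}$, followed by property (1) in the form $|P_{T}| \geq \delta^{\eta_{T}}|P(\mathfrak{r}) \cap T| \geq \delta^{2\eta}|P(\mathfrak{r}) \cap T|$. The logarithmic loss you flag is also present in the paper's own proof (which states (ii) with ``$\geq$'' but in fact establishes ``$\gtrapprox$''), and it is harmless since all downstream uses of Claim \ref{c1} invoke it only up to $\lessapprox$.
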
 

\begin{proof} For $\eta \in 2^{-\mathbb{N}} \cap [\epsilon/100,\epsilon/10]$ and $\Delta \in 2^{-\mathbb{N}} \cap [\delta,1]$, write 
\begin{displaymath} \mathcal{T}(\eta,\Delta) := \{T \in \mathcal{T} : \eta_{T} \in [\eta,2\eta) \text{ and } \Delta_{T} \in [\Delta,2\Delta)\}. \end{displaymath}
Since $\mathcal{T} \subset \bigcup_{\eta,\Delta} \mathcal{T}(\eta,\Delta)$, there exist $\eta,\Delta$ such that, setting $\mathcal{T}_{0} := \mathcal{T}(\eta,\Delta)$,
\begin{displaymath} \sum_{T \in \mathcal{T}_{0}} |P(\mathfrak{r}) \cap T| \gtrapprox \sum_{T \in \mathcal{T}} |P(\mathfrak{r}) \cap T| \sim \mathfrak{r}|P(\mathfrak{r})|. \end{displaymath} 
Since $\eta_{T} \leq 2\eta$ for all $T \in \mathcal{T}_{1}$, the above combined with property (1) above (namely $|P_{T}| \geq \delta^{\eta_{T}}|P(\mathfrak{r}) \cap T|$) implies
\begin{displaymath} \mathcal{I}(P(\mathfrak{r}),\mathcal{T}_{0}) \stackrel{\mathrm{def.}}{=} \sum_{T \in \mathcal{T}_{0}} |P_{T}| \geq \delta^{2\eta} \sum_{T \in \mathcal{T}_{0}} |P(\mathfrak{r}) \cap T| \gtrapprox \delta^{2\eta}\mathfrak{r}|P(\mathfrak{r})|, \end{displaymath} 
as desired.
\end{proof}
We note that $\delta/\Delta$ is a small scale:

\begin{cor}\label{cor1} It holds $\delta/\Delta \lesssim \delta^{\epsilon/2}$.
\end{cor}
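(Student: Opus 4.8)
The goal is to prove Corollary~\ref{cor1}: that $\delta/\Delta \lesssim \delta^{\epsilon/2}$, equivalently that $\Delta \gtrsim \delta^{1-\epsilon/2}$. The plan is to show that if $\Delta$ were too small (say $\Delta \leq \delta^{1-\epsilon/2}$, i.e.\ $\delta/\Delta \geq \delta^{\epsilon/2}$), then the regularity hypothesis on $\sigma(\mathcal{T})$ would force $|\mathcal{T}_{0}|$ — and hence the incidence count $\mathcal{I}(P(\mathfrak{r}),\mathcal{T}_{0})$ — to be so small that it contradicts item (ii) of Claim~\ref{c1}.

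Here is the mechanism I would use. Fix $T \in \mathcal{T}_{0}$. Since $P_{T} \subset P(\mathfrak{r}) \cap T \cap Q_{T}$ and $Q_{T}$ is a single dyadic cube of side $\Delta_{T} \sim \Delta$, the whole of $P_{T}$ lives in a $\Delta$-cube, hence its $\delta$-covering number is at most $(\Delta/\delta)^{2}$. On the other hand, $P_{T}$ is contained in the tube $T$, so it actually lies in the intersection of a $\Delta$-cube with a $\delta$-tube, giving the better bound $|P_{T}| \leq \Delta/\delta$. Summing over $T \in \mathcal{T}_{0}$ then gives $\mathcal{I}(P(\mathfrak{r}),\mathcal{T}_{0}) = \sum_{T \in \mathcal{T}_{0}} |P_{T}| \leq (\Delta/\delta)\,|\mathcal{T}_{0}| \leq (\Delta/\delta)\,|\mathcal{T}|$. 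Combining with Claim~\ref{c1}(ii) and \eqref{form11},
\begin{displaymath} (\Delta/\delta)|\mathcal{T}| \geq \mathcal{I}(P(\mathfrak{r}),\mathcal{T}_{0}) \geq \delta^{2\eta}\mathfrak{r}|P(\mathfrak{r})| \geq \delta^{2\eta}\delta^{-\epsilon}|\mathcal{T}|, \end{displaymath}
so $\Delta/\delta \geq \delta^{2\eta - \epsilon}$. Since $\eta \leq \epsilon/10$, we get $\Delta/\delta \geq \delta^{\epsilon/5 - \epsilon} = \delta^{-4\epsilon/5}$, hence $\delta/\Delta \leq \delta^{4\epsilon/5}$. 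This is even stronger than the claimed $\delta/\Delta \lesssim \delta^{\epsilon/2}$, so I would simply record the weaker inequality to leave room for the implicit logarithmic losses in Claim~\ref{c1}(ii) (the $\lessapprox$ there introduces a $(\log 1/\delta)^{C}$ factor, which is absorbed since $\delta^{\epsilon/5}$ beats any power of $\log(1/\delta)$ for $\delta$ small).

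The one place requiring a little care is the bound $|P_{T}| \leq \Delta/\delta$: strictly we have $P_{T} \subset T \cap Q_{T}$ where $T$ is an ordinary $\delta$-tube and $Q_{T}$ has side $\sim \Delta$, and $P_{T}$ is a set of $\delta$-separated points, so $|P_{T}| \lesssim \Delta/\delta$ indeed (the intersection is contained in a $\delta \times O(\Delta)$ rectangle). That is the only geometric input; everything else is bookkeeping with the inequalities already in hand. I do not anticipate a genuine obstacle here — the corollary is a short consequence of the size lower bound \eqref{form11} forced on $P(\mathfrak{r})$ together with the trivial upper bound on incidences coming from confining $P_{T}$ to a $\Delta$-window inside a tube. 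I would present it in three lines as above, making explicit that the gain $\delta^{-4\epsilon/5}$ in $\Delta/\delta$ comfortably dominates the logarithmic factors.
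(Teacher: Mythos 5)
Your argument is correct and is essentially the paper's own proof: both chain the trivial bound $\mathcal{I}(P(\mathfrak{r}),\mathcal{T}_{0})=\sum_{T\in\mathcal{T}_{0}}|P_{T}|\lesssim(\Delta/\delta)|\mathcal{T}|$ (from $P_{T}$ being $\delta$-separated inside $T\cap Q_{T}$) against Claim \ref{c1}(ii) and the non-triviality assumption \eqref{form11}, and then use $\eta\leq\epsilon/10$ to absorb the logarithmic losses. The only cosmetic difference is that you phrase it as a contradiction and track the slightly stronger exponent $4\epsilon/5$ before relaxing to $\epsilon/2$, which the paper does implicitly.
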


\begin{proof} Notice that since $\diam(P_{T}) \lesssim \Delta$ and $P_{T} \subset P_{r}(\mathcal{T})$ is $\delta$-separated for all $T \in \mathcal{T}_{0}$,
\begin{displaymath} \delta^{-\epsilon/2}|\mathcal{T}| \leq \delta^{2\eta - \epsilon}|\mathcal{T}| \stackrel{\eqref{form11}}{\leq} \delta^{2\eta}\mathfrak{r}|P(\mathfrak{r})| \stackrel{\mathrm{C. \,} \ref{c1}}{\lessapprox} \mathcal{I}(P(\mathfrak{r}),\mathcal{T}_{0}) = \sum_{T \in \mathcal{T}_{0}} |P_{T}| \lesssim |\mathcal{T}| (\Delta/\delta). \end{displaymath}
This gives the claim.
\end{proof} 

We proceed to the next refinement.

\begin{claim}\label{c2} There exist subsets $\mathcal{T}_{1} \subset \mathcal{T}_{0} \subset \mathcal{T}$, $P_{1} \subset P(\mathfrak{r})$ and a number $\ell \in 2^{\mathbb{N}} \cap \{1,\ldots,10\Delta/\delta\}$ with the following properties:
\begin{itemize}
\item[(i)] $P_{T} \cap P_{1} = P_{T}$ and $|P_{T}| \sim l$ for all $T \in \mathcal{T}_{1}$.
\item[(ii)] $\mathcal{I}(P_{1},\mathcal{T}_{1}) \gtrapprox \delta^{2\eta}\mathfrak{r}|P(\mathfrak{r})|$.
\item[(iii)] $|P_{1}| \gtrapprox \delta^{2\eta}|P(\mathfrak{r})|$.
\end{itemize}
\end{claim}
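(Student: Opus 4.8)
The plan is to obtain Claim \ref{c2} by a single dyadic pigeonholing of $\mathcal{T}_{0}$ over the value of $|P_{T}|$, and then simply to let $P_{1}$ be the union of the $P_{T}$ over the surviving tubes.

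Concretely, first I would note that every $T \in \mathcal{T}_{0}$ with $P_{T} \neq \emptyset$ satisfies $1 \leq |P_{T}| \leq 10\Delta/\delta$ (tubes with $P_{T} = \emptyset$ contribute nothing to $\mathcal{I}(P(\mathfrak r),\mathcal{T}_{0})$, so I discard them). The lower bound is trivial; for the upper bound, $P_{T} \subset T \cap Q_{T}$ is $\delta$-separated — being a subset of the $\delta$-separated set $P_{r}(\mathcal{T})$ — and, by Claim \ref{c1}(i), $Q_{T}$ has side $\Delta_{T} < 2\Delta$, so $T \cap Q_{T}$ is contained in a set coverable by $O(\Delta/\delta)$ balls of radius $\delta$. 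Hence $|P_{T}|$ assumes only $O(\log(1/\delta))$ dyadic values, and pigeonholing yields $\ell \in 2^{\mathbb{N}} \cap \{1,\ldots,10\Delta/\delta\}$ for which the set $\mathcal{T}_{1} := \{T \in \mathcal{T}_{0} : |P_{T}| \in [\ell,2\ell)\}$ satisfies
\begin{equation*}
\sum_{T \in \mathcal{T}_{1}} |P_{T}| \ \gtrapprox\ \sum_{T \in \mathcal{T}_{0}} |P_{T}| \ =\ \mathcal{I}(P(\mathfrak r),\mathcal{T}_{0}) \ \geq\ \delta^{2\eta}\mathfrak r\,|P(\mathfrak r)|,
\end{equation*}
the last inequality being Claim \ref{c1}(ii). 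Then I set $P_{1} := \bigcup_{T \in \mathcal{T}_{1}} P_{T} \subset P(\mathfrak r)$. Property (i) holds by construction, and since $P_{T} \subset P_{1}$ for $T \in \mathcal{T}_{1}$ I get $\mathcal{I}(P_{1},\mathcal{T}_{1}) = \sum_{T \in \mathcal{T}_{1}} |P_{1} \cap P_{T}| = \sum_{T \in \mathcal{T}_{1}} |P_{T}| \gtrapprox \delta^{2\eta}\mathfrak r\,|P(\mathfrak r)|$, which is (ii).

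For (iii) I would use that point-multiplicities on $P(\mathfrak r)$ never exceed $2\mathfrak r$: any $p \in P_{1} \subset P(\mathfrak r)$ has $|\{T \in \mathcal{T}_{1} : p \in P_{T}\}| \leq |\{T \in \mathcal{T} : p \in T\}| = r(p) \leq 2\mathfrak r$, because $P_{T} \subset T$ and $P(\mathfrak r)$ is by definition made of points $p$ with $r(p) \in [\mathfrak r,2\mathfrak r]$. Summing over $p \in P_{1}$ gives $2\mathfrak r\,|P_{1}| \geq \mathcal{I}(P_{1},\mathcal{T}_{1}) \gtrapprox \delta^{2\eta}\mathfrak r\,|P(\mathfrak r)|$, hence $|P_{1}| \gtrapprox \delta^{2\eta}|P(\mathfrak r)|$. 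I do not expect a real obstacle in this claim: it is a bookkeeping reduction, and the only slightly non-automatic inputs are the a priori bound $|P_{T}| \lesssim \Delta/\delta$ (which keeps the pigeonholing loss down to a factor $\lessapprox$) and the elementary multiplicity bound just used.
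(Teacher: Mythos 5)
Your argument is correct and essentially coincides with the paper's proof: the same dyadic pigeonholing of $\mathcal{T}_{0}$ over the value of $|P_{T}|$ (using $|P_{T}|\lesssim\Delta/\delta$), followed by the same multiplicity bound $\mathcal{I}(P_{1},\mathcal{T}_{1})\leq 2\mathfrak{r}|P_{1}|$ coming from $r(p)\leq 2\mathfrak{r}$ on $P(\mathfrak{r})$. The only (immaterial) difference is that you take $P_{1}:=\bigcup_{T\in\mathcal{T}_{1}}P_{T}$ rather than $P(\mathfrak{r})$ intersected with $\bigcup_{T\in\mathcal{T}_{1}}T$, and both choices satisfy (i)--(iii).
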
 

\begin{proof} For $l \in 2^{\mathbb{N}} \cap \{1,\ldots,10\Delta/\delta\}$, write $\mathcal{T}_{0}(l) := \{T \in \mathcal{T}_{0} : |P_{T}| \in [l,2l)\}$ (evidently $|P_{T}| \leq 10\Delta/\delta$, since $P_{T}$ is a $\delta$-separated subset of $Q \cap T$). Then,,
\begin{displaymath} \delta^{2\eta}\mathfrak{r}|P(\mathfrak{r})| \stackrel{\mathrm{C.\, } \ref{c1}}{\lessapprox} \mathcal{I}(P(\mathfrak{r}),\mathcal{T}_{0}) \leq \sum_{l \in 2^{\mathbb{N}} \cap \{1,\ldots,10\Delta/\delta\}} \mathcal{I}(P(\mathfrak{r}),\mathcal{T}_{0}(l)). \end{displaymath} 
Thus, there exists $l \in 2^{\mathbb{N}} \cap \{1,\ldots,10\Delta/\delta\}$ such that $\mathcal{I}(P(\mathfrak{r}),\mathcal{T}_{0}(l)) \gtrapprox \delta^{2\eta}\mathfrak{r}|P(\mathfrak{r})|$. We now define
\begin{displaymath} \mathcal{T}_{1} := \mathcal{T}_{0}(l) \quad \text{and} \quad P_{1} := P(\mathfrak{r}) \cup \bigcup_{T \in \mathcal{T}_{1}} T. \end{displaymath}
Then evidently $P_{1} \cap P_{T} = P_{T}$ and $|P_{T}| \sim l$ for all $T \in \mathcal{T}_{1}$. Furthermore, 
\begin{displaymath} \mathcal{I}(P_{1},\mathcal{T}_{1}) = \sum_{p \in P_{1}} |\{T \in \mathcal{T}_{1} : p \in P_{T}\}| \stackrel{(\ast)}{=} \sum_{p \in P(\mathfrak{r})} |\{T \in \mathcal{T}_{0}(l) : p \in P_{T}\}| \gtrapprox \delta^{2\eta}\mathfrak{r}|P(\mathfrak{r})|, \end{displaymath}
where $(\ast)$ follows by observing that if $p \in P(\mathfrak{r})$ and $|\{T \in \mathcal{T}_{0}(l) : p \in P_{T}\}| \geq 1$, then $p \in P(\mathfrak{r}) \cap T$ for some $T \in \mathcal{T}_{0}(l) = \mathcal{T}_{1}$, and therefore $p \in P_{1}$.

The final claim $|P_{1}| \gtrapprox \delta^{2\eta}|P(\mathfrak{r})|$ follows from $\delta^{2\eta}\mathfrak{r}|P(\mathfrak{r})| \lessapprox \mathcal{I}(P_{1},\mathcal{T}_{1}) \leq 2\mathfrak{r}|P_{1}|$. \end{proof}

\begin{cor}\label{cor2} For $T \in \mathcal{T}_{1}$, the set $S_{Q}(P_{1}  \cap P_{T})$ is a Frostman $((\delta/\Delta),\epsilon/100,O_{\epsilon}(1))$-set. \end{cor}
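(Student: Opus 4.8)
The plan is to unwind definitions: all the substance is already contained in property~(2) of the two-ends decomposition and in Claim~\ref{c1}, and Corollary~\ref{cor2} is merely a normalisation of parameters. There is no genuine difficulty.

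Fix $T \in \mathcal{T}_{1}$. By Claim~\ref{c2}(i) we have $P_{1} \cap P_{T} = P_{T}$, hence $S_{Q}(P_{1} \cap P_{T}) = S_{Q}(P_{T})$, so it suffices to analyse $S_{Q}(P_{T})$. By property~(2), $S_{Q}(P_{T})$ is a Frostman $(\delta/\Delta_{T}, \eta_{T}, O_{\epsilon}(1))$-set; that is, $|S_{Q}(P_{T}) \cap B(x,r)|_{\delta/\Delta_{T}} \leq O_{\epsilon}(1) \cdot r^{\eta_{T}} |S_{Q}(P_{T})|_{\delta/\Delta_{T}}$ for all $x \in \R^{2}$ and all $\delta/\Delta_{T} \leq r \leq 1$. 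Two elementary adjustments remain.

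\emph{Lowering the exponent.} By Claim~\ref{c1}(i) we have $\eta_{T} \geq \eta \geq \epsilon/100$, and since $r^{\eta_{T}} \leq r^{\epsilon/100}$ for every $r \leq 1$, the displayed inequality continues to hold with $\eta_{T}$ replaced by $\epsilon/100$. Thus $S_{Q}(P_{T})$ is a Frostman $(\delta/\Delta_{T}, \epsilon/100, O_{\epsilon}(1))$-set. \emph{Changing the base scale.} By Claim~\ref{c1}(i) we also have $\Delta \leq \Delta_{T} < 2\Delta$, hence $\delta/\Delta_{T} \leq \delta/\Delta \leq 2\,\delta/\Delta_{T}$. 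Writing $\delta' := \delta/\Delta_{T}$ and $\delta'' := \delta/\Delta$, for $r \in [\delta'',1] \subset [\delta',1]$ one has $|S_{Q}(P_{T}) \cap B(x,r)|_{\delta''} \leq |S_{Q}(P_{T}) \cap B(x,r)|_{\delta'}$ since $\delta'' \geq \delta'$, while $|S_{Q}(P_{T})|_{\delta'} \lesssim |S_{Q}(P_{T})|_{\delta''}$ since a ball of radius $\delta'' \leq 2\delta'$ in $\R^{2}$ is covered by $O(1)$ balls of radius $\delta'$. Combining these two facts with the previous step shows that $S_{Q}(P_{T})$ is a Frostman $(\delta/\Delta, \epsilon/100, O_{\epsilon}(1))$-set, which is the assertion of the corollary.

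The only points requiring any care — both entirely standard — are that decreasing the Frostman exponent is cost-free for radii below $1$, and that rescaling the base scale by a bounded factor perturbs the covering numbers, and hence the Frostman constant, only by bounded factors; no deeper input is needed, so I do not anticipate any real obstacle.
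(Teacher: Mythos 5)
Your proof is correct and follows essentially the same route as the paper: the paper's proof is the one-line observation that $P_{1}\cap P_{T}=P_{T}$ (Claim \ref{c2}(i)), after which the statement is just property (2) of the two-ends decomposition with the parameters normalised via Claim \ref{c1}(i). Your two adjustments (lowering the exponent from $\eta_{T}\geq\epsilon/100$ and replacing the base scale $\delta/\Delta_{T}$ by the comparable scale $\delta/\Delta$ at the cost of an $O(1)$ factor) are exactly the routine details the paper leaves implicit, and both are carried out correctly.
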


\begin{proof} This follows immediately from Claim \ref{c2}(i), namely $P_{1} \cap P_{T} = P_{T}$ for $T \in \mathcal{T}_{1}$.   \end{proof} 

We make a final refinement at scale $\Delta$:
\begin{claim}\label{c4} There exists a family $\mathcal{Q} \subset \mathcal{D}_{\Delta}(P_{1})$ with the following properties:
\begin{itemize}
\item[(i)] $\delta^{2\eta}\mathfrak{r}|P_{1} \cap Q| \lessapprox \mathcal{I}(P_{1} \cap Q,\mathcal{T}_{1})$ for all $Q \in \mathcal{Q}$.
\item[(ii)] $|P(\mathfrak{r})| \lessapprox \delta^{-2\eta}\sum_{Q \in \mathcal{Q}} |P_{1} \cap Q|$. 
\end{itemize}
\end{claim}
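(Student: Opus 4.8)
The plan is to prove Claim \ref{c4} by the same pigeonholing philosophy already used for Claims \ref{c1} and \ref{c2}: we want to restrict attention to those $\Delta$-squares $Q$ that carry most of the ``restricted'' incidences $\mathcal{I}(P_{1},\mathcal{T}_{1})$, while simultaneously guaranteeing that inside each such $Q$ the incidence count is still comparable to the trivial upper bound $\approx \mathfrak{r}|P_{1}\cap Q|$. Concretely, let $\mathcal{D}_{\Delta}(P_{1})$ be the dyadic $\Delta$-squares meeting $P_{1}$, and split this family according to the dyadic size of the ratio $\mathcal{I}(P_{1}\cap Q,\mathcal{T}_{1})/(\mathfrak{r}|P_{1}\cap Q|)$. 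Since every $p$ has multiplicity $r(p)\in[\mathfrak{r},2\mathfrak{r}]$ in the full incidence count and $P_{T}\subset P(\mathfrak{r})$, we always have $\mathcal{I}(P_{1}\cap Q,\mathcal{T}_{1})\leq 2\mathfrak{r}|P_{1}\cap Q|$, so this ratio lies in $(0,2]$ and there are only $O(\log(1/\delta))$ relevant dyadic values. Pigeonholing, we find a single dyadic level $\lambda\in(0,2]$ such that the subfamily $\mathcal{Q} := \{Q\in\mathcal{D}_{\Delta}(P_{1}) : \mathcal{I}(P_{1}\cap Q,\mathcal{T}_{1})\sim\lambda\,\mathfrak{r}|P_{1}\cap Q|\}$ satisfies
\begin{displaymath} \sum_{Q\in\mathcal{Q}} \mathcal{I}(P_{1}\cap Q,\mathcal{T}_{1}) \gtrapprox \mathcal{I}(P_{1},\mathcal{T}_{1}) \gtrapprox \delta^{2\eta}\mathfrak{r}|P(\mathfrak{r})|, \end{displaymath}
using Claim \ref{c2}(ii) for the last step.

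Next I would argue that this one surviving level $\lambda$ is necessarily $\gtrapprox\delta^{2\eta}$, which is exactly what property (i) asserts (absorbing the implicit $\sim$-constant into $\lessapprox$). Indeed, for the discarded squares (those with $\mathcal{I}(P_{1}\cap Q,\mathcal{T}_{1})\lesssim\delta^{2\eta}\mathfrak{r}|P_{1}\cap Q|$, up to the logarithmic factor) the total contribution is $\lesssim\delta^{2\eta}\mathfrak{r}\sum_{Q}|P_{1}\cap Q| = \delta^{2\eta}\mathfrak{r}|P_{1}|$, and by Claim \ref{c2}(iii) together with the trivial bound $|P_{1}|\leq|P(\mathfrak{r})|$ this is already $\lesssim\mathcal{I}(P_{1},\mathcal{T}_{1})$ only up to constants, so one must be slightly careful: the clean way is to first fix the $\lambda$-level by pigeonhole as above, then \emph{observe} that if $\lambda$ were $\ll\delta^{2\eta}$ (beyond logarithmic factors) the displayed lower bound $\sum_{Q\in\mathcal{Q}}\mathcal{I}\gtrapprox\delta^{2\eta}\mathfrak{r}|P(\mathfrak{r})|$ combined with $\sum_{Q\in\mathcal{Q}}\mathcal{I}(P_{1}\cap Q,\mathcal{T}_{1})\sim\lambda\,\mathfrak{r}\sum_{Q\in\mathcal{Q}}|P_{1}\cap Q|\leq\lambda\,\mathfrak{r}|P(\mathfrak{r})|$ would force $\lambda\gtrapprox\delta^{2\eta}$, a contradiction. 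This simultaneously proves (i) (since for $Q\in\mathcal{Q}$, $\mathcal{I}(P_{1}\cap Q,\mathcal{T}_{1})\sim\lambda\mathfrak{r}|P_{1}\cap Q|\gtrapprox\delta^{2\eta}\mathfrak{r}|P_{1}\cap Q|$) and sets up (ii).

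For property (ii), I would chain:
\begin{displaymath} \delta^{2\eta}\mathfrak{r}|P(\mathfrak{r})| \lessapprox \sum_{Q\in\mathcal{Q}}\mathcal{I}(P_{1}\cap Q,\mathcal{T}_{1}) \leq 2\mathfrak{r}\sum_{Q\in\mathcal{Q}}|P_{1}\cap Q|, \end{displaymath}
where the first inequality is the pigeonholed lower bound above and the second uses $\mathcal{I}(P_{1}\cap Q,\mathcal{T}_{1})\leq 2\mathfrak{r}|P_{1}\cap Q|$ from $r(p)\leq 2\mathfrak{r}$ and $p\in P_{T}\Rightarrow p\in P(\mathfrak{r})$. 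Cancelling $\mathfrak{r}$ gives $|P(\mathfrak{r})|\lessapprox\delta^{-2\eta}\sum_{Q\in\mathcal{Q}}|P_{1}\cap Q|$, which is exactly (ii).

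The main obstacle here is not any single estimate — each is an elementary dyadic pigeonhole — but rather bookkeeping the logarithmic losses so that the final bound (i) genuinely has the form $\delta^{2\eta}\mathfrak{r}|P_{1}\cap Q|\lessapprox\mathcal{I}(P_{1}\cap Q,\mathcal{T}_{1})$ with the $\lessapprox$-constant depending only on $\epsilon$ and an absolute power of $\log(1/\delta)$, and making sure the contradiction argument pinning $\lambda\gtrapprox\delta^{2\eta}$ does not secretly need a lower bound on $|P_{1}|$ better than Claim \ref{c2}(iii) provides. One should also double-check that $\mathcal{D}_{\Delta}(P_{1})$ only has $\lesssim\delta^{-2}$ elements so that the pigeonholing over dyadic values of $\lambda$ (and not over the squares themselves) is what introduces the $\log(1/\delta)$ factor — this is automatic since all sets live in $[0,1]^{2}$.
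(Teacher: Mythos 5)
Your proposal is correct and follows essentially the same route as the paper: both select the family $\mathcal{Q}$ of $\Delta$-squares whose incidence density is $\gtrapprox \delta^{2\eta}\mathfrak{r}$, use Claim \ref{c2}(ii) for the lower bound on total incidences and the trivial bound $\mathcal{I}(P_{1}\cap Q,\mathcal{T}_{1})\leq 2\mathfrak{r}|P_{1}\cap Q|$ (via $P_{1}\subset P(\mathfrak{r})$) to conclude (ii). The only cosmetic difference is that the paper defines $\mathcal{Q}$ directly by the threshold in (i) and notes the complementary squares cannot dominate, whereas you pigeonhole on the dyadic density level $\lambda$ and deduce $\lambda\gtrapprox\delta^{2\eta}$ afterwards; both are sound.
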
 

\begin{proof} Define $\mathcal{Q}$ via condition (i): 
\begin{displaymath} \mathcal{Q} := \{Q \in \mathcal{D}_{\Delta}(P_{1}) : \mathcal{I}(P_{1} \cap Q,\mathcal{T}_{1}) \gtrapprox \delta^{2\eta}\mathfrak{r}|P_{1} \cap Q|\}. \end{displaymath}
Then note that by Claim \ref{c2}(ii),
\begin{align*} \delta^{2\eta}\mathfrak{r}|P(\mathfrak{r})| \lessapprox \mathcal{I}(P_{1},\mathcal{T}_{1}) \leq \sum_{Q \in \mathcal{Q}} \mathcal{I}(P_{1} \cap Q,\mathcal{T}_{1}) + \sum_{Q \in \mathcal{D}_{\Delta}(P_{1}) \, \setminus \mathcal{Q}} \mathcal{I}(P_{1} \cap Q,\mathcal{T}_{1}). \end{align*}
If the ``$\gtrapprox$" constant in the definition of $\mathcal{Q}$ is chosen appropriately, the second sum cannot dominate the left hand side, so we infer from $P_{1} \subset P(\mathfrak{r})$ that
\begin{displaymath}  \delta^{2\eta}\mathfrak{r}|P(\mathfrak{r})| \lessapprox \sum_{Q \in \mathcal{Q}} \mathcal{I}(P_{1} \cap Q,\mathcal{T}_{1}) \lesssim \mathfrak{r} \sum_{Q \in \mathcal{Q}} |P_{1} \cap Q|. \end{displaymath}
This yields (ii). \end{proof} 

Next, we define
\begin{displaymath} \mathcal{T}_{1}(Q) := \{T \in \mathcal{T}_{1} : P_{T} \subset Q\}, \qquad Q \in \mathcal{Q}, \end{displaymath}
so $\mathcal{I}(P_{1} \cap Q,\mathcal{T}_{1}) \stackrel{\mathrm{def.}}{=} \sum_{T \in \mathcal{T}_{1}} |P_{1} \cap Q \cap P_{T}| = \sum_{T \in \mathcal{T}_{1}(Q)} |P_{1} \cap Q \cap P_{T}| \stackrel{\mathrm{def.}}{=} \mathcal{I}(P_{1} \cap Q,\mathcal{T}_{1}(Q))$. We then record that by Claim \ref{c4}(i), and since $|P_{T}| \lesssim |Q \cap T|_{\delta} \lesssim (\Delta/\delta)$,
\begin{displaymath} \delta^{2\eta}\mathfrak{r}|P_{1} \cap Q| \lessapprox \mathcal{I}(P_{1} \cap Q,\mathcal{T}_{1}(Q)) \leq \sum_{T \in \mathcal{T}_{1}(Q)} |P_{T}| \lesssim (\Delta/\delta) \cdot |\mathcal{T}_{1}(Q)|. \end{displaymath} 
This can be rearranged to $|P_{1} \cap Q| \lessapprox \delta^{-2\eta}(\Delta/\delta)  |\mathcal{T}_{1}(Q)|/\mathfrak{r}$. This "trivial bound" allows us to establish the special case of our main claim \eqref{form40} where $\mathfrak{r} \leq C_{\mathrm{KT}}C_{\mathrm{reg}}\Delta^{-s}$. Namely, by the previous estimate,
\begin{displaymath} |P_{1} \cap Q| \lessapprox (C_{\mathrm{KT}}C_{\mathrm{reg}})^{1/s}\frac{\delta^{-2\eta - 1}|\mathcal{T}_{1}(Q)|}{\mathfrak{r}^{(1 + s)/s}}, \qquad \mathfrak{r} \leq C_{\mathrm{KT}}C_{\mathrm{reg}}\Delta^{-s}. \end{displaymath}
Since $|\{Q \in \mathcal{Q} : P_{T} \subset Q\}| \leq 1$ for each $T \in \mathcal{T}$, it follows
\begin{align*} |P(\mathfrak{r})| & \stackrel{\mathrm{C.\,} \ref{c4}}{\lessapprox} \delta^{-2\eta} \sum_{Q \in \mathcal{Q}} |P_{1} \cap Q| \lessapprox (C_{\mathrm{KT}}C_{\mathrm{reg}})^{1/s}\frac{\delta^{-4\eta - 1}}{\mathfrak{r}^{(1 + s)/s}} \sum_{Q \in \mathcal{Q}} |\mathcal{T}_{1}(Q)|\\
& \leq (C_{\mathrm{KT}}C_{\mathrm{reg}})^{1/s}\frac{\delta^{-4\eta - 1}|\mathcal{T}|}{\mathfrak{r}^{(1 + s)/s}}, \quad \mathfrak{r} \leq C_{\mathrm{KT}}C_{\mathrm{reg}}\Delta^{-s}. \end{align*}
Since $\eta \leq \epsilon/10$, this completes the proof \eqref{form40} in the case $\mathfrak{r} \leq C_{\mathrm{KT}}C_{\mathrm{reg}}\Delta^{-s}$. So, in the sequel we may assume
\begin{equation}\label{form41} \mathfrak{r} \geq C_{\mathrm{KT}}C_{\mathrm{reg}}\Delta^{-s}. \end{equation}

To handle this case, we fix $Q \in \mathcal{Q}$, and define a family $\mathcal{R}(Q)$ of rectangles of dimensions $\delta \times \Delta$ (sometimes called ``tubelets" in the literature) intersecting $Q$. Such rectangles will be denoted with the letter $u$. We initially define $\mathcal{R}_{0}(Q)$ to be a maximal family of distinct such rectangles (where distinctness means $\mathrm{Leb}(u_{1} \cap u_{2}) \leq \tfrac{1}{2}\mathrm{Leb}(u_{1})$).  

For $u \in \mathcal{R}_{0}(Q)$ fixed, define the multiplicity number 
\begin{displaymath} m(u) := |\{T \in \mathcal{T}_{1}(Q) : T \prec u\}|, \end{displaymath}
where the notation $T \prec u$ means that $T \cap Q \subset Cu$. Here $C \geq 1$ is an absolute constant chosen so that each $T \in \mathcal{T}^{\delta}$ with $Q \cap T \neq \emptyset$ satisfies $T \prec u$ for at least one (and therefore $O(1)$) rectangles $u \in \mathcal{R}_{0}(Q)$. 

\begin{claim}\label{c3} For each $Q \in \mathcal{Q}$, there exists a family $\mathcal{R}(Q) \subset \mathcal{R}_{0}(Q)$ and a number $m_{Q} \in 2^{\mathbb{N}}$ with the following properties:
\begin{itemize}
\item[(i)] $m(u) \sim m_{Q}$ for all $u \in \mathcal{R}(Q)$.
\item[(ii)] $\delta^{2\eta}\mathfrak{r}|P_{1} \cap Q| \lessapprox \mathcal{I}(P_{1} \cap Q,\mathcal{T}_{1}(Q)) \lessapprox lm_{Q} |\mathcal{R}(Q)|$.
\end{itemize}
 \end{claim}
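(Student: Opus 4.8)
The plan is to prove Claim \ref{c3} by a double pigeonholing of the incidences inside $Q$, first over the multiplicity $m(u)$ of the tubelets, then carrying the fixed value through to compare with $l|\mathcal{R}(Q)|$.

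First I would establish the chain of inequalities in (ii) from the right. For each $T \in \mathcal{T}_{1}(Q)$ we have $T \prec u$ for at least one and at most $O(1)$ tubelets $u \in \mathcal{R}_{0}(Q)$; fix one such $u = u(T)$. Since every $p \in P_{T} \subset P_{1} \cap Q$ lies on $T$, and $T \cap Q \subset Cu(T)$, we get $P_{T} \subset Cu(T)$, so $|P_{1} \cap Q \cap P_{T}| = |P_{T}| \sim l$ by Claim \ref{c2}(i). Summing over $T$, and grouping by the tubelet $u(T)$,
\begin{displaymath} \mathcal{I}(P_{1} \cap Q,\mathcal{T}_{1}(Q)) = \sum_{T \in \mathcal{T}_{1}(Q)} |P_{T}| \lesssim l \sum_{u \in \mathcal{R}_{0}(Q)} |\{T \in \mathcal{T}_{1}(Q) : u(T) = u\}| \lesssim l \sum_{u \in \mathcal{R}_{0}(Q)} m(u), \end{displaymath}
the last step because $u(T) = u$ forces $T \prec u$, whence the inner count is $\leq m(u)$. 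Then I pigeonhole the dyadic values of $m(u)$: the tubelets with $m(u) < $ (some small power of $\delta$ times the average) contribute negligibly to $\sum_u m(u)$ compared with the left-hand side of (ii) (which by Claim \ref{c4}(i) is $\gtrapprox \delta^{2\eta}\mathfrak{r}|P_1 \cap Q|$, a genuinely positive quantity), and similarly the at most $O(\log(1/\delta))$ dyadic scales let us select a single value $m_Q \in 2^{\mathbb{N}}$ and the subfamily $\mathcal{R}(Q) := \{u \in \mathcal{R}_0(Q) : m(u) \in [m_Q, 2m_Q)\}$ with $\sum_{u \in \mathcal{R}_0(Q)} m(u) \lessapprox m_Q |\mathcal{R}(Q)|$. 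This gives $\mathcal{I}(P_{1} \cap Q,\mathcal{T}_{1}(Q)) \lessapprox l m_Q |\mathcal{R}(Q)|$, which together with Claim \ref{c4}(i) is exactly (ii); property (i) holds by construction of $\mathcal{R}(Q)$.

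There is one point that needs a little care: when pigeonholing the dyadic value of $m(u)$, one must ensure that discarding the low-multiplicity tubelets does not kill the lower bound $\delta^{2\eta}\mathfrak{r}|P_1 \cap Q| \lessapprox \mathcal{I}(P_1 \cap Q, \mathcal{T}_1(Q))$. This is handled in the usual way: since there are only $\lesssim \log(\Delta/\delta) \lesssim \log(1/\delta)$ admissible dyadic values of $m(u)$ (note $m(u) \leq |\mathcal{T}_1(Q)| \leq \delta^{-O(1)}$), dropping all $u$ with $m(u)$ below $\delta^{-2\eta}$ times the dominant value changes $\sum_u l \cdot m(u)$ by at most a $\log$-factor, so the dominant dyadic block still satisfies $l m_Q |\mathcal{R}(Q)| \gtrapprox \mathcal{I}(P_1 \cap Q, \mathcal{T}_1(Q)) \gtrapprox \delta^{2\eta}\mathfrak{r}|P_1 \cap Q|$.

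I do not expect a serious obstacle here — this claim is a bookkeeping step. The only mild subtlety is keeping the $\lessapprox$-constants consistent with those already fixed in Claim \ref{c4}(i), and remembering that $l \sim |P_T|$ makes $|P_T| \sim l$ exact up to a factor of $2$, so no loss accumulates beyond the $(\log(1/\delta))^{O(1)}$ already absorbed into $\lessapprox$. The real work of the proof of Theorem \ref{theorem: incidence bound} comes afterward, when $m_Q$ and $\mathcal{R}(Q)$ are fed into the Wang--Wu estimate (Theorem \ref{t:WW}) at scale $\Delta$ after rescaling $Q$ to the unit square.
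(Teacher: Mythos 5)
Your argument is correct and essentially the same as the paper's proof: both decompose the incidences over dyadic multiplicity classes of the tubelets, use $|P_{1} \cap P_{T}| \sim l$ from Claim \ref{c2}(i), and pigeonhole among the $\lesssim \log(1/\delta)$ admissible dyadic values of $m(u)$ to extract $m_{Q}$ and $\mathcal{R}(Q)$, with the left inequality in (ii) supplied directly by Claim \ref{c4}(i). Your worry that discarding low-multiplicity tubelets might ``kill the lower bound'' is moot, since that bound concerns the full incidence count $\mathcal{I}(P_{1}\cap Q,\mathcal{T}_{1}(Q))$, which is untouched by the selection of $\mathcal{R}(Q)$; this over-caution does not affect correctness.
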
 
 
 \begin{remark} Recall that $P_{T} \subset Q$ for all $T \in \mathcal{T}_{1}(Q)$. \end{remark} 
 
 \begin{proof}[Proof of Claim \ref{c3}] For $m \in 2^{\mathbb{N}}$, let $\mathcal{R}(Q,m) := \{u \in \mathcal{R}_{0}(Q) : m(u) \sim m\}$. Then, 
 \begin{align*} \mathcal{I}(P_{1} \cap Q,\mathcal{T}_{1}(Q)) & = \sum_{T \in \mathcal{T}_{1}(Q)} |P_{1} \cap P_{T}| \leq \sum_{m \in 2^{\mathbb{N}}} \sum_{u \in \mathcal{R}(Q,m)} \mathop{\sum_{T \in \mathcal{T}_{1}(Q)}}_{T \prec u} |P_{1} \cap P_{T}|\\
 & \stackrel{\mathrm{C.\,} \ref{c2}}{\lesssim} \sum_{m \in 2^{\mathbb{N}}} |\mathcal{R}(Q,m)| \cdot lm. \end{align*} 
 Consequently, there exists $m =: m_{Q}$ such that $\mathcal{R}(Q) := \mathcal{R}(Q,m)$ satisfies (ii). \end{proof} 
 
For $Q \in \mathcal{R}(Q)$ fixed, we record the observation 
 \begin{equation}\label{form9} l m_{Q} |\mathcal{R}(Q)| \stackrel{\mathrm{C.\,} \ref{c3}}{\gtrapprox} \delta^{2\eta}\mathfrak{r} |P_{1} \cap Q| \quad \Longrightarrow \quad l \gtrapprox \frac{\delta^{2\eta}\mathfrak{r}|P_{1} \cap Q|}{m_{Q} |\mathcal{R}(Q)|}. \end{equation}

 Recall that the ``original" tube-family $\mathcal{T}$ was assumed to be a Katz--Tao $(\delta,1,C_{\mathrm{KT}})$-set. The rectangles $\mathcal{R}(Q)$ form a Katz--Tao $(\delta/\Delta,1)$-set of $(\delta/\Delta)$-tubes in the following sense:
 
 \begin{claim}\label{c6} Let $Q \in \mathcal{Q}$, and let $S_{Q} \colon \R^{2} \to \R^{2}$ be the affine rescaling map taking $Q$ to $[0,1)^{2}$. Then the family $\{S_{Q}(u) : u \in \mathcal{R}(Q)\}$ is a Katz--Tao $((\delta/\Delta),1,O(C_{\mathrm{KT}}'))$-set (of distinct ordinary $(\delta/\Delta)$-tubes), where 
 \begin{displaymath} C_{\mathrm{KT}}' := C_{\mathrm{KT}}C_{\mathrm{reg}}m_{Q}^{-1}\Delta^{-s}, \end{displaymath}
Moreover, $|\mathcal{R}(Q)| \lesssim |\mathcal{T}_{1}(Q)|/m_{Q}$. \end{claim}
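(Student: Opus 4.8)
The plan is to verify the Katz--Tao property for $\{S_{Q}(u) : u \in \mathcal{R}(Q)\}$ directly from the definition, by transferring the information we have about $\mathcal{T}$ (its Katz--Tao $(\delta,1,C_{\mathrm{KT}})$-property and the $(\delta,s,C_{\mathrm{reg}})$-regularity of $\sigma(\mathcal{T})$) through the rescaling $S_Q$. First I would note that the rectangles $u \in \mathcal{R}(Q)$ have dimensions $\delta \times \Delta$ and intersect the $\Delta$-square $Q$, so $S_Q(u)$ is an ordinary $(\delta/\Delta)$-tube contained in a bounded dilate of $[0,1)^2$; distinctness is inherited from the maximal choice of $\mathcal{R}_0(Q)$. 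The core of the argument is the bound: for every $(\delta/\Delta) \leq \rho \leq 1$ and every $\rho \times 2$ rectangle $R$ in the rescaled picture, $|\{u \in \mathcal{R}(Q) : S_Q(u) \subset R\}| \lesssim C_{\mathrm{KT}}'(\rho\Delta/\delta)$. Pulling $R$ back by $S_Q^{-1}$ gives a rectangle $R'$ of dimensions $\rho\Delta \times 2\Delta$ inside a bounded neighbourhood of $Q$. Each $u \in \mathcal{R}(Q)$ with $S_Q(u) \subset R$ carries $m(u) \sim m_Q$ tubes $T \in \mathcal{T}_1(Q)$ with $T \prec u$, i.e.\ $T \cap Q \subset Cu \subset CR'$; distinct such $u$ give essentially disjoint tube-families (a tube $T$ satisfies $T \prec u$ for only $O(1)$ rectangles $u$), so the number of $u$ in $R$ is $\lesssim m_Q^{-1} |\{T \in \mathcal{T}_1(Q) : T \cap Q \subset CR'\}|$.

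It remains to estimate $N := |\{T \in \mathcal{T}_1(Q) : T \cap Q \subset CR'\}|$. Here two constraints act in tandem. On the one hand, $\mathcal{T}$ is a Katz--Tao $(\delta,1,C_{\mathrm{KT}})$-set, and a $\delta \times 1$ tube $T$ whose intersection with the $\Delta$-square $Q$ lies in a $\rho\Delta$-wide slab is contained (globally) in a rectangle of width $O(\rho\Delta)$ and length $2$ — wait, this is not quite right: the slab $CR'$ only localizes $T$ inside $Q$, and $T$ extends to length $1$. The correct device is to use the slope: the tubes $T$ with $T \cap Q \subset CR'$ and, say, a fixed slope, are genuinely confined to a width-$O(\rho\Delta)$ rectangle (inside $Q$, hence — since they share a slope — inside a width-$O(\rho\Delta + \Delta \cdot (\text{slope spread}))$ global rectangle), and the Katz--Tao $(\delta,1,C_{\mathrm{KT}})$-property bounds the count per slope by $O(C_{\mathrm{KT}} \cdot \max\{\rho\Delta,\delta\}/\delta)$. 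The number of distinct slopes $\sigma(T)$ appearing among these tubes is controlled by the $(\delta,s,C_{\mathrm{reg}})$-regularity of $\sigma(\mathcal{T})$: since all such $T$ meet the same $\Delta$-square $Q$ and lie within a $\rho\Delta$-neighbourhood of a line, their slopes lie in an interval of length $O(\rho)$, hence $|\sigma(\mathcal{T}) \cap (\text{interval of length } \rho)|_{\delta} \lesssim C_{\mathrm{reg}}(\rho/\delta)^s$ — or more precisely one applies regularity at scales $\delta \leq \rho \leq 1$ inside a length-$\rho$ interval. Multiplying the per-slope bound by the slope count and dividing by $m_Q$ produces $N/m_Q \lesssim C_{\mathrm{KT}}C_{\mathrm{reg}} m_Q^{-1} \rho^s \Delta^{?} \delta^{-1} \cdot \rho\Delta \cdot \ldots$; bookkeeping the exponents against $\rho \Delta/\delta$ and the definition $C_{\mathrm{KT}}' = C_{\mathrm{KT}}C_{\mathrm{reg}}m_Q^{-1}\Delta^{-s}$ is what pins down the claim, using $s \leq 1$ and $\rho \leq 1$ to absorb the leftover $\rho$-power.

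The final assertion $|\mathcal{R}(Q)| \lesssim |\mathcal{T}_1(Q)|/m_Q$ is the case $\rho = 1$ (or just summing $m(u) \sim m_Q$ over all $u \in \mathcal{R}(Q)$ against $\sum_u m(u) \lesssim |\mathcal{T}_1(Q)|$, since each $T$ contributes to $O(1)$ values of $u$).

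\textbf{Main obstacle.} The delicate point is handling the interaction between localization and length: a $\delta \times 1$ tube is only localized to width $\rho\Delta$ \emph{inside} the small square $Q$, not globally, so one cannot apply the Katz--Tao property of $\mathcal{T}$ to a width-$\rho\Delta$ rectangle directly. The fix — grouping by slope so that same-slope tubes passing through $Q$ in a thin slab \emph{are} globally confined, and then using the regularity of $\sigma(\mathcal{T})$ to count the slopes — is the crux, and getting the two exponents ($C_{\mathrm{KT}}$ contributing a factor $\rho\Delta/\delta$ per slope, $C_{\mathrm{reg}}$ contributing $(\rho/\delta)^s$ slopes) to combine into the stated $C_{\mathrm{KT}}'$ with the right power of $\rho\Delta/\delta$ requires care, but is ultimately bookkeeping once the slope-decomposition is in place.
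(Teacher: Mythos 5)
Your opening reduction (dividing out $m_{Q}$ via $m(u) \sim m_{Q}$ and the $O(1)$-multiplicity of the relation $T \prec u$, so that the count of $u$ inside a test rectangle is $\lesssim m_{Q}^{-1}|\{T \in \mathcal{T}_{1}(Q) : T \prec \mathbf{R}\}|$) and the final assertion $|\mathcal{R}(Q)| \lesssim |\mathcal{T}_{1}(Q)|/m_{Q}$ both match the paper. The gap is in the core count, exactly at the step you flag as "bookkeeping". Decomposing by individual $\delta$-slopes does not close: per fixed slope, the parallel tubes meeting $Q$ inside a slab of width $w := \rho\Delta$ number $\lesssim w/\delta$ (here the Katz--Tao hypothesis buys nothing beyond disjointness of parallel tubes), and the number of admissible slopes in an interval of length $O(w/\Delta)$, counted at resolution $\delta$ as you propose, can be as large as $C_{\mathrm{reg}}\bigl(\tfrac{w/\Delta}{\delta}\bigr)^{s}$. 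The product is $C_{\mathrm{KT}}C_{\mathrm{reg}}\Delta^{-s}(w/\delta)^{1+s}$, whereas the claim requires $C_{\mathrm{KT}}C_{\mathrm{reg}}\Delta^{-s}(w/\delta)$. The excess factor $(w/\delta)^{s}$ is $\geq 1$ and can be as large as $(\Delta/\delta)^{s}$; it is a power of $w/\delta \geq 1$, not of a quantity bounded by $1$, so it cannot be "absorbed using $s \leq 1$ and $\rho \leq 1$". The slope-by-slope scheme over-counts because it never uses the Katz--Tao condition jointly across nearby slopes, which is where its strength lies.

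The paper's proof repairs precisely this by choosing the grouping at the scale of the test rectangle rather than at scale $\delta$: cover $\{T \in \mathcal{T}_{1}(Q) : T \prec \mathbf{R}\}$ by dyadic $w$-tubes $\mathbf{T}_{1},\ldots,\mathbf{T}_{M}$ (thickness equal to the width $w$ of $\mathbf{R}$ in the original coordinates). Since all these $T$ meet $Q$ inside the slab $C\mathbf{R}$, whose long direction agrees with their slopes up to $O(w/\Delta)$, tubes whose slopes differ by $\leq w$ stay within $O(w)$ of each other over unit length, so each $w$-range of slopes contributes $O(1)$ tubes $\mathbf{T}_{j}$; hence $M \lesssim |\sigma(\mathcal{T}) \cap J|_{w}$ with $|J| \lesssim w/\Delta$, and regularity at the coarse resolution $w$ gives $M \lesssim C_{\mathrm{reg}}\Delta^{-s}$ (no power of $w/\delta$ at all). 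The Katz--Tao hypothesis is then applied once per $\mathbf{T}_{j}$, giving $|\mathcal{T} \cap \mathbf{T}_{j}| \leq C_{\mathrm{KT}}(w/\delta)$ \emph{simultaneously over all} $\sim w/\delta$ distinct $\delta$-slopes inside $\mathbf{T}_{j}$. Multiplying, $|\{T : T \prec \mathbf{R}\}| \lesssim C_{\mathrm{KT}}C_{\mathrm{reg}}\Delta^{-s}(w/\delta)$, which after dividing by $m_{Q}$ is exactly the claimed Katz--Tao constant $C_{\mathrm{KT}}'$. So the architecture of your argument is right, but the resolution at which the two hypotheses are invoked must be $w$ (for the slope count) and the full $w$-tube (for Katz--Tao), not $\delta$ and a single slope.
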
 
 
 \begin{remark} We record that $m_{Q} \lesssim C_{\mathrm{reg}}\Delta^{-s}$, so $C_{\mathrm{KT}}' \gtrsim C_{\mathrm{KT}} \geq 1$. The reason is that $m(u) \lesssim C_{\mathrm{reg}}\Delta^{-s}$ uniformly. To see this, note that if $T \prec u$, then the slope of $T$ lies in some fixed interval $J = J(u)$ with $\diam(J) \lesssim \delta/\Delta$. For each slope $\sigma \in \delta \Z \cap J$ there are $\lesssim 1$ tubes $T \prec u$ with $\sigma(T) = \sigma$. On the other hand, since $\sigma(\mathcal{T})$ is $(\delta,s,C_{\mathrm{reg}})$-regular, we have $|\sigma(\mathcal{T}) \cap J| \lesssim C_{\mathrm{reg}}\Delta^{-s}$. \end{remark}
 
 \begin{proof}[Proof of Claim \ref{c6}] Let $\rho \in 2^{-\mathbb{N}} \cap [\delta,\Delta]$, and let $\mathbf{R}$ be a rectangle of dimensions $2\Delta \times \rho$. We need to show that 
 \begin{displaymath} |\{u \in \mathcal{R}(Q) : u \subset \mathbf{R}\}| \leq C_{\mathrm{KT}}'(\rho/\delta). \end{displaymath}
 To prove this, we first estimate
\begin{align} |\{u \in \mathcal{R}(Q) : u \subset \mathbf{R}\}| & \lesssim m_{Q}^{-1} \mathop{\sum_{u \in \mathcal{R}(Q)}}_{u \subset \mathbf{R}} |\{T \in \mathcal{T}_{1}(Q) : T \prec u\}| \notag\\
&\label{form29} \lesssim m_{Q}^{-1} |\{T \in \mathcal{T}_{1}(Q) : T \prec \mathbf{R}\}|. \end{align} 
Here $T \prec \mathbf{R}$ means the same as ``$T \prec u$ for at least one $u \in \mathcal{R}(Q)$ with $u \subset \mathbf{R}$". We record that the estimate $|\mathcal{R}(Q)| \lesssim |\mathcal{T}_{1}(Q)|/m_{Q}$ claimed as the last item in Claim \ref{c6} follows directly from the calculation above, just omitting the inclusion ``$u \subset \mathbf{R}$" everywhere.

Write $\mathcal{T}(\mathbf{R}) := \{T \in \mathcal{T}_{1}(Q) : T \prec \mathbf{R}\}$. To estimate $|\mathcal{T}(\mathbf{R})|$, let $\mathbf{T}_{1},\ldots,\mathbf{T}_{M} \in \mathcal{T}^{\rho}(\mathcal{T})$ be the list of dyadic $\rho$-tubes which contain at least one element of $\mathcal{T}(\mathbf{R}) \subset \mathcal{T}$, thus $\mathcal{T}(\mathbf{R}) \subset \bigcup_{j \leq M} (\mathcal{T} \cap \mathbf{T}_{j})$. By the Katz--Tao $(\delta,1,C_{\mathrm{KT}})$-set hypothesis,
\begin{displaymath} |\mathcal{T} \cap \mathbf{T}_{j}| \leq C_{\mathrm{KT}}(\rho/\delta), \qquad 1 \leq j \leq M. \end{displaymath}
On the other hand, the slope of each $\mathbf{T}_{j}$ lies on a fixed interval $J = J(\mathbf{R}) \subset [-1,1]$ of length $\lesssim \rho/\Delta$. This implies by the $(\delta,s,C_\mathrm{reg})$-regularity of $\sigma(\mathcal{T})$ that 
\begin{displaymath} M \lesssim |\sigma(\mathcal{T}) \cap J|_{\rho} \lesssim C_{\mathrm{reg}}\Delta^{-s}. \end{displaymath} 
Therefore, $|\{T \in \mathcal{T}_{1}(Q) : T \prec \mathbf{R}\}| \lesssim C_{\mathrm{KT}}C_{\mathrm{reg}}\Delta^{-s} \cdot (\rho/\delta)$, as claimed, and now the claim follows from \eqref{form29}. \end{proof}

 We apply Theorem \ref{t:WW} at scale $\delta/\Delta \lesssim \delta^{\epsilon/2}$ (Corollary \ref{cor1}) with constant $C = C_{\mathrm{KT}}'$ to
 \begin{itemize}
 \item the set $\bar{P} := S_{Q}(P_{1} \cap Q)$, 
 \item the family of $(\delta/\Delta)$-tubes $\mathbb{T} := \{S_{Q}(u) : u \in \mathcal{R}(Q)\}$,
 \item the subsets $\bar{P}_{S_{Q}(u)} := S_{Q}(P_{1} \cap P_{T}) \subset \bar{P} \cap S_{Q}(u)$,
 \end{itemize}
 where $T \in \mathcal{T}_{1}(Q)$ is an arbitrary tube satisfying $T \prec u$. Then $|\bar{P}_{S_{Q}(u)}| \geq |P_{1} \cap P_{T}| \sim l$ by Claim \ref{c2}. We also recall from Corollary \ref{cor2} that $\bar{P}_{S_{Q}(u)}$ is a $((\delta/\Delta),\eta,O_{\epsilon}(1))$-set with $\eta \leq \epsilon/10$. With this numerology, Theorem \ref{t:WW} implies\footnote{We hide the ``$\delta^{-\epsilon}$" constant from Theorem \ref{t:WW} in the $\lessapprox$ notation.}
 
 \begin{align*} |P_{1} \cap Q| \sim |\bar{P}|_{\delta/\Delta} \geq \Big| \bigcup_{\mathbf{T} \in \mathbb{T}} \bar{P}_{\mathbf{T}} \Big|_{\delta/\Delta} & \gtrapprox \frac{1}{C_{\mathrm{KT}}'}(\delta/\Delta)^{(1 + \eta)/2}l^{3/2}|\mathcal{R}(Q)|\\
 & \stackrel{\eqref{form9}}{\gtrapprox} \frac{\delta^{3\eta}}{C_{\mathrm{KT}}'}(\mathfrak{r}/m_{Q})^{3/2}|P_{1} \cap Q|^{3/2}(\delta/\Delta)^{(1 + \eta)/2}|\mathcal{R}(Q)|^{-1/2}. \end{align*} 
Taking also into account that $|\mathcal{R}(Q)| \lesssim |\mathcal{T}_{1}(Q)|/m_{Q}$, this can be rearranged into
 \begin{align*} |P_{1} \cap Q| & \lessapprox \delta^{-6\eta}(C_{\mathrm{KT}}')^{2}\frac{(\Delta/\delta)^{1 + \eta}|\mathcal{R}(Q)|}{(\mathfrak{r}/m_{Q})^{3}}\\
 & \stackrel{\mathrm{C.\,} \ref{c6}}{\lesssim}  \delta^{-6\eta}(C_{\mathrm{KT}}C_{\mathrm{reg}}\Delta^{-s})^{2} \cdot \frac{(\Delta/\delta)^{1 + \eta} |\mathcal{T}_{1}(Q)|}{\mathfrak{r}^{3}}. \end{align*}
 Here, using $\mathfrak{r} \geq C_{\mathrm{KT}}C_{\mathrm{reg}}\Delta^{-s}$ (recall \eqref{form41}), and $s \geq \tfrac{1}{2}$, it holds
  \begin{displaymath} \frac{(C_{\mathrm{KT}}C_{\mathrm{reg}}\Delta^{-s})^{2}\Delta}{\mathfrak{r}^{3}} = \frac{(C_{\mathrm{KT}}C_{\mathrm{reg}}\Delta^{-s})^{2 - 1/s} \cdot (C_{\mathrm{KT}}C_{\mathrm{reg}})^{1/s}}{\mathfrak{r}^{3}}\leq \frac{(C_{\mathrm{KT}}C_{\mathrm{reg}})^{1/s}}{\mathfrak{r}^{(s + 1)/s}}. 
  \end{displaymath}
  Consequently, 
  \begin{displaymath} |P_{1} \cap Q| \lessapprox \delta^{-7\eta} \frac{(C_{\mathrm{KT}}C_{\mathrm{reg}})^{1/s} \cdot \delta^{-1}|\mathcal{T}_{1}(Q)|}{\mathfrak{r}^{(s + 1)/s}}, \qquad Q \in \mathcal{Q}. \end{displaymath}
To finish the proof of \eqref{form40}, sum the preceding estimate over $Q \in \mathcal{Q}$, recall from Claim \ref{c4} that $|P(\mathfrak{r})| \lessapprox \delta^{-2\eta} \sum_{Q \in \mathcal{Q}} |P_{1} \cap Q|$, and note that $\sum_{Q \in \mathcal{Q}} |\mathcal{T}_{1}(Q)| \leq |\mathcal{T}|$. \end{proof}
\subsection{Sharpness of Theorem \ref{theorem: incidence bound}}\label{sec: sharp example}
The following example will demonstrate the sharpness of the factor $C_{\mathrm{reg}}^{1/s}$ in Theorem \ref{theorem: incidence bound}. 
\begin{figure}[h!]
\begin{center}
\begin{overpic}[scale = 0.8]{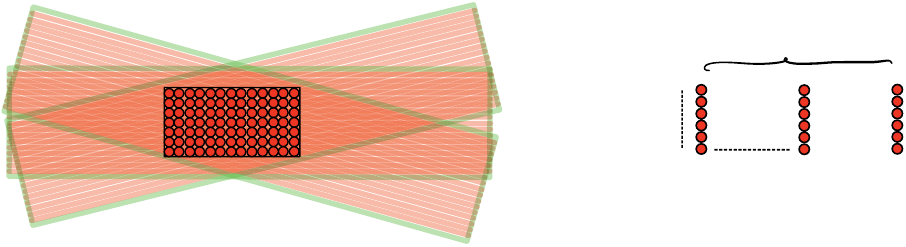}
\put(4,4){\small{$\mathcal{T}$}}
\put(25,3.5){$\small{P}$}
\put(82,11.5){\small{$\delta^{s}$}}
\put(69,13){\small{$\delta^{s - 1}$}}
\put(87,22){\small{$r$}}
\put(87.5,7){\small{$\mathcal{T}$}}
\end{overpic}
\caption{On the left: the set $P$ and the tubes $\mathcal{T}$. On the right: the dual squares of the tubes $\mathcal{T}$.}\label{fig1}
\end{center}
\end{figure}
\begin{example} For $C\geq 1$ and $s \in [\tfrac{1}{2},1)$ given, we will find (arbitrarily small) $\delta \in 2^{-\mathbb{N}}$, $1 \leq r \leq \delta^{-s}$, and a Katz-Tao $(\delta,1)$-set of $\delta$-tubes $\mathcal{T}$ such that $\sigma(\mathcal{T})$ is $(\delta,1,C)$-regular, and
\begin{equation}\label{form43} |\p_r(\mathcal{T})|_{\delta} \gtrsim C^{1/s} \cdot \frac{\delta^{-1}|\mathcal{T}|}{r^{(1 + s)/s}}. \end{equation}

Fix $\delta \in 2^{-\mathbb{N}}$ so small that $\delta^{-s(1 - s)} \geq C$. Fix also $r \in \{1,\ldots,\delta^{-s}\}$ such that $C \sim r^{1 - s}$. Let 
\begin{displaymath} P := [0,r^{-1}] \times [0,c\delta^{s}], \end{displaymath}
where $c > 0$ is a suitable absolute constant. Observe that $P$ is contained in many $\delta^{s}$-tubes. More precisely, there exists an arc $J \subset \mathbb{S}^{1}$ of directions with $\mathcal{H}^{1}(J) \sim r\delta^{s}$ such that for each $\theta \in J$, we have $P \subset \mathbf{T}$ for some $(1 \times \delta^{s})$-tube $\mathbf{T}_{\theta}$ whose longer side is parallel to $\theta$. 

Let $\Theta \subset J$ be a maximal $\sim \delta^{s}$-separated subset with $|\Theta| = r$, and let 
\begin{displaymath} \mathbb{T} := \{\mathbf{T}_{\theta} : \theta \in \Theta\}. \end{displaymath}
The tubes in $\mathbb{T}$ are depicted with green outlines in Figure \ref{fig1}. 

To define the announced Katz-Tao $(\delta,1)$-set of $\delta$-tubes $\mathcal{T}$, fix $\mathbf{T} \in \mathbb{T}$, and let $\mathcal{T}(\mathbf{T})$ be a maximal family of distinct parallel $\delta$-tubes contained in $\mathbf{T}$ (thus $\mathbf{T} \subset \bigcup_{T \in \mathcal{T}(\mathbf{T})} T$). Then, set
\begin{equation}\label{form44} \mathcal{T} := \bigcup_{\mathbf{T} \in \mathbb{T}} \mathcal{T}(\mathbf{T}) \quad \Longrightarrow \quad |\mathcal{T}| \sim r \cdot \delta^{s - 1}. \end{equation}
Note that every point $p \in P$ is contained in $|\Theta| = r$ many tubes in $\mathbb{T}$, and therefore $\geq r$ tubes in $\mathcal{T}$. In other words $P \subset \p_r(\mathcal{T})$, which gives $|\p_r(\mathcal{T})|_{\delta} \geq |P|_{\delta} \gtrsim \delta^{s - 2}/r$.

To understand the Katz-Tao and regularity properties of $\mathcal{T}$ and $\sigma(\mathcal{T})$, it is profitable to view the point-line dual of $\mathcal{T}$, shown on the right hand side of Figure \ref{fig1}. Each column of red discs in that picture represents the dual of some fixed $\mathcal{T}(\mathbf{T})$ -- consisting of parallel tubes. Evidently the dual of $\mathcal{T}$ (and therefore $\mathcal{T}$ itself) is a Katz-Tao $(\delta,1)$-set with a constant independent of $r \in [1,\delta^{-s}]$.

To bound the regularity constant of the slope set $\sigma(\mathcal{T})$ (viewed here as a subset of $\mathbb{S}^{1})$, note that $\sigma(\mathcal{T}) = \Theta$, because the tubes in $\mathcal{T}(\mathbf{T})$ are parallel to $\mathbf{T}$. Therefore, for $\delta \leq \rho \leq R \leq 1$,
\begin{displaymath} |\sigma(\mathcal{T}) \cap B(\theta,R)|_{\rho} = |\Theta \cap B(\theta,R)|_{\rho}.  \end{displaymath}
We claim that the right hand side is $\lesssim r^{1 - s}(R/\rho)^{s}$. Since $\diam(\Theta) \leq r\delta^{s}$, it suffices to consider $R \leq r\delta^{s}$, and since $\Theta$ is $\delta^{s}$-separated, it suffices to consider $\rho \geq \delta^{s}$. In this range $R/r \leq r$, and the claimed estimate follows from the trivial bound $|\Theta \cap B(\theta,R)|_{\rho} \lesssim (R/\rho) \leq r^{1 - s}(R/\rho)^{s}$.

Since $C \sim r^{1 - s}$, we have now shown that $\sigma(\mathcal{T})$ is $(\delta,s,O(C)))$-regular. Finally, the estimate \eqref{form43} follows from recalling that $|\p_r(\mathcal{T})|_{\delta} \gtrsim \delta^{s - 2}/r$, and observing that also
\begin{displaymath} |\mathcal{P}_{r}(\mathcal{T})| \gtrsim \delta^{s - 2}/r  = r^{(1 - s)/s} \cdot \frac{\delta^{-1} \cdot (r \cdot \delta^{s - 1})}{r^{(1 + s)/s}} \stackrel{\eqref{form44}}{\sim} C^{1/s} \cdot \frac{\delta^{-1}|\mathcal{T}|}{r^{(1 + s)/s}}.  \end{displaymath} 
\end{example}
\section{Nikod\'ym maximal estimate}
\label{sec: maximal estimate}
Let us recall the definition of quasi-Assouad dimension as introduced in \cite{qA_dimension}. Let $E\subseteq\R^d$ be a non-empty set. For $\gamma \in (0, 1)$, define
$$
h_E(\gamma):= \inf \left\{ \alpha : 
 \exists\,C > 0 \text{ such that,}\; 
 \text{for all}\, 0 < r<r^{1-\gamma}< R \;\text{and}\; x \in E,\;
|B(x, R) \cap E \big|_r \leqslant C \left( \frac{R}{r} \right)^\alpha 
\right\}.$$
The \textit{quasi-Assouad dimension} of $E$ is defined as the limit
\[
\dim_{\mathrm{qA}} E := \lim_{\gamma \to 0^+} h_E(\gamma).
\]
For the empty set, we define $\dim_{qA} \varnothing := 0$. We list some basic features of $\qad$ that will be used in \S\ref{sec: bochner--riesz}. 
\begin{lemma}
    The quasi-Assouad dimension satisfies the following properties.
    \begin{enumerate}
        \item Monotonicity: $\qad E\leq \qad F$ whenever $E\subseteq F$.
        \item Finite stability: $\qad(E\cup F)=\max\{\qad E,\qad F\}$.
        \item Bi-Lipschitz invariance: $\qad(f(E))=\qad(E)$ for any bi-Lipschitz map $f$.
        \item Stability under closure: $\qad(\overline{E})=\qad(E)$.
    \end{enumerate}
\end{lemma}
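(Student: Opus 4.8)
The plan is to verify each of the four stated properties directly from the definition of $\qad$ via $h_E(\gamma)$, reducing everything to routine covering-number estimates and the fact that $\qad E = \lim_{\gamma \to 0^+} h_E(\gamma)$ is a limit of a monotone family.

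\textbf{Monotonicity.} Suppose $E \subseteq F$. Fix $\gamma \in (0,1)$. For $x \in E \subseteq F$ and $0 < r < r^{1-\gamma} < R$, we have $|B(x,R) \cap E|_r \leq |B(x,R) \cap F|_r$, so any $\alpha$ admissible in the definition of $h_F(\gamma)$ (with some constant $C$) is also admissible for $h_E(\gamma)$ with the same $C$ — but one must be slightly careful because the center $x$ ranges over $E$ rather than $F$; since $E \subseteq F$, the supremum over $x \in E$ is no larger, so $h_E(\gamma) \leq h_F(\gamma)$. Letting $\gamma \to 0^+$ gives $\qad E \leq \qad F$.

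\textbf{Finite stability.} The inequality $\qad(E \cup F) \geq \max\{\qad E, \qad F\}$ follows from monotonicity. For the reverse, fix $\gamma$ and let $\alpha > \max\{h_E(\gamma), h_F(\gamma)\}$, with constants $C_E, C_F$ witnessing admissibility of $\alpha$ for $E$ and $F$. For $x \in E \cup F$, say $x \in E$ (the case $x \in F$ is symmetric), and $0 < r < r^{1-\gamma} < R$, cover $B(x,R) \cap (E \cup F) = (B(x,R)\cap E) \cup (B(x,R) \cap F)$. The first piece needs $\leq C_E (R/r)^\alpha$ balls. For the second piece, $B(x,R) \cap F$: if this is empty there is nothing to add; otherwise pick $y \in B(x,R)\cap F$, so $B(x,R) \subseteq B(y,2R)$ and $|B(x,R)\cap F|_r \leq |B(y,2R)\cap F|_r \leq C_F (2R/r)^\alpha \lesssim C_F (R/r)^\alpha$. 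Hence $\alpha$ is admissible for $E \cup F$ with constant $\lesssim C_E + C_F$, giving $h_{E\cup F}(\gamma) \leq \max\{h_E(\gamma), h_F(\gamma)\}$; let $\gamma \to 0^+$.

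\textbf{Bi-Lipschitz invariance and stability under closure.} If $f$ is $L$-bi-Lipschitz, then $f$ maps an $r$-cover of $B(x,R)\cap E$ to an $Lr$-cover of $f(B(x,R)\cap E)$, and $f(B(x,R) \cap E) \subseteq B(f(x), LR) \cap f(E)$; combined with a covering-number comparison $|A|_{Lr} \leq |A|_r$ and $|A|_r \lesssim_L |A|_{Lr}$ (at the cost of constants depending only on $L$ and the dimension), the ratio $(R/r)$ is distorted only by a bounded multiplicative factor, which does not affect the exponent $\alpha$ in the limit; moreover the constraint $r < r^{1-\gamma} < R$ is stable under scaling $r,R$ by bounded factors up to shrinking $\gamma$ slightly, and since we take $\gamma \to 0^+$ this is harmless. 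Thus $h_{f(E)}(\gamma') \leq h_E(\gamma)$ for $\gamma'$ slightly smaller than $\gamma$, and symmetrically using $f^{-1}$; passing to the limit gives equality. For closure: $E \subseteq \overline{E}$ gives $\qad E \leq \qad \overline{E}$ by monotonicity; conversely, for $x \in \overline{E}$ and any cover, every $r$-ball meeting $\overline{E}$ can be enlarged to a $2r$-ball with center in $E$ meeting $E$, so $|B(x,R) \cap \overline{E}|_r \lesssim |B(x', 2R) \cap E|_{r/2}$ for a nearby $x' \in E$ — again only bounded distortions of $r$ and $R$ — hence $h_{\overline{E}}(\gamma') \leq h_E(\gamma)$ and $\qad \overline{E} \leq \qad E$.

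\textbf{Main obstacle.} None of these steps is deep; the only genuine care needed is bookkeeping the constraint $0 < r < r^{1-\gamma} < R$ (equivalently $R > r^{1-\gamma}$, i.e.\ a gap between scales) under the bounded rescalings of $r$ and $R$ that appear in the closure and bi-Lipschitz arguments. The clean way to handle this is to observe that replacing $(r,R)$ by $(cr, CR)$ for fixed constants $c,C$ transforms the condition $R \geq r^{1-\gamma}$ into something implied by $R \geq r^{1-\gamma'}$ for any $\gamma' < \gamma$ once $r$ is small, and then exploit that $\qad$ is the $\gamma \to 0^+$ limit, so these $o(1)$-in-$\gamma$ losses vanish. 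I would state this once as a small lemma ("$h_E$ is insensitive to bounded rescalings in the limit") and invoke it in parts (3) and (4).
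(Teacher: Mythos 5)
Your argument is correct, but it takes a different route from the paper: the paper does not prove this lemma at all, it simply cites Table 3.1 in \S 3.4.2 of Fraser's book on Assouad-type dimensions, whereas you give a direct, self-contained verification from the definition of $h_E(\gamma)$. Your four reductions are the standard ones and each goes through: monotonicity is immediate since admissible pairs $(\alpha,C)$ for $F$ remain admissible for $E$; finite stability uses subadditivity of covering numbers plus re-centring $B(x,R)\subseteq B(y,2R)$ for some $y\in B(x,R)\cap F$; and the bi-Lipschitz and closure statements reduce to bounded rescalings $(r,R)\mapsto(cr,CR)$, which only distort the ratio $R/r$ and hence the constant $C$, not the exponent $\alpha$. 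One small simplification you could make: the ``shrink $\gamma$ slightly'' device in your final paragraph is not actually needed, because the gap condition transfers at the \emph{same} $\gamma$ --- if $0<r<r^{1-\gamma}<R$ and $c\le 1\le C$, then $(cr)^{1-\gamma}\le r^{1-\gamma}<R\le CR$, so the pair $(cr,CR)$ is again admissible in the definition of $h(\gamma)$ (and the definition in this paper imposes no upper bound on $R$, so enlarging $R$ costs nothing). With that observation your auxiliary lemma becomes a one-line remark and the whole proof is purely a matter of constants being absorbed into $C$. What your approach buys is self-containedness (useful if one does not want to chase the exact formulation in the cited reference, where the properties are stated for a family of dimensions); what the paper's citation buys is brevity and the assurance that edge cases (e.g.\ empty sets, bi-Lipschitz maps defined only on $E$) are handled in a standard reference.
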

\begin{proof}
    See Table 3.1 in \cite[\S3.4.2]{fraser_book}.
\end{proof}
Let us first show the sharpness of the exponent $1+s$ in Theorem \ref{theorem: main}.
\begin{prop}\label{prop: sharpness}
    Let $\Theta\subseteq\mathbb{S}^1$ satisfy $\qad \Theta=s$, for $s\in[0,1]$. Suppose that for all $\epsilon>0$, there exists $C_\epsilon\geq 1$, such that
    \begin{equation}
        \label{eqn: sharpness nikodym bound}
        \|\N_{\Theta;\delta}f\|_{L^p(\R^2)}\leq C_\epsilon\delta^{-\epsilon}\|f\|_{L^p(\R^2)},
    \end{equation}
     for all $\delta\in(0,1)$. Then $p\geq 1+s$. 
\end{prop}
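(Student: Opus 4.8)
The plan is to construct, for a suitable small $\delta$, a ``bush'' or ``Nikod\'ym-type'' example testing the operator $\N_{\Theta;\delta}$ and to extract the lower bound $p \geq 1+s$ by making the ratio $\|\N_{\Theta;\delta}f\|_{L^p}/\|f\|_{L^p}$ grow like a positive power of $\delta$ unless $p \geq 1+s$. First I would unwind the definition of quasi-Assouad dimension: since $\qad\Theta = s$, for every small $\gamma > 0$ and every $\alpha < s$ there are arbitrarily small pairs of scales $\delta \le \rho^{1-\gamma} < \rho \le 1$ (after rescaling we may take $\rho$ to be a fixed scale and $\delta$ the small scale, with $\delta \le \rho^{1-\gamma}$) and a point $\theta_0 \in \Theta$, together with an arc $I$ of length $\rho$ centred at $\theta_0$, such that $|\Theta \cap I|_\delta \gtrsim (\rho/\delta)^{\alpha}$. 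Actually for the sharpness example it is cleanest to aim for the endpoint: pick $\alpha$ close to $s$, let $N := |\Theta \cap I|_\delta \gtrsim (\rho/\delta)^{\alpha}$, and fix a $\delta$-separated subset $\{\theta_1,\dots,\theta_N\} \subset \Theta \cap I$.

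Next I would build the test function and the test set. The natural choice is $f = \mathbf{1}_{B}$ where $B$ is a small ball (or a short tube) of radius $\sim \delta$ — more precisely, take $B = B(0,\delta)$, so $\|f\|_{L^p}^p \sim \delta^2$. For each $i$, let $T_i$ be the $\delta$-tube of length $1$ in direction $\theta_i$ passing through $0$; these are all tubes through a common point (a ``bush''), and since $\theta_i \in \Theta$ they are admissible directions for $\N_{\Theta;\delta}$. Each tube satisfies $\fint_{T_i}|f| \gtrsim \mathrm{Leb}(B \cap T_i)/\mathrm{Leb}(T_i) \sim \delta^2/\delta = \delta$, so $\N_{\Theta;\delta}f(x) \gtrsim \delta$ for every $x \in T_i$, hence on $E := \bigcup_i T_i$. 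The key geometric point is to lower-bound $\mathrm{Leb}(E)$: because the directions are $\delta$-separated and confined to an arc of length $\rho$, two tubes $T_i, T_j$ overlap only in an $O(\delta^2/|\theta_i-\theta_j|)$-measure region near the common point, and a standard computation (summing $\delta^2/(k\delta) = \delta/k$ over $k \lesssim N$ and using $N\delta \lesssim \rho$) gives $\mathrm{Leb}(E) \gtrsim N\delta \cdot (\text{something})$; more carefully $\mathrm{Leb}(E) \gtrsim \delta \cdot (\rho \wedge 1)$ when $N \sim \rho/\delta$, but with $N \sim (\rho/\delta)^\alpha$ one gets $\mathrm{Leb}(E) \gtrsim N\delta = \delta(\rho/\delta)^\alpha$ roughly (the overlap correction is a logarithmic loss at worst). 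So $\|\N_{\Theta;\delta}f\|_{L^p}^p \gtrsim \delta^p \cdot \mathrm{Leb}(E) \gtrsim \delta^{p+1}(\rho/\delta)^{\alpha}$.

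Plugging into \eqref{eqn: sharpness nikodym bound}: $\delta^{(p+1)/p}(\rho/\delta)^{\alpha/p} \lesssim \|\N_{\Theta;\delta}f\|_{L^p} \le C_\epsilon \delta^{-\epsilon}\|f\|_{L^p} \sim C_\epsilon \delta^{-\epsilon}\delta^{2/p}$, i.e. $(\rho/\delta)^{\alpha/p} \lesssim C_\epsilon\delta^{-\epsilon}\delta^{1/p}$. Since $\delta \le \rho^{1-\gamma}$, we have $\rho/\delta \ge \delta^{-1}\delta^{(1-\gamma)^{-1}\cdot(\text{...})}$ — more transparently, writing $\delta = \rho^{m}$ with $m \ge (1-\gamma)^{-1}$, the inequality becomes $\rho^{\alpha(1-m)/p} \lesssim C_\epsilon \rho^{m(1/p - \epsilon)}$; letting $\rho \to 0$ along the available sequence of scales forces $\alpha(1-m)/p \ge m(1/p-\epsilon)$ in the limit of the exponents, which after rearranging and sending $\gamma,\epsilon \to 0$ (so $m \to 1^{+}$ can be taken, or rather one takes $m$ large and divides) yields $\alpha \le \dots$; the cleanest bookkeeping is: the inequality $(\rho/\delta)^{\alpha} \lesssim C_\epsilon^p \delta^{-p\epsilon - 1}\cdot \delta^{0}$... let me instead keep $\delta$ and $\rho$ and note $\delta^{p\epsilon}(\rho/\delta)^{\alpha} \cdot \delta \lesssim C_\epsilon^p$, and since we may also let $\delta \to 0$ with $\rho/\delta \to \infty$ we must have the total $\delta$-power non-negative, i.e. (using $\delta = \rho^m$) $\alpha(1 - 1/m) \le 1 + p\epsilon$ roughly — wait, this is the wrong direction. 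The correct extraction: from $(\rho/\delta)^{\alpha/p} \lesssim C_\epsilon \delta^{1/p - \epsilon}$ and $\rho \ge \delta^{1/(1-\gamma)}$ we get $\delta^{-\alpha(1-\frac{1}{1-\gamma})/p}\cdot(\text{const}) \lesssim \delta^{\alpha/p}\cdot\delta^{-\alpha/p}\dots$ — I would at this point just carefully substitute $R = \rho$, $r = \delta$ from the definition of $h_E(\gamma)$, use that $h_E(\gamma) \to s$, choose $\alpha$ slightly below $s$ and $\gamma,\epsilon$ small, and read off from the competing powers of $\delta$ (which must balance as $\delta \to 0$) that necessarily $\frac{\alpha}{p}\cdot\frac{\gamma}{1-\gamma} \le \frac{1}{p} - \frac{\alpha}{p} + \epsilon$ is the wrong inequality and the right one gives $1 + s \le p$; the main obstacle is precisely this exponent bookkeeping together with correctly handling the overlap loss in $\mathrm{Leb}(E)$. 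I expect the overlap estimate for the bush $E = \bigcup T_i$ (showing it is not much smaller than $N\delta$, up to a $\log(1/\delta)$ or $\delta^{-\epsilon}$ factor, which is harmless since we already allow $\delta^{-\epsilon}$ losses) to be the only place requiring genuine care; everything else is definition-chasing.
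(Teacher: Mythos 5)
Your overall strategy (extract a two\-scale density burst from the quasi\-Assouad definition, build a bush of tubes in those directions, and test the operator) matches the paper's, but there is a genuine gap at the heart of the argument: the choice of test function $f=\mathbf{1}_{B(0,\delta)}$ cannot close the exponent bookkeeping, and this is exactly where your write\-up trails off. The quasi\-Assouad hypothesis only supplies scale pairs $(\delta,\rho)$ with $\rho>\delta^{1-\gamma}$ for \emph{small} $\gamma$, i.e.\ the guaranteed ratio is only $\rho/\delta\geq\delta^{-\gamma}$; the density burst may well occur between two nearby scales (this is the whole point of quasi\-Assouad versus box dimension, cf.\ the sets with $\overline{\dim}_{\mathrm{B}}\Theta\leq\epsilon$ but $\qad\Theta=1$). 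With your ball test function the computation gives $\delta\cdot(N\delta)^{1/p}\lesssim C_\epsilon\delta^{-\epsilon}\delta^{2/p}$, i.e.\ $N\lesssim C_\epsilon^{p}\delta^{1-p-p\epsilon}$, and since all you know is $N\gtrsim(\rho/\delta)^{\alpha}\geq\delta^{-\gamma\alpha}$, this only forces $p\geq 1+\gamma\alpha-p\epsilon$, which is vacuous as $\gamma\to 0$. Concretely: if $s=1$ and the burst has $N\sim\delta^{-\gamma}$ directions inside an arc of length $\rho=\delta^{1-\gamma}$, your inequality yields $p\geq 1+\gamma$ instead of $p\geq 2$. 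So the ball test only works when $\rho\sim 1$, i.e.\ under a box\-dimension hypothesis, not a quasi\-Assouad one.

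The missing idea is to adapt the test function to the pair of scales: take $f=\chi_R$ where $R=\bigcap_{T\in\mathbb{T}}T$ is the common intersection of the bush, a rectangle of dimensions $\sim(\delta/\rho)\times\delta$ with $\mathrm{Leb}(R)\gtrsim\delta^{2}/\rho$ (much larger than $\delta^{2}$ when $\rho\ll 1$). Then $\N_{\Theta;\delta}f\gtrsim\mathrm{Leb}(R)/\delta$ on $A=\bigcup T$ with $\mathrm{Leb}(A)\gtrsim\delta N$, and the resulting necessary condition is homogeneous in the ratio: $(\rho/\delta)^{\frac{1+s-2\epsilon'}{p}-1}\lesssim\delta^{-\epsilon}$. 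Combined with $\rho/\delta\geq\delta^{-\gamma}$ and a choice like $\epsilon=\gamma\epsilon'/p$ (also note the paper builds the constant $C_\epsilon^{p}$ into the count extracted from $h_\Theta(\gamma)\geq s-\epsilon'$, so constants are absorbed), this gives $p\geq 1+s-3\epsilon'$ and hence $p\geq 1+s$. Your secondary worries are not the issue: the overlap estimate $\mathrm{Leb}(\bigcup T_i)\gtrsim N\delta$ for a bush with $\delta$-separated directions is fine (pass to a $C\delta$-separated subfamily and use the far halves), and the ``centred at $x$ versus through $0$'' point is handled by noting that for $x\in T$ the tube centred at $x$ with the same direction still captures a fixed fraction of $R$.
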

\begin{proof}
Let $\epsilon'>0$ be arbitrary. By the definition above, $h_\Theta(\gamma)\geq s-\epsilon'$ if $\gamma>0$ is chosen sufficiently small. By our hypothesis, 
for $\epsilon:=\gamma\epsilon'/p$, there exists $C_\epsilon\geq 1$ such that \eqref{eqn: sharpness nikodym bound} holds.
For this constant $C_\epsilon$, there exists a pair of scales $(\delta,\rho)$ satisfying $\rho>\delta^{1-\gamma}$, and a direction $\omega\in\Theta$, such that 
\begin{equation}
    \label{eqn: tube-count} |B(\omega,\rho)\cap\Theta|_\delta>C_\epsilon^p(\rho/\delta)^{s-2\epsilon'}.
\end{equation}
 Let $\mathbb{T}$ be a maximal family of $\delta$-tubes centred at the origin with $\delta$-separated directions lying in the set $B(\omega,\rho)\cap\Theta$. Then \eqref{eqn: tube-count} gives a lower bound on the size of this tube-family. Let $A:=\bigcup_{T\in\mathbb{T}}T$ and $R:=\bigcap_{T\in\mathbb{T}}T$. 
By geometric considerations, $R$ contains a rectangle of dimensions $\sim (\delta/\rho) \times \delta$, so $|R| \gtrsim \delta^{2}/\rho$. Also, $|A|\gtrsim\delta|\mathbb{T}|$, and by \eqref{eqn: tube-count}, we have $|\mathbb{T}|\gtrsim C_\epsilon^p(\rho/\delta)^{s-2\epsilon'}$.
Take $f=\chi_R$, and note that $$\N_{\Theta;\delta}f(x)\geq \frac{|R|}{|T|}\quad\text{for all}\quad x\in A \stackrel{\eqref{eqn: sharpness nikodym bound}}{\implies}|A|^{1/p}|R|\,\delta^{-1}\lesssim C_\epsilon\delta^{-\epsilon}|R|^{1/p}.$$ 
Using the lower bounds on $|A|$ and $|R|$ in the right-hand side above and simplifying, we get 
$$(\rho/\delta)^{\frac{s-2\epsilon'+1}{p}-1}\lesssim\delta^{-\epsilon}.$$
Recall that $\delta^{-\gamma}\leq(\rho/\delta)$, using which in the above, we arrive at $$1\lesssim(\rho/\delta)^{1-\frac{1+s-2\epsilon'}{p}-\frac{\epsilon}{\gamma}}=(\rho/\delta)^{1-\frac{1+s-3\epsilon'}{p}},$$ where the last step follows from our choice of $\epsilon$.
The above condition is true only if $p\geq 1+s-3\epsilon'$. Since $\epsilon'>0$ is arbitrary, this is equivalent to $p\geq 1+s$.
\end{proof}
\begin{remark}
    Proposition \ref{prop: sharpness} also demonstrates why the assumption $\overline{\dim}_\mathrm{B}(\Theta)\leq s$ is not sufficient to obtain improvements over C\'ordoba's $L^2$-bound (without restricting to radial functions). Indeed, by \cite[Proposition 1.6]{qA_dimension}, for any $\epsilon>0$, there exists $\Theta\subset\mathbb{S}^1$ satisfying $\overline{\dim}_\mathrm{B}(\Theta)=\epsilon$ but $\qad(\Theta)=1$, and consequently $p_\Theta\geq 2$.
\end{remark}
Next, we use Theorem \ref{theorem: incidence bound} to prove \eqref{eqn: main}.
\begin{prop}\label{prop: main}
    Suppose $\Theta\subseteq\mathbb{S}^1$ satisfies $\qad \Theta\leq s\in[1/2,1]$. Then for all $\epsilon>0$,
    $$\|\N_{\Theta;\delta}f\|_{L^{1+s}(\R^2)}\lesssim_\epsilon\delta^{-\epsilon}\|f\|_{L^{1+s}(\R^2)}.$$
\end{prop}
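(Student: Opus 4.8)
The plan is to deduce Proposition \ref{prop: main} from the incidence bound in Theorem \ref{theorem: incidence bound} via a standard linearisation and dualisation of the maximal operator, combined with the usual reduction of the quasi-Assouad hypothesis to a genuine $(\delta,s,C)$-regularity statement. First I would linearise: for each $x$ choose a $\delta$-tube $T(x)$ with direction in $\Theta$ realising (up to a factor $2$) the supremum in $\mathcal{N}_{\Theta;\delta}f(x)$, and by the pointwise reduction already used in Remark \ref{rem1} assume $f = \mathbf{1}_{E}$ for a measurable set $E\subset[0,1)^{2}$ (or a sum of such indicators after a dyadic pigeonholing on the values of $f$, which only costs logarithmic factors — an $L^{p}$ restricted-type estimate suffices by Marcinkiewicz-style interpolation). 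It is convenient to discretise to dyadic $\delta$-tubes $\mathcal{T}^{\delta}$: each $T(x)$ is comparable to a bounded number of dyadic tubes, and the slope set of the chosen tubes is a subset of a $\delta$-neighbourhood of $\Theta$. The quantity to bound is then $\mathrm{Leb}(\{x : \fint_{T(x)}\mathbf{1}_{E} \geq \lambda\})$, uniformly in $\lambda$, and after another dyadic pigeonhole on the scale of $|E|$ we may assume $E$ is a union of $\delta$-cubes $\mathcal{E}$ and that each selected tube $T$ satisfies $|\{p\in\mathcal{E} : p\subset T\}|\gtrsim \lambda\delta^{-1}$.

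The second step is the combinatorial heart: group the selected tubes, again by pigeonholing, so that they form (a bounded number of) Katz--Tao $(\delta,1,C_{\mathrm{KT}})$-families — here one uses that for each dyadic tube direction at most $O(\delta^{-1})$ parallel dyadic tubes can be chosen, and that within a thin rectangle of dimensions $\rho\times 2$ the number of distinct directions available is controlled; the Katz--Tao property at exponent $1$ with an absolute constant can be arranged because any family of dyadic $\delta$-tubes decomposes into $O(\log(1/\delta))$ Katz--Tao $(\delta,1,O(1))$-subfamilies via a greedy algorithm. More importantly, I must arrange that the slope set $\sigma(\mathcal{T})$ is $(\delta,s,C_{\mathrm{reg}})$-regular with $C_{\mathrm{reg}} = O_{\epsilon}(\delta^{-\epsilon})$. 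This is exactly where the quasi-Assouad hypothesis enters: $\dim_{\mathrm{qA}}\Theta \leq s$ means that for every $\epsilon>0$ there is $\gamma>0$ such that the $(\delta,s)$-regularity inequality $|\Theta\cap I|_{r}\lesssim (R/r)^{s+\epsilon}$ holds for all $\delta \leq r \leq R \leq 1$ with $r \leq R^{1/(1-\gamma)}$ — i.e. whenever $r$ and $R$ are "well-separated". The nuisance is the short range $R^{1/(1-\gamma)} < r \leq R$, where one only has the trivial bound $|\Theta\cap I|_{r}\lesssim (R/r)$; but since $R/r \leq (R/r)^{1/\gamma \cdot \gamma} $ is then at most $(1/\delta)^{\gamma}$... more precisely in that short range $(R/r) \leq \delta^{-\gamma \cdot \text{(something)}}$, so one absorbs it into the $\delta^{-\epsilon}$ loss by choosing $\gamma$ small relative to $\epsilon$. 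So after passing to a subset $\Theta_{\delta}$ of a $\delta$-net and discretising slopes, one gets a $(\delta, s + \epsilon, O_{\epsilon}(1))$-regular slope set — and since $s\in[\tfrac12,1]$ one may as well take the exponent to be $s$ at the cost of replacing $\epsilon$ by a slightly larger loss, or simply run Theorem \ref{theorem: incidence bound} with $s+\epsilon$ in place of $s$.

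Granting this, apply Theorem \ref{theorem: incidence bound} with $r := \lambda\delta^{-1}$ and the family $\mathcal{T}$ of selected dyadic tubes: the set of $x$ we want to bound is contained in (a $\delta$-neighbourhood of) $P_{r}(\mathcal{T})$, so
\begin{displaymath} \mathrm{Leb}\big(\{x : \mathcal{N}_{\Theta;\delta}\mathbf{1}_{E}(x) \gtrsim \lambda\}\big) \lesssim \delta^{2}|P_{r}(\mathcal{T})|_{\delta} \lesssim_{\epsilon} \delta^{-\epsilon} \cdot \delta^{2} \cdot \frac{\delta^{-1}|\mathcal{T}|}{r^{(s+1)/s}}. \end{displaymath}
Now $|\mathcal{T}| \lesssim \delta^{-1}|\mathcal{E}| = \delta^{-1} \cdot \delta^{-2}|E|$ (each dyadic direction contributes $\lesssim \delta^{-1}$ tubes and $|\mathcal{E}| = \delta^{-2}\mathrm{Leb}(E)$), and $r = \lambda\delta^{-1}$, so the right side is $\lesssim_{\epsilon}\delta^{-\epsilon}\lambda^{-(s+1)/s}\mathrm{Leb}(E)$. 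This is precisely a restricted weak-type $((s+1)/s \to (s+1)/s)$... wait — checking the exponent: the distributional estimate $\mathrm{Leb}(\{\mathcal{N}f > \lambda\}) \lesssim \lambda^{-p}\|f\|_{p}^{p}$ with $f=\mathbf{1}_{E}$ reads $\lambda^{-(s+1)/s}\mathrm{Leb}(E)$, i.e. $p = (s+1)/s$; but we want $p = 1+s$. The resolution is that one must exploit the further gain $\lambda \gtrsim \delta$ together with the $L^{\infty}$ bound $\|\mathcal{N}_{\Theta;\delta}\|_{\infty\to\infty}\leq 1$ and interpolate, OR — more likely the correct route — one does not fix $f = \mathbf{1}_{E}$ but keeps the level-set structure of a general $f$ and sums the dyadic pieces, where the extra summation in the value of $f$ converts the exponent $(s+1)/s$ into $1+s$ by a convexity/interpolation argument (this is the classical trick relating the Kakeya/Nikodým maximal bound at $L^{p}$ to the "bush"-type incidence estimate). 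Concretely, write $f = \sum_{k} 2^{k}\mathbf{1}_{E_{k}}$ with $E_{k}$ the level sets, apply the above to each $\mathbf{1}_{E_{k}}$, and optimise; the output is $\|\mathcal{N}_{\Theta;\delta}f\|_{L^{1+s}} \lesssim_{\epsilon}\delta^{-\epsilon}\|f\|_{L^{1+s}}$. The main obstacle I anticipate is precisely this last bookkeeping — getting the exponent $1+s$ rather than $(1+s)/s$ out of the incidence bound — which requires carefully using that the incidence bound is strongest at the threshold $r \sim \delta^{-s}$ and interpolating the resulting family of restricted-type estimates against the trivial $L^{\infty}$ bound; a secondary, more routine obstacle is the conversion of the quasi-Assouad hypothesis into honest $(\delta,s,O_{\epsilon}(\delta^{-\epsilon}))$-regularity with uniform control over the whole range of scales.
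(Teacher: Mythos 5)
Several of your preparatory steps match the paper: the conversion of $\qad\Theta\leq s$ into genuine $(\delta,s,O_{\epsilon}(\delta^{-\epsilon}))$-regularity by splitting into well-separated and short scale ranges is exactly the paper's Claim \ref{c7}, the localisation and discretisation to one tube per $\delta$-cube is the paper's reduction, and your worry about repeated tubes is handled in the paper by the multiplicity decomposition of Claim \ref{c8}. But the step where you actually invoke Theorem \ref{theorem: incidence bound} contains a genuine error: the containment ``$\{x : \mathcal{N}_{\Theta;\delta}\mathbf{1}_{E}(x)\gtrsim\lambda\}$ is contained in a $\delta$-neighbourhood of $P_{r}(\mathcal{T})$ with $r=\lambda\delta^{-1}$'' is false. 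The linearised tubes $T(x)$ are \emph{centred at} the points $x$ of the superlevel set; a point $x$ lies in its own tube and, generically, in no others, so there is no reason it should be covered by $\lambda\delta^{-1}$ tubes. The set $P_{r}(\mathcal{T})$ controls the heavily covered points, and those live inside $E$ (each selected tube meets $\gtrsim\lambda\delta^{-1}$ cubes of $\mathcal{E}$), not inside the superlevel set. The exponent mismatch you then notice --- getting $(s+1)/s$ where $1+s$ is wanted --- is a symptom of this confusion, and the proposed fixes do not repair it: decomposing $f=\sum_{k}2^{k}\mathbf{1}_{E_{k}}$ and summing, or interpolating with the trivial $L^{\infty}$ bound, cannot turn a (purported) weak-type $(s+1)/s$ estimate into a strong $(1+s)$ estimate; no amount of level-set bookkeeping changes the exponent.

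The missing idea is a duality step that puts the incidence bound on the side where its exponent is exactly critical. The paper writes, for the cube-wise chosen tubes $T_{q}$, $\|\mathcal{N}_{\Theta;\delta}f\|_{L^{1+s}}^{1+s}\lesssim\delta^{1-s}\sum_{q}a_{q}\int_{T_{q}}|f|$ with $\|a\|_{\ell^{1+1/s}}=1$, and by H\"older reduces everything to the \emph{dual} estimate $\|\sum_{q}\mathbf{1}_{T_{q}}\|_{L^{1+1/s}}\lesssim_{\epsilon}\delta^{-1-\epsilon}$ (Claim \ref{c5}). There the layer-cake expansion produces $\sum_{r}r^{1+1/s}\mathrm{Leb}(P_{r}(\mathcal{T}_{M}))$, and the factor $r^{-(s+1)/s}$ from Theorem \ref{theorem: incidence bound} cancels $r^{1+1/s}$ exactly, so the dyadic $r$-sum only costs a logarithm; the multiplicity $M$ enters through the Katz--Tao constant $C_{\mathrm{KT}}\lesssim\delta^{-1}/M$ and comes out with the right power. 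Alternatively, your linearised set-up can be salvaged, but only by a two-sided incidence count rather than a containment: compare $\mathcal{I}(\mathcal{E},\mathcal{T}_{M})\geq\lambda\delta^{-1}|\mathcal{T}_{M}|$ with $\sum_{r}r\min\{|\mathcal{E}|,|P_{r}(\mathcal{T}_{M})|_{\delta}\}$ and optimise over the threshold $r$; together with the $C_{\mathrm{KT}}\sim\delta^{-1}/M$ bookkeeping this yields $\sum_{M}M|\mathcal{T}_{M}|\lessapprox\delta^{-\epsilon}\lambda^{-(1+s)}|\mathcal{E}|$, i.e.\ the restricted weak-type $(1+s)$ bound. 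Either way, the bridge from the incidence theorem to the exponent $1+s$ is a concrete duality/bushes argument, not an interpolation afterthought, and as written your proof does not contain it.
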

\begin{proof}
Fix $\epsilon>0$ and $\delta \in 2^{-\mathbb{N}}$. The hypothesis $\qad \Theta \leq s$ implies that $\Theta$ is $(\delta,s)$-regular:
\begin{claim}\label{c7} $\Theta$ is $(\delta,s,C_{\mathrm{reg}})$-regular for $C_{\mathrm{reg}} \lesssim_{\epsilon} \delta^{-\epsilon}$. \end{claim}

\begin{proof} Choose $\gamma>0$ small enough that $\gamma(1-s)<\epsilon$. By the definition of $h_{\Theta}(\gamma)$, for all scales $\delta\leq r\leq r^{1-\gamma}\leq R\leq 1$, there exists a constant $C_\epsilon\geq 1$, such that $$|\Theta\cap I|_r\leq C_\epsilon(R/r)^{h_{\Theta}(\gamma)+\epsilon}\leq C_\epsilon\delta^{-\epsilon}(R/r)^s,$$ for all $I\in\mathcal{D}_R(\R)$. On the other hand, when $\delta\leq r\leq R\leq r^{1-\gamma}\leq 1$, we have the trivial bound $$|\Theta\cap I|_r\leq R/r=(R/r)^{1-s}(R/r)^s\leq r^{-\gamma(1-s)}(R/r)^s\leq\delta^{-\epsilon}(R/r)^s.$$ 
This completes the proof of the claim. \end{proof}

We start the proof of Proposition \ref{prop: main} in earnest. Arguing as in Remark \ref{rem1} (and by translation invariance), it suffices to prove the following local estimate:
\begin{equation}
    \label{eqn: local estimate}
    \|\N_{\Theta;\delta} f\|_{L^{1+s}([-\tfrac{1}{2},\tfrac{1}{2}]^2)}\lesssim_\epsilon\delta^{-\epsilon}\|f\|_{L^{1+s}(\R^2)}.
\end{equation}
Next, we use local constancy of $\N_{\Theta;\delta}$ to discretise the problem. Abbreviate $\mathcal{D} := \mathcal{D}_{\delta}$. Let $c_q$ denote the centre of $q\in\mathcal{D}$. Any $\delta$-tube $T$ centred at $x\in q$ is contained in a $4\delta$-tube $T'$ centred at $c_q$, with the same direction as $T$. Thus, for all $x\in q$, we have 
$$\N_{\Theta;\delta} f(x)\lesssim\sup_{T\ni c_q}\frac{1}{|T|}\int_{T}|f(y)|\,\dd{y}\lesssim\frac{1}{|T_q|}\int_{T_q}|f(y)|\,\dd{y},$$
where the supremum is taken over all $4\delta$-tubes centred at $c_q$, and essentially realised by the tube $T_q$. Then, $$\|\N_{\Theta;\delta} f\|_{L^{1+s}([-\tfrac{1}{2},\tfrac{1}{2}]^{2})}^{1+s}=\sum_{q\in\mathcal{D}}\|\N_{\Theta;\delta} f\|_{L^{1+s}(q)}^{1+s}\lesssim\delta^{1-s}\sum_{q\in\mathcal{D}}\bigg(\int_{T_q}|f(y)|\,\dd{y}\bigg)^{1+s}=\delta^{1-s}\sum_{q\in\mathcal{D}}a_q\int_{T_q}|f(y)|\,\dd{y},$$ 
for some sequence $a:=(a_q)_{q\in\mathcal{D}}$ with $\|a\|_{\ell^{1+1/s}(\mathcal{D})}=1$. By H\"older's inequality, it follows that 
$$\|\N_{\Theta;\delta} f\|_{L^{1+s}([-\tfrac{1}{2},\tfrac{1}{2}]^{2})}\lesssim\delta^{\frac{1-s}{1+s}}\|\sum_{q\in\mathcal{D}}a_q\chi_{T_q}\|_{L^{1+1/s}(\R^2)}\|f\|_{L^{1+s}(\R^2)}.$$
Thus, the goal \eqref{eqn: local estimate} is reduced to proving the following bound
\begin{equation*}
    \label{eqn: nikodym dual with coefficients}
    \|\sum_{q\in\mathcal{D}}a_q\mathbf{1}_{T_q}\|_{L^{1+1/s}(\R^2)}\lesssim_\epsilon\delta^{-\frac{1-s}{1+s}-\epsilon}\|a\|_{\ell^{1+1/s}(\mathcal{D})},
\end{equation*}
for all sequences $a=(a_q)_{q\in\mathcal{D}}$.
By dyadically pigeonholing in the sizes of the coefficients, we can further reduce matters to the case where $a_q=1$ for all $q$. That is, 
\begin{equation}
    \label{eqn: nikodym dual}
    \Big\| \sum_{q\in\mathcal{D}}\mathbf{1}_{T_q}\Big\|_{L^{1+1/s}(\R^2)}\lesssim_\epsilon\delta^{-1-\epsilon}.
\end{equation}
To prove \eqref{eqn: nikodym dual}, we may assume that that $\Theta \subset B(e_{1},\tfrac{1}{10})$, that is, the directions of the tubes $T_{q}$ are all nearly horizontal. This can be achieved by partitioning $\{T_{q}\}_{q \in \mathcal{D}}$ into boundedly many sub-families, and applying rotations. The point of making this reduction is to ensure that of the "ordinary" $4\delta$-tubes $T_{q}$ appearing in \eqref{eqn: nikodym dual} may be covered by $C \lesssim 1$ dyadic $\delta$-tubes $T_{q}^{1},\ldots,T_{q}^{C}$ whose slopes are parallel to the slope of $T_{q}$, up to an error $\lesssim \delta$, and such that $\dist(q,T_{q}^{j}) \lesssim \delta$ for all $1 \leq j \leq C$. We leave the details to the reader. Thus, in order to prove \eqref{eqn: nikodym dual}, it suffices to verify the following variant stated in terms of dyadic $\delta$-tubes:
\begin{claim}\label{c5} Assume that for every $q \in \mathcal{D}$ we are given a single tube $T_{q} \in \mathcal{T}^{\delta}$ such that $\dist(q,T_{q}) \leq A\delta$. Let $\mathcal{T} := \bigcup_{q \in \mathcal{D}} T_{q}$, and assume that $\sigma(\mathcal{T}) \subset \delta \Z \cap [-1,1]$ is $(\delta,s,C_{\mathrm{reg}})$-regular. Then,
\begin{equation}\label{form57} \Big\| \sum_{q\in\mathcal{D}}\mathbf{1}_{T_q}\Big\|_{L^{1+1/s}([0,1]^{2})}\lesssim_{A,\epsilon} C_{\mathrm{reg}}^{1/(1 + s)}\delta^{-1-\epsilon}, \qquad \epsilon > 0. \end{equation}  \end{claim}
The plan is to apply Theorem \ref{theorem: incidence bound} to the family $\mathcal{T}$ in Claim \ref{c5}, but a small problem is that the tubes $T_{q}$, $q \in \mathcal{D}$, may not be distinct. To quantify this, write $M(T) := |\{q \in \mathcal{D} : T = T_{q}\}|$ for $T \in \mathcal{T}$. Noting that $M(T) \in \{1,\ldots,O(\delta^{-1})\}$, we may decompose
\begin{displaymath} \sum_{q \in \mathcal{D}} \mathbf{1}_{T_{q}} \leq  \sum_{M = 1}^{O(\delta^{-1})} 2M \sum_{T \in \mathcal{T}_{M}} \mathbf{1}_{T}, \end{displaymath}
where the $M$-sum only runs over dyadic integers, and $\mathcal{T}_{M} := \{T \in \mathcal{T} : M(T) \in [M,2M]\}$. We see that \eqref{form57} follows from
\begin{equation}\label{form58} \Big\| \sum_{T \in \mathcal{T}_{M}} \mathbf{1}_{T} \Big\|_{L^{1 + 1/s}([0,1]^{2})} \lesssim_{A,\epsilon} C_{\mathrm{reg}}^{1/(1 + s)}\delta^{-1 - \epsilon}/M, \qquad M \in \{1,\ldots,O(\delta^{-1})\}. \end{equation}   
The constant $M$ shows up in the Katz-Tao constant of $\mathcal{T}_{M}$, as follows:

\begin{claim}\label{c8} $\mathcal{T}_{M}$ is a Katz-Tao $(\delta,1,C_{\mathrm{KT}})$-set with $C_{\mathrm{KT}} \lesssim A\delta^{-1}/M$. In particular $|\mathcal{T}_{M}| \lesssim A\delta^{-2}/M$. \end{claim} 

\begin{proof} Let $\delta \leq \Delta \leq 1$, and $\mathbf{T} \in \mathcal{T}^{\Delta}$. Note that if $T \in \mathcal{T}_{M}$ is contained in $\mathbf{T}$, then by the hypothesis of Claim \ref{c5} all the squares $q \in \mathcal{D}$ with $T = T_{q}$ are contained in the $(A\delta)$-neighbourhood of $\mathbf{T}$, denoted $\mathbf{T}^{(A\delta)}$. Consequently, using also the definition of $\mathcal{T}_{M}$,
\begin{displaymath} |\mathcal{T}_{M} \cap \mathbf{T}| \leq \sum_{T \in \mathcal{T}_{M} \cap \mathbf{T}} \frac{|\{q \in \mathcal{D} \cap \mathbf{T}^{(A\delta)} : T = T_{q}\}|}{M} \leq \frac{1}{M} \sum_{q \in \mathcal{D} \cap \mathbf{T}^{(A\delta)}} |\{T \in \mathcal{T} : T = T_{q}\}|. \end{displaymath} 
But here $|\{T \in \mathcal{T} : T = T_{q}\}| = 1$ for all $q \in \mathcal{D}$ fixed, and $|\mathcal{D} \cap \mathbf{T}^{(A\delta)}| \lesssim A\Delta/\delta^{2}$. So, we may infer that $|\mathcal{T} \cap \mathbf{T}| \lesssim (A\delta^{-1}/M) \cdot (\Delta/\delta)$, as claimed. \end{proof} 

Now we may complete the proof of Claim \ref{c5} by appealing to Theorem \ref{theorem: incidence bound}. As in the theorem, we write $P_{r}(\mathcal{T}_{M}) := \{p \in [0,1]^{2} : |\{T \in \mathcal{T}_{M} : p \in T\}| \geq r\}$ for $r \geq 1$, and then decompose
\begin{equation}
    \label{eqn: tube sum bound}
    \|\sum_{T\in \mathcal{T}_{M}} \mathbf{1}_{T}\|_{L^{1+1/s}([0,1]^{2})}^{1+1/s} \lesssim  \sum_{1\leq r\leq\delta^{-2}} r^{1 + 1/s}\mathrm{Leb}(P_{r}(\mathcal{T}_{M})), \end{equation} 
    where the $r$-sum only runs over dyadic integers. Evidently $\mathrm{Leb}(P_{r}(\mathcal{T}_{M})) \lesssim \delta^{2} |P_{r}(\mathcal{T}_{M})|_{\delta}$. Plugging in the bound for the Katz-Tao constant provided by Claim \ref{c8}, Theorem \ref{theorem: incidence bound} yields
    \begin{displaymath} |P_{r}(\mathcal{T}_{M})|_{\delta}  \lesssim_{\epsilon} \delta^{-\epsilon}(C_{\mathrm{KT}}C_{\mathrm{reg}})^{1/s}\frac{\delta^{-1}|\T|}{r^{1 + 1/s}} \lesssim_{A,\epsilon} C_{\mathrm{reg}}^{1/s}\delta^{-\epsilon}\Big( \frac{\delta^{-1}}{M} \Big)^{1/s}\frac{\delta^{-3}/M}{r^{1 + 1/s}} = C_{\mathrm{reg}}^{1/s}\frac{\delta^{-2 - \epsilon} \cdot \delta^{-1 - 1/s}}{(Mr)^{1 + 1/s}}. \end{displaymath} 
    Inserting this bound back into \eqref{eqn: tube sum bound} proves \eqref{form58}, and therefore Claim \ref{c5}. \end{proof}

\subsection{Examples}
The next example displays a set $\Theta \subset \mathbb{S}^{1}$ with $\dim_{\mathrm{qA}} \Theta \leq \tfrac{1}{3}$, but such that the associated operators $\mathcal{N}_{\Theta;\delta}$ are not essentially bounded on $L^{q}$ for any $q < 3/2$. In particular, they are not essentially bounded on $L^{q}$ for $q = 1 + \dim_{\mathrm{qA}} \Theta$. 

\begin{prop}\label{ex3} There exists a set $\Theta \subset \mathbb{S}^{1}$ with $\dim_{\mathrm{qA}} \Theta \leq \tfrac{1}{3}$, and two vanishing sequences $\{\delta_{n}\}$ and $\{\epsilon_{n}\}$ such that 
\begin{equation}\label{form55} \|\mathcal{N}_{\Theta;\delta_{n}}\|_{L^{q} \to L^{q}} \geq \delta^{\tfrac{2}{3} - \tfrac{1}{q} + \epsilon_{n}}, \qquad n \in \mathbb{N}. \end{equation}
\end{prop}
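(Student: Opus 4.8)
The plan is to exhibit $\Theta$ as a direction set built from a construction of Ren and the first author \cite{2024arXiv241104528O}, in which a set of slopes of quasi-Assouad dimension $\leq \tfrac13$ nonetheless supports a ``Furstenberg-type'' configuration that is as bad as an Ahlfors $\tfrac12$-regular set at a sequence of scales. Concretely, I would first recall (or extract from \cite{2024arXiv241104528O}) that there is a set $\Theta\subset\mathbb S^1$ and a vanishing sequence of scales $\delta_n$ such that, at each scale $\delta_n$, the set $\Theta$ contains a $\delta_n$-separated subset $\Theta_n$ of cardinality $\sim \delta_n^{-1/3}$ which is ``$\delta_n$-uniformly distributed'' in a strong multiscale sense: for the purpose of the lower bound I only need that $\Theta_n$ looks, at scale $\delta_n$, like a $\tfrac12$-regular set living on an arc of length $\sim\delta_n^{1/3}$, i.e. there are $\sim\delta_n^{-1/6}$ sub-arcs of length $\sim\delta_n^{2/3}$ each containing $\sim\delta_n^{-1/6}$ directions. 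The quasi-Assouad bound $\dim_{\mathrm{qA}}\Theta\leq\tfrac13$ is the property that makes $\Theta$ ``look small'' at all sufficiently small relative scales $R/r$ with $r\leq r^{1-\gamma}\leq R$; that the construction achieves both simultaneously is exactly the content of \cite{2024arXiv241104528O}, and I would cite it.

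Given such a $\Theta_n$ at scale $\delta=\delta_n$, I would run the standard ``Besicovitch/Nikodým'' dual testing argument, analogous to the one in Proposition \ref{prop: sharpness}, but tuned to the internal structure of $\Theta_n$. Group $\Theta_n$ into $\sim\delta^{-1/6}$ arcs $J_1,\dots$ of length $\sim\delta^{2/3}$, each carrying $\sim\delta^{-1/6}$ directions. For each arc $J_i$ consider the $\delta$-tubes through the origin with directions in $\Theta_n\cap J_i$; their union $A_i$ has measure $|A_i|\sim\delta\cdot\delta^{-1/6}=\delta^{5/6}$ (the tubes through the origin with nearly-equal directions are essentially disjoint away from a small core), while their common intersection contains a rectangle of dimensions $\sim(\delta/\delta^{2/3})\times\delta=\delta^{1/3}\times\delta$, so the ``core'' $R_i:=\bigcap$ has $|R_i|\gtrsim\delta^{4/3}$. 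Take $f=\mathbf 1_{R_i}$: then $\mathcal N_{\Theta;\delta}f\gtrsim |R_i|/|T|\sim\delta^{4/3}/\delta=\delta^{1/3}$ on all of $A_i$, giving $\|\mathcal N_{\Theta;\delta}\mathbf 1_{R_i}\|_q\gtrsim \delta^{1/3}|A_i|^{1/q}=\delta^{1/3+5/(6q)}$, hence the operator norm is $\gtrsim \delta^{1/3+5/(6q)}/\delta^{4/(3q)}=\delta^{1/3-1/(2q)}$. That already beats the $L^q$-boundedness threshold for $q<3/2$, but to get the cleaner exponent $\tfrac23-\tfrac1q$ stated in \eqref{form55} one should instead test against a union over \emph{all} the arcs at once: take $f=\mathbf 1_R$ where $R$ is a single rectangle of dimensions $\sim\delta^{2/3}\times\delta$ placed at the origin (so that $R\subset T$ for every direction in the whole arc of length $\delta^{1/3}$ spanned by $\Theta_n$), observe $|R|\sim\delta^{5/3}$, that $\mathcal N_{\Theta;\delta}f\gtrsim |R|/|T|\sim\delta^{2/3}$ holds on $A:=\bigcup_{\theta\in\Theta_n}T_\theta$ with $|A|\gtrsim\delta|\Theta_n|\sim\delta^{2/3}$, and conclude $\|\mathcal N_{\Theta;\delta}\|_{L^q\to L^q}\gtrsim \delta^{2/3}|A|^{1/q}/|R|^{1/q}\sim\delta^{2/3}\cdot\delta^{2/(3q)}/\delta^{5/(3q)}=\delta^{2/3-1/q}$. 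One then absorbs the implicit constants and the polylog factors into the $\delta^{\epsilon_n}$ with $\epsilon_n\to0$, which is legitimate precisely because $\delta_n\to0$.

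The routine parts are the two geometric measurements — that nearly-parallel $\delta$-tubes through a point have near-maximal union and that their intersection contains the stated rectangle — both of which are elementary planar geometry and appear already in Proposition \ref{prop: sharpness}; I would state them and leave the verification to the reader. I also need to double-check that the particular shape/placement of $R$ (a $\delta^{2/3}\times\delta$ rectangle at the origin) is genuinely contained in all the relevant tubes, which forces the arc of directions to have length $\lesssim \delta^{1/3}$; this matches the arc length in the construction, so it is consistent, but the constants must be chosen carefully.

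The main obstacle is not the maximal-function estimate — that is a short duality computation — but rather importing the direction set $\Theta$ with the two competing properties from \cite{2024arXiv241104528O}: one must verify that the cited construction simultaneously (a) has quasi-Assouad dimension at most $\tfrac13$, which is a statement about covering numbers at \emph{all} relative scales $R/r$ with $r$ much smaller than a power of $R$, and (b) contains, at the sequence of scales $\delta_n$, a $\delta_n$-separated subset with the near-$\tfrac12$-regular internal structure used above (equivalently, that it violates $(\delta_n,s)$-regularity badly for $s$ near $\tfrac12$ on an arc). The tension is that (a) limits how concentrated $\Theta$ can be at moderately separated scales while (b) needs concentration at the single scale pair $(\delta_n,\delta_n^{2/3})$; reconciling these is exactly why one uses quasi-Assouad (which ignores a window of relative scales shrinking to zero) rather than Assouad dimension, and it is where the real content of \cite{2024arXiv241104528O} enters. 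I would therefore devote the bulk of the written proof to stating precisely which lemma of \cite{2024arXiv241104528O} is being invoked and how its parameters translate into (a) and (b), and keep the maximal-operator lower bound itself brief.
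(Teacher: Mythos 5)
There is a genuine gap, and it is fatal to the approach rather than a fixable detail. Your lower bound rests on importing a direction set $\Theta$ that, at a sequence of scales $\delta_n$, contains $\sim\delta_n^{-1/3}$ many $\delta_n$-separated directions packed into a single arc of length $\sim\delta_n^{1/3}$ (equivalently, a near-$\tfrac12$-regular configuration at the scale pairs $(\delta_n,\delta_n^{2/3})$ and $(\delta_n,\delta_n^{1/3})$). No such set can have $\dim_{\mathrm{qA}}\Theta\leq\tfrac13$. The quasi-Assouad dimension only exempts scale pairs $(r,R)$ with $R\leq r^{1-\gamma}$, i.e.\ pairs whose logarithmic separation is a vanishing fraction of $\log(1/r)$; the pairs $(\delta,\delta^{1/3})$ and $(\delta,\delta^{2/3})$ satisfy $R>r^{1-\gamma}$ for every small $\gamma$, so they are fully controlled by the definition. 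Your configuration gives $|B(\theta,\delta^{1/3})\cap\Theta|_{\delta}\sim(\delta^{1/3}/\delta)^{1/2}$ at arbitrarily small $\delta$, which forces $h_{\Theta}(\gamma)\geq\tfrac12$ for all $\gamma<\tfrac23$ and hence $\dim_{\mathrm{qA}}\Theta\geq\tfrac12$ — exactly the mechanism of Proposition \ref{prop: sharpness} run in reverse. So properties (a) and (b) in your plan are mutually exclusive, and no lemma of \cite{2024arXiv241104528O} can supply both; the bush-over-a-rectangle computation, while arithmetically correct, can never be executed on a set of quasi-Assouad dimension $\tfrac13$.

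The paper's proof uses a genuinely different mechanism. The direction set supplied by \cite[Theorem 1.9]{2024arXiv241104528O} (Example \ref{ex1}) is an honestly $(\delta,\tfrac13)$-regular set $\Theta_0$ of $\approx\delta^{-1/3}$ directions — no anomalous angular concentration — and the largeness of $\mathcal{N}_{\Theta;\delta}$ comes instead from the companion object in that construction: a Frostman $(\delta,1)$-set $P$ whose projections satisfy $|\pi_{\theta}(P)|_{\delta}\leq\delta^{-t}$ with $t$ close to $\tfrac23$ for every $\theta\in\Theta_0$. One tests against $f=\mathbf{1}_{P_{\delta}}$: the small projections mean that for each $\theta$ few tubes cover $P_\delta$, so many tubes carry density $\gtrsim\delta^{t}$, giving $\mathcal{N}_{\Theta;\delta}f\gtrsim\delta^{t}$ on the union $G$ of these heavy tubes; the Furstenberg set theorem (Theorem \ref{t:FurstenbergSets}) together with C\'ordoba's $L^{2}$ argument shows $\mathrm{Leb}(G)\geq\delta^{\epsilon/4}$, i.e.\ the heavy tubes form an essentially full-measure Kakeya-type union. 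This yields Proposition \ref{ex2}, and the final set with $\dim_{\mathrm{qA}}\Theta\leq\tfrac13$ and the vanishing sequences is then built by an explicit recursive nesting of these single-scale examples (the step you deferred to a citation). In short: the exponent $\tfrac23-\tfrac1q$ comes from sharpness of exceptional-set estimates for projections along a $\tfrac13$-dimensional direction set, not from a $\tfrac12$-regular angular concentration, and your proposal misses this essential input.
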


The main work behind Proposition \ref{ex3} is contained in the following ``single-scale" version:

\begin{prop}\label{ex2} Fix $\epsilon > 0$, and an arc $I \subset \mathbb{S}^{1}$. Then, for $\delta > 0$ arbitrarily small there exists a $\delta$-separated set $\Theta \subset I$ satisfying
\begin{equation}\label{form49} |\Theta \cap B(\theta,R)|_{r} \leq \left(\tfrac{R}{r} \right)^{1/3 + \epsilon}, \qquad \theta \in \mathbb{S}^{1}, \,  \delta \leq r \leq r^{1 - \epsilon} \leq R, \end{equation} 
and such that $\|\mathcal{N}_{\Theta;\delta}\|_{L^{q} \to L^{q}} \geq \delta^{\tfrac{2}{3} - \tfrac{1}{q} + \epsilon}$ for all $q \geq 1$.
\end{prop}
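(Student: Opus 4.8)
## Proof proposal for Proposition \ref{ex2}

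The plan is to construct $\Theta$ as a (single-scale snapshot of a) self-similar Cantor-type set with dimension $\tfrac{1}{3}$, but arranged so that at a \emph{special intermediate scale} the set behaves like a $1$-dimensional set, producing a large bush of tubes through a small common region. The quantitative mechanism is exactly the one already used in Proposition \ref{prop: sharpness}: if $\Theta$ contains $N$ directions inside an arc of length $\rho$, then the $\delta$-tubes through the origin with those directions all contain a common rectangle $R$ of dimensions $\sim(\delta/\rho)\times\delta$, while their union $A$ has measure $\sim\delta N$; testing $\mathcal N_{\Theta;\delta}$ on $\mathbf 1_R$ gives
\begin{displaymath}
\|\mathcal N_{\Theta;\delta}\|_{L^q\to L^q}\gtrsim\frac{|A|^{1/q}\,|R|\,\delta^{-1}}{|R|^{1/q}}\sim\big(\delta N\big)^{1/q}\Big(\frac{\delta}{\rho}\Big)^{1-1/q}\delta^{-1}.
\end{displaymath}
So the whole construction reduces to: pack as many directions $N$ as possible into an arc of length $\rho$, subject to the Frostman-type constraint \eqref{form49} with exponent $\tfrac13+\epsilon$. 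The constraint forces $N\lesssim(\rho/\delta)^{1/3+\epsilon}$ when $\rho>\delta^{1-\epsilon}$; the extremal choice is to take the directions genuinely $(\delta,\tfrac13)$-regular, e.g. $\rho=\delta^{3/4}$ and $N\sim(\rho/\delta)^{1/3}=\delta^{-1/12}$, or more cleanly $\rho=\delta^{1/2}$ and $N=\delta^{-1/6}$. Plugging $\rho=\delta^{1/2}$, $N=\delta^{-1/6}$ into the displayed bound yields $(\delta^{5/6})^{1/q}(\delta^{1/2})^{1-1/q}\delta^{-1}=\delta^{-1/2+1/(3q)}=\delta^{2/3-1/q}\cdot\delta^{1/3-2/(3q)+\dots}$; one then optimizes the ratio $\rho/\delta$ to hit the exponent $\tfrac23-\tfrac1q$ exactly with an arbitrarily small loss $\delta^{\epsilon}$. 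The right numerology is to nest the bush construction \emph{recursively}: an arc of $\delta$-directions that is $(\delta,\tfrac13)$-regular is (after rescaling) again such a configuration, so one takes a "$k$-fold iterated bush'' where the $L^q\to L^q$ bounds compound, and sending $k\to\infty$ converts the polynomial savings into the clean exponent $\tfrac23-\tfrac1q$ with loss $\delta^{\epsilon}$.

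Concretely, I would proceed as follows. \textbf{Step 1.} Fix the self-similar model: let $C\subset[0,1]$ be the attractor of the IFS $\{x\mapsto x/m,\ x\mapsto x/m+(1-1/m)\}$ scaled so that $\dim C=\log2/\log m=\tfrac13$ (i.e. $m=8$), and identify directions in $I$ with points of $[0,1]$ via a smooth chart. Take $\Theta:=$ a maximal $\delta$-separated subset of the $\delta$-neighbourhood of $C$, for $\delta=m^{-n}$. \textbf{Step 2.} Verify the Frostman bound \eqref{form49}: for $\delta\le r\le r^{1-\epsilon}\le R$, standard self-similarity gives $|\Theta\cap B(\theta,R)|_r\lesssim (R/r)^{1/3}$, and absorbing the implicit constant into the $(R/r)^\epsilon$ factor (legitimate since $R/r\ge r^{-\epsilon}\ge\delta^{-\epsilon}$, hence $(R/r)^\epsilon$ is at least $\delta^{-\epsilon}\to\infty$) yields \eqref{form49}. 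This is the routine part. \textbf{Step 3.} Construct the test function. Because $C$ has full branching, inside every interval of length $\rho=m^{-j}$ ($1\le j\le n$) the set $\Theta$ contains $N_\rho\sim(\rho/\delta)^{1/3}=\rho^{1/3}\delta^{-1/3}$ many $\delta$-separated directions. Choose the bush at \emph{each} scale simultaneously: more precisely build $f$ to be a sum of indicator functions of the $\sim$ rectangles $R_\rho$ (of dimensions $(\delta/\rho)\times\delta$) common to the $N_\rho$ tubes through a chosen base point, for $\rho$ ranging over $m^{-1},\dots,m^{-n}$ placed at disjoint locations; equivalently, run the one-scale computation with the optimal single $\rho$ and then iterate. \textbf{Step 4.} Evaluate the ratio $\|\mathcal N_{\Theta;\delta}f\|_q/\|f\|_q$ and optimize over $\rho$ (equivalently over $j$): the single-scale bound is $\gtrsim(\delta N_\rho)^{1/q}(\delta/\rho)^{1-1/q}\delta^{-1}$, and substituting $N_\rho\sim(\rho/\delta)^{1/3}$ and writing $\rho=\delta^{1-\beta}$ gives an exponent in $\delta$ equal to $\tfrac{1}{q}(1+\tfrac{\beta}{3})+\beta(1-\tfrac1q)-1$ up to sign conventions; this is maximized (most negative) as $\beta\to1$, i.e. $\rho\to\delta^0=1$, giving exponent $\to\tfrac23-\tfrac1q$, but the Frostman constraint \eqref{form49} only kicks in for $\rho>\delta^{1-\epsilon}$, i.e. $\beta<\epsilon$ is forbidden; taking $\beta$ close to $1$ is allowed, so we get $\tfrac23-\tfrac1q+\epsilon$ after accounting for lower-order losses. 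Then iterating across the $n$ scales (or directly using $\beta\nearrow1$) removes the remaining slack.

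The main obstacle is \textbf{Step 3–4}, namely making the bush example and the dimension constraint genuinely compatible in the correct exponent regime: a naive one-scale bush with $\rho$ close to $1$ needs $N_\rho\sim\rho^{1/3}\delta^{-1/3}\sim\delta^{-1/3}$ directions in an arc of unit length, which is consistent with $\dim_{\mathrm{qA}}=\tfrac13$, but one must check that the \emph{overlap} pattern is as claimed — i.e. that the $\delta^{-1/3}$ tubes through the base point really do share a common $(\delta/\rho)\times\delta=(\delta)\times\delta$ rectangle (here $\rho\sim1$ so $R$ is essentially a single $\delta$-cube), and that $|A|\sim\delta\cdot\delta^{-1/3}=\delta^{2/3}$; plugging in gives $\|\mathcal N_{\Theta;\delta}\mathbf 1_R\|_q/\|\mathbf 1_R\|_q\sim(\delta^{2/3})^{1/q}\delta^{1-1/q}\delta^{-1}\cdot\delta^{2/q-2/q}=\delta^{-1/3+1/(3q)+1-1/q}=\delta^{2/3-2/(3q)}$, which is \emph{not} quite $\delta^{2/3-1/q}$ — so the single largest bush is not enough, and one genuinely needs the multi-scale / iterated construction, where bushes at scales $\rho=\delta^{1-\beta}$ for a range of $\beta$ are superposed and the gains multiply to interpolate up to the exponent $\tfrac23-\tfrac1q$. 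Verifying that the superposition does not cost more than $\delta^{-\epsilon}$ in $\|f\|_q$ (a disjointness/separation bookkeeping) is where the real care is needed; everything else is a self-similarity computation. I would lean on the structure from \cite{2024arXiv241104528O} (cited in the paper outline as the source of this construction) for the precise packing argument that realizes the optimal exponent with an $\epsilon$-loss.
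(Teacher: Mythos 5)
There is a genuine gap, and it is the one you yourself flag in Steps 3--4: the bush mechanism (the same computation as in Proposition \ref{prop: sharpness}) falls short of the exponent $\tfrac23-\tfrac1q$, and your proposed repair --- superposing bushes at many scales over a self-similar Cantor set of dimension $\tfrac13$ and asserting that ``the gains multiply'' --- is never actually carried out, and there is strong reason to believe it cannot be. A single bush constrained by \eqref{form49} gives at best $\|\mathcal N_{\Theta;\delta}\|_{L^q\to L^q}\gtrsim \delta^{\beta(1-\frac{4}{3q})}$ with $\beta\le 1$, which for every $q>1$ is strictly weaker than $\delta^{2/3-1/q}$; more structurally, the bush argument is exactly the proof of Proposition \ref{prop: sharpness} and can only ever certify $p_\Theta\ge 1+\dim_{\mathrm{qA}}\Theta=\tfrac43$, whereas the whole point of Proposition \ref{ex2} is to beat $1+\dim_{\mathrm{qA}}\Theta$ and reach $\tfrac32$. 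Worse, your $\Theta$ is Ahlfors $\tfrac13$-regular (a self-similar Cantor set), and the paper explicitly raises the possibility that $p_\Theta\le 1+s$ for every $s$-Ahlfors regular $\Theta$ with $s\le\tfrac12$; if that is true, no test function whatsoever can give the claimed bound for your choice of $\Theta$. So the construction must use a direction set with special arithmetic structure, not a generic $\tfrac13$-dimensional one, and the proposal contains no mechanism that exploits such structure.

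The paper's proof runs through two ingredients that are absent from your proposal. First, the direction set is (a localized piece of) the set $\Theta_{0}$ from \cite[Theorem 1.9]{2024arXiv241104528O}: a $(\delta,\tfrac13)$-regular set of $\approx\delta^{-1/3}$ directions together with a Frostman $(\delta,1)$-set $P\subset[0,1]^2$ whose projections satisfy $|\pi_\theta(P)|_\delta\le\delta^{-t}$ with $t=\tfrac23+\tfrac\epsilon2$ for \emph{every} $\theta\in\Theta_0$; the test function is $\mathbf 1_{P_\delta}$, not an indicator of rectangles. The small-projection property forces, for each $\theta$, a definite fraction of the mass of $P_\delta$ into $\lesssim\delta^{-t}$ tubes of direction $\theta$, each carrying measure $\gtrsim\delta^{t+1}$, so $\mathcal N_{\Theta;\delta}\mathbf 1_{P_\delta}\gtrsim\delta^{t}$ on the union $G$ of these heavy tubes. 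Second --- and this is the step your plan has no substitute for --- one must show $\mathrm{Leb}(G)\ge\delta^{\epsilon/4}$. This is not a bookkeeping/disjointness check: through (almost) every point of $P$ passes a Frostman $(\delta,\tfrac13)$-set of heavy tubes, so the tube family is a Furstenberg configuration, and the paper invokes the Furstenberg set theorem (Theorem \ref{t:FurstenbergSets}) to extract an essentially Kakeya-type subfamily and then C\'ordoba's $L^2$ argument to bound $\mathrm{Leb}(G)$ from below. Combining $\mathcal N_{\Theta;\delta}\mathbf 1_{P_\delta}\gtrsim\delta^{t}$ on $G$, $\mathrm{Leb}(G)\ge\delta^{\epsilon/4}$, and $\|\mathbf 1_{P_\delta}\|_{q}\sim\delta^{1/q}$ gives the claimed operator bound. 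Your closing remark that you would ``lean on the structure from \cite{2024arXiv241104528O}'' points at the right source, but the proposal does not identify what that structure provides (small projections of a one-dimensional set) nor how to convert it into a lower bound for the maximal operator (the measure estimate for $G$ via the Furstenberg set theorem), and these are precisely the content of the proof.
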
 
Let us complete the proof of Proposition \ref{ex3} based on Proposition \ref{ex2}.

\begin{proof}[Proof of Proposition \ref{ex3} assuming Proposition \ref{ex2}] We construct $\Theta$ recursively. Set 
\begin{displaymath} \Theta_{0} := \mathbb{S}^{1} \quad \text{and} \quad \delta_{0} := \mathcal{H}^{1}(\mathbb{S}^{1}), \end{displaymath}
and let $\{\epsilon_{n}\}$ be an arbitrary sequence with $\epsilon_{n} \searrow 0$. Assume that $\Theta_{n}$ has already been constructed for some $n \geq 0$, and can be written as a union of finitely many $\delta_{n}$-separated points, and a single arc of length $\delta_{n}$, denoted $I_{n}$. 

To construct $\Theta_{n + 1}$, apply Proposition \ref{ex2} with parameters $\epsilon_{n + 1}$ and $I_{n}$. This yields a scale $\delta_{n + 1} < \delta_{n}$, and a $\delta_{n + 1}$-separated set $\Theta_{n + 1}(I_{n}) \subset I_{n}$. Replace exactly one of the points $\theta \in \Theta_{n + 1}(I_{n})$ by an arc $I_{n + 1}$ centred at $\theta$, of length $\delta_{n + 1}$. Define $\Theta_{n + 1}$ as the union of the "old" points in $\Theta_{n}$, the points in $\Theta_{n + 1}(I_{n})$, and the new arc $I_{n + 1}$. 

Eventually, the sets $\Theta_{n}$ converge (in the Hausdorff metric) to a countable set $\Theta \subset \mathbb{S}^{1}$. We claim that $\dim_{\mathrm{qA}} \Theta \leq \tfrac{1}{3}$. Equivalently, for every $\epsilon > 0$ there exists a constant $C_{\epsilon} \geq 1$ such that
\begin{equation}\label{form54} |\Theta \cap B(\theta,R)|_{r} \leq C_{\epsilon}(R/r)^{1/3 + \epsilon}, \qquad \theta \in \mathbb{S}^{1}, \, 0 < r \leq r^{1 - \epsilon} \leq R. \end{equation}
 To prove \eqref{form54}, fix $\epsilon > 0$ and $0 < r \leq r^{1 - \epsilon} \leq R$. Let $n \in \mathbb{N}$ be the first index such that $\epsilon_{n + 1} \leq \tfrac{1}{2}\epsilon$; thus $\delta_{n} \sim_{\epsilon} 1$. Consider first the case $R \leq \delta_{n}$. In this case we claim that \eqref{form54} holds with $C_{\epsilon} = 1$:
\begin{equation}\label{form53} |\Theta \cap B(\theta,R)|_{r} \leq (R/r)^{1/3 + \epsilon}, \qquad \theta \in \mathbb{S}^{1}, \, r \leq r^{1 - \epsilon} \leq R \leq \delta_{n}. \end{equation}
To prove \eqref{form53}, consider first the sub-case where $\delta_{m + 1} \leq r \leq r^{1 - \epsilon} \leq R \leq \delta_{m}$ for some $m \geq n$. Fix $\theta \in \mathbb{S}^{1}$. Note that $\Theta \, \setminus \, I_{m} \subset \Theta_{m}$ is $\delta_{m}$-separated, so $|B(\theta,R) \cap (\Theta \, \setminus \, I_{m})| \leq 1$. On the other hand, 
\begin{displaymath} |(\Theta \cap I_{m}) \cap B(\theta,R)|_{r} \lesssim |\Theta_{m + 1}(I) \cap B(\theta,R)|_{r} \leq \left(\tfrac{R}{r} \right)^{1/3 + \epsilon_{m + 1}}. \end{displaymath} 
by the definition of $\Theta_{m + 1}(I)$ via Proposition \ref{ex2}. These observations show that $|\Theta \cap B(\theta,R)|_{r} \lesssim (R/r)^{1/3 + \epsilon_{m + 1}}$, and in particular
\begin{displaymath} |\Theta \cap B(\theta,\delta_{m})|_{\delta_{m + 1}} \lesssim (\delta_{m}/\delta_{m + 1})^{1/3 + \epsilon_{m + 1}}. \end{displaymath}
Since $\epsilon_{m + 1} \leq \epsilon_{n + 1} \leq \tfrac{1}{2}\epsilon$, and all the ratios $(R/r)$ and $(\delta_{m}/\delta_{m + 1})$ appearing above can be assumed to be large, we have
\begin{equation}\label{form52} |\Theta \cap B(\theta,R)|_{r} \leq (R/r)^{1/3 + \epsilon} \quad \text{and} \quad |\Theta \cap B(\theta,\delta_{m})|_{\delta_{m + 1}} \leq (\delta_{m}/\delta_{m + 1})^{1/3 + \epsilon}. \end{equation} 

Let us next consider the more general case where $R \leq \delta_{n}$, but now 
\begin{displaymath} \delta_{m + 1} \leq r \leq \delta_{m} \leq \delta_{k + 1} \leq R \leq \delta_{k} \end{displaymath}
for some $k < m$. We note that if $A \subset \mathbb{S}^{1}$ and $r < r' < R$, then $|A \cap B(\theta,R)|_{r} \leq |A \cap B(\theta,R)|_{r'}|A \cap B(\theta',r')|_{r}$ for some $\theta' \in \mathbb{S}^{1}$, namely the point which maximises $\theta' \mapsto |A \cap B(\theta',r')|_{r}$. Iterating this observation, we may estimate
\begin{displaymath} |\Theta \cap B(\theta,R)|_{r} \leq |\Theta \cap B(\theta,R)|_{\delta_{k + 1}} \big( \prod_{j = k + 1}^{m - 1} |\Theta \cap B(\theta_{j},\delta_{j})|_{\delta_{j + 1}} \Big) |\Theta \cap B(\theta_{m},\delta_{m})|_{r}\end{displaymath}
for some points $\theta_{k + 1},\ldots,\theta_{m} \in \mathbb{S}^{1}$. To each of the factors above we now apply \eqref{form52} to conclude \eqref{form53}. 

Finally, to prove \eqref{form54} in full generality, assume that $R \geq \delta_{n}$, but $r \leq r^{1 - \epsilon} \leq \delta_{n}$. Then,
\begin{displaymath} |\Theta \cap B(\theta,R)|_{r} \leq |\Theta \cap B(\theta,R)|_{\delta_{n}}|\Theta \cap B(\theta',\delta_{n})|_{r} \lesssim_{\epsilon} (\delta_{n}/r)^{1/3 + \epsilon} \leq (R/r)^{1/3 + \epsilon}. \end{displaymath} 
This only leaves the case $R \geq \delta_{n}$ and $r \geq \delta_{n}^{1/(1 - \epsilon)}$, where the trivial estimate $|\Theta \cap B(\theta,R)|_{r} \lesssim_{\epsilon} 1$ suffices to complete the proof of \eqref{form54}.

We have now verified that $\dim_{\mathrm{qA}} \Theta \leq \tfrac{1}{3}$. It remains to show that \eqref{form55} holds. This follows from the fact that $\Theta_{n + 1}(I_{n})$ is contained in the $\delta_{n + 1}$-neighbourhood of $\Theta$ (if desired, one could also slightly modify the construction to ensure that $\Theta_{n + 1}(I_{n}) \subset \Theta$ for all $n \geq 0$). Therefore,
\begin{displaymath} \|\mathcal{N}_{\Theta;\delta_{n + 1}}\|_{L^{q} \to L^{q}} \gtrsim \|\mathcal{N}_{\Theta(I_{n});\delta_{n + 1}}\|_{L^{q} \to L^{q}} \geq \delta_{n + 1}^{\tfrac{2}{3} - \tfrac{1}{q} + \epsilon_{n + 1}}, \qquad n \in \mathbb{N}, \, q \geq 1,  \end{displaymath} 
by the definition of $\Theta(I_{n})$ (that is, Proposition \ref{ex2}). \end{proof} 

The proof of Proposition \ref{ex2} is based on the following special case of \cite[Theorem 1.9]{2024arXiv241104528O}. 

\begin{example}\label{ex1} Fix $t > \tfrac{2}{3}$, and an interval $I \subset [0,1)$. Then there exists $C = C(I,t) > 0$ such that the following holds for $\delta > 0$ small enough. There exists a $\delta$-separated Frostman $(\delta,1,C)$-set $P \subset [0,1]^{2}$, and a $\delta$-separated $(\delta,\tfrac{1}{3},(\log(1/\delta))^{C})$-regular set $\Theta_{0} \subset I$ with cardinality $|\Theta_{0}| \geq (\log (1/\delta))^{-C}\delta^{-1/3}$ such that 
\begin{equation}\label{form47} |\pi_{\theta}(P)|_{\delta} \leq \delta^{-t}, \qquad \theta \in \Theta_{0}. \end{equation}
Here $\pi_{\theta}(x,y) := x + \theta y$ for $(x,y) \in \R^{2}$ and $\theta \in \R$.
\end{example} 

\begin{remark} Only the case $I = [0,1)$ has been stated in \cite[Theorem 1.9]{2024arXiv241104528O}, but the general case can be reduced to the case $I = [0,1)$, as follows. Write $I = [a,a + r)$, and consider the maps $\iota(\theta) := a + r\theta$ (which send $[0,1)$ to $I$) and $A(x,y) := (rx - ay,y)$. Then,
\begin{displaymath} (\pi_{\iota(\theta)} \circ A)(x,y) = (rx - ay) + (a + r\theta)y = r\pi_{\theta}(x,y) \end{displaymath}
for all $(x,y) \in \R^{2}$. Let $P \subset [0,1]^{2}$ and $\Theta_{0} \subset [0,1)$ be the sets provided by \cite[Theorem 1.9]{2024arXiv241104528O} in the special case $I = [0,1)$. Then the previous equation shows that 
\begin{displaymath} \pi_{\iota(\theta)}A(P) = r\pi_{\theta}(P), \qquad \theta \in \Theta_{0}. \end{displaymath}
Write $P' := A(P)$ and $\Theta_{0}' := \iota(\Theta_{0}) \subset I$. Now the previous equation implies, along with \eqref{form47}, that $|\pi_{\theta'}(P')|_{\delta} \leq |\pi_{\theta}(P)|_{\delta} \leq \delta^{-t}$ for all $\theta \in \Theta_{0}'$. One needs a few further minor edits to make sure that $P' \subset [0,1]^{2}$, and $\Theta_{0}'$ is $\delta$-separated, but we leave these to the reader (in these details, it is important that "$r$" can be viewed as a constant). \end{remark} 

We will also need a special case of the \emph{Furstenberg set theorem} \cite[Theorem 5.7]{2023arXiv230110199O}:
\begin{theorem}\label{t:FurstenbergSets} For every $\epsilon > 0$ there exists $\zeta > 0$ such that the following holds for all $\delta > 0$ sufficiently small. Let $P \subset [0,1]^{2}$ be a $(\delta,1,\delta^{-\zeta})$-regular set with $|P|_{\delta} \geq \delta^{\zeta - 1}$. For each $p \in P$, let $\mathcal{T}(p)$ be a Frostman $(\delta,\tfrac{1}{3},\delta^{-\zeta})$-set of ordinary $\delta$-tubes containing $p$. Then the union $\bigcup_{T \in \mathcal{T}} \mathcal{T}(p)$ contains a Frostman $(\delta,1,\delta^{-\epsilon})$-set (of ordinary $\delta$-tubes). \end{theorem}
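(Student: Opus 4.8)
The plan is to deduce Theorem~\ref{t:FurstenbergSets} from the general $\delta$-discretised Furstenberg set estimate \cite[Theorem 5.7]{2023arXiv230110199O} by specialising its parameters. The key point is that, after passing to point--tube duality, the configuration $(P,\{\mathcal{T}(p)\}_{p\in P})$ becomes (a $\delta$-discretisation of) an $(s,t)$-Furstenberg set with $s=\tfrac13$ and $t=1$.

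First I would reduce to the case where the tubes in $\bigcup_{p}\mathcal{T}(p)$ all have slope in a fixed compact interval, say $[-1,1]$; this is harmless after partitioning $\bigcup_p\mathcal{T}(p)$ into $O(1)$ sub-families, rotating, and re-assembling, since both the Frostman property of each $\mathcal{T}(p)$ and the desired conclusion are stable under this. With this normalisation, the point--line duality $\mathbf{D}$ (or rather the analogous map for ordinary $\delta$-tubes) and its inverse are bi-Lipschitz with absolute constants on the relevant compact regions, hence carry Katz--Tao-set, Frostman-set, and regularity properties to the same properties with only an absolute change of the constants. Dualising: $P$ (a $(\delta,1,\delta^{-\zeta})$-regular set with $|P|_{\delta}\geq\delta^{\zeta-1}$) becomes a family $\mathcal{L}$ of ordinary $\delta$-tubes which is a $(\delta,1,\delta^{-O(\zeta)})$-set of cardinality $|\mathcal{L}|\gtrsim\delta^{O(\zeta)-1}$; each fibre $\mathcal{T}(p)$ (a Frostman $(\delta,\tfrac13,\delta^{-\zeta})$-set of tubes through $p$) becomes a Frostman $(\delta,\tfrac13,\delta^{-O(\zeta)})$-set $Y_{\ell}$ of $\delta$-squares inside the dual tube $\ell\in\mathcal{L}$; and $\bigcup_{p}\mathcal{T}(p)$ becomes $Y:=\bigcup_{\ell\in\mathcal{L}}Y_{\ell}$. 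Thus $Y$ carries the structure of a $(\tfrac13,1)$-Furstenberg set at scale $\delta$. (If \cite[Theorem 5.7]{2023arXiv230110199O} is already phrased in the ``dual'' formulation in terms of points with tube-pencils through them, this step is skipped and the theorem is applied to $(P,\{\mathcal{T}(p)\})$ directly with the same numerology.)

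Next I would apply \cite[Theorem 5.7]{2023arXiv230110199O} with $s=\tfrac13$ and $t=1$. The sharp exponent for $(s,t)$-Furstenberg sets in this range is
\begin{displaymath}
\min\Big\{\,s+t,\ \tfrac{3s+t}{2},\ s+1\,\Big\}\;=\;\min\Big\{\tfrac43,\ 1,\ \tfrac43\Big\}\;=\;1,
\end{displaymath}
the binding constraint being the middle term $\tfrac{3s+t}{2}=1$, which lies strictly above the classical Wolff-type bound $\max\{2s,\ s+\tfrac{t}{2}\}=\tfrac56$; this is exactly why a sharp (post-Wolff) Furstenberg estimate is needed, and why the value $\tfrac13$ is the relevant threshold. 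Given $\epsilon>0$, the theorem thus produces $\zeta=\zeta(\epsilon)>0$ so that, under the above hypotheses, $Y$ contains a Frostman $(\delta,1,\delta^{-\epsilon/2})$-set (of cardinality $\gtrsim\delta^{\epsilon/2-1}$). Dualising this subset back via $\mathbf{D}^{-1}$, and absorbing the absolute multiplicative constants lost in $\mathbf{D}^{\pm1}$ into the slack between $\delta^{-\epsilon/2}$ and $\delta^{-\epsilon}$ (valid once $\delta$ is small), one obtains a Frostman $(\delta,1,\delta^{-\epsilon})$-set of ordinary $\delta$-tubes contained in $\bigcup_{p}\mathcal{T}(p)$, as claimed.

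The only genuine work is bookkeeping: matching the precise hypotheses and conclusion of \cite[Theorem 5.7]{2023arXiv230110199O} --- whether its line/point family must be a ``$(\delta,t)$-set'' or ``$(\delta,t)$-regular'', whether a cardinality normalisation is imposed on the fibres, and whether the conclusion is already stated as ``contains a Frostman set'' rather than merely a lower bound on $|Y|_{\delta}$ --- against ours. All of these are either implied by the $(\delta,1,\delta^{-\zeta})$-regularity and size of $P$ together with the Frostman property of the $\mathcal{T}(p)$, or are standard refinements of the cited result; in particular the ``contains a Frostman set'' version follows from the proof of \cite[Theorem 5.7]{2023arXiv230110199O}, or formally by applying a cardinality version of it inside every ball $B(x,r)$ and pigeonholing over scales. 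I expect no essential difficulty beyond this: the mathematical content resides entirely in the cited Furstenberg set estimate, and our task is to recognise our hypotheses as a special case of it.
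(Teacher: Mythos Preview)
Your proposal is correct and follows the same route as the paper: both treat Theorem~\ref{t:FurstenbergSets} as a direct specialisation of \cite[Theorem 5.7]{2023arXiv230110199O} with parameters $s=\tfrac{1}{3}$, $t=1$, and both note that the stated cardinality conclusion of that theorem must be upgraded to the Frostman conclusion by reapplying it at all intermediate scales (the paper cites the analogous argument in \cite[Theorem 4.9]{MR4722034}, you describe it as ``applying a cardinality version inside every ball $B(x,r)$ and pigeonholing over scales''). Your discussion of duality and the exponent computation $\min\{s+t,\tfrac{3s+t}{2},s+1\}=1$ is more explicit than the paper's one-line remark, but there is no substantive difference in approach.
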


\begin{remark} The conclusion of \cite[Theorem 5.7]{2023arXiv230110199O} is stated in the slightly weaker form that $\mathcal{T}$ contains a $\delta$-separated set of cardinality $\geq \delta^{\epsilon - 1}$. However, this conclusion applied at various different scales formally implies Theorem \ref{t:FurstenbergSets}. A similar argument appears in the proof \cite[Theorem 4.9]{MR4722034}.   \end{remark} 

We are now equipped to give the details of Example \ref{ex2}. Fix $\epsilon > 0$, and let 
\begin{displaymath} t  := 2/3 + \epsilon/2. \end{displaymath}
The direction set $\Theta$ will be a subset of the family $\Theta_{0} \subset I$ given by Example \ref{ex1} (we identify the given arc $I \subset \mathbb{S}^{1}$ with a subinterval of $[0,1)$ in order to apply Example \ref{ex1}). Fix $\zeta = \zeta(\epsilon) > 0$ to be determined later, and let $J \subset I$ be an arc of length $R_{0} := \delta^{\zeta}$ which contains maximally many points of $\Theta_{0}$. Write $\Theta := \Theta_{0} \cap J$. Evidently
\begin{displaymath} |\Theta| \geq \delta^{\zeta}|\Theta| \gtrsim (\log(1/\delta))^{-C}\delta^{\zeta - 1/3}. \end{displaymath}
In particular $\Theta$ is a Frostman $(\delta,\tfrac{1}{3},\delta^{-2\zeta})$-set. We check that $\Theta$ satisfies \eqref{form49}. Fix $r \leq r^{1 - \epsilon} \leq R$. We may assume that $R \leq R_{0} \sim \diam(\Theta)$. Then, by the $(\delta,\tfrac{1}{3},(\log(1/\delta))^{C})$-regularity of $\Theta_{0}$,
\begin{displaymath} |\Theta \cap B(\theta,R)|_{r} \leq (\log(1/\delta))^{C}\left(\tfrac{R}{r} \right)^{1/3} \leq \left(\tfrac{R}{r} \right)^{1/3 + \epsilon}, \end{displaymath} 
taking into account that $(R/r)^{\epsilon} \geq r^{-\epsilon} \geq R_{0}^{-\epsilon} \geq \delta^{-\epsilon \zeta} \geq (\log(1/\delta))^{C}$.

Let $P \subset [0,1]^{2}$ be the Frostman $(\delta,1,O_{I}(1))$-set provided by Example \ref{ex1}. Below, we denote the $\delta$-neighbourhood of a set $A \subset \R^{d}$ by $A_{\delta}$. The plan is to construct a Borel set $G \subset \R^{2}$ satisfying
\begin{equation}\label{form48} \mathrm{Leb}(G) \geq \delta^{\epsilon/4} \end{equation}
with the property 
\begin{equation}\label{form50} \mathcal{N}_{\Theta}(\mathbf{1}_{P_{\delta}})(x) \gtrsim_{I} \delta^{t}, \qquad x \in G. \end{equation}
A combination of \eqref{form48}-\eqref{form49} allows us to find a lower bound for $B_{q} := \|\mathcal{N}_{\Theta}\|_{L^{q} \to L^{q}}$:
\begin{displaymath} \delta^{t + \epsilon/4} \lesssim_{I} \|\mathcal{N}_{\Theta}(\mathbf{1}_{P_{\delta}})\|_{L^{q}(G)} \leq B_{q}\|\mathbf{1}_{P_{\delta}}\|_{L^{q}} \sim B_{q}\delta^{1/q}. \end{displaymath}
This yields $B_{q} \gtrsim_{I} \delta^{t - 1/q + \epsilon/2} = \delta^{2/3 - 1/q + 3\epsilon/4}$, and proposition follows.

It remains to construct $G$, and to this end we first define a family $\mathcal{T}$ of ordinary $\delta$-tubes. Fix $\theta \in \Theta$, and let $\mathcal{T}_{\theta,0}$ be a minimal cover of $P_{\delta}$ by ordinary $\delta$-tubes parallel to $\pi_{\theta}^{-1}\{0\}$. Then $|\mathcal{T}_{\theta,0}| \leq C\delta^{-t}$ by \eqref{form47}. Let $\mathcal{T}_{\theta}  := \{T \in \mathcal{T}_{\theta,0} : \mathrm{Leb}(T \cap P_{\delta}) \geq c\delta^{t + 1}\}$ for a suitable constant $c = c_{I} > 0$ to be determined in a moment. Then,
\begin{displaymath} \delta \sim_{I} \mathrm{Leb}(P_{\delta}) \leq \sum_{T \in \mathcal{T}_{\theta}} \mathrm{Leb}(T \cap P_{\delta}) + \sum_{T \in \mathcal{T}_{\theta,0} \, \setminus \, \mathcal{T}_{\theta}} \mathrm{Leb}(T \cap P_{\delta}) \leq  \sum_{T \in \mathcal{T}_{\theta}} \mathrm{Leb}(T \cap P_{\delta}) + Cc\delta, \end{displaymath}
so 
\begin{equation}\label{form51} \sum_{T \in \mathcal{T}_{\theta}} \mathrm{Leb}(T \cap P_{\delta}) \gtrsim_{I} \delta, \qquad \theta \in \Theta; \end{equation}
provided that the constant $c = c_{I} > 0$ was chosen sufficiently small. 

Let $\mathcal{T} := \bigcup_{\theta \in \Theta} \mathcal{T}_{\theta}$, and define $G := \bigcup_{T \in \mathcal{T}} T$. We claim that $G$ satisfies \eqref{form48}. Note that the property \eqref{form50} is immediate from the definition of the tubes $\mathcal{T}_{\theta}$.

To prove \eqref{form48}, for $\theta \in \Theta$ fixed let 
\begin{displaymath} P_{\theta} := P \cap \bigcup_{T \in \mathcal{T}_{\theta}} 2T. \end{displaymath}
Then, $P_{\delta} \cap \bigcup_{T \in \mathcal{T}_{\theta}} T \subset P_{\theta,\delta}$ by the triangle inequality, so 
\begin{displaymath} \delta^{2}|P_{\theta}| \gtrsim \mathrm{Leb}(P_{\theta,\delta}) \geq \sum_{T \in \mathcal{T}_{\theta}} \mathrm{Leb}(T \cap P_{\delta}) \stackrel{\eqref{form51}}{\gtrsim_{I}} \delta \quad \Longrightarrow \quad |P_{\theta}| \gtrsim_{I} \delta^{-1} \sim_{I} |P| \end{displaymath}
for each $\theta \in \Theta$. This implies
\begin{displaymath} \sum_{p \in P} |\{\theta \in \Theta : p \in P_{\theta}\}| = \sum_{\theta \in \Theta} |P_{\theta}| \gtrsim_{I} |\Theta||P|. \end{displaymath}
Thus, there exists a subset $P' \subset P$ with $|P'| \sim_{I} |P|$ such that 
\begin{displaymath} |\{\theta \in \Theta :  p \in P_{\theta}\}| \gtrsim_{I} |\Theta|, \qquad p \in P'. \end{displaymath}
For each $p \in P'$, let $\mathcal{T}(p) := \{T \in \mathcal{T} : p \in 2T\}$. Then $|\mathcal{T}(p)| \gtrsim |\{\theta \in \Theta : p \in P_{\theta}\}| \sim_{I} |\Theta|$ for all $p \in P'$. Recall that $P$  (therefore $P'$) is a Frostman $(\delta,1,O_{I}(1))$-set, and $\Theta$ is a Frostman $(\delta,\tfrac{1}{3},\delta^{-2\zeta})$-set. For $C \geq 1$ fixed, these facts imply by Theorem \ref{t:FurstenbergSets} that $\mathcal{T}$ contains a Frostman $(\delta,1,\delta^{-\epsilon/C})$-set, provided $\zeta = \zeta(\epsilon,C) > 0$ was chosen small enough (in other words $\mathcal{T}$ is essentially a ``Kakeya set of tubes"). Now, it follows from C\'ordoba's $L^{2}$-argument (see \cite[Lemma 3.38]{OrponenIncidenceGeometry} for details) that $G = \bigcup_{T \in \mathcal{T}} T$ has measure $\geq \delta^{\epsilon/4}$.

\section{Application to Bochner--Riesz}
\label{sec: bochner--riesz}
In this section, we prove Theorem \ref{theorem: A} and Theorem \ref{theorem: B}. First, we recall some relevant definitions. All these definitions can be found in \cite{cladek_BR,roy,SZ}. Let $\Omega$ be a convex domain as defined in \S\ref{sec: introduction}. 

\begin{definition}
    [Supporting lines]
    A line $\ell$ is called a supporting line for $\Omega$, if $\ell$ contains a point on $\partial\Omega$, and $\Omega$ lies entirely on one of the two closed half-planes determined by $\ell$. If there exists a unique supporting line for $\Omega$ at $\xi$, we call it a tangent line. If $\ell$ is a supporting line for $\Omega$ at the point $\xi\in\partial\Omega$, we let $\omega(\ell)\in\mathbb{S}^1$ be the vector that is parallel to $\ell$, which has $-\xi+\Omega$ to the left of it. We denote the collection of all supporting lines for $\Omega$ at the point $\xi\in\partial\Omega$ by $\mathscr{L}(\Omega;\xi)$.
\end{definition}

\begin{definition}
    [$\delta$-caps]
For $\delta>0$, we define a $\delta$-cap to be a subset of $\partial\Omega$ of the form $$B(\ell,\delta):=\{P\in\partial\Omega: \text{dist}(P,\ell)<\delta\},$$ where $\ell$ is any supporting line for $\partial\Omega$. 
\end{definition}
\begin{definition}
    [Affine dimension]
    Let $N(\Omega,\delta)$ be the minimum number of $\delta$-caps required to cover $\partial\Omega$. The \textit{affine dimension} of $\Omega$ is defined as $$\kappa_\Omega:=\limsup_{\delta\to 0^+}\frac{\log{N(\Omega,\delta)}}{\log{\delta^{-1}}}.$$ 
\end{definition}
\begin{definition}
    [Additive energy]\label{def: additive energy}
    Let $\mathfrak{B}_\delta$ be a collection of $\delta$-caps covering $\partial\Omega$. Consider a partition of $\mathfrak{B}_\delta$ as follows
\begin{equation}
    \label{eqn: partition of caps}\mathfrak{B}_\delta=\bigsqcup_{i=1}^{M_0}\mathfrak{B}_{\delta,i},
\end{equation}
with the following property 

\begin{equation}
    \label{eqn: finite overlap property}
    \forall i,\text{ each $\xi\in\R^2$ lies in at most $M_1$ sets in the class }\{B_{1,i}+\dots+B_{m,i}:B_{j,i}\in\mathfrak{B}_{\delta,i}\}. 
\end{equation}
Let $$\Xi_{\mathfrak{B}_\delta,m}:=\min\{M_0^{2m}\cdot M_1:\text{there is a partition }\eqref{eqn: partition of caps}\text{ of $\mathfrak{B}_\delta$ satisfying }\eqref{eqn: finite overlap property}\},$$ and $$\Xi_m(\Omega,\delta):=\min\{\Xi_{\mathfrak{B}_\delta,m}:\mathfrak{B}_\delta \text{ is a covering of $\partial\Omega$ by $\delta$-caps}\}.$$
Define the \textit{$m$-additive energy} of $\Omega$ to be $$\mathcal{E}_{m,\Omega}:=\limsup_{\delta\to 0^+}\frac{\log\Xi_m(\Omega,\delta)}{\log{\delta^{-1}}}.$$
\end{definition}
Given a convex domain $\Omega$, we associate a direction-set $\Theta(\Omega)$ to it as follows. Fix $\xi\in\partial\Omega$. If $\Omega$ does not have a tangent at $\xi$, then $\mathscr{L}(\Omega;\xi)$ is infinite. Let $(\ell_L,\ell_R)$ be the unique pair of supporting lines such that 
    $$|\omega(\ell_R)-\omega(\ell_L)|=\max_{\ell,\ell'\in\mathscr{L}(\Omega;\xi)}|\omega(\ell)-\omega(\ell')|,$$ and $\omega(\ell_R)$ is to the left of $\omega(\ell_L)$. To make the dependence on $\xi$ explicit, denote $\omega_L(\xi):=\omega(\ell_L)$ and $\omega_R(\xi):=\omega(\ell_R)$. If $\Omega$ has a tangent line $\ell$ at $\xi$, we denote $\omega_R(\xi):=\omega(\ell)=:\omega_L(\xi)$. The reason behind the subscripts `$L$' and `$R$' is the following. After rotation, if $\Omega$ was locally graph-parametrized by a (convex) function $\gamma$ around $\xi$, then the slopes of $\omega_L(\xi)$ and $\omega_R(\xi)$ are $\gamma_L'(\xi_1)$ and $\gamma_R'(\xi_1$), respectively, recalling that the left and right-derivatives of a convex function exist everywhere. When $\gamma_L'(\xi_1)=\gamma_R'(\xi_1)$ i.e., $\gamma'(\xi_1)$ exists, $\Omega$ has a tangent line at $\xi$, and the directions $\omega_L(\xi)$ and $\omega_R(\xi)$ coincide.
   \begin{definition} 
   The direction-set associated with $\Omega$ is defined as
$$
\Theta(\Omega) := \big\{ \omega_L(\xi),\omega_R(\xi) : \xi \in \partial\Omega \big\}\subseteq\mathbb{S}^1.
$$
\end{definition}
The following proposition provides $L^p$-bounds for $B^\alpha_\Omega$ in terms of the additive energy and bounds for the Nikod\'ym maximal function. The result is a reformulation of \cite[Theorem 1.4]{cladek_BR}, which is a generalisation of C\'ordoba's argument for the $L^4$-boundedness of the circular Bochner--Riesz operator \cite{Cordoba}. The same argument was also used by Seeger and Ziesler to prove the $L^4$ bound in Theorem \ref{theorem: seeger--ziesler}. 
\begin{prop}
    \label{prop: 2m bound}
    Let $\Omega$ be a convex domain. Suppose the Nikod\'ym maximal operator associated with the direction-set $\Theta=\Theta(\Omega)$ satisfies the bound $$\|\N_{\Theta;\delta}\|_{L^{\frac{m}{m-1}}(\R^2)\to L^{\frac{m}{m-1}}(\R^2)}\lesssim_\eta\delta^{-\beta-\eta},\quad\text{for all $\eta>0$}.$$ Then $B^\alpha_\Omega$ is bounded on $L^{2m}$ for all $\alpha>\frac{\mathcal{E}_{m,\Omega}}{2m}+\frac{\beta}{2}$.
\end{prop}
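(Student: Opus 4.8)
The plan is to follow the multilinear square function argument of Córdoba, in the form used by Seeger--Ziesler \cite{SZ} and Cladek \cite{cladek_BR}; indeed the proposition is essentially a reformulation of \cite[Theorem 1.4]{cladek_BR}, so I will only indicate the main steps and the points requiring care. First I would \emph{reduce to a single annulus}: decompose the multiplier $(\max\{1-\rho(\xi),0\})^{\alpha}$ dyadically as $\sum_{k \geq 0} m_{k}(\xi)$, plus a harmless bounded piece supported away from $\partial\Omega$, where $m_{k}$ is supported on the ``$\delta_{k}$-annulus'' $\{|1-\rho(\xi)| \sim \delta_{k}\}$ with $\delta_{k} = 2^{-k}$, satisfies $|m_{k}| \lesssim \delta_{k}^{\alpha}$ together with the natural derivative bounds in the radial direction. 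If one can show that the operator $S_{\delta}$ whose multiplier is adapted to the $\delta$-annulus and bounded by $1$ satisfies
$$\|S_{\delta} f\|_{L^{2m}(\R^{2})} \lesssim_{\eta} \delta^{-\mathcal{E}_{m,\Omega}/(2m) - \beta/2 - \eta}\|f\|_{L^{2m}(\R^{2})}, \qquad \eta > 0,$$
then $\|m_{k}(D)f\|_{L^{2m}} \lesssim_{\eta} \delta_{k}^{\alpha - \mathcal{E}_{m,\Omega}/(2m) - \beta/2 - \eta}\|f\|_{L^{2m}}$, and summing the geometric series over $k$ yields $L^{2m}$-boundedness of $B^{\alpha}_{\Omega}$ for all $\alpha > \mathcal{E}_{m,\Omega}/(2m) + \beta/2 + \eta$; letting $\eta \to 0$ gives the claim.

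For the single-scale estimate I would run \emph{Córdoba's argument, using the additive energy to handle orthogonality}. Decompose $S_{\delta} = \sum_{\tau} S_{\tau}$, where $\tau$ runs over a covering of $\partial\Omega$ by $\delta$-caps realizing (up to constants) the optimum in Definition \ref{def: additive energy}: a partition into $M_{0}$ classes with the $M_{1}$-fold overlap property for $m$-fold sumsets, and $M_{0}^{2m}M_{1} = \Xi_{m}(\Omega,\delta) = \delta^{-\mathcal{E}_{m,\Omega} + o(1)}$. Write $\|S_{\delta}f\|_{2m}^{2m} = \|(S_{\delta}f)^{m}\|_{L^{2}}^{2}$ and expand $(S_{\delta}f)^{m} = \sum_{\tau_{1},\ldots,\tau_{m}} \prod_{j} S_{\tau_{j}}f$; the frequency support of $\prod_{j} S_{\tau_{j}}f$ lies in $\tau_{1} + \cdots + \tau_{m}$. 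Splitting the tuples according to the class of each $\tau_{j}$ (Cauchy--Schwarz costing a power of $M_{0}$ after squaring), then using the $M_{1}$-overlap property and Plancherel to exploit the almost-orthogonality of the sumsets within one class, one arrives at
$$\|S_{\delta}f\|_{L^{2m}}^{2m} \lesssim \Xi_{m}(\Omega,\delta) \sum_{\tau_{1},\ldots,\tau_{m}} \int \prod_{j} |S_{\tau_{j}}f|^{2} = \Xi_{m}(\Omega,\delta)\,\Big\| \Big( \sum_{\tau} |S_{\tau}f|^{2} \Big)^{1/2} \Big\|_{L^{2m}}^{2m}.$$

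It then remains to prove \emph{Córdoba's square function estimate}, which is where the Nikod\'ym maximal operator enters:
$$\Big\| \Big( \sum_{\tau} |S_{\tau}f|^{2} \Big)^{1/2} \Big\|_{L^{2m}}^{2} \lesssim_{\eta} \|\mathcal{N}_{\Theta;\delta}\|_{L^{\frac{m}{m-1}} \to L^{\frac{m}{m-1}}}\,\|f\|_{L^{2m}}^{2} \lesssim_{\eta} \delta^{-\beta - \eta}\|f\|_{L^{2m}}^{2}, \qquad \Theta = \Theta(\Omega).$$
One dualizes $\|(\sum_{\tau}|S_{\tau}f|^{2})^{1/2}\|_{L^{2m}}^{2} = \|\sum_{\tau}|S_{\tau}f|^{2}\|_{L^{m}}$ against $g \geq 0$ with $\|g\|_{L^{m/(m-1)}} = 1$, and for each $\tau$ replaces $g$ by $g \ast \eta_{\tau}$ with $\eta_{\tau}$ an $L^{1}$-normalized bump adapted to the dual tube $T_{\tau}$ of $\tau$ (legitimate since $|S_{\tau}f|^{2}$ has frequency support in $\tau - \tau$). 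The key geometric point is that $T_{\tau}$ points in the supporting-line direction of $\partial\Omega$ at the cap $\tau$, hence in a direction belonging to $\Theta(\Omega)$ --- this is precisely where including both $\omega_{L}(\xi)$ and $\omega_{R}(\xi)$ is used, so that corners of $\partial\Omega$ are covered --- so $|g \ast \eta_{\tau}| \lesssim \mathcal{N}_{\Theta;\delta}g$ after the appropriate $\delta$-rescaling, a bound independent of $\tau$. Summing in $\tau$ and applying H\"older with the hypothesized $L^{m/(m-1)}$-bound for $\mathcal{N}_{\Theta;\delta}$ gives the estimate; the standard care (localization to dual-tube scale) needed to make the convolution reduction into a genuine, non-circular inequality I would import verbatim from \cite{Cordoba, SZ, cladek_BR}. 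Combining this with the previous step and $\Xi_{m}(\Omega,\delta)^{1/(2m)} \lesssim_{\eta} \delta^{-\mathcal{E}_{m,\Omega}/(2m) - \eta}$ produces exactly the single-annulus bound needed above.

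The main obstacle I expect is bookkeeping rather than a new idea: in the second step, matching the Cauchy--Schwarz / $L^{2}$-orthogonality count \emph{exactly} to the definition of $\Xi_{m}(\Omega,\delta)$ (the partition into $M_{0}$ classes with $M_{1}$-fold overlap of $m$-fold sumsets, and the precise exponent $2m$ on $M_{0}$), and in the third step the geometric verification that the dual tubes of the $\delta$-caps really do point only in directions of $\Theta(\Omega)$, with correct handling of the non-tangent-line points of $\partial\Omega$ and of the varying widths of the caps. These are the places where I would rely on the detailed computations in \cite{SZ, cladek_BR}.
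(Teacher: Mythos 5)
Your proposal is correct and takes essentially the same route as the paper, which does not reprove the statement but simply cites \cite[Theorem 1.4]{cladek_BR} (the C\'ordoba/Seeger--Ziesler square-function argument you sketch) and remarks that self-adjointness of $B^{\alpha}_{\Omega}$ converts the $L^{2m/(2m-1)}$ bound there into the $L^{2m}$ bound claimed here. Your direct dualization at $L^{2m}$ even makes that last remark unnecessary, and your exponent bookkeeping --- $\Xi_{m}(\Omega,\delta)^{1/(2m)} \approx \delta^{-\mathcal{E}_{m,\Omega}/(2m)}$ from the additive-energy orthogonality step and $\delta^{-\beta/2}$ from the square-function/Nikod\'ym step --- matches the threshold $\alpha > \tfrac{\mathcal{E}_{m,\Omega}}{2m} + \tfrac{\beta}{2}$.
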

We remark that \cite[Theorem 1.4]{cladek_BR} features the $L^{\frac{2m}{2m-1}}$ norm instead of $L^{2m}$. However, these norms are equal, since $B^\alpha_\Omega$ is a self-adjoint operator.

The proofs of Theorem \ref{theorem: A} and Theorem \ref{theorem: B} involve two closely related constructions involving the following generalization of the middle-third Cantor set.

\begin{definition}
    [Homogeneous Moran sets]\label{def: HMS}
    Let $(n_k)_{k\geq 1}$ be a sequence of integers $n_k\geq 2$, and $(c_k)_{k\geq 1}$ be a sequence of `contraction ratios' $c_k\in(0,1)$, satisfying $n_kc_k< 1$ for all $k\geq 1$. Starting from the initial interval $\mathscr{I}_0:=\big\{[-1/2,1/2]\big\}$, let $\mathscr{I}_k$ be given recursively as follows. For each $J\in\mathscr{I}_{k-1}$, let $\mathscr{I}_k(J)$ be a collection of $n_k$ disjoint closed sub-intervals of $J$ (which we call the \textit{children} of $J$), each of length $c_k\cdot\mathcal{L}(J)$, and define $$\mathscr{I}_k:=\bigcup_{J\in\mathscr{I}_{k-1}}\mathscr{I}_k(J),\quad\text{and}\quad E_k:=\bigcup_{I\in\mathscr{I}_k}I.$$ The set $E$ given by $$E:=\bigcap_{k\geq 1}E_k,$$ is called a \textit{homogeneous Moran set} (HMS).
\end{definition}
For an HMS $E$ as above, we call $E_k$ the \textit{$k$th generation} of $E$. For $k \geq 1$, define $\mathscr{I}_k'$ to be the collection of all connected components of $E_{k-1}\setminus E_k$. We refer to these as the intervals removed at generation $k$. Any interval in $\mathscr{I}_{\mathrm{rem}}:=\bigcup_{k\geq 1}\mathscr{I}_k'$ will be called a \textit{removed interval}. We also define $E_{\mathrm{mid}}$ to be the collection of all mid-points of the removed intervals.
\begin{definition}
    [End-point condition]
    Let $E$ be an HMS as defined above. We say that $E$ satisfies the end-point condition, if for all $k \geq 0$, and all $J=[a,b]\in\mathscr{I}_{k}$, the left end-point of the left-most child of $J$ is $a$, and the right end-point of the right-most child of $J$ is $b$.   
\end{definition}
\begin{definition}
    [Generalised Cantor set]\label{def: GCS}
    Let $C$ be an HMS as defined above. We call $C$ a \textit{generalised Cantor set} (GCS), if it satisfies the end-point condition, as well as the Hua--Rao--Wen--Wu \cite{HRWW} condition 
    \begin{equation}
        \label{eqn: HRWW condition}
        \lim_{k \to \infty} \frac{\log c_k}{\log (c_1c_2\cdots c_k)}=0.
    \end{equation}
\end{definition}
 Definition 2.14 from \cite{roy} is a special case of Definition \ref{def: GCS}, where $n_k$ and $c_k$ are constant, and the sets are self-similar. 

The following result gives the quasi-Assouad dimension of generalised Cantor sets in terms of the numbers $n_k$ and $c_k$.
\begin{lemma}\label{lemma: cantor set dimension}
    Let $C \subset [-\tfrac{1}{2},\tfrac{1}{2}]$ be a GCS. Then $$\limsup_{K\to \infty}\frac{\log(n_1n_{2}\dots n_{K})}{-\log(c_1c_{2}\dots c_{K})}\leq\dim_{\mathrm{qA}}(C)\leq\limsup_{K\to \infty}\max_{1\leq k\leq K}\frac{\log(n_kn_{k+1}\dots n_{K})}{-\log(c_kc_{k+1}\dots c_{K})}.$$ 
\end{lemma}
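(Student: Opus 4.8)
## Proof plan

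The plan is to estimate the covering numbers $|C \cap I|_r$ for a GCS $C$ with $I$ an interval of length $R$, and $\delta \le r \le r^{1-\gamma} \le R \le 1$, and then take $\gamma \to 0^+$. The key structural feature of a GCS is that, between consecutive ``Moran scales'' $\ell_k := c_1 c_2 \cdots c_k$, the set $C$ looks like a disjoint union of $n_1 \cdots n_k$ scaled copies of the whole construction, each living in an interval of length $\ell_k$. So I would first reduce to the case where $r = \ell_m$ and $R = \ell_{k}$ for some $k \le m$: any general pair $(r,R)$ can be replaced by the nearest Moran scales at the cost of a multiplicative constant $\max_j n_j$, which is harmless after taking $\gamma \to 0$ provided one argues a little more carefully near the boundary (using the end-point condition to control how a generic interval $I$ of length $R$ meets the generation-$k$ intervals — it meets $O(1)$ of them). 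For Moran scales, a direct count gives
\begin{equation}\label{eq:moran-count}
|C \cap I|_{\ell_m} \le C_0 \cdot n_{k+1} n_{k+2} \cdots n_m, \qquad \ell_k = |I|,
\end{equation}
and conversely $|C \cap I|_{\ell_m} \gtrsim n_{k+1} \cdots n_m$ when $I$ is one of the generation-$k$ intervals (here the end-point and HRWW conditions are used to ensure the copies are genuinely $\ell_m$-separated and that there is no degeneracy in the contraction ratios).

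\textbf{Upper bound.} Fix $\gamma > 0$ and suppose $\delta \le r \le r^{1-\gamma} \le R$. Choose $k \le m$ so that $\ell_{k} \sim R$ and $\ell_m \sim r$ (using HRWW to guarantee these indices exist with comparability constants bounded independent of the scales). By \eqref{eq:moran-count},
\begin{displaymath}
|C \cap I|_r \lesssim n_{k+1}\cdots n_m = \Big(\tfrac{R}{r}\Big)^{\beta_{k,m}}, \qquad \beta_{k,m} := \frac{\log(n_{k+1}\cdots n_m)}{-\log(\ell_m/\ell_k)} = \frac{\log(n_{k+1}\cdots n_m)}{-\log(c_{k+1}\cdots c_m)},
\end{displaymath}
so $h_C(\gamma) \le \limsup_{m} \max_{k < m} \beta_{k,m}$, where the $\max$ over $k$ ranges only over those $k$ for which the ``aspect ratio'' condition $\ell_m \le \ell_m^{1-\gamma'} \le \ell_k$ (a discretised version of $r \le r^{1-\gamma} \le R$) can hold. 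As $\gamma \to 0^+$, this constraint disappears and the $\limsup$ over $m$ of the max over all $k \le m$ survives; relabelling the product indices (replacing $k+1$ by $k$) gives exactly the claimed upper bound
\begin{displaymath}
\dim_{\mathrm{qA}}(C) \le \limsup_{K \to \infty} \max_{1 \le k \le K} \frac{\log(n_k n_{k+1}\cdots n_K)}{-\log(c_k c_{k+1}\cdots c_K)}.
\end{displaymath}

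\textbf{Lower bound.} For this I only need a \emph{single} well-chosen pair of scales at each large generation. Fix $K$ and take $I$ to be the initial interval $[-\tfrac12,\tfrac12]$ (so $R = 1$, i.e.\ $\ell_0$) and $r = \ell_K = c_1 \cdots c_K$. Then by the matching lower bound in \eqref{eq:moran-count}, $|C \cap I|_r \gtrsim n_1 \cdots n_K = (R/r)^{\log(n_1\cdots n_K)/(-\log(c_1\cdots c_K))}$. The HRWW condition \eqref{eqn: HRWW condition} is precisely what guarantees $\log c_K / \log(c_1\cdots c_K) \to 0$, which forces $r = r^{1-\gamma}\cdot r^{\gamma}$ with $r^\gamma$ negligible compared to the gap between $\ell_K$ and $\ell_{K-1}$ — concretely, it ensures $\ell_K \le \ell_K^{1-\gamma}\le \ell_{K-1} \le 1 = R$ fails to be an obstruction, so this pair $(r,R)$ is admissible in the definition of $h_C(\gamma)$ for every $\gamma$. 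Hence $h_C(\gamma) \ge \limsup_K \log(n_1\cdots n_K)/(-\log(c_1\cdots c_K))$ for all $\gamma$, and letting $\gamma \to 0^+$ gives the lower bound.

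\textbf{Main obstacle.} The routine part is \eqref{eq:moran-count}; the delicate part is the passage between arbitrary scales $(r,R)$ and Moran scales $(\ell_m,\ell_k)$ while keeping track of the constraint $r \le r^{1-\gamma}\le R$, and making sure that the $\max_{1\le k \le K}$ in the upper bound genuinely emerges (rather than just the $k=1$ term) — this is where the HRWW condition does real work, since without control on the ratios $\log c_k/\log \ell_k$ the intermediate scales could be distributed so unevenly that the relevant $(r,R)$ pairs for a given $\gamma$ miss the index $k$ achieving the maximum. I expect the bulk of the write-up to be a careful bookkeeping argument showing that for every $\gamma > 0$ and every $k$, there are admissible scale pairs ``centred'' near generation $k$ that realise $\beta_{k,m}$ up to $o(1)$ as $m \to \infty$.
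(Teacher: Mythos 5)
The paper itself does not prove this lemma — it simply cites \cite[Theorem 1.14]{qA_dimension} — so your self-contained argument is by construction a different route, and its skeleton (count at the Moran scales $\ell_k:=c_1\cdots c_k$; upper bound by dominating each per-pair exponent by the maximum; lower bound from the single pair $R=1$, $r=\ell_K$) is the right one. The lower bound part is essentially fine, and in fact simpler than you make it: with $R=1$ the admissibility constraint $r<r^{1-\gamma}<R$ holds automatically, so no HRWW input is needed there, and the packing count $|C|_{\ell_K}\gtrsim n_1\cdots n_K$ needs no separation of the copies — the $n_1\cdots n_K$ generation-$K$ intervals are disjoint, have length $\ell_K$ and each meets $C$, so any ball of radius $\ell_K$ meets only $O(1)$ of them.

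The genuine gap is in the upper bound, in the passage from arbitrary scales $(r,R)$ to Moran scales. Both mechanisms you propose for it are false: HRWW does \emph{not} let you pick $\ell_k\sim R$, $\ell_m\sim r$ ``with comparability constants bounded independent of the scales'' (it only says $\log(1/c_k)/\log(1/\ell_k)\to 0$, which allows the consecutive ratios $\ell_k/\ell_{k+1}=1/c_{k+1}$ to be unbounded), and the rounding cost is not a constant $\max_j n_j$ — indeed $\max_j n_j$ can be infinite, e.g.\ the paper's own construction for Theorem \ref{theorem: B} has $n_k=2^k$, $c_k=2^{-3k}$. Concretely, taking the nearest Moran scales $\ell_k\geq R>\ell_{k+1}$ and $\ell_{m-1}>r\geq\ell_m$, your count gives $|C\cap I|_r\lesssim n_{k+1}\cdots n_m\leq(\ell_k/\ell_m)^{\beta_{k,m}}$, which exceeds the desired $(R/r)^{\beta_{k,m}}$ by a factor as large as $(c_{k+1}c_m)^{-\beta_{k,m}}$, which is not $O(1)$. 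The step that actually closes this — and which your ``main obstacle'' paragraph gestures at but never performs — is: by HRWW, $1/c_j\leq\ell_j^{-\eta}$ for all large $j$; combined with $\ell_{k+1}\geq c_{k+1}R$ and $\ell_m\geq c_m r$ this self-improves to $1/c_{k+1}\leq R^{-2\eta}\leq r^{-2\eta}$ and $1/c_m\leq r^{-2\eta}$, and then the admissibility constraint $R\geq r^{1-\gamma}$, i.e.\ $1/r\leq(R/r)^{1/\gamma}$, absorbs the loss into a factor $(R/r)^{O(\eta/\gamma)}$; one finishes by choosing $\eta=\eta(\gamma,\epsilon)$ small, handling the finitely many large scales by the constant $C$, and letting $\gamma\to0^+$. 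This is exactly where HRWW and the ``quasi'' in quasi-Assouad interact; without the $\gamma$-constraint the claim would be an Assouad-dimension bound and would be false. Finally, your concern that the $\max_{1\leq k\leq K}$ must ``genuinely emerge'' is a non-issue: the lemma only asserts the upper bound, and every per-pair exponent $\beta_{k,m}$ is trivially dominated by that maximum; realising the maximum would only matter for a matching lower bound, which is not claimed.
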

\begin{proof}
    We refer to \cite[Theorem 1.14]{qA_dimension} for the proof.
\end{proof}

We also need the quasi-Assouad dimension of $C_{\mathrm{mid}}$.
\begin{lemma}
    \label{lemma: midpoints dimension}
     Let $C$ be a GCS. Then $$\dim_{\mathrm{qA}}(C_{\mathrm{mid}})=\dim_{\mathrm{qA}}(C).$$
\end{lemma}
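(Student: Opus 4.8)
The key fact is that $C_{\mathrm{mid}}$ sits inside the set of removed intervals, whose endpoints are (by the end-point condition) already endpoints of intervals in some $\mathscr{I}_k$, hence contained in $C$. So $C_{\mathrm{mid}}$ is, very roughly, a ``dilated and translated copy'' of pieces of $C$ at every scale, and the two sets should have the same covering numbers up to controlled errors. I would prove the two inequalities $\dim_{\mathrm{qA}}(C_{\mathrm{mid}})\le\dim_{\mathrm{qA}}(C)$ and $\dim_{\mathrm{qA}}(C_{\mathrm{mid}})\ge\dim_{\mathrm{qA}}(C)$ separately, both via the covering-number description of $h_E(\gamma)$.

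\textbf{Upper bound $\dim_{\mathrm{qA}}(C_{\mathrm{mid}})\le\dim_{\mathrm{qA}}(C)$.} Fix $\gamma>0$ and write $s:=h_C(\gamma/2)$ (say). Fix scales $r<r^{1-\gamma}<R$, a point $x\in C_{\mathrm{mid}}$, and estimate $|B(x,R)\cap C_{\mathrm{mid}}|_r$. Let $k$ be the generation with $\mathcal{L}(J)\approx R$ for $J\in\mathscr{I}_k$; then $B(x,R)$ meets $O(1)$ intervals of $\mathscr{I}_k$. Inside each such $J$, the midpoints of intervals removed at generations $>k$ all lie within $O(1)$ children of $J$, and each removed interval at generation $k'>k$ has its midpoint at distance $\le\mathcal{L}(\text{interval of }\mathscr{I}_{k'-1})$ from an endpoint of an interval of $\mathscr{I}_{k'-1}\subset C$. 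The point is: an $r$-cover of $B(x',\,\mathcal{L}(J'))\cap C$ for the relevant children $J'$ (plus the boundedly many removed-interval midpoints at generations between $k$ and the scale of $r$, which form a set coverable by $O(\log(1/r))$ balls — or better, a genuinely sparse set) yields an $r$-cover of $B(x,R)\cap C_{\mathrm{mid}}$ of comparable size. The HRWW condition \eqref{eqn: HRWW condition} guarantees that the ``transition'' generations contribute a factor that is sub-polynomial in $R/r$, so after choosing $\gamma$ small we get $|B(x,R)\cap C_{\mathrm{mid}}|_r\lesssim_\epsilon (R/r)^{s+\epsilon}$, giving $h_{C_{\mathrm{mid}}}(\gamma')\le h_C(\gamma)+\epsilon$ for a suitable $\gamma'$, and letting $\gamma\to0$ and $\epsilon\to 0$ finishes this direction.

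\textbf{Lower bound $\dim_{\mathrm{qA}}(C_{\mathrm{mid}})\ge\dim_{\mathrm{qA}}(C)$.} Here I would exploit that $C\subset\overline{C_{\mathrm{mid}}}$, or more precisely that every interval $J\in\mathscr{I}_k$ contains a removed-interval midpoint within distance $O(\mathcal{L}(J))$ of any prescribed child, so that an $r$-cover of $B(x,R)\cap C_{\mathrm{mid}}$ of size $N$ gives an $r$-cover of a slightly shrunk copy of $B(x',R')\cap C$ of size $O(N)$, with $R/R'$ and $r/r'$ bounded by a sub-polynomial (in $R/r$) factor, again using \eqref{eqn: HRWW condition}. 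This shows $h_C(\gamma)\le h_{C_{\mathrm{mid}}}(\gamma')+\epsilon$, and passing to the limit gives $\dim_{\mathrm{qA}}(C)\le\dim_{\mathrm{qA}}(C_{\mathrm{mid}})$. Alternatively, one can note $\overline{C_{\mathrm{mid}}}\supseteq C$: every point of $C$ is a limit of removed-interval midpoints (since at each generation $k$, the gap adjacent to any interval of $\mathscr{I}_k$ has midpoint within $O(\mathcal{L}(J_k))$ of it), so by stability of $\dim_{\mathrm{qA}}$ under closure and monotonicity, $\dim_{\mathrm{qA}}(C)\le\dim_{\mathrm{qA}}(\overline{C_{\mathrm{mid}}})=\dim_{\mathrm{qA}}(C_{\mathrm{mid}})$. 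This is the cleaner route and I would use it.

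\textbf{Main obstacle.} The genuine work is the upper bound: one must show that passing from $C$ to $C_{\mathrm{mid}}$ does not \emph{increase} the covering dimension, and the subtlety is that $C_{\mathrm{mid}}$ aggregates fragments of $C$ coming from \emph{many} different scales, so an $r$-ball near a midpoint could in principle see overlapping contributions from removed intervals of several generations. Controlling this requires the HRWW condition \eqref{eqn: HRWW condition} to ensure the ratios $c_k$ are sub-polynomially small relative to the cumulative ratio $c_1\cdots c_k$, so that the ``number of active transition generations'' between any two scales $r<R$ is $o(\log(R/r))$ and the accumulated multiplicative error is $(R/r)^{o(1)}$, hence harmless after the limit $\gamma\to0$ defining $\dim_{\mathrm{qA}}$. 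Making this bookkeeping precise — essentially reproducing the argument behind Lemma \ref{lemma: cantor set dimension} but ``shifted'' to the midpoint set — is the part that needs care.
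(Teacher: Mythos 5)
Your lower bound is exactly the paper's argument ($C\subset\overline{C_{\mathrm{mid}}}$ plus monotonicity and closure-stability of $\dim_{\mathrm{qA}}$), and that part is fine. The genuine gap is the upper bound, which you explicitly leave as ``bookkeeping that needs care'': nothing in your sketch actually establishes $|B(x,R)\cap C_{\mathrm{mid}}|_r\lesssim (R/r)^{s+\epsilon}$, and at least one intermediate claim is quantitatively false. The midpoints of removed intervals of length between $r$ and $R$ inside $B(x,R)$ are not ``boundedly many'' nor coverable by $O(\log(1/r))$ balls: already for the middle-thirds Cantor set (which satisfies \eqref{eqn: HRWW condition}) there are of order $(R/r)^{\log 2/\log 3}$ such removed intervals, i.e.\ polynomially many in $R/r$. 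Their contribution is still of the right size, but your argument gives no mechanism to see this, and the appeal to \eqref{eqn: HRWW condition} to absorb ``transition generations'' is asserted rather than proved; likewise the claim that midpoints of intervals removed at generations $>k$ inside $J\in\mathscr{I}_k$ lie in $O(1)$ children of $J$ is not correct (generation-$(k+2)$ gaps are spread over all $n_{k+1}$ children).

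The paper's proof shows this can be done scale-by-scale with no generation bookkeeping and without \eqref{eqn: HRWW condition}: only the end-point condition is used. Fix $r<R$ and an interval $I_R$ of length $R$, and split the removed intervals by length. If a removed interval $I$ has $|I|\leq r$, its midpoint lies within $r/2$ of $C$ (its endpoints are in $C$ by the end-point condition), so these midpoints are covered by an $(r/2)$-cover of $I_R\cap C$, giving $|I_R\cap C_{\mathrm{mid}}^{\leq r}|_r\leq |I_R\cap C|_{r/2}$. If $|I|>r$, count these midpoints by the left endpoints $a(I)$: these lie in $C$ and are pairwise more than $r$-separated, so their number in $I_R$ is at most $|I_R\cap C|_r$. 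Adding the two bounds transfers the quasi-Assouad estimate from $C$ to $C_{\mathrm{mid}}$ directly, with the same pair of scales $(r,R)$, so no sub-polynomial error terms or smallness of $c_k$ ever enter. I recommend you replace your multi-scale scheme by this two-case split; as it stands, your upper bound is an unproved plan built around a hypothesis (HRWW) that the statement does not actually need.
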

\begin{proof}
    Fix $0<r<R \leq 1$. Let $I_R\subseteq [-1/2,1/2]$ be any sub-interval of length $R$. Recall that $C_\mathrm{mid}$ is the collection of all mid-points of all the removed intervals $\I_\mathrm{rem}$. Let us partition  $\I_\mathrm{rem}=\I_\mathrm{rem}^{\leq r}\cup \I_\mathrm{rem}^{>r}$, where $$\I_\mathrm{rem}^{\leq r}:=\{I\in\I_\mathrm{rem}:|I|\leq r\},\quad \I_\mathrm{rem}^{>r}:=\{I\in\I_\mathrm{rem}:|I|>r\}.$$
    Correspondingly, we also define $$C_\mathrm{mid}^{\leq r}:=\{c(I): I\in\I_\mathrm{rem}^{\leq r}\},\quad C_\mathrm{mid}^{>r}:=\{c(I):I\in\I_\mathrm{rem}^{>r}\},$$ where $c(I)$ denotes the centre of $I$.
    For all $I\in\I_\mathrm{rem}^{\leq r}$, we have $\dist(c(I),C)\leq r/2$. As such, 
    \begin{equation}
        \label{eqn: c-mid equation 1}
        |I_R\cap C_\mathrm{mid}^{\leq r}|_{r}\leq |I_R\cap C|_{r/2}.
    \end{equation}
     Since $C_\mathrm{mid}^{>r}$ is a finite set, we can just bound the cardinality of its intersection with $I_R$. Instead of counting the mid-points $c(I)$, however, we count the left end-points $a(I)$ of the relevant intervals $I$. By the end-point condition, $a(I)\in C$ for all $I\in\I_\mathrm{rem}$. On the other hand, $$|a(I)-a(J)|>r\quad\text{for all}\quad I,J\in\I_{\mathrm{rem}}^{>r},\,I\neq J.$$
    Thus, $\{a(I)\}_{I\in\I_\mathrm{rem}^{>r}}$ is an $r$-separated set in $C$. Therefore, 
    \begin{equation}
        \label{eqn: c-mid equation 2}
        |I_R\cap C_\mathrm{mid}^{>r}|\leq |I_R\cap C|_r.
    \end{equation}
    Combining \eqref{eqn: c-mid equation 1} and \eqref{eqn: c-mid equation 2}, it quickly follows that $\qad(C_\mathrm{mid})\leq\qad(C)$.
   
   To prove the converse inequality, note that $C \subset \overline{C}_{\mathrm{mid}}$. Since quasi-Assouad dimension is monotone and stable under closure, we get $\qad(C_{\mathrm{mid}})=\qad(\overline{C}_{\mathrm{mid}})\geq \qad(C)$, completing the proof. \end{proof}

The following is an elementary fact about generalized Cantor sets.
\begin{lemma}
    \label{lemma: limit points}
    Generalised Cantor sets are perfect. More precisely: if $C \subset [-\tfrac{1}{2},\tfrac{1}{2}]$ is a GCS and $x\in C$ is not a left (resp. right) end-point of $J\in\mathscr{I}_k$ for any $k\geq 1$, then there exists a sequence $(x_j)_{j\geq 1}\subset C \, \setminus \, \{x\}$ with $x_j\nearrow x$ (resp. $x_j\searrow x$). 
\end{lemma}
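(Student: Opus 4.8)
The plan is to prove this directly from the structure of a generalised Cantor set, specifically using the end-point condition together with the fact that at each level every parent interval has at least two children.

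\medskip

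\noindent\textbf{Setup.} Fix a GCS $C \subset [-\tfrac12,\tfrac12]$ and a point $x \in C$ which is not a left end-point of any $J \in \mathscr{I}_k$. For each $k \geq 0$, let $J_k \in \mathscr{I}_k$ be the (unique) level-$k$ interval containing $x$; these are nested, $J_0 \supset J_1 \supset J_2 \supset \cdots$, and $\mathcal{L}(J_k) = c_1 c_2 \cdots c_k \to 0$, so $\bigcap_k J_k = \{x\}$. I will produce a sequence $(x_j) \subset C \setminus \{x\}$ with $x_j \nearrow x$ by picking, for infinitely many $k$, a suitable end-point of a child of $J_k$ that lies strictly to the left of $x$ but inside $J_k$ (hence within distance $\mathcal{L}(J_k)$ of $x$).

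\medskip

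\noindent\textbf{Main step.} I claim that for every $k \geq 0$ there is a point $y_k \in C$ with $y_k < x$ and $y_k \in J_k$. Indeed, $J_{k+1}$ is one of the $n_{k+1} \geq 2$ children of $J_k$, and $x \in J_{k+1}$. If $J_{k+1}$ is not the left-most child of $J_k$, let $J'$ be the child immediately to its left; by the end-point condition $J'$ has a left end-point which is the left end-point $a$ of $J'$, and $a \in C$ (the left end-point of any interval in $\mathscr{I}_{k+1}$ lies in $C$, since iterating the end-point condition shows every such end-point survives all further subdivisions). Then $a < x$ since $J'$ lies entirely to the left of $J_{k+1}$, and $a \in J_k$. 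If instead $J_{k+1}$ \emph{is} the left-most child of $J_k$, then by the end-point condition the left end-point of $J_{k+1}$ equals the left end-point of $J_k$; but $x$ is not a left end-point of $J_{k+1}$ by hypothesis, so $x$ is strictly to the right of this common left end-point, and we may instead use any point of $C \cap J_{k+1} \setminus \{x\}$ to the left of $x$ --- and such a point exists by induction on the level, descending into $J_{k+1}$, $J_{k+2}, \ldots$ until we first reach a level where $x$ is not in the left-most child (this must happen for some level, for otherwise $x$ would be the common left end-point of all $J_m$, contradicting the hypothesis). In all cases we obtain $y_k \in C$, $y_k < x$, $y_k \in J_k$, hence $0 < x - y_k \leq \mathcal{L}(J_k) = c_1 \cdots c_k$.

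\medskip

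\noindent\textbf{Conclusion.} Setting $x_k := y_k$, we have $x_k \in C \setminus \{x\}$, $x_k < x$, and $x - x_k \to 0$ since $c_1 \cdots c_k \to 0$. Passing to a monotone subsequence if necessary (or simply noting $x_k \to x$ from the left, which already gives a sequence in $C \setminus \{x\}$ converging to $x$ from below), we obtain the desired sequence $x_j \nearrow x$. The case where $x$ is not a right end-point is symmetric, using right-most children and right end-points. In particular every $x \in C$ is a limit point of $C$ (a point $x$ that \emph{is} a left end-point of some $J_k$ is not a right end-point of any $J_m$ below that level, by the end-point condition and $n_m \geq 2$, so the right-handed version applies; and vice versa), so $C$ is perfect.

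\medskip

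\noindent\textbf{Main obstacle.} The only delicate point is the bookkeeping in the case where $J_{k+1}$ is repeatedly the left-most child of its parent: one must argue that this cannot persist for all levels, which is exactly the content of the assumption that $x$ is not a left end-point of any $J_k$ (if it were the left-most child at every level, then by the end-point condition $x$ would coincide with the left end-point of $J_0 = [-\tfrac12,\tfrac12]$, i.e. $x = -\tfrac12$, which is a left end-point of $J_0$). Once this is observed, everything else is a routine consequence of the end-point condition and $n_k \geq 2$; the Hua--Rao--Wen--Wu condition \eqref{eqn: HRWW condition} is not needed here.
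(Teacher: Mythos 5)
Your argument is correct and is essentially the paper's proof in different notation: the paper encodes the choice of child at each level as a digit string $0.d_1d_2d_3\ldots$, characterises left end-points as the strings that are eventually $0$, and truncates just before a nonzero digit, which is exactly your selection (via the end-point condition) of left end-points of the nested intervals $J_k$ and of their left siblings, with the same two ingredients doing the work — end-points of construction intervals lie in $C$, and ``not a left end-point'' forces a non-left-most child infinitely often. The only blemish is the closing parenthetical claim that persistence of left-most children at every level would force $x=-\tfrac{1}{2}$: what it actually forces is that $x$ equals the left end-point of $J_k$ at the level where the persistence begins, but your main step already states this correctly, so nothing in the proof is affected.
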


\begin{proof}
   Each point $x\in C$ can be represented as a string $$0.d_1d_2d_3\dots,\quad 0\leq d_k\leq n_k-1.$$ Note that $x$ is a left end-point of some $J\in\mathscr{I}_m$ if and only if $x$ has a representation with $d_k=0$ for all $k\geq m$ (here, we use the end-point condition of $C$). Otherwise, $d_{k_j}\geq 1$ for a subsequence $(d_{k_j})_{j\geq 1}$. Let $x_j\in C$ be the point given by the string $$0.d_1d_2\dots d_{k_j-1}000\dots.$$ Clearly, $x_j<x$ for all $j\geq 1$, and $\lim_j x_j=x$. If $x$ is not a right end-point, we argue similarly. 
\end{proof}
\begin{definition}
    [GCS domains]
    Given a generalized Cantor set $C$, we define the following convex domain 
    $$\Omega(C):=\text{int}(\Omega'(C)),\quad\text{with}\quad\Omega'(C):=\text{conv}\{(t,t^2-1/8):t\in C\},$$ where $\text{conv}(A)$ denotes the convex hull of the set $A$, and  $\text{int}(A)$ denotes the interior of the set $A$. The constant ``$-1/8$" ensures that $0 \in \Omega(C)$, as required by the definition of convex domains.
    
    Let us define the map $F:(\cos t,\,\sin t)\mapsto \tan t$. Then $F(\omega)$ is the slope of a line with direction $\omega\in\mathbb{S}^1$. 
\end{definition}
\begin{prop}\label{prop: slope set of GCS domains}
    Let $C$ be a GCS and let $\Omega=\Omega(C)$. Let $\Theta=\Theta(\Omega)$ be the associated direction-set. Then $F(\Theta)=2(C\cup C_{\mathrm{mid}})\cup\{0\}$. 
\end{prop}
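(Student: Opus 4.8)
The plan is to describe $\partial\Omega$ explicitly and then, proceeding point by point, read off the two extreme supporting directions $\omega_{L},\omega_{R}$. Throughout write $C\subset[-\tfrac12,\tfrac12]$, so that the end-point condition forces $\pm\tfrac12\in C$ and hence $\mathrm{conv}(C)=[-\tfrac12,\tfrac12]$; likewise every endpoint of a removed interval $I=(a,b)\in\mathscr{I}_{\mathrm{rem}}$ lies in $C$. Since $t\mapsto t^{2}-\tfrac18$ is strictly convex, each $(t,t^{2}-\tfrac18)$ with $t\in C$ is an extreme point of $\Omega'(C)$, hence lies on $\partial\Omega$; moreover the lower part of $\partial\Omega$ is the graph over $[-\tfrac12,\tfrac12]$ of the lower convex envelope $L$ of the function that equals $t^{2}-\tfrac18$ on $C$ and $+\infty$ elsewhere. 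A routine convexity computation gives $L(x)=x^{2}-\tfrac18$ for $x\in C$, while on each removed interval $I=(a,b)$ the function $L$ is affine with slope $a+b=2\,c(I)$. The remaining part of $\partial\Omega$ is the horizontal top segment joining $(-\tfrac12,\tfrac18)$ to $(\tfrac12,\tfrac18)$, along which the supporting line has slope $0$.

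First I would collect the contributions of the ``smooth'' boundary points. On the interior of the top segment the tangent has slope $0$, so $0\in F(\Theta)$. On the interior of each chord over a removed interval $I$ the tangent has slope $2c(I)$, so $2C_{\mathrm{mid}}\subseteq F(\Theta)$. At $(t,t^{2}-\tfrac18)$ with $t\in C$ a two-sided accumulation point of $C$ one has $L'_{-}(t)=L'_{+}(t)=2t$, so $\Omega$ has a tangent there and $F(\omega_{L})=F(\omega_{R})=2t$; this will give $2C\subseteq F(\Theta)$ once it is checked that every $t\in C$ other than $\pm\tfrac12$ and the endpoints of removed intervals is such an accumulation point, and that each endpoint of a removed interval, although only a one-sided accumulation point, still has $2t$ among its extreme supporting slopes.

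Next I would handle the corners. Fix $I=(a,b)\in\mathscr{I}_{\mathrm{rem}}$. Using the end-point condition together with the pairwise disjointness of the children in Definition~\ref{def: HMS}, $a$ is the right endpoint of some generation interval and is therefore not the left endpoint of any interval of any generation; by Lemma~\ref{lemma: limit points} this makes $a$ a left-accumulation point of $C$. Consequently, just to the left of $(a,a^{2}-\tfrac18)$ the boundary runs along the parabola while just to the right it runs along the chord of $I$, so the supporting slopes there fill $[2a,\,a+b]$ and $\{F(\omega_{L}),F(\omega_{R})\}=\{2a,\,2c(I)\}\subseteq 2C\cup 2C_{\mathrm{mid}}$; symmetrically, at $(b,b^{2}-\tfrac18)$ one gets $\{2c(I),\,2b\}$. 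Finally, at $(\tfrac12,\tfrac18)$, where the lower boundary meets the top segment and $\tfrac12$ is a left-accumulation point of $C$, the supporting slopes fill $[0,1]$, so $\{F(\omega_{L}),F(\omega_{R})\}=\{0,\,2\cdot\tfrac12\}$; symmetrically $\{2\cdot(-\tfrac12),\,0\}$ at $(-\tfrac12,\tfrac18)$. Combining the last two paragraphs yields $F(\Theta)=2C\cup 2C_{\mathrm{mid}}\cup\{0\}=2(C\cup C_{\mathrm{mid}})\cup\{0\}$.

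The convexity bookkeeping — that the lower boundary is precisely the parabola over $C$ patched by chords over the gaps, together with the one-sided derivative values at the gap endpoints — is routine. The one genuinely combinatorial point, and the only place where the GCS structure enters, is the dichotomy used above: a point of $C$ that is not a two-sided accumulation point must be $\pm\tfrac12$ or an endpoint of a removed interval, and such an endpoint is a one-sided but not two-sided accumulation point. I expect this to require the most care; it follows from Lemma~\ref{lemma: limit points} once one observes that, since the children in each generation are pairwise disjoint, no point can simultaneously be the left endpoint of an interval of one generation and the right endpoint of an interval of another (propagating endpoints down to a common generation via the end-point condition reduces every case to disjointness within a single $\mathscr{I}_{k}$).
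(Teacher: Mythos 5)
Your proof is correct and takes essentially the same route as the paper: parametrise the lower boundary by a convex function (the lower convex envelope of the parabola over $C$), split into the cases $\xi_1\notin C$, $\xi_1\in C$ a two-sided accumulation point, and endpoints/corners, and read off the one-sided derivatives using Lemma \ref{lemma: limit points}. Your explicit verification of the combinatorial dichotomy (endpoints of removed intervals are one-sided but not two-sided accumulation points of $C$) is a detail the paper leaves implicit in its Case 3, but it does not constitute a different approach.
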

\begin{proof}
    Let us decompose the boundary as 
    \begin{equation}\label{G1G2} \partial\Omega=\mathfrak{G}_1\cup\mathfrak{G}_2, \end{equation}
    where $\mathfrak{G}_1:=\{\xi\in\partial\Omega:\xi_2<1/8\}$, and $\mathfrak{G}_2:=\{\xi\in\partial\Omega:\xi_2= 1/8\}$. Then $\mathfrak{G}_2$ is a line segment, and that $\mathfrak{G}_1$ can be graph-parametrised over the interval $[-1/2,1/2]$, so that there exists a convex function $\gamma:[-1/2,1/2]\to\R$ such that $$\mathfrak{G}_1=\{(t,\gamma(t)):|t|\leq 1/2\}.$$
    By the convexity of $\gamma$, the left and right-derivatives $\gamma_L'$ and $\gamma_R'$ exist everywhere, and for all $\xi\in\mathfrak{G}_1$, we have $F(\omega_L(\xi))=\gamma_L'(\xi_1)$ and $F(\omega_R(\xi))=\gamma_R'(\xi_1)$ (here, we note that $\mathfrak{G}_1$ does not contain the corners $(\pm 1/2,1/8)$).
    
\medskip\noindent\textit{Case 1: $\xi_1\notin C$.} In this case, the graph of $\gamma$ near $\xi$ is a line-segment. If $\xi_1\in I=(a,b)$ for some $I$ removed in the construction of $C$, then
    $$\gamma(t)=a^2-1/8+(a+b)(t-a),\quad\text{for all}\quad t\in I.$$
Then $\gamma'_L(\xi_1)=\gamma_R'(\xi_1)=a+b\in 2\,C_\mathrm{mid}$.

\medskip\noindent\textit{Case 2: $\xi_1\in C$ is not an end-point.}
In this case, the graph near $\xi$ ``looks like" a parabola. Indeed, by Lemma \ref{lemma: limit points}, infinitely many points of the form $(t,t^2-1/8),\,t\in C$ accumulate at $\xi$ from both sides. Let $(x_n)_n\subset C$ be a sequence with $x_n\nearrow \xi_1$. Since $\gamma_R'(\xi_1)$ exists, it is equal to the limit 
$$\gamma_R'(\xi_1)= \lim_n\frac{y_n^2-\xi_1^2}{y_n-\xi_1}=\lim_n y_n+\xi_1=2\xi_1.$$ 
Similarly, there exists a sequence $(y_n)_n\subset C$, $y_n\searrow \xi_1$, and $$\gamma_L'(\xi_1)=\lim_n\frac{\xi_1^2-x_n^2}{\xi_1-x_n}=\xi_1+\lim_nx_n=2\xi_1.$$ Thus, $\gamma_L'(\xi_1)=\gamma_R'(\xi_1)=2\xi_1\in 2C$.

\medskip\noindent\textit{Case 3: $\xi_1\in C$ is an end-point.}  In this case, the graph ``looks like" a parabola on one side of $\xi$ and is a line-segment on the other side. Without loss of generality, let $\xi_1=a$ be a left end-point of the removed interval $(a,b)$. Following the arguments of the previous cases, we find that $\gamma_L'(a)=2\xi_1\in 2C$ and $\gamma_R'(\xi_1)=a+b\in 2\,C_\mathrm{mid}$. 

If $\xi\in\mathfrak{G}_2\setminus\{(-1/2,1/8),(1/2,1/8)\}$, then there is a unique horizontal supporting line at $\xi$, and $\omega_L(\xi)=\omega_R(\xi)=(1,0)$, so the slope is $0$. By the end-point condition, $\pm1/2\in C$, and by Case 3 above, we find that $\gamma_R'(-1/2)=-1$ and $\gamma_L'(1/2)=1$. Thus, $F(\omega_R(-1/2,1/8))=-1\in 2C$ and $F(\omega_L(-1/2,1/8))=0$, while $F(\omega_R(1/2,1/8))=0$ and $F(\omega_L(1/2,1/8))=1\in 2C$. 
\end{proof}
\begin{corollary}\label{cor: direction-set dimension}
    Let $\Omega$ be a domain generated by a GCS, and $\Theta=\Theta(\Omega)$ be the associated direction-set. Then $\qad(\Theta)=\qad(C)$.
\end{corollary}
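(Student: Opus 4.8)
The plan is to deduce this directly from Proposition \ref{prop: slope set of GCS domains} together with the elementary stability properties of quasi-Assouad dimension already recorded (monotonicity, finite stability, bi-Lipschitz invariance, stability under closure), and Lemma \ref{lemma: midpoints dimension}. Since $F \colon \mathbb{S}^1 \to \R$ restricted to any arc avoiding the ``vertical'' direction is bi-Lipschitz onto its image, and $\Theta = \Theta(\Omega)$ lies in such an arc (all supporting lines of $\Omega(C)$ have bounded slope, as the boundary is graph-parametrised by a convex function on $[-\tfrac12,\tfrac12]$ together with one horizontal segment), bi-Lipschitz invariance gives $\qad(\Theta) = \qad(F(\Theta))$.

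By Proposition \ref{prop: slope set of GCS domains} we have $F(\Theta) = 2(C \cup C_{\mathrm{mid}}) \cup \{0\}$. The scaling $x \mapsto 2x$ is bi-Lipschitz and adding the single point $\{0\}$ does not change the quasi-Assouad dimension (this is a special case of finite stability, since $\qad(\{0\}) = 0$). Hence $\qad(F(\Theta)) = \qad(C \cup C_{\mathrm{mid}})$. Finite stability then gives $\qad(C \cup C_{\mathrm{mid}}) = \max\{\qad(C), \qad(C_{\mathrm{mid}})\}$, and by Lemma \ref{lemma: midpoints dimension} we have $\qad(C_{\mathrm{mid}}) = \qad(C)$, so the maximum equals $\qad(C)$. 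Chaining these equalities yields $\qad(\Theta) = \qad(C)$.

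I do not anticipate a genuine obstacle here; the only point that deserves a sentence of care is the claim that $F$ is bi-Lipschitz on the relevant arc. This needs the observation that $\Theta(\Omega)$ is contained in a compact subset of $\mathbb{S}^1$ bounded away from the two directions where $F$ blows up (equivalently, $F(\Theta)$ is a bounded subset of $\R$ — indeed $F(\Theta) \subset [-1,1]$ by the explicit description). On any closed sub-arc of $\mathbb{S}^1$ on which $\tan$ is finite, $F$ has bounded derivative and bounded-below derivative, hence is bi-Lipschitz, so the invariance property applies.
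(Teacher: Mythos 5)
Your proposal is correct and follows essentially the same route as the paper: bi-Lipschitz invariance of $\qad$ for $F$ on the arc where slopes lie in $[-1,1]$, the description $F(\Theta)=2(C\cup C_{\mathrm{mid}})\cup\{0\}$ from Proposition \ref{prop: slope set of GCS domains}, finite stability, and Lemma \ref{lemma: midpoints dimension}. You merely spell out a few details the paper leaves implicit (the dilation by $2$ and the harmless extra point $\{0\}$), which changes nothing of substance.
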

\begin{proof}
    Note that Definition \ref{def: GCS} does not permit slopes outside the set $[-1,1]$. Recall that $F$ is bi-Lipschitz on the domain $[-\pi/4,\pi/4]$. By Proposition \ref{prop: slope set of GCS domains}, finite stability and bi-Lipschitz invariance of quasi-Assouad dimension, we have $$\qad(\Theta)=\qad(F(\Theta))=\max\{\qad(C),\qad(C_{\mathrm{mid}})\}=\qad(C),$$ by Lemma \ref{lemma: midpoints dimension}.
\end{proof}

We next define a cover of the boundary of a GCS domain by $\delta$-caps, denoted $\mathfrak{B}_{\delta}$, which will be used in the proofs of Proposition \ref{prop: dimension of GCS domain} and Proposition \ref{prop: additive energy of GCS domain} below. First, we make the following observation. Let $\xi,\zeta\in\partial\Omega$ be points such that $\xi_1,\zeta_1\in C$ and $\zeta$ is not the end-point of any removed interval. Then there is a unique (tangent) supporting line $\ell$ at $\zeta$ of slope $2\zeta_1$, and \begin{equation}\label{distanceFormula} \dist(\xi,\ell)\sim (\xi_1-\zeta_1)^2.\end{equation} 
We define $\mathfrak{B}_\delta$ now.
\begin{definition}[Cover $\mathfrak{B}_{\delta}(\eta)$]\label{def:canonicalCover} Let $C$ be a GCS with parameters $c_{k},n_{k}$, and fix $\eta > 0$. Write $\partial \Omega = \mathfrak{G}_{1} \cup \mathfrak{G}_{2}$ as in \eqref{G1G2}. For an interval $I\subseteq[-1/2,1/2]$, let $B_I$ denote the part of $\mathfrak{G}_1$ over $I$. Let $\delta \in (0,1]$ be sufficiently small in terms of $\eta$ (as will be clarified in a moment), and write 
\begin{displaymath} K(\delta) := \max\{k \in \mathbb{N} : c_{1}\cdots c_{k} \geq \delta^{1/2}\} =\max\{k\in\mathbb{N}:|I|\geq \delta^{1/2},\,\forall I\in\I_k\}. \end{displaymath}
Let us consider the following covering of $[-1/2,1/2]$:
\begin{equation}
    \label{eqn: I(delta)}
    \I(\delta):=\I_{K(\delta)}\cup\I_1'\cup\dots\cup \I_{K(\delta)}'.
\end{equation}
Clearly, then $\{B_I:I\in\I(\delta)\}$ is a covering of $\mathfrak{G}_1$. Moreover, if $I \in \mathscr{I}_{k}'$, then $B_I$ is a single line-segment, and so it is a $\delta$-cap. However, $B_I$ need not be a $\delta$-cap when $I\in\I_{K(\delta)}$. By condition \eqref{eqn: HRWW condition}, it holds
$$c_1\dots c_k\leq (c_1\dots c_{k+1})^{1-\eta}\quad\text{for all $k$ sufficiently large}.$$
In particular, $\delta^{1/2} \leq |I| \leq \delta^{1/2 - \eta}$ for all $I \in \I_{K(\delta)}$, provided $\delta > 0$ is sufficiently small in terms of $\eta$. But by convexity and \eqref{distanceFormula}, we can partition each $I\in\I_{K(\delta)}$ into a family $\I_{K(\delta)}(I)$ of $O(\delta^{-\eta})$ many sub-intervals, such that $B_J$ is a $\delta$-cap for all $J\in\I_{K(\delta)}(I)$. 
Define $$\mathfrak{B}_{\delta,0}:=\bigcup_{I\in\I_{K(\delta)}}\{B_J: J\in\I_{K(\delta)}(I)\},$$
 and
\begin{displaymath}
       \mathfrak{B}_\delta:=\mathfrak{B}_{\delta,0}\cup\mathfrak{B}_{\delta,1}\cup\dots\cup\mathfrak{B}_{\delta,K(\delta)}.
    \end{displaymath} 
    Thus, $\mathfrak{B}_{\delta}$ is a cover of $\mathfrak{G}_{1}$ by $\delta$-caps. We also add a single cap to $\mathfrak{B}_{\delta}$ (without introducing additional notation) to cover the ``ceiling" $\mathfrak{G}_{2}$ of $\partial \Omega$. Finally, we remark that $\mathfrak{B}_{\delta} = \mathfrak{B}_{\delta}(\eta)$ actually depends on the parameter $\eta > 0$ (and given $\eta > 0$, the family $\mathfrak{B}_{\delta}$ is well-defined for all $\delta > 0$ small enough).
    
     \end{definition}

\begin{prop}\label{prop: dimension of GCS domain}
    Let $\Omega=\Omega(C)$ be a GCS domain. Then $\kappa_\Omega=\frac12\,\overline{\dim}_B(C)=\frac12\limsup_K\frac{\log(n_1\dots n_K)}{-\log(c_1\dots c_K)}$.
\end{prop}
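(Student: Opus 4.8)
The quantity $\kappa_{\Omega}$ is, by definition, governed by the minimal number $N(\Omega,\delta)$ of $\delta$-caps needed to cover $\partial\Omega=\mathfrak{G}_{1}\cup\mathfrak{G}_{2}$. Since $\mathfrak{G}_{2}$ is a single segment, and $\mathfrak{G}_{1}$ is (essentially) the graph of the convex function $\gamma$ which, by the proof of Proposition~\ref{prop: slope set of GCS domains}, satisfies $\gamma(t)=t^{2}-\tfrac18$ for $t\in C$ and is linear on each removed interval, covering $\partial\Omega$ by $\delta$-caps amounts to resolving $C$ at scale $\delta^{1/2}$ (a $\delta$-cap tangent to the parabolic portion of $\partial\Omega$ has ``width'' $\sim\delta^{1/2}$). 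The plan is therefore to prove the two-sided comparison
\[ c\,|C|_{\delta^{1/2}}\;\le\;N(\Omega,\delta)\;\le\;C_{\eta}\,\delta^{-\eta}\,|C|_{\delta^{1/2}},\qquad\eta>0, \]
which immediately yields $\kappa_{\Omega}=\limsup_{\delta\to0}\tfrac{\log|C|_{\delta^{1/2}}}{\log\delta^{-1}}=\tfrac12\overline{\dim}_{B}(C)$ because $\log\delta^{-1}=2(-\log\delta^{1/2})$; the remaining equality $\overline{\dim}_{B}(C)=\limsup_{K}\tfrac{\log(n_{1}\cdots n_{K})}{-\log(c_{1}\cdots c_{K})}$ is then a statement about the Moran set $C$ alone, to be handled at the end using the HRWW condition \eqref{eqn: HRWW condition}.

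For the upper bound I would use the explicit cover $\mathfrak{B}_{\delta}=\mathfrak{B}_{\delta}(\eta)$ of Definition~\ref{def:canonicalCover}. Writing $N_{k}:=n_{1}\cdots n_{k}$ and $L_{k}:=c_{1}\cdots c_{k}$, and letting $K:=K(\delta)$ be as in that definition (so $L_{K}\ge\delta^{1/2}>L_{K+1}$), the ``curved'' family $\mathfrak{B}_{\delta,0}$ contributes $O(\delta^{-\eta})$ caps over each of the $|\mathscr{I}_{K}|=N_{K}$ generation-$K$ intervals, while each removed interval of generation $\le K$ contributes a single cap, since the corresponding piece of $\mathfrak{G}_{1}$ is a line segment; there are $\sum_{k=1}^{K}|\mathscr{I}_{k}'|=\sum_{k=1}^{K}N_{k-1}(n_{k}-1)<\sum_{k=1}^{K}N_{k}\le K\cdot N_{K}$ of the latter, and $K=K(\delta)\lesssim\log\delta^{-1}$ because $c_{k}<\tfrac1{n_{k}}\le\tfrac12$ forces $L_{K}<2^{-K}$. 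Adding one cap for $\mathfrak{G}_{2}$, this gives $N(\Omega,\delta)\le|\mathfrak{B}_{\delta}|\lesssim_{\eta}\delta^{-\eta}N_{K}$. Finally $N_{K}\lesssim|C|_{\delta^{1/2}}$: the left endpoints of the $N_{K}$ pairwise disjoint closed length-$L_{K}$ generation-$K$ intervals form a set $S\subset C$ — it lies in $C$ by the end-point condition — which is $L_{K}\ge\delta^{1/2}$-separated, so $|C|_{\delta^{1/2}}\gtrsim|S|=N_{K}$.

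For the lower bound, fix a maximal $\delta^{1/2}$-separated subset $S'\subset C$, so $|S'|\gtrsim|C|_{\delta^{1/2}}$, and consider the points $(x,x^{2}-\tfrac18)\in\mathfrak{G}_{1}$ for $x\in S'$. The crux is the claim that a single $\delta$-cap $B(\ell,\delta)$ contains $(x,x^{2}-\tfrac18)$ for at most $O(1)$ values of $x\in S'$. To see this, note that a cap whose supporting line $\ell$ has slope of absolute value $>1$ must, by Proposition~\ref{prop: slope set of GCS domains}, touch $\partial\Omega$ only at a corner $(\pm\tfrac12,\tfrac18)$, and then meets $\partial\Omega$ inside an $O(\delta)$-neighbourhood of that corner, covering $O(1)$ points of the $\delta^{1/2}$-separated set $S'$. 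For a cap with $\ell\colon y=mt+b$ and $|m|\le1$: since $\ell$ supports the convex body $\Omega$ we have $\gamma(t)\ge mt+b$ for all $t\in[-\tfrac12,\tfrac12]$, and a point $(t,\gamma(t))$ lies in $B(\ell,\delta)$ only if $\gamma(t)-mt-b\lesssim\delta$; restricting to $t\in C$, where $\gamma(t)=t^{2}-\tfrac18$, this becomes $0\le(t-\tfrac m2)^{2}-\kappa\lesssim\delta$ with $\kappa:=\tfrac{m^{2}}4+\tfrac18+b$ depending only on $\ell$. Such a sublevel set of a quadratic is a union of at most two intervals of length $\lesssim\delta^{1/2}$, hence contains $O(1)$ points of $S'$. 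It follows that covering $\{(x,x^{2}-\tfrac18):x\in S'\}$ by $\delta$-caps requires $\gtrsim|S'|\gtrsim|C|_{\delta^{1/2}}$ of them, i.e. $N(\Omega,\delta)\gtrsim|C|_{\delta^{1/2}}$.

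The two bounds give $\kappa_{\Omega}=\tfrac12\overline{\dim}_{B}(C)$ as explained in the first paragraph. For the last equality: with $K(r):=\max\{k:L_{k}\ge r\}$, the inclusion $C\subset E_{K(r)+1}$ gives $|C|_{r}\le N_{K(r)+1}$, while the generation-$K(r)$ left endpoints (which are $\ge L_{K(r)}\ge r$-separated) give $|C|_{r}\gtrsim N_{K(r)}$; inserting these into $\overline{\dim}_{B}(C)=\limsup_{r\to0}\tfrac{\log|C|_{r}}{-\log r}$ and using \eqref{eqn: HRWW condition} — which yields $-\log L_{K+1}=(1+o(1))(-\log L_{K})$ and so lets the scales $r,L_{K(r)},L_{K(r)+1}$ and the counts $N_{K(r)},N_{K(r)+1}$ be interchanged under the logarithm — one obtains $\overline{\dim}_{B}(C)=\limsup_{K}\tfrac{\log N_{K}}{-\log L_{K}}$, completing the proof. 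I expect the geometric claim of the third paragraph — that one $\delta$-cap can ``see'' only boundedly many $\delta^{1/2}$-separated points of the parabola $\{(t,t^{2}-\tfrac18)\}$ — to be the main point to get right; its two ingredients are the convexity inequality $\gamma(t)\ge t^{2}-\tfrac18$ (from $\ell$ being a supporting line together with $\gamma|_{C}$ agreeing with the parabola) and the elementary fact that $\{t:\kappa\le(t-t^{*})^{2}\le\kappa+c\delta\}$ is a union of at most two intervals of length $O(\delta^{1/2})$. Everything else is routine bookkeeping.
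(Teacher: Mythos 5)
Your proof is correct, and its skeleton is the same as the paper's: both arguments sandwich $N(\Omega,\delta)$ between the generation-$K(\delta)$ count and $\delta^{-\eta}$ times that count, using exactly the cover $\mathfrak{B}_{\delta}(\eta)$ of Definition~\ref{def:canonicalCover} for the upper bound together with $K(\delta)\lesssim\log\delta^{-1}$, and then convert to the stated formula via the HRWW condition \eqref{eqn: HRWW condition}. Where you genuinely diverge is in making the two ``outsourced'' steps self-contained. For the lower bound, the paper invokes the distance estimate \eqref{distanceFormula} and compresses the counting into one sentence (with details deferred to \cite{roy}), whereas you prove directly that a single $\delta$-cap can contain at most $O(1)$ points $(x,x^{2}-\tfrac18)$ with $x$ in a $\delta^{1/2}$-separated subset of $C$, via the explicit sublevel-set computation $\kappa\le(t-\tfrac m2)^{2}\le\kappa+O(\delta)$; this is a cleaner, fully verifiable version of the same geometric fact. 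For the final identity $\overline{\dim}_{B}(C)=\limsup_{K}\frac{\log(n_{1}\cdots n_{K})}{-\log(c_{1}\cdots c_{K})}$ the paper cites \cite[Theorem 1.3]{boxdimformula}, while you reprove it from the end-point condition and \eqref{eqn: HRWW condition}; your sketch is correct. Two small inaccuracies to repair, neither a real gap: (a) the inequality $\gamma(t)\ge mt+b$ holds only when $\Omega$ lies above the supporting line $\ell$; when $\Omega$ lies below $\ell$ (e.g.\ caps along $\mathfrak{G}_{2}$ or at a corner from above) the inequality reverses, and the identical quadratic argument with flipped signs gives the same two-intervals-of-length-$O(\delta^{1/2})$ conclusion, but this case should be stated; (b) for a steep cap ($|m|>1$) touching at a corner, the cap can meet the curved part $\mathfrak{G}_{1}$ in a $O(\delta^{1/2})$-neighbourhood of the corner rather than an $O(\delta)$-neighbourhood (e.g.\ $m\to1^{+}$, where the vertical gap factors as $(\tfrac12-t)(m-\tfrac12-t)\ge(\tfrac12-t)^{2}$), which still yields only $O(1)$ points of your $\delta^{1/2}$-separated set, so the conclusion stands once the justification is corrected — in fact this corner case can simply be absorbed into the general quadratic computation.
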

\begin{proof}
  Let $\delta\in(0,1)$. We want to estimate $N(\Omega,\delta)$, which is the minimum number of $\delta$-caps required to cover $\partial\Omega$.
  Define $K(\delta) := \max\{k \in \mathbb{N} : c_{1}\cdots c_{k} \geq \delta^{1/2}\} =\max\{k\in\mathbb{N}:|I|\geq \delta^{1/2},\,\forall I\in\I_k\}$. We claim that $$|\I_{K(\delta)}|\lesssim N(\Omega,\delta)\lesssim_\eta \delta^{-\eta}|\I_{K(\delta)}|\quad\text{for all $\eta>0$}.$$ This follows from a slight modification of the argument in the proof of \cite[Claim 3.1]{roy}, which was done in the case of a self-similar GCS. We reproduce the proof in the general setting here, omitting a few details which the reader can find in \cite[\S3.2]{roy}. 
  
  It follows from \eqref{distanceFormula}, that in order to cover $\partial\Omega$, we need at least one $\delta$-cap between such $\xi$ and $\zeta$, whenever $|\xi_1-\zeta_1|\geq c\delta^{1/2}$ for a suitable constant $c=O(1)$. Since $|I|\geq \delta^{1/2}$ for all $I\in\I_{K(\delta)}$, it follows that $N(\Omega,\delta)\gtrsim c^{-1}|\I_{K(\delta)}|$. 

  For the upper bound, we fix $\eta > 0$, and use the covering $\mathfrak{B}_{\delta} = \mathfrak{B}_{\delta}(\eta)$ introduced in Definition \ref{def:canonicalCover}. Noting that $|\mathscr{J}_{k}'| \leq |\mathscr{J}_{K(\delta)}|$ for all $1 \leq k \leq K(\delta)$, we obtain
$$|\mathfrak{B}_\delta|\leq|\mathfrak{B}_{\delta,0}|+|\mathfrak{B}_{\delta,1}|+\dots+|\mathfrak{B}_{\delta,K(\delta)}|\lesssim_\eta K(\delta)\delta^{-\eta}|\I_{K(\delta)}|.$$ Since $c_k<1/2$, we also have $K(\delta)\leq \frac12\log\delta^{-1}$. Therefore, $N(\Omega,\delta)\lesssim_\eta\delta^{-\eta}|\I_{K(\delta)}|$. Now $$\log|\I_{K(\delta)}|=\log(n_1\dots n_{K(\delta)})\leq\log\delta^{-1/2}\frac{\log(n_1\dots n_{K(\delta)})}{-\log(c_1\dots c_{K(\delta)})}.$$ On the other hand, by condition \eqref{eqn: HRWW condition}, we have $$\log|\I_{K(\delta)}|\geq (1-\epsilon)\log\delta^{-1/2}\frac{\log(n_1\dots n_{K(\delta)})}{-\log(c_1\dots c_{K(\delta)})},$$ for all $\delta$ sufficiently small. Therefore, $$\kappa_\Omega=\limsup_{\delta\to 0^+}\frac{N(\Omega,\delta)}{\log\delta^{-1}}=\frac12\limsup_{K \to \infty} \frac{\log(n_1\dots n_{K})}{-\log(c_1\dots c_{K})}=\frac12\,\overline{\dim}_B(C),$$ where the last equality follows from \cite[Theorem 1.3]{boxdimformula}.  \end{proof}


\begin{lemma}
    \label{lemma: additive energy of GCS}
    Let $C$ be a GCS and suppose for all $k\geq 1$ there exists a collection of intervals $\J_k$ such that
 \begin{equation}\label{form62} \I_k(J)=\J_k\quad\text{for all}\quad J\in\I_{k-1}. \end{equation} 
    Denote $$g_{m,k}:=\max_{y\in\R}\;\;\Big|\Big\{(I_1,\dots,I_m)\in \J_k^m:y\in I_1+\dots+I_m\Big\}\Big|.$$ Then $$\max_{y\in\R}\Big|\Big\{(I_1,\dots,I_m)\in \I_k^m:y\in I_1+\dots+I_m\Big\}\Big|\leq g_{m,1}\dots g_{m,k},$$ for all $k\geq 1$.
\end{lemma}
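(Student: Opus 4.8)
The plan is to prove the inequality by induction on $k$, exploiting the self-similar structure imposed by \eqref{form62}. I would carry the running quantity $G_k := \max_{y \in \R}\bigl|\{(I_1,\dots,I_m) \in \I_k^m : y \in I_1 + \cdots + I_m\}\bigr|$ and show $G_k \le g_{m,1}\cdots g_{m,k}$ for all $k \ge 0$. The base case $k = 0$ is trivial, since $\I_0 = \{[-1/2,1/2]\}$ has a single element, so $G_0 = 1$ (the empty product).

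For the inductive step, the crucial --- and only --- use of hypothesis \eqref{form62} is the bijective parametrisation $\I_k \leftrightarrow \I_{k-1} \times \J_k$. Each $I \in \I_k$ has a unique parent $J(I) \in \I_{k-1}$; by \eqref{form62} the children $\I_k(J)$ of every $J \in \I_{k-1}$ form the same configuration $\J_k$, transported into $J$ by the affine homothety $A_J \colon [-1/2,1/2] \to J$, so $I$ carries a second coordinate, namely the unique label $K(I) \in \J_k$ with $I = A_J(K(I))$. Writing $L := c_1 \cdots c_{k-1}$ for the common length of the intervals of $\I_{k-1}$ and $\mu_J$ for the midpoint of $J$, one has $A_J(K) = \mu_J + L K$ (a point plus a dilate of $K$); the orientation of $A_J$ is irrelevant, since $g_{m,k}$ is unchanged when $\J_k$ is reflected. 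In particular $I \subseteq J(I)$, and for any tuple $(I_1,\dots,I_m) \in \I_k^m$, with parents $J_i := J(I_i)$ and labels $K_i := K(I_i)$, the identity I would exploit is
\begin{displaymath}
I_1 + \cdots + I_m \;=\; \left(\sum_{i=1}^{m}\mu_{J_i}\right) + L(K_1 + \cdots + K_m).
\end{displaymath}

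Now fix $y \in \R$. Since $I_i \subseteq J_i$, the event $y \in I_1 + \cdots + I_m$ forces $y \in J_1 + \cdots + J_m$; and, given the parents, the displayed identity shows that $y \in I_1 + \cdots + I_m$ is equivalent to $z \in K_1 + \cdots + K_m$, where $z := \bigl(y - \sum_i \mu_{J_i}\bigr)/L$ depends only on $(J_1,\dots,J_m)$ and $y$. Grouping the tuples of $\I_k^m$ by their parent-tuple, I would thus obtain
\begin{displaymath}
\bigl|\{(I_1,\dots,I_m) \in \I_k^m : y \in I_1 + \cdots + I_m\}\bigr| = \sum_{\substack{(J_1,\dots,J_m)\in\I_{k-1}^m\\ y\in J_1+\cdots+J_m}} \bigl|\{(K_1,\dots,K_m)\in\J_k^m : z \in K_1 + \cdots + K_m\}\bigr|.
\end{displaymath}
Each summand on the right is $\le g_{m,k}$ by the definition of $g_{m,k}$, and the number of summands is $\le G_{k-1}$; hence the left-hand side is $\le G_{k-1}\,g_{m,k}$. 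Taking the supremum over $y$ gives $G_k \le G_{k-1}\,g_{m,k}$, and the inductive hypothesis yields $G_k \le g_{m,1}\cdots g_{m,k}$, completing the induction.

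I do not expect a substantial obstacle: the content is simply that at each generation the coarse position of a sum (which parent intervals it meets) and its fine position within those parents (the labels in $\J_k$) contribute multiplicatively to the overlap count of $I_1 + \cdots + I_m$. The one point to treat with care is the reading of \eqref{form62} as the assertion that every interval of $\I_{k-1}$ is subdivided according to the same relative pattern $\J_k$, which is exactly what makes $A_J(K) = \mu_J + L K$ hold uniformly in $J$ and turns the parent/label correspondence into a genuine bijection.
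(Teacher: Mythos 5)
Your proof is correct and follows essentially the same route as the paper's: induction on the generation, grouping $m$-tuples by their parent tuples in $\I_{k-1}$, bounding the number of admissible parent tuples by the inductive hypothesis and the within-parent count by $g_{m,k}$ using the common relative pattern $\J_k$. If anything, you are slightly more precise than the paper's write-up in noting that the children arise from $\J_k$ via a common homothety (midpoint shift plus dilation by $c_1\cdots c_{k-1}$) rather than a mere translation, and that the overlap count $g_{m,k}$ is invariant under this.
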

\begin{proof}
    Suppose the claim holds for some $k\geq 1$. Let us fix $y\in\R$. Each $I\in\I_{k+1}$ is contained in a unique $J\in\I_{k}$, and by the induction hypothesis, 
    $$\Big|\Big\{(J_1,\dots,J_m)\in \I_k^m:y\in J_1+\dots+J_m\Big\}\Big|\leq g_{m,1}\dots g_{m,k}.$$ Thus, it suffices to show that for all tuples $(J_1,\dots, J_m)\in\I_k^m$,
    $$\Big|\Big\{(I_1,\dots,I_m)\in \I_{k+1}^m:y\in I_1+\dots+I_m,\,I_j\subset J_j\Big\}\Big|\leq g_{m,k+1}.$$ For a bounded interval $I$, let $c(I)$ denote its centre. Now $$y\in I_1+\dots+I_m\iff y'\in (I_1-c(J_1))+\dots+(I_m-c(J_m)),$$ where $y':=y-c(J_1)-\dots-c(J_m)$. Since the intervals $I_j-c(J_j)$ are all translates of elements in $\J_{k+1}$, it follows that $$\Big|\Big\{(I_1,\dots,I_m)\in \J_k^m:y'\in (I_1-c(J_1))+\dots+(I_m-c(J_m))\Big\}\Big|\leq g_{m,k+1},$$ which completes the induction step. \end{proof}
    
\begin{prop}\label{prop: additive energy of GCS domain}
    Let $C$ be a GCS satisfying \eqref{form62}, and let $\Omega=\Omega(C)$. Then $$\mathcal{E}_{m,\Omega}\leq \left(\sup_{k \in \mathbb{N}} \log g_{m,k} \right)\limsup_{\delta\to 0^+}\frac{K(\delta)}{\log\delta^{-1}},$$ where $K(\delta):=\max\{k\in\mathbb{N}:|I|\geq\delta^{1/2},\,\forall I\in\I_k\}$.
\end{prop}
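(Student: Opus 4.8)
The plan is to estimate $\Xi_m(\Omega,\delta)$ by feeding in the explicit cover $\mathfrak{B}_\delta=\mathfrak{B}_\delta(\eta)$ from Definition \ref{def:canonicalCover}, where $\eta>0$ is a small auxiliary parameter that is sent to $0$ only at the very end. Recall that $\mathfrak{B}_\delta=\mathfrak{B}_{\delta,0}\cup\mathfrak{B}_{\delta,1}\cup\dots\cup\mathfrak{B}_{\delta,K(\delta)}$ together with one extra cap over the ceiling $\mathfrak{G}_2$, where $\mathfrak{B}_{\delta,k}$ ($1\le k\le K(\delta)$) is the family of $\delta$-caps around the chords $B_I$, $I\in\mathscr{I}_k'$, and $\mathfrak{B}_{\delta,0}$ consists of $\delta$-caps over a subdivision of each $I\in\mathscr{I}_{K(\delta)}$ into $O(\delta^{-\eta})$ sub-intervals. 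I would use precisely this decomposition as the partition \eqref{eqn: partition of caps}, so that $M_0=K(\delta)+2$. Since $c_k<n_k^{-1}\le\tfrac12$ for every GCS, one has $K(\delta)\le\tfrac12\log\delta^{-1}$, hence $M_0\lesssim\log\delta^{-1}$ and the factor $M_0^{2m}$ contributes only $O\!\big(\tfrac{\log\log\delta^{-1}}{\log\delta^{-1}}\big)\to0$ to the exponent; so all the work is in controlling the overlap number $M_1$.

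I claim that for this partition one may take $M_1=O_m\big(\delta^{-m\eta}(\max_i n_i)^m\, g_{m,1}\cdots g_{m,K(\delta)}\big)$. The ceiling class contributes overlap $1$. Fix a class $\mathfrak{B}_{\delta,i}$ with $1\le i\le K(\delta)$ and a point $\xi=(\xi_1,\xi_2)$, and suppose $\xi\in B_{I_1}+\dots+B_{I_m}$ with $I_j\in\mathscr{I}_i'$; writing each $I_j$ as a subset of its ancestor $J_j\in\mathscr{I}_{i-1}$, this inclusion forces $\xi_1\in J_1+\dots+J_m$ (up to an error coming from the $\delta$-neighbourhoods of the chords which, at the generations $i\le K(\delta)$, is absorbed by passing to a bounded number of neighbouring intervals of $\mathscr{I}_{i-1}$, since every ancestor has length $\ge\delta^{1/2}$). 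By Lemma \ref{lemma: additive energy of GCS} applied at generation $i-1$, there are at most $g_{m,1}\cdots g_{m,i-1}$ ancestor tuples $(J_1,\dots,J_m)$ with $\xi_1\in J_1+\dots+J_m$; each $J_j$ contains exactly $n_i-1$ removed intervals of generation $i$, so at most $(n_i-1)^m$ tuples refine a given ancestor tuple, and (using $g_{m,k}\ge1$) the class therefore has overlap $\le(n_i-1)^m g_{m,1}\cdots g_{m,i-1}\le(\max_i n_i)^m g_{m,1}\cdots g_{m,K(\delta)}$. The class $\mathfrak{B}_{\delta,0}$ is handled the same way, now reducing through the intervals of $\mathscr{I}_{K(\delta)}$ (Lemma \ref{lemma: additive energy of GCS} at generation $K(\delta)$) and absorbing $O(\delta^{-\eta})^m$ refinements. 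The fiddliest point — and the one I expect to require real care — is the reduction ``$\xi\in\textstyle\sum_j B_{I_j}\Rightarrow\xi_1\in\sum_j J_j$'': one must check that, at scales $\ge\delta^{1/2}$, the horizontal extent of a $\delta$-cap around a chord $B_I$ is bounded by $|I|+O(\delta^{1/2})$, so that passing to the coarser Cantor intervals (possibly together with $O_m(1)$ neighbours) costs only a factor $O_m(1)$ in the count. This is the analogue of the estimate carried out for self-similar GCS in \cite[\S3]{roy}, which I would adapt essentially verbatim.

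It remains to see that $(\max_{i\le K(\delta)}n_i)^m$ is negligible. Set $s_k:=\log(1/c_k)>0$ and $S_k:=s_1+\dots+s_k$, so that the Hua--Rao--Wen--Wu condition \eqref{eqn: HRWW condition} reads $s_k/S_k\to0$. From $n_kc_k<1$ we get $\log n_k<s_k=(s_k/S_k)S_k$, and from the definition of $K(\delta)$ we get $S_i\le S_{K(\delta)}\le\tfrac12\log\delta^{-1}$ for all $i\le K(\delta)$. Given $\nu>0$, pick $I_0$ with $s_i/S_i<\nu$ for $i>I_0$; then $\max_{i\le K(\delta)}\log n_i\le\max\{\max_{i\le I_0}\log n_i,\ \tfrac{\nu}{2}\log\delta^{-1}\}$, and letting $\delta\to0$ with $I_0$ fixed and then $\nu\to0$ gives $\max_{i\le K(\delta)}\log n_i=o(\log\delta^{-1})$. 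Combining the bounds, and using that $\Xi_m(\Omega,\delta)\le\Xi_{\mathfrak{B}_\delta(\eta),m}\le M_0^{2m}M_1$,
\[
\frac{\log\Xi_m(\Omega,\delta)}{\log\delta^{-1}}\ \le\ \frac{2m\log M_0+\log M_1}{\log\delta^{-1}}\ \le\ o(1)+m\eta+\frac{1}{\log\delta^{-1}}\sum_{k=1}^{K(\delta)}\log g_{m,k},
\]
and since $\sum_{k=1}^{K(\delta)}\log g_{m,k}\le K(\delta)\sup_k\log g_{m,k}$ (we may assume $\sup_k\log g_{m,k}<\infty$), taking $\limsup_{\delta\to0^+}$ yields $\mathcal{E}_{m,\Omega}\le m\eta+\big(\sup_k\log g_{m,k}\big)\limsup_{\delta\to0^+}\tfrac{K(\delta)}{\log\delta^{-1}}$. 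Letting $\eta\to0^+$ — legitimate since $\mathcal{E}_{m,\Omega}$ does not depend on $\eta$ and for each fixed $\eta>0$ the cover $\mathfrak{B}_\delta(\eta)$ is defined for all small $\delta$ — completes the proof.
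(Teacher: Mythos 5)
Your proof is correct and follows essentially the same route as the paper's: the same cover $\mathfrak{B}_\delta(\eta)$ from Definition \ref{def:canonicalCover} used as the partition, the same per-class overlap count obtained by projecting to the horizontal axis and invoking Lemma \ref{lemma: additive energy of GCS} (with the extra factors $n_i^m$ for the removed-interval classes and $O(\delta^{-\eta m})$ for $\mathfrak{B}_{\delta,0}$), and the same limit bookkeeping, the only cosmetic differences being that the paper works with $g_m:=\sup_k g_{m,k}$ and bounds $n_{K(\delta)}\le\delta^{-\eta}$ directly from \eqref{eqn: HRWW condition}, whereas you bound $\max_{i\le K(\delta)}n_i$ a bit more carefully. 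One small remark: the step you flag as fiddly is actually exact, since each cap $B_I$ is by definition the part of $\mathfrak{G}_1$ lying over $I$, so $\mathrm{proj}(B_I)=I$ and $\xi\in B_{I_1}+\dots+B_{I_m}$ gives $\xi_1\in I_1+\dots+I_m\subset J_1+\dots+J_m$ with no $\delta$-error or neighbouring intervals needed.
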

\begin{proof}
    This follows essentially from the argument of \cite[Claim 3.3]{roy}, in the case of self-similar GCS. We provide a modified version of this argument, excluding a few minor details that the reader can find in \cite[\S3.3]{roy}. 
    
    Suppose $\sup_k g_{m,k}=:g_m<\infty$ (otherwise, there is nothing to prove). Fix $\eta > 0$, and recall the cover $\mathfrak{B}_\delta = \mathfrak{B}_{\delta}(\eta)$ from Definition \ref{def:canonicalCover}. Let us fix $0\leq i\leq K(\delta)$. For $B_{j,i}\in\mathfrak{B}_{\delta,i}$, we have 
    \begin{equation}\label{form63} \xi\in B_{1,i}+B_{2,i}+\dots+B_{m,i}\implies \xi_1\in \mathrm{proj}(B_{1,i})+\mathrm{proj}(B_{2,i})+\dots+\mathrm{proj}(B_{m,i}), \end{equation}
    where `$\mathrm{proj}$' denotes projection onto the horizontal coordinate axis. 
    Next, we record that 
    $$\max_{y\in\R}\Big|\Big\{(I_1',\dots,I_m')\in (\I_k')^m:y\in I_1'+\dots+I_m'\Big\}\Big|\leq n_k^mg_m^{k-1},\quad\text{for all $k\geq 1$},$$
    since $\max_{y} |\{(I_{1},\ldots,I_{m}) \in (\mathscr{J}_{k - 1})^{m} : y \in I_{1} + \ldots + I_{m}\}| \leq g_{m}^{k - 1}$ by Lemma \ref{lemma: additive energy of GCS}, and each $I_{j} \in \mathscr{J}_{k - 1}$ contains $\leq n_{k}$ intervals in $\mathscr{J}_{k}'$. Recalling the choice of the families $\mathfrak{B}_{\delta,i}$ for $1 \leq i \leq K(\delta)$ (they were defined as $\delta$-caps over "removed" intervals), it follows from the estimate above, and \eqref{form63}, that
    \begin{equation}\label{form64} \max_{\xi\in\R^2}\Big|\Big\{(B_{1,i},\dots,B_{m,i})\in\mathfrak{B}_{\delta,i}^m:\xi\in B_{1,i}+\dots+B_{m,i}\Big\}\Big| \leq n_{K(\delta)}^mg_m^{K(\delta)},\quad\text{for all}\quad 1\leq i\leq K(\delta). \end{equation}
    For $i = 0$, the collection $\mathfrak{B}_{\delta,i}$ was defined a little differently, namely by selecting $\lesssim_{\eta} \delta^{-\eta}$ many $\delta$-caps corresponding to each interval in $\mathscr{J}_{K(\delta)}$. Now, applying Lemma \ref{lemma: additive energy of GCS} with $k = K(\delta)$, we obtain the following analogue of \eqref{form64} in the case $i = 0$:
    \begin{equation}\label{form65} \max_{\xi\in\R^2}\Big|\Big\{(B_{1,0},\dots,B_{m,0})\in\mathfrak{B}_{\delta,0}^m:\xi\in B_{1,0}+\dots+B_{m,0}\Big\}\Big| \lesssim_{\eta} \delta^{-\eta m} g_m^{K(\delta)},\quad\text{for all}\quad 1\leq i\leq K(\delta). \end{equation}
    
    Recall Definition \ref{def: additive energy}, which we now apply with $M_{0} := K(\delta) + 1$ and $M_{1} := \max\{n_{K(\delta)}^{m},O_{\eta}(\delta^{-\eta m})\} \cdot g_{m}^{K(\delta)}$, thanks to \eqref{form64}-\eqref{form65}. It follows that 
    $$\Xi_m(\Omega,\delta)\leq\Xi_{\mathfrak{B}_\delta,m}\lesssim_\eta K(\delta)^{2m} \cdot \delta^{-\eta m}n_{K(\delta)}^mg_m^{K(\delta)},$$ so that $$\mathcal{E}_{m,\Omega}\leq 2m\frac{\log K(\delta)}{\log\delta^{-1}}+m\frac{\log n_{K(\delta)}}{\log\delta^{-1}}+\log g_m\limsup_{\delta\to 0^+}\frac{K(\delta)}{\log\delta^{-1}} + \eta m$$
    As noted earlier, $K(\delta)\leq\frac12\log\delta^{-1}$, so the first limit is zero. By \eqref{eqn: HRWW condition}, the following holds, provided $K(\delta)$ is large enough: $$n_{K(\delta)}\leq c_{K(\delta)}^{-1}\leq (c_1\dots c_{K(\delta)})^{-\eta}\leq\delta^{-\eta}.$$ Finally, let $\eta \to 0^+$ to obtain the desired bound.
\end{proof}

Lastly, we need the following result about the existence of large interval families with small additive energy. The domains appearing in Theorem \ref{theorem: B} and Theorem \ref{theorem: A} will be GCS domains constructed using these intervals.
\begin{lemma}[Lemma 2.11 \cite{roy}]\label{lemma: interval family}
 For $m\geq 2$ and all $N$ sufficiently large, there exists a family $\mathscr I(N;2m)$ of $N$ subintervals of $[-1/2,1/2]$ satisfying the following properties. 
 \begin{enumerate}[label=(\roman*)]
     \item Each interval in $\I(N;2m)$ has length $N^{-m}$.
     \item The intervals in $\I(N;2m)$ are separated by $(m/2)N^{-m}$: $$\dist(I_1,I_2)\geq (m/2)N^{-m}\quad\text{for all}\quad I_1,I_2\in\I(N;2m),\;I_1\neq I_2.$$ 
    \item There exists a constant $g_m\geq 1$, depending only on $m$, such that $$\max_{y\in\R}\;\;\Big|\Big\{(I_1,\dots,I_m)\in \I(N;2m)^m:y\in I_1+\dots+I_m\Big\}\Big|\leq g_m.$$
     \item The family $\I(N;2m)$ contains the intervals $I^-:=[-1/2,-1/2+N^{-m}]$ and $I^+:=[1/2-N^{-m},1/2]$. 
 \end{enumerate}
 \end{lemma}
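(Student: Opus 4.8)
The plan is to produce $\mathscr{I}(N;2m)$ by rescaling a \emph{Sidon set of order $m$} of integers. Recall that a set $A \subset \Z$ is a $B_{m}$-set if, whenever $a_{1} \le \cdots \le a_{m}$ and $b_{1} \le \cdots \le b_{m}$ are elements of $A$ with $a_{1} + \cdots + a_{m} = b_{1} + \cdots + b_{m}$, then $a_{i} = b_{i}$ for all $i$; equivalently, every integer has at most $m!$ representations as an \emph{ordered} $m$-tuple of (not necessarily distinct) elements of $A$. Padding $2$-sums to $m$-sums shows that a $B_{m}$-set is in particular a Sidon set, so each positive difference of two of its elements occurs at most once. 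By the classical Bose--Chowla construction, for every prime power $q$ there is a $B_{m}$-set $A_{0} \subset \{0,1,\dots,q^{m}\}$ with $|A_{0}| = q$; picking, via Bertrand's postulate, a prime $q \in (2N,4N)$ yields such an $A_{0}$ with $|A_{0}| = q > 2N$, contained in an interval of length $<(4N)^{m}$.

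Next I would thin $A_{0}$ to a set $A$ of cardinality exactly $N$ all of whose pairwise distances are $\ge g_{0} := (m+2)4^{m}$. This is possible because, $A_{0}$ being a Sidon set, at most $g_{0}-1$ pairs of its points lie at distance $<g_{0}$ (at most one per difference value in $\{1,\dots,g_{0}-1\}$); deleting one point from each such pair (the independent set of the corresponding graph) leaves a set of size $\ge q - g_{0} \ge N$ with all pairwise distances $\ge g_{0}$ — here $N$ is taken large enough that $2N - g_{0} \ge N$ — and I keep any $N$ of those points, calling the result $A$. Note $A$ is still a $B_{m}$-set, being a subset of one. After translating I may assume $\min A = 0$, and I put $D := \max A$, so $D < (4N)^{m}$ since $A$ is (a translate of) a subset of $A_{0}$. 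Now define the affine map $\phi(a) := -\tfrac{1}{2} + \tfrac{1}{2}N^{-m} + (a/D)(1 - N^{-m})$, set $I_{a} := \phi(a) + [-\tfrac{1}{2}N^{-m}, \tfrac{1}{2}N^{-m}]$ for $a \in A$, and let $\mathscr{I}(N;2m) := \{I_{a} : a \in A\}$. Then (i) is immediate, and (iv) holds because $\phi(0) = -\tfrac{1}{2} + \tfrac{1}{2}N^{-m}$ and $\phi(D) = \tfrac{1}{2} - \tfrac{1}{2}N^{-m}$, so $I_{\min A} = I^{-}$ and $I_{\max A} = I^{+}$. Writing $\beta := (1 - N^{-m})/D$ for the slope of $\phi$, for distinct $a,a' \in A$ one has $\dist(I_{a},I_{a'}) \ge \beta|a - a'| - N^{-m} \ge \beta g_{0} - N^{-m}$, and $\beta g_{0} = g_{0}(1 - N^{-m})/D > (m+2)(1-N^{-m})N^{-m}$ by $D < 4^{m}N^{m}$ and $g_{0}/4^{m} = m+2$; for $N$ large this gives $\dist(I_{a},I_{a'}) \ge (m/2)N^{-m}$, which is (ii).

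It remains to verify (iii). Since $\phi$ is affine, $I_{a_{1}} + \cdots + I_{a_{m}}$ is an interval of length $mN^{-m}$ centred at $m\phi(0) + \beta(a_{1} + \cdots + a_{m})$, so $y \in I_{a_{1}} + \cdots + I_{a_{m}}$ forces $a_{1} + \cdots + a_{m}$ to lie within $mN^{-m}/(2\beta) = mN^{-m}D/(2(1 - N^{-m})) < m4^{m}$ of a fixed real number (again using $D < 4^{m}N^{m}$ and $N$ large). Hence $a_{1} + \cdots + a_{m}$ takes at most $2m4^{m} + 1$ distinct integer values, and for each of them the $B_{m}$-property bounds the number of ordered $m$-tuples from $A$ (repetitions allowed) by $m!$; thus the total count in (iii) is at most $g_{m} := (2m4^{m} + 1)\,m!$, a constant depending only on $m$. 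The one genuinely delicate point is the bookkeeping in the second paragraph: the interval length is pinned to exactly $N^{-m}$ and the required separation is $\asymp_{m} N^{-m}$, whereas a Sidon set of order $m$ with $N$ elements necessarily spans a window of length $\asymp_{m} N^{m}$; choosing the thinning constant $g_{0} = g_{0}(m)$ both large enough to overcome the factor $4^{m}$ hidden in $\beta$ and small enough that $2N - g_{0} \ge N$ is precisely what makes these two constraints compatible. Everything else — the Bose--Chowla construction, stability of the $B_{m}$-property under passing to subsets, Bertrand's postulate, and the affine change of variables — is standard.
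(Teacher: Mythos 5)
This lemma is not proved in the paper at all: it is imported verbatim from \cite{roy} (Lemma 2.11 there), so there is no in-paper argument to compare your proposal against; I can only assess it on its own terms, and it checks out. Your route — take a Bose--Chowla $B_m$ set of size $q\in(2N,4N)$ inside an interval of length $q^m<(4N)^m$, use the induced Sidon property to delete at most $g_0-1$ points and retain $N$ points with pairwise gaps at least $g_0=(m+2)4^m$, then rescale affinely so the extreme points give $I^-$ and $I^+$ — is correct: the separation computation gives $\operatorname{dist}(I_a,I_{a'})\geq\big[(m+2)(1-N^{-m})-1\big]N^{-m}\geq (m/2)N^{-m}$ once $N^{-m}\leq 1/2$, and for (iii) the constraint $y\in I_{a_1}+\cdots+I_{a_m}$ confines $a_1+\cdots+a_m$ to at most $2m4^m+1$ integers, each realised by at most $m!$ ordered tuples thanks to the $B_m$ property, so $g_m=(2m4^m+1)\,m!$ depends only on $m$. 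The bookkeeping you flag as delicate is handled correctly, and it is indeed the crux: since one needs $N$ points inside a window of length $\sim_m N^m$ with bounded $m$-fold sum multiplicity, an optimal-density algebraic $B_m[g]$ construction is essentially forced (a greedy or random construction would not fit in the window), so whether or not \cite{roy} phrases its construction via Bose--Chowla, your argument is of the same nature and can legitimately stand in for the citation.
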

Let us now prove the results stated in \S\ref{sec: BR intro}.
\B*
\begin{proof}
    The domain $\Omega$ is a minor modification of the one constructed in \cite[p.~11]{cladek_BR}. We define an HMS as follows. Let $\mathscr{I}_0:=\big\{[-1/2,1/2]\big\}$, and $\mathscr{I}_1=\mathscr{I}(2;3)$. For $I\in\mathscr{I}_k$, let $\mathcal{L}_I$ be the order-preserving affine transformation mapping $I$ onto $[-1/2,1/2]$. We define $\mathscr{I}_{k+1}$ as $$\mathscr{I}_{k+1}:=\{\mathcal{L}_I^{-1}(J)\subset I:J\in\mathscr{I}(2^{k};3),\; I\in\mathscr{I}_k\}.$$ Then as in Definition \ref{def: HMS}, we define $$C_k:=\bigcup_{I\in\mathscr{I}_k}I,\quad\text{and}\quad C:=\bigcap_{k\geq 1}C_k.$$ Then $C$ is an HMS with $n_k=2^{k}$, and $c_k=2^{-3k}$ for all $k\geq 1$. Then $$\lim_k\frac{\log c_k}{\log (c_1\dots c_k)}=\lim_k\frac{2k}{k(k+1)}=0.$$
    Property (iv) in Lemma \ref{lemma: interval family} ensures that $C$ satisfies the end-point condition. We let $\Omega=\Omega(C)$ be the domain generated by $C$. Observe that \begin{equation}
    \label{eqn: dimension construction B}
    \lim_K\frac{\log(n_kn_{k+1}\dots n_K)}{-\log(c_kc_{k+1}\dots c_K)}=\frac13\quad\text{for all}\quad 1\leq k\leq K.
\end{equation} In particular for $k=1$, Proposition \ref{prop: dimension of GCS domain} yields $\kappa_\Omega=\frac 16$.
    
    Now we show that $\|B^\alpha_\Omega\|_{L^6\to L^6}<\infty$ for all $\alpha > 0$. First, we estimate $\mathcal{E}_{3,\Omega}$. By property (iii) in Lemma \ref{lemma: interval family}, we have $\sup_k g_{3,k}\leq g_3$. On the other hand, $K(\delta)$ satisfies $$c_1^{-1}\dots c_{K(\delta)}^{-1}\leq\delta^{-1/2}\implies K(\delta)\lesssim(\log\delta^{-1/2})^{1/2}.$$ By Proposition \ref{prop: additive energy of GCS domain}, it follows that $\mathcal{E}_{3,\Omega}=0$. 
    
    Next, consider the direction-set $\Theta=\Theta(\Omega)$. By Corollary \ref{cor: direction-set dimension}, Lemma \ref{lemma: cantor set dimension}, and \eqref{eqn: dimension construction B}, we find that $$\qad(\Theta)=\qad(C)=1/3\leq 1/2.$$ By Proposition \ref{prop: main}, we get  $$\|\N_{\Theta;\delta}f\|_{L^{3/2}(\R^2)}\lesssim_\eta\delta^{-\eta}\|f\|_{L^{3/2}(\R^2)},\quad\text{for all $\eta>0$}.$$
   Thus, the parameter $\beta$ from the statement of Proposition \ref{prop: 2m bound} is $0$. Using the same proposition, it follows that $\|B^\alpha_\Omega\|_{L^6\to L^6}<\infty$ for all $\alpha>0$. Hence, $p_\Omega\geq 6$.
\end{proof}

We finally prove Theorem \ref{theorem: A}, using a variation of the argument above. The main difference is that the Cantor set underlying the construction of the domain $\Omega$ is self-similar in the case of Theorem \ref{theorem: A} --  unlike in the proof Theorem \ref{theorem: B}. The self-similarity of $C$ allows us to apply results from \cite{roy} to obtain non-trivial $L^{p}$-bounds for $B_{\Omega}^{\alpha}$ for $p > 6$. However, the statement at $p = 6$ becomes slightly weaker. It is conceivable that the self-similarity of the Cantor set is not essential in \cite{roy}; if this is the case, then a non-self-similar construction might yield a version of Theorem \ref{theorem: A} which supersedes Theorem \ref{theorem: B}.
\A*
\begin{proof}
Fix $\epsilon>0$ as in the statement. The domain $\Omega$ will be precisely the domain $\Omega_p$ constructed in \cite[\S3.1]{roy} corresponding to $p=6$. Nevertheless, for completeness, we recall the construction here. Let $N$ be a large integer, to be specified later. We define an HMS as follows. Let $\mathscr{I}_0:=\big\{[-1/2,1/2]\big\}$, and $\mathscr{I}_1=\mathscr{I}(N;6)$. We define $\mathscr{I}_{k+1}$ as $$\mathscr{I}_{k+1}:=\{\mathcal{L}_I^{-1}(J)\subset I:J\in\mathscr{I}(N;3),\; I\in\mathscr{I}_k\}.$$ We define $$C_k:=\bigcup_{I\in\mathscr{I}_k}I,\quad\text{and}\quad C:=\bigcap_{k\geq 1}C_k.$$ Then $C$ is an HMS with $n_k=N$ and $c_k=N^{-3}$ for all $k\geq 1$, thus $$\lim_k\frac{\log c_k}{\log (c_1\dots c_k)}=\lim_k \frac{1}{k}=0.$$ Again, property (iv) in Lemma \ref{lemma: interval family} ensures that $C$ satisfies the end-point condition, and so it is a GCS. Let $\Omega=\Omega(C)$. Observe that 
\begin{equation}
    \label{eqn: dimension construction A}
    \lim_K\frac{\log(n_kn_{k+1}\dots n_K)}{-\log(c_kc_{k+1}\dots c_K)}=\frac13\quad\text{for all}\quad 1\leq k\leq K.
\end{equation}
In particular, for $k=1$, Proposition \ref{prop: dimension of GCS domain} gives $\kappa_\Omega=\frac16$.

Now we prove the relevant estimates for $B^\alpha_\Omega$. We do so by interpolating the estimates at $p=6,\,18,$ and $\infty$. The $p=\infty$ bound follows directly from the $L^\infty$ bound in Theorem \ref{theorem: seeger--ziesler}. The $p=18$ bound is precisely the $L^{6m}$ bound in \cite[Theorem 1.3]{roy} for $m=3$, which was obtained using decoupling estimates (see \cite[Proposition 5.7]{roy} for the details). Thus, we only need to prove the $L^6$ bound. We establish this from Proposition \ref{prop: 2m bound} with $m=3$. To estimate $\mathcal{E}_{3,\Omega}$, we see that due to property (iii) in Lemma \ref{lemma: interval family}, $\sup_k g_{m,k}\leq g_3$. Next, we have $$c_1^{-1}\dots c_{K(\delta)}^{-1}\leq\delta^{-1/2}\implies K(\delta)\leq\frac16\frac{\log\delta^{-1}}{\log N}.$$ By Proposition \ref{prop: additive energy of GCS domain}, it follows that $\mathcal{E}_{3,\Omega}\leq 6\epsilon$ provided we choose $N\geq g_3^{\frac{1}{36\epsilon}}$. By Corollary \ref{cor: direction-set dimension}, Lemma \ref{lemma: cantor set dimension}, and \eqref{eqn: dimension construction A}, we have $$\qad(\Theta)=\qad(C)=1/3\leq 1/2.$$ Thus by Proposition \ref{prop: main}, $$\|\N_{\Theta;\delta}f\|_{L^{3/2}(\R^2)}\lesssim_\eta\delta^{-\eta}\|f\|_{L^{3/2}(\R^2)},\quad\text{for all $\eta>0$}.$$ Applying Proposition \ref{prop: 2m bound} with $\beta=0$, we get $\|B^\alpha_{\Omega}\|_{L^6\to L^6}<\infty$ for all $\alpha>\epsilon$.
\end{proof}

\begin{remark}
    Both Theorem \ref{theorem: B} and Theorem \ref{theorem: A} hold for domains that follow a very specific construction scheme. It would be interesting to study the Bochner--Riesz problem for more general classes of convex domains. For instance, it is conceivable that for all $\Omega$ such that $\Theta(\Omega)$ is an $s$-Ahlfors regular direction set, the bounds in Theorem \ref{theorem: seeger--ziesler} can be improved. Indeed, the sharp examples of Theorem \ref{theorem: seeger--ziesler} discussed in \cite{SZ, cladek_BR} involve direction-sets containing long arithmetic progressions at every scale. The Ahlfors regularity condition would rule out these examples.
\end{remark}
\appendix
\section{Kakeya maximal estimate}\label{app:kakeya}
Here we give a proof of Theorem \ref{theorem: fractal kakeya}. First, we need the following bound for $\delta$-tubes.
\begin{lemma}\label{lemma: rogers}
    Let $\mathcal{T}$ be a collection of $\delta$-tubes whose directions form a Frostman $(\delta,s,C)$-subset of $\mathbb{S}^{1}$, and such that there is only one tube in each direction. Then
\begin{displaymath} \Big\| \sum_{T \in \mathcal{T}} \chi_T \Big\|_{L^{p'}(\R^2)} \lessapprox C^{1/p}\delta^{2/p'}|\mathcal{T}|,\quad p=1+s.\end{displaymath} 
\end{lemma}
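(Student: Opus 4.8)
Write $f := \sum_{T \in \mathcal{T}} \chi_{T}$ and $N := |\mathcal{T}|$, and for $\lambda>0$ set $E_{\lambda} := \{x \in \R^{2} : f(x) \geq \lambda\}$; note that $f$ is integer-valued, so $E_{\lambda} = \bigcup_{T} T$ whenever $0<\lambda\leq 1$. Applying the Frostman hypothesis at the scale $r=\delta$ gives $N \gtrsim C^{-1}\delta^{-s}$, so the threshold $\lambda_{0} := C\delta^{s}N$ satisfies $1 \lesssim \lambda_{0} \leq N$ once $\delta$ is small. The plan is to run the $L^{p'}$-form of C\'ordoba's argument through the layer-cake identity
\begin{equation*} \Big\|\sum_{T}\chi_{T}\Big\|_{L^{p'}(\R^{2})}^{p'} = p'\int_{0}^{\infty} \lambda^{p'-1}\,\mathrm{Leb}(E_{\lambda})\,\dd\lambda, \end{equation*}
splitting the $\lambda$-integral at $\lambda=1$ and $\lambda=\lambda_{0}$ and estimating $\mathrm{Leb}(E_{\lambda})$ on each piece. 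The exponent $p=1+s$ intervenes only through the identities $p'-1 = 1/s$ and $s(p'-1)=1$.

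On the range $0<\lambda\leq\lambda_{0}$ the trivial estimates already suffice: for $0<\lambda\leq 1$ we have $\mathrm{Leb}(E_{\lambda}) = \mathrm{Leb}(\bigcup_{T}T) \leq \sum_{T}\mathrm{Leb}(T) \lesssim \delta N$, and for $\lambda\geq 1$, $\mathrm{Leb}(E_{\lambda}) \leq \lambda^{-1}\int f \lesssim \delta N/\lambda$. Since $p'-2 = 1/s-1\geq 0$, the $\lambda$-integral over $[1,\lambda_{0}]$ is controlled by its upper endpoint and comes out $\lesssim \delta N\,\lambda_{0}^{\,p'-1} = \delta N\,(C\delta^{s}N)^{1/s} = C^{1/s}\delta^{2}N^{p'}$, while the contribution of $[0,1]$ is $\lesssim \delta N$, which is $\leq C^{1/s}\delta^{2}N^{p'}$ again because $N \gtrsim C^{-1}\delta^{-s}$. (This is exactly where $p=1+s$ is used: it is the unique exponent for which $\lambda_{0}^{\,p'-1}$ supplies precisely one extra power of $\delta$.)

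The range $\lambda_{0}\leq\lambda\leq N$ is the substantial one, and here I would invoke C\'ordoba's \emph{bush} argument in its Frostman-restricted form. If more than $\lambda$ tubes of $\mathcal{T}$ meet at a point, their direction-set is a subset of the given Frostman $(\delta,s,C)$-set of cardinality $>\lambda$, hence is \emph{spread} over an arc of length $\gtrsim(\lambda/(CN))^{1/s}$; following these tubes away from the point and bounding their local multiplicity by the Frostman count shows that the bush occupies area $\gtrsim \lambda\delta^{1-s}/(CN)$, and a $\delta$-net double-counting argument over $E_{\lambda}$ (each tube meeting $O(\delta^{-1})$ of the net points) upgrades this --- via iteration of the bush inequality --- to the distributional bound
\begin{equation*} \mathrm{Leb}(E_{\lambda}) \lessapprox C^{p'-1}\,\delta^{2}\,N^{p'}\,\lambda^{-p'}, \qquad \lambda_{0}\leq\lambda\leq N. \end{equation*}
With this, the factor $\lambda^{-p'}$ exactly balances $\lambda^{p'-1}$, so $\int_{\lambda_{0}}^{N}\lambda^{p'-1}\mathrm{Leb}(E_{\lambda})\,\dd\lambda \lessapprox C^{p'-1}\delta^{2}N^{p'}\log(1/(C\delta^{s})) \lessapprox C^{p'-1}\delta^{2}N^{p'}\log(1/\delta)$. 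Summing the three ranges and recalling $p'-1 = p'/p = 1/s$ gives $\|\sum_{T}\chi_{T}\|_{L^{p'}}^{p'} \lessapprox C^{p'/p}\delta^{2}N^{p'}$, i.e.\ the claimed inequality after taking $p'$-th roots.

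I expect the bush estimate on $[\lambda_{0},N]$ to be the only genuine obstacle. The delicate point is producing honest $\lambda^{-p'}$ decay: one must cash in the Frostman \emph{spreading} of directions rather than merely the Frostman count, and absorb the degenerate case in which a bush's directions cluster inside a short sub-arc; a single unrefined bush inequality only yields $L^{1}$- or $L^{2}$-type decay in $\lambda$, which for non-integer $1/s$ is \emph{not} enough --- naive interpolation between consecutive integer moments provably undershoots the target exponent of $\delta$. The rest is routine dyadic bookkeeping in $\lambda$, all logarithmic losses being absorbed into the $\lessapprox$ notation.
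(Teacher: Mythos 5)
Your proposal has a genuine gap, and it sits exactly where you suspect: the distributional bound $\mathrm{Leb}(E_{\lambda}) \lessapprox C^{p'-1}\delta^{2}N^{p'}\lambda^{-p'}$ on the main range $[\lambda_{0},N]$ is never actually proved. Note that this bound is, up to logarithms, \emph{equivalent} to the lemma itself (it follows from the lemma by Chebyshev, and your layer-cake computation recovers the lemma from it), so the proposal essentially restates the problem on the range that matters. The mechanism you offer --- a single bush inequality (area $\gtrsim \lambda\delta^{1-s}/(CN)$) upgraded by ``$\delta$-net double-counting'' and ``iteration'' --- is not substantiated, and there is a concrete reason to doubt it: a bush gives a \emph{lower} bound on $\mathrm{Leb}(\bigcup_{T}T)$, not an upper bound on $\mathrm{Leb}(E_{\lambda})$, and converting incidences into an upper bound in the standard way only yields the $L^{1}$-type bound $\delta N/\lambda$ or the $L^{2}$-type (pairwise-intersection) bound $\lesssim C\delta^{1+s}N^{2}\lambda^{-2}$. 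As you yourself observe, for $s<\tfrac12$ (so $p'>3$) interpolating these two undershoots the target power of $\delta$, yet the lemma is needed for all $s\in(0,1]$ in the proof of Theorem \ref{theorem: fractal kakeya}. So the heart of the argument is missing precisely in the regime where the statement is hardest. (Incidentally, for $s>\tfrac12$ the trivial and $L^{2}$-type bounds with crossover at $\lambda_{0}=C\delta^{s}N$ \emph{do} combine to give the claim, so your scheme can be completed there; the obstruction is only the range $s<\tfrac12$.)

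For comparison, the paper's proof avoids distributional analysis altogether and works uniformly in $s$: expand
$\|\sum_{T}\chi_{T}\|_{p'}^{p'}=\sum_{T}\int_{T}\bigl(\sum_{T'}\chi_{T'}\bigr)^{1/s}$,
apply Minkowski's inequality with exponent $1/s\geq 1$ to get $\int_{T}(\sum_{T'}\chi_{T'})^{1/s}\leq\bigl(\sum_{T'}|T\cap T'|^{s}\bigr)^{1/s}$, then use $|T\cap T'|\lesssim\delta^{2}/\angle(T,T')$ and sum dyadically in the angle, where the Frostman hypothesis gives $|\{T':\angle(T,T')\lesssim 2^{k}\delta\}|\lesssim C|\mathcal{T}|(2^{k}\delta)^{s}$, so that $\sum_{T'}\angle(T,T')^{-s}\lessapprox C|\mathcal{T}|$. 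This fractional-moment computation is exactly the correct ``interpolation'' between integer moments that your bush route cannot reach, and it finishes in a few lines. If you want to salvage your outline, you would need to replace the bush step by this pairwise computation (or an equivalent induction-on-scales input), at which point the layer-cake scaffolding becomes unnecessary.
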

The following short proof was communicated to us by Keith Rogers.
\begin{proof} 
  We start by writing 
  $$\Big\| \sum_{T \in \mathcal{T}} \chi_T \Big\|_{L^{(s + 1)/s}(\R^2)}^{(s + 1)/s}=\int\Big(\sum_{T\in\T}\chi_T\Big)\cdot\Big(\sum_{T'\in\T}\chi_{T'}\Big)^{1/s}=\sum_{T\in\T}\int_T\Big(\sum_{T'\in\T}\chi_{T'}\Big)^{1/s}.$$
  Applying Minkowski's inequality with exponent $1/s \geq 1$, we get
\begin{align*}\label{eqn: minkowski step} \nonumber\sum_{T \in \mathcal{T}} \int_{T} \Big( \sum_{T' \in \mathcal{T}} \chi_{T'} \Big)^{1/s} \,& \leq \sum_{T \in \mathcal{T}} \Big( \mathop{\sum_{T' \in \mathcal{T}}}_{T \cap T' \neq \emptyset} |T \cap T'|^{s} \Big)^{1/s}\\
& \lesssim \sum_{T \in \mathcal{T}} \Big( \mathop{\sum_{T' \in \mathcal{T}}}_{T \cap T' \neq \emptyset} \frac{\delta^{2s}}{\angle(T,T')^{s}} \Big)^{1/s},\end{align*}
where $\angle(T,T'):=|\omega(T)-\omega(T')|$.
Now
\begin{displaymath} \mathop{\sum_{T' \in \mathcal{T}}}_{T \cap T' \neq \emptyset} \frac{1}{\angle(T,T')^{s}}\lesssim\sum_{k\geq 0}^{\log\delta^{-1}}(2^k\delta)^{-s}|\{T'\in\T:\angle(T,T')\sim 2^k\delta\}|.\end{displaymath} 
By the Frostman $(\delta,s,C)$-set hypothesis on the direction-set, we have $$|\{T'\in\T:\angle(T,T')\lesssim 2^k\delta\}|\lesssim C|\T|(2^k\delta)^s,\quad\text{and so}\quad \mathop{\sum_{T' \in \mathcal{T}}}_{T \cap T' \neq \emptyset} \frac{1}{\angle(T,T')^{s}}\lessapprox C|\T|.$$ Plugging this estimate back above concludes the proof. 
\end{proof}
Let us recall the statement of Theorem \ref{theorem: fractal kakeya}.
\kakeya*
\begin{proof}
        It suffices to prove the estimate for $p=1+s$ and interpolate with the trivial $L^1$-bound.
        Similar to the proof of Proposition \ref{prop: main}, we begin by discretizing the maximal operator, and bound 
        $$\|\K_\delta f\|_{L^p(\dd{\mu})}\lesssim\bigg(\sum_{\omega\in\Xi}|\K_\delta f(\omega)|^p\,\mu(B_\delta(\omega))\bigg)^{1/p},$$ where $\Xi$ is a $\delta$-net in $\supp{\mu}$. Since $|\Xi|\lesssim\delta^{-1}$, and by H\"older's inequality, 
        $$\K_\delta f(\omega)\lesssim\delta^{-1/p}\|f\|_{L^p(\R^2)}\quad\text{for all}\quad\omega\in\Xi,$$
         the contribution from the points $\omega\in\Xi$ for which $\mu(B_\delta(\omega))\leq \delta^{2/p}$ in the above sum is negligible. On the other hand, by the Frostman condition we have $\mu(B_\delta(\omega))\lesssim\delta^s$ for all $\omega\in\Xi$. Therefore, at the cost of a logarithm, we may assume that there exists a $\lambda\in[\delta^{2/p},\delta^s]$ such that $\mu(B_\delta(\omega))\sim\lambda$ for all $\omega\in\Xi$. Next, for each $\omega\in\Xi$, we find a tube $T_\omega$ such that $$\K_\delta f(\omega)\lesssim\fint_{T_\omega}|f|.$$ By the above, and duality, we find an $\ell^{p'}$-normalized non-negative sequence $(a_\omega)_{\omega\in\Xi},$ such that
        \begin{equation}
            \label{eqn: kakeya maximal reduction}
            \|\K_\delta f\|_{L^p(\dd{\mu})}\lessapprox\lambda^{1/p}\delta^{-1}\sum_{\omega\in\Xi}a_\omega\int_{T_\omega}|f|\leq\lambda^{1/p}\delta^{-1}\|\sum_{\omega\in\Xi}a_\omega\chi_{T_\omega}\|_{L^{p'}(\R^2)}\,\|f\|_{L^p(\R^2)}.
        \end{equation}
    Let $r\in[\delta,1]$. For all $r$-ball $B_r$, we have $$r^s\gtrsim\mu(B_r)\gtrsim\sum_{\omega\in B_r}\mu(B_\delta(\omega))\implies |\Xi\cap B_r|\lesssim(\lambda|\Xi|)^{-1}|\Xi|r^s.$$ In other words, $\Xi$ is a Frostman $(\delta,s,(\lambda|\Xi|)^{-1})$-set. By Lemma \ref{lemma: rogers}, 
    $$\|\sum_{\omega\in\Xi}\chi_{T_\omega}\|_{L^{p'}(\R^2)}\lessapprox \lambda^{-1/p}\delta^{2/p'}|\Xi|^{1/p'}.$$ By dyadically pigeonholing in the size of $a_\omega$, we find, 
    $$\|\sum_{\omega\in\Xi}a_\omega\chi_{T_\omega}\|_{L^{p'}(\R^2)}\lessapprox\lambda^{-1/p}\delta^{2/p'},$$ using which in \eqref{eqn: kakeya maximal reduction} yields $$\|\K_\delta f\|_{L^p(\dd{\mu})}\lessapprox\delta^{1-2/p}\|f\|_{L^p(\R^2)},$$ as required.
\end{proof}

\bibliographystyle{plain}
\bibliography{Citation,references}

\def\cprime{$'$}
\begin{thebibliography}{10}

\bibitem{alfonseca}
Angeles Alfonseca.
\newblock Strong type inequalities and an almost-orthogonality principle for
  families of maximal operators along directions in $\mathbb{R}^{2}$.
\newblock {\em Journal of the London Mathematical Society}, 67(1):208--218,
  2003.

\bibitem{ASV}
Angeles Alfonseca, Fernando Soria, and Ana Vargas.
\newblock An almost-orthogonality principle in {$L^2$} for directional maximal
  functions.
\newblock In {\em Harmonic analysis at {M}ount {H}olyoke ({S}outh {H}adley,
  {MA}, 2001)}, volume 320 of {\em Contemp. Math.}, pages 1--7. Amer. Math.
  Soc., Providence, RI, 2003.

\bibitem{bateman}
Michael Bateman.
\newblock Kakeya sets and directional maximal operators in the plane.
\newblock {\em Duke Math. J.}, 147(1):55--77, 2009.

\bibitem{cladek_BR}
Laura {Cladek}.
\newblock {New $L^p$ bounds for Bochner-Riesz multipliers associated with
  convex planar domains with rough boundary}.
\newblock {\em arXiv e-prints}, page arXiv:1509.05106, September 2015.

\bibitem{CF_differentiation}
A.~C\'ordoba and R.~Fefferman.
\newblock On differentiation of integrals.
\newblock {\em Proc. Nat. Acad. Sci. U.S.A.}, 74(6):2211--2213, 1977.

\bibitem{Cordoba_Nikodym}
Antonio Cordoba.
\newblock The {K}akeya maximal function and the spherical summation
  multipliers.
\newblock {\em American Journal of Mathematics}, 99(1):1--22, 1977.

\bibitem{Cordoba}
Antonio C\'ordoba.
\newblock A note on {B}ochner-{R}iesz operators.
\newblock {\em Duke Math. J.}, 46(3):505--511, 1979.

\bibitem{demeter_3d_nikodym}
Ciprian Demeter.
\newblock {$L^2$} bounds for a {K}akeya-type maximal operator in {$\Bbb R^3$}.
\newblock {\em Bull. Lond. Math. Soc.}, 44(4):716--728, 2012.

\bibitem{2025arXiv251115899D}
Ciprian {Demeter} and William {O'Regan}.
\newblock {Incidence estimates for quasi-product sets and applications}.
\newblock {\em arXiv e-prints}, page arXiv:2511.15899, November 2025.

\bibitem{2026arXiv260101264D}
Ciprian {Demeter} and William {O'Regan}.
\newblock {New discretized polynomial expander and incidence estimates}.
\newblock {\em arXiv e-prints}, page arXiv:2601.01264, January 2026.

\bibitem{diplinio--parissis}
Francesco Di~Plinio and Ioannis Parissis.
\newblock Maximal directional operators along algebraic varieties.
\newblock {\em Amer. J. Math.}, 143(5):1463--1503, 2021.

\bibitem{duo--vargas}
J.~Duoandikoetxea and A.~Vargas.
\newblock Directional operators and radial functions on the plane.
\newblock {\em Ark. Mat.}, 33(2):281--291, 1995.

\bibitem{Fefferman}
Charles Fefferman.
\newblock The multiplier problem for the ball.
\newblock {\em Ann. of Math. (2)}, 94:330--336, 1971.

\bibitem{boxdimformula}
Dejun Feng, Zhiying Wen, and Jun Wu.
\newblock Some dimensional results for homogeneous {M}oran sets.
\newblock {\em Sci. China Ser. A}, 40(5):475--482, 1997.

\bibitem{fraser_book}
Jonathan~M. Fraser.
\newblock {\em Assouad dimension and fractal geometry}, volume 222 of {\em
  Cambridge Tracts in Mathematics}.
\newblock Cambridge University Press, Cambridge, 2021.

\bibitem{hagelstein}
Paul {Hagelstein}, Blanca {Radillo-Murguia}, and Alexander {Stokolos}.
\newblock {Probabilistic Construction of Kakeya-Type Sets in $\mathbb{R}^2$
  associated to separated sets of directions}.
\newblock {\em arXiv e-prints}, page arXiv:2405.17674, May 2024.

\bibitem{HRWW}
Su~Hua, Hui Rao, Zhiying Wen, and Jun Wu.
\newblock On the structures and dimensions of {M}oran sets.
\newblock {\em Sci. China Ser. A}, 43(8):836--852, 2000.

\bibitem{katz}
Nets~Hawk Katz.
\newblock Maximal operators over arbitrary sets of directions.
\newblock {\em Duke Math. J.}, 97(1):67--79, 1999.

\bibitem{katz2}
Nets~Hawk Katz.
\newblock Remarks on maximal operators over arbitrary sets of directions.
\newblock {\em Bull. London Math. Soc.}, 31(6):700--710, 1999.

\bibitem{kim}
Jongchon Kim.
\newblock Almost-orthogonality principles for certain directional maximal
  functions.
\newblock {\em J. Geom. Anal.}, 31(7):7320--7332, 2021.

\bibitem{qA_dimension}
Fan L{\"u} and Lifeng Xi.
\newblock Quasi-assouad dimension of fractals.
\newblock {\em Journal of Fractal Geometry}, 05 2016.

\bibitem{mitsis}
Themis Mitsis.
\newblock {Norm Estimates for the Kakeya Maximal Function with Respect to
  General Measures}.
\newblock {\em Real Analysis Exchange}, 27(2):563 -- 572, 2001.

\bibitem{NSW}
A.~Nagel, E.~M. Stein, and S.~Wainger.
\newblock Differentiation in lacunary directions.
\newblock {\em Proc. Nat. Acad. Sci. U.S.A.}, 75(3):1060--1062, 1978.

\bibitem{MR4722034}
Tuomas Orponen.
\newblock On the {H}ausdorff dimension of radial slices.
\newblock {\em Ann. Fenn. Math.}, 49(1):183--209, 2024.

\bibitem{2025arXiv251105091O}
Tuomas {Orponen}.
\newblock {$ABC$ sum-product theorems for Katz-Tao sets}.
\newblock {\em arXiv e-prints}, page arXiv:2511.05091, November 2025.

\bibitem{OrponenIncidenceGeometry}
Tuomas Orponen.
\newblock Approximate incidence geometry in the plane.
\newblock {\em Res. Math. Sci.}, 12(4):Paper No. 65, 26, 2025.

\bibitem{2024arXiv241104528O}
Tuomas {Orponen} and Kevin {Ren}.
\newblock {On the projections of almost Ahlfors regular sets}.
\newblock {\em arXiv e-prints}, page arXiv:2411.04528, November 2024.

\bibitem{MR4718435}
Tuomas Orponen and Pablo Shmerkin.
\newblock On the {H}ausdorff dimension of {F}urstenberg sets and orthogonal
  projections in the plane.
\newblock {\em Duke Math. J.}, 172(18):3559--3632, 2023.

\bibitem{2023arXiv230110199O}
Tuomas {Orponen} and Pablo {Shmerkin}.
\newblock {Projections, Furstenberg sets, and the $ABC$ sum-product problem}.
\newblock {\em arXiv e-prints}, page arXiv:2301.10199, January 2023.

\bibitem{MR4912925}
Tuomas Orponen and Guangzeng Yi.
\newblock Large cliques in extremal incidence configurations.
\newblock {\em Rev. Mat. Iberoam.}, 41(4):1431--1460, 2025.

\bibitem{parcet--rogers}
Javier Parcet and Keith~M. Rogers.
\newblock Directional maximal operators and lacunarity in higher dimensions.
\newblock {\em Amer. J. Math.}, 137(6):1535--1557, 2015.

\bibitem{roy}
Hrit Roy.
\newblock Improved {$L^p$} bounds for {B}ochner-{R}iesz operators associated
  with rough convex domains in the plane.
\newblock {\em Math. Z.}, 310(1):Paper No. 14, 2025.

\bibitem{SZ}
Andreas Seeger and Sarah Ziesler.
\newblock Riesz means associated with convex domains in the plane.
\newblock {\em Math. Z.}, 236(4):643--676, 2001.

\bibitem{SS}
P.~Sj\"ogren and P.~Sj\"olin.
\newblock Littlewood-{P}aley decompositions and {F}ourier multipliers with
  singularities on certain sets.
\newblock {\em Ann. Inst. Fourier (Grenoble)}, 31(1):vii, 157--175, 1981.

\bibitem{stromberg_annals}
Jan-Olov Str\"omberg.
\newblock Maximal functions associated to rectangles with uniformly distributed
  directions.
\newblock {\em Ann. of Math. (2)}, 107(2):399--402, 1978.

\bibitem{stromberg_thesis}
JO~Str{\"o}mberg.
\newblock Maximal functions for rectangles with given directions.
\newblock {\em Mittag-Leffler Inst}, 1976.

\bibitem{2024arXiv241108871W}
Hong {Wang} and Shukun {Wu}.
\newblock {Restriction estimates using decoupling theorems and two-ends
  Furstenberg inequalities}.
\newblock {\em arXiv e-prints}, page arXiv:2411.08871, November 2024.

\bibitem{2025arXiv250921869W}
Hong {Wang} and Shukun {Wu}.
\newblock {Two-ends Furstenberg estimates in the plane}.
\newblock {\em arXiv e-prints}, page arXiv:2509.21869, September 2025.

\bibitem{zahl_fractal_kakeya}
Joshua Zahl.
\newblock Unions of lines in $\mathbb {R}^n$.
\newblock {\em Mathematika}, 69(2):473--481, 2023.

\end{thebibliography}
\end{document}